\newtheorem{proposition}{Proposition}
\newtheorem{theorem}[proposition]{Theorem}
\newtheorem{lemma}[proposition]{Lemma}
\newtheorem{corollary}[proposition]{Corollary}
\theoremstyle{definition}
\newtheorem{remark}[proposition]{Remark}
\newcommand{\cref}[1]{Corollary~\ref{c.#1}}
\numberwithin{equation}{section}
\numberwithin{proposition}{section}
\newcommand{\A}{\mathcal{A}}
\newcommand{\Ahom}{\overline{\A}}
\newcommand{\Z}{\mathbb{Z}}
\newcommand{\N}{\mathbb{N}}
\newcommand{\R}{\mathbb{R}}
\newcommand{\E}{\mathbb{E}}
\renewcommand{\P}{\mathbb{P}}
\newcommand{\F}{\mathcal{F}}
\newcommand{\Zd}{\mathbb{Z}^d}
\newcommand{\Rd}{{\mathbb{R}^d}}
\newcommand{\ep}{\varepsilon}
\renewcommand{\a}{\mathbf{a}}
\newcommand{\f}{\mathbf{f}}
\renewcommand{\subset}{\subseteq}
\renewcommand{\a}{\mathbf{a}}
\renewcommand{\b}{\mathbf{b}}
\newcommand{\ahom}{{\overbracket[1pt][-1pt]{\a}}}  
\renewcommand{\subset}{\subseteq}
\newcommand{\cu}{{\scaleobj{1.2	}{\square}}}
\renewcommand{\L}{\mathcal{L}}
\renewcommand{\fint}{\strokedint}
\DeclareMathOperator{\dist}{dist}
\DeclareMathOperator*{\osc}{osc}
\newcommand{\X}{\mathcal{X}}  
\newcommand{\Y}{\mathcal{Y}}
\renewcommand{\bar}{\overline}
\renewcommand{\tilde}{\widetilde}
\newcommand{\indc}{\mathds{1}}
\renewcommand{\O}{\mathcal{O}}
\DeclareMathOperator{\data}{data}
\renewcommand{\hat}{\widehat}
\newcommand{\uhom}{{u_{\mathrm{hom}}}}
\renewcommand{\data}{\mathrm{data}}
\begin{document}

\title[Higher-order linearization and regularity in homogenization]{Higher-order linearization and regularity in nonlinear homogenization}

\begin{abstract}
We prove large-scale $C^\infty$ regularity for solutions of nonlinear elliptic equations with random coefficients, thereby obtaining a version of the statement of Hilbert's 19th problem in the context of homogenization. The analysis proceeds by iteratively improving three statements together: (i) the regularity of the homogenized Lagrangian~$\overline{L}$, (ii) the commutation of higher-order linearization and homogenization, and (iii) large-scale $C^{0,1}$-type regularity for higher-order linearization errors. We consequently obtain a quantitative estimate on the scaling of linearization errors, a Liouville-type theorem describing the polynomially-growing solutions of the system of higher-order linearized equations, and an explicit (heterogenous analogue of the) Taylor series for an arbitrary solution of the nonlinear equations---with the remainder term optimally controlled. These results give a complete generalization to the nonlinear setting of the large-scale regularity theory in homogenization for linear elliptic equations.  
\end{abstract}

\author[S. Armstrong]{Scott Armstrong}
\address[S. Armstrong]{Courant Institute of Mathematical Sciences, New York University, 251 Mercer St., New York, NY 10012}
\email{scotta@cims.nyu.edu}

\author[S. J. Ferguson]{Samuel J. Ferguson}
\address[S. J. Ferguson]{Courant Institute of Mathematical Sciences, New York University, 251 Mercer St., New York, NY 10012}
\email{sjf370@nyu.edu}

\author[T. Kuusi]{Tuomo Kuusi}
\address[T. Kuusi]{Department of Mathematics and Statistics, P.O. Box 68 (Gustaf H\"allstr\"omin katu 2), FI-00014 University of Helsinki, Finland}
 \email{tuomo.kuusi@helsinki.fi}

\keywords{stochastic homogenization, large-scale regularity, nonlinear elliptic equation, linearized equation, commutativity of homogenization and linearization}
\subjclass[2010]{35B27, 35B45, 60K37, 60F05}
\date{\today}

\maketitle
\setcounter{tocdepth}{1}

\tableofcontents%


\section{Introduction}

\subsection{Motivation: quantitative homogenization for nonlinear equations}

This article is concerned with nonlinear, divergence-form, uniformly elliptic equations of the form
\begin{equation} 
\label{e.pde}
- \nabla \cdot \left( D_p L(\nabla u(x),x ) \right) = 0 \quad \mbox{in} \ U \subseteq \Rd, \quad d\geq 2. 
\end{equation}
The Lagrangian~$L(p,x)$ is assumed to be uniformly convex and regular in~$p$ and possess some mild regularity in~$x$. Furthermore,~$L$ is a stochastic object: it is sampled by a probability measure~$\P$ which is statistically stationary and satisfies a unit range of dependence. This essentially means that $x\mapsto L(\cdot,x)$ is a random field,  valued in the space of uniformly convex functions, the law of which is independent of~$x$ (or, to be precise, periodic in~$x$; see  Subsection~\ref{ss.assumptions} for the assumptions). 
%

\smallskip

The objective is to describe the statistical behavior of the solutions of~\eqref{e.pde}, with respect to the probability measure~$\P$, on \emph{large length scales}. In other words, we want to understand what the solution~$u$ looks like in the case that the ``macroscopic'' domain $U$ is very large relative to ``microscopic''  scale, which is the correlation length scale of the coefficients (taken to the unit scale). 

\smallskip

At a qualitative level, a satisfactory characterization of the solutions, in the regime in which the ratio of these two length scales is large, is provided by the principle of homogenization. First proved in this context by Dal Maso and Modica~\cite{DM1,DM2}, it asserts roughly that a solution of~\eqref{e.pde} is, with probability approaching one, close in~$L^2$ (relative to its size in~$L^2$) to a solution of a deterministic equation of the form
\begin{equation}
\label{e.pde.homog}
-\nabla \cdot \left( D_p\overline{L} \left( \nabla \uhom \right) \right)
= 0 \quad \mbox{in} \ U, 
\end{equation}
for an \emph{effective Lagrangian}~$\overline{L}$ which is also uniformly convex and~$C^{1,1}$. 

\smallskip

This result is of great importance, from both the theoretical and computation points of view, since the complexity of the homogenized equation~\eqref{e.pde.homog} is significantly less than that of~\eqref{e.pde} as it is both deterministic and spatially homogeneous. It hints that the structure of~\eqref{e.pde} should, on large domains and with high probability, possess some of the structure of a constant coefficient equation and thus we may expect it to be more amenable to our analysis than the worst-possible heterogeneous equation of the form~\eqref{e.pde}. In other words, since the Lagrangian~$L$ is sampled by a probability measure~$\P$ with nice ergodic properties, rather than given to us by the devil, can we expect its solutions to have a nicer structure? In order to answer this kind of question, we need to build a \emph{quantitative} theory of homogenization. 

\smallskip

To be of practical use, the principle of homogenization needs be made quantitative. We need to have answers to questions such as these:
\begin{itemize}
\item How large does the ratio of scale separation need to be before we can be reasonably sure that solutions of~\eqref{e.pde} are close to those of~\eqref{e.pde.homog}? In other words, what is the size of a typical error in the homogenization approximation in terms of the size of $U$? 
\item Can we estimate the probability of the unlikely event that the error is  large?
\item What is~$D_p\overline{L}$ and how can we efficiently compute it? How regular can we expect it to be? Can we efficiently compute its derivatives?
\item Can we describe the fluctuations of the solutions?  
\end{itemize}
In this paper we show that~\eqref{e.pde} has a $C^\infty$ structure. In particular, we will essentially answer the third question posed above by demonstrating that the effective Lagrangian~$\overline{L}$ is as regular as~$L(\cdot,x)$ with estimates for its derivatives. We will identify the higher derivatives of $\overline{L}$ as the homogenized coefficients of certain \emph{linearized} equations and give quantitative homogenization estimates for these, implicitly indicating a computational method for approximating them and thus a Taylor approximation for~$\overline{L}$. Finally, we will prove large-scale~$C^{k,1}$ type estimates for solutions of~\eqref{e.pde}, for~$k\in\N$ as large as can be expected from the regularity assumptions on~$L$, a result analogous to Hilbert's 19th problem, famously given for spatially homogeneous Lagrangians by De Giorgi and Nash. Our analysis reveals the interplay between these three seemingly different kinds results:  (i) the regularity of $\overline{L}$, (ii) the homogenization of linearized equations, and (iii) the large-scale regularity of the solutions. 
In analogy to the way that the Schauder estimates are iterated in the resolution of Hilbert's 19th problem, these three statements must be proved together, iteratively in the parameter~$k\in\N$ which represents the degree of regularity of~$\overline{L}$, the order of the linearized equation, and the order of the $C^{k,1}$ estimate. 

\subsection{Background: large-scale regularity theory and its crucial role in quantitative homogenization}

In the last decade, beginning with the work of Gloria and Otto~\cite{GO1,GNO1}, 
a quantitative theory of homogenization has been developed to give precise answers to  questions like the ones stated in the previous subsection. Until now, most of the progress has come in the case of \emph{linear} equations 
\begin{equation} 
\label{e.pde.lin}
-\nabla \cdot \a(x) \nabla u = 0,
\end{equation}
which corresponds to the special case~$L(p,x) = \frac12p\cdot \a(x)p$ of~\eqref{e.pde}, where~$\a(x)$ is a symmetric matrix. By now there is an essentially complete quantitative theory for linear equations, and we refer to the monograph~\cite{AKMbook} and the references therein for a comprehensive presentation of this theory. Quantitative homogenization for the nonlinear equation~\eqref{e.pde} has a comparatively sparse literature; in fact, the only such results of which we are aware are those of~\cite{AS,AM} (see also~\cite[Chapter 11]{AKMbook}), our previous paper~\cite{AFK} and a new paper of Fischer and Neukamm~\cite{FN} which was posted to the arXiv as we were finishing the present article. 

\smallskip

Quantitative homogenization is inextractably linked to regularity estimates on the solutions of the heterogeneous equation. This is not surprising when we consider that the homogenized flux~$D_p\overline{L}(\nabla u_{\mathrm{hom}})$ should be related to the spatial average (say, on some mesoscopic scale) of the heterogeneous flux $D_pL(\nabla u(x),x)$. In order for spatial averages of the latter to converge nicely, we need to have bounds. It could be unfortunate and lead to a very slow rate of homogenization if, for instance, the flux was concentrated on sets of very small measure which percolate only on very large scales. To rule this out we need good great estimates: ideally, we would like to know that the size of the flux on small scales is the same as on large scales, which amounts to a~$W^{1,\infty}$ estimate on solutions. 

\smallskip

Unfortunately, solutions of equations with highly oscillating coefficients do not possess very strong regularity, in general. Indeed, the best deterministic elliptic regularity estimate for solutions of~\eqref{e.pde}, which does not degenerate as the size of the domain~$U$ becomes large, is~$C^{0,\delta}$ in terms of H\"older regularity (the De Giorgi-Nash estimate) and~$W^{1,2+\delta}$ in terms of Sobolev regularity (the Meyers estimate). The tiny exponent $\delta>0$ in each estimate becomes small as the ellipticity ratio becomes large (see~\cite[Example 3.1]{AKMbook}) and thus both estimates are far short of the desired regularity class~$W^{1,\infty} = C^{0,1}$. 

\smallskip

One of the main insights in the quantitative theory of homogenization 
is that, compared to a generic (``worst-case'')~$L$, solutions of the equation~\eqref{e.pde} have much better regularity if~$L$ is sampled by an ergodic probability measure~$\P$. This is an effect of homogenization itself: on large scales,~\eqref{e.pde} should be a ``small perturbation'' of~\eqref{e.pde.homog}, and therefore better regularity estimates for the former can be inherited from the latter. This is the same idea used to prove the classical Schauder estimates. In the context of homogenization, the result states that there exists a random variable~$\X$ (sometimes called the minimal scale) which is finite almost surely such that, for every $\X < r < \frac 12R$ and solution $u\in H^1(B_R)$ of~\eqref{e.pde} with $U=B_R$, we have the estimate
\begin{equation} 
\label{e.C01estintro}
\fint_{B_r} \left| \nabla u \right|^2 
\leq 
C \left( 1+ \fint_{B_R} \left| \nabla u \right|^2 \right).
\end{equation}
Here~$C$ depends only on dimension and ellipticity and~$\fint_U := \frac{1}{|U|}\int_U$ denotes the mean of a function in~$U$. If we could send $r\to 0$ in~\eqref{e.C01estintro}, it would imply that
\begin{equation*} \label{}
\left| \nabla u(0) \right|^2 
\leq 
C \left( 1+ \fint_{B_R} \left| \nabla u \right|^2 \right), 
\end{equation*}
which is a true Lipschitz estimate, same estimate in fact as holds for the homogenized equation~\eqref{e.pde.homog}. As~\eqref{e.C01estintro} is valid only for $r>\X$, it is sometimes called a ``large-scale $C^{0,1}$ estimate'' or a ``Lipschitz estimate down to the microscopic scale." This estimate, first demonstrated in~\cite{AS} in the stochastic setting, is a generalization of the celebrated result in the case of (non-random) periodic coefficients due to Avellaneda and Lin~\cite{AL1}.  Of course, it then becomes very important to quantify the size of~$\X$. The estimate proved in~\cite{AS}, which is essentially optimal, states that $\X$ is bounded up to ``almost volume-order large deviations'': for every $s<d$ and $r\geq1$,
\begin{equation} 
\label{e.SI.X}
\P \left[ \X > r \right] \leq C\exp\left( -cr^s \right).
\end{equation}
Here the constant~$C$ depends only on~$s$,~$d$, and the ellipticity. A proof of this large-scale regularity estimate together with~\eqref{e.SI.X} can be found in~\cite[Chapter 3]{AKMbook} in the linear case and in~\cite[Chapter 11]{AKMbook} for the nonlinear case. The right side of~\eqref{e.SI.X} represents the probability of the unlikely event that the~$L$ sampled by~$\P$ will be a ``worst-case''~$L$ in the ball of radius~$r$. A proof of the optimality of~\eqref{e.SI.X} can be found in~\cite[Section 3.6]{AKMbook}.

\smallskip

This large-scale regularity theory introduced in~\cite{AS} was further developed in the case of~\eqref{e.pde.lin} in~\cite{GNO2,AM,FO,AKM1,AKM}  and now plays an essential role in the  quantitative theory of stochastic homogenization. Whether one employs functional inequalities~\cite{GNO2,DGO} or renormalization arguments~\cite{AKM1,AKM,GO6}, it is a crucial ingredient in the proof of the optimal error estimates in homogenization for~\eqref{e.pde.lin}: see the monograph~\cite{AKMbook} and the references therein for a complete presentation of these developments. 

\smallskip

The large-scale~$C^{0,1}$ estimate is, from one point of view, the best regularity one can expect solutions of~\eqref{e.pde} or~\eqref{e.pde.lin} to satisfy: since the coefficients are rapidly oscillation, there is no hope for the gradient to exhibit continuity on the macroscopic scale. However, as previously shown in the periodic case in~\cite{AL1,AL2}, the solutions of the linear equation~\eqref{e.pde.lin} still have a $C^\infty$ structure. To state what we mean, let us first think of an (interior) $C^{k,1}$ estimate not as a pointwise bound on the $(k+1)$th derivatives of a function, but as an estimate on how well a function may be approximated on small balls by a $k$th order polynomial. By Taylor's theorem, these are of course equivalent in the following sense:
\begin{equation*} \label{}
\left| \nabla^{k+1} u(0) \right|
\simeq 
\limsup_{r\to 0} 
\frac1{r^{k+1}}
\inf_{p\in\mathcal{P}_k} 
\left\| u - p \right\|_{\underline{L}^2(B_r)},
\end{equation*}
where $\mathcal{P}_k$ denotes the set of polynomials of degree at most~$k$ and and we use the notation $\| w \|_{\underline{L}^2(U)}:= \left( \fint_U |w|^2 \right)^{\frac12}$ to denote the volume-normalized~$L^2(U)$ norm. Thus the interior $C^{k,1}$ estimate for a harmonic function can be stated in the form: for any harmonic function $u$ in $B_R$ and any $r\in \left(0,\tfrac12R\right]$, 
\begin{equation} 
\label{e.Ck1harm}
\inf_{p\in \mathcal{P}_k}
\left\| u - p\right\|_{\underline{L}^2(B_r)}
\leq 
C \left( \frac{r}{R} \right)^{k+1}
\inf_{p\in \mathcal{P}_k}
\left\| u - p\right\|_{\underline{L}^2(B_R)}.
\end{equation}
Moreover, the infimum on the left side may be replaced by the set of \emph{harmonic} polynomials of degree at most~$k$. 

\smallskip

As we cannot expect a solution of~\eqref{e.pde} to have regularity beyond~$C^{0,1}$ in a classical (pointwise) sense, in order to make sense of a~$C^{k,1}$ estimate for the heterogeneous equation~\eqref{e.pde.lin} we need to replace the set of polynomials by a heterogeneous analogue. The classical Liouville theorem says that the set of harmonic functions which grow like $o(|x|^{k+1})$ is just the set of harmonic polynomials of degree at most~$k$. This suggests that we should use the (random) vector space
\begin{equation*} \label{}
\mathcal{A}_k:= 
\left\{ 
u\in H^1_{\mathrm{loc}}(\Rd)\,:\, -\nabla \cdot \a\nabla u = 0, \ \limsup_{r\to \infty} r^{k+1} \left\| u \right\|_{\underline{L}^2(B_r)} = 0 
\right\}. 
\end{equation*}
We think of these as~``$\a(x)$-harmonic polynomials.'' 
It turns out that one can prove that, ~$\P$--almost surely, this set is finite dimensional and has the same dimension as the set of at most $k$th order harmonic polynomials. In fact, one can match any~$\a(x)$-harmonic polynomial to an $\ahom$-harmonic polynomial in the highest degree, and vice versa. In close analogy to~\eqref{e.Ck1harm}, the statement of large-scale $C^{k,1}$ regularity is then as follows: there exists a minimal scale~$\X$ satisfying~\eqref{e.SI.X} such that, for any $R > 2\X$ and any solution $u\in H^1(B_R)$ of $-\nabla \cdot \a\nabla u =0$, we can find $\phi \in \A_k$ such that, for every $r\in \left[\X,\tfrac12 R \right]$, 
\begin{equation} 
\label{e.Ck1introd.lin}
\left\| u - \phi\right\|_{\underline{L}^2(B_r)}
\leq 
C \left( \frac{r}{R} \right)^{k+1}
\inf_{\phi \in \mathcal{A}_k}
\left\| u - \phi \right\|_{\underline{L}^2(B_R)}.
\end{equation}
See~\cite[Theorem 3.8]{AKMbook} for the full statement, which was first proved in the periodic setting by Avellaneda and Lin~\cite{AL4}. Subsequent versions of this result, which are based on the ideas of~\cite{AL1,AL4} in their more quantitative formulation given in~\cite{AS}, were proved in various works~\cite{GNO2,FO,AKM1}, with the full statement here given in~\cite{AKM,BGO}. 

\smallskip

In all of its various forms, higher regularity in stochastic homogenzations is based on the simple idea that solutions of the heterogeneous equation should be close to those of the homogenized equation, which should have much better regularity. In the case of the linear equation~\eqref{e.pde.lin}, it does not a large leap, technically or philosophically, to go from~\eqref{e.C01estintro} to~\eqref{e.Ck1introd.lin}.  
Indeed, to gain control over higher derivatives, one just needs to differentiate the equation (not in the microscopic parameters, of course, but in macroscopic ones) and, luckily, since the equation is linear, this does not change the equation. Roughly speaking, the idea is analogous to bootstrapping the regularity of a constant-coefficient, linear equation by differentiating it. Therefore the estimate~\eqref{e.Ck1introd.lin} is perhaps not too surprising once one has the large-scale~$C^{0,1}$ estimate in hand. 

\subsection{Summary of the results proved in this paper}
\label{ss.naive}

The situation is very different in the nonlinear case. When one differentiates the equation (again, in a macroscopic parameter) we get a new equation, namely the first-order linearized equation. If we want to apply a large-scale regularity result to this equation, we must first (quantitatively) homogenize it! Achieving higher-order regularity estimates requires repeatedly differentiating the equation, which lead to a hierarchy of linearized equations requiring homogenization estimates. 

\smallskip

Let us be a bit more explicit. The gradient of the homogenized Lagrangian~$\overline{L}$ is given by the well-known formula
\begin{align} 
\label{e.DpLbar.form}
D_p\overline{L}(p) 
&
= \E \left[ \int_{[0,1]^d} D_pL\left( p+ \nabla \phi_p(x),x\right) \,dx \right]
\\ & \notag
= \lim_{r\to \infty} \fint_{B_r} D_pL\left( p+ \nabla \phi_p(x),x\right) \,dx,
\end{align}
where~$\phi_p$ is the first-order corrector with slope~$p\in\Rd$, that is, it satisfies 
\begin{equation*} \label{}
\left\{ 
\begin{aligned}
& -\nabla \cdot D_pL(p+\nabla \phi_p(x),x) = 0 \quad \mbox{in}  \ \Rd, \\
&
\nabla \phi_p \quad \mbox{is $\Zd$--stationary,} \quad \E \left[ \int_{[0,1]^d} \nabla \phi_p(x) \,dx \right]=0. 
\end{aligned} 
\right.
\end{equation*}
The limit in the second line of~\eqref{e.DpLbar.form} is to be understood in a~$\P$--almost sure sense, and it is a consequence of the ergodic theorem, which states that macroscopic averages of stationary fields must converge to their expectations. 
The formula~\eqref{e.DpLbar.form} says that~$D_p\overline{L}(p)$ is the flux per unit volume of the first-order corrector with slope~$p\in\Rd$. It naturally arises when we homogenize the nonlinear equation. We can try to show that $\overline{L}\in C^2$ by formally differentiating~\eqref{e.DpLbar.form}, which leads to the expression
\begin{equation*} \label{}
D_p \partial_{p_i} \overline{L}(p) 
= 
\E \left[ \int_{[0,1]^d} D_p^2L\left( p+ \nabla \phi_p(x),x\right) \left(e_i + \nabla \left(\partial_{p_i}\phi_p(x) \right) \right) \,dx \right].
\end{equation*}
If we define the linearized coefficients around~$\ell_p + \phi_p$ by 
\begin{equation*} \label{}
\a_p (x) :=D_p^2L\left( p+ \nabla \phi_p(x),x\right)
\end{equation*}
and put $\psi^{(1)}_{p,e_i}:= \partial_{p_i}\phi_p(x)$, then we see that $\psi^{(1)}_{p,e_i}$ is the first-order corrector with slope $e_i$ of the linear equation with coefficients $\a_p$:
\begin{equation*} \label{}
\left\{ 
\begin{aligned}
& -\nabla \cdot \a_p\left( e_i + \nabla \psi^{(1)}_{p,e_i} \right) = 0 \quad \mbox{in}  \ \Rd, \\
&
\nabla \psi^{(1)}_{p,e_i} \quad \mbox{is $\Zd$--stationary,} \quad \E \left[ \int_{[0,1]^d} \nabla \psi_{p,e_i}(x) \,dx \right]=0. 
\end{aligned} 
\right.
\end{equation*}
We call $\psi^{(1)}_{p,e}$ a \emph{first-order linearized corrector}. 
Moreover, we have the formula 
\begin{equation*} \label{}
D_p \partial_{p_i} \overline{L}(p) 
= 
\E \left[ \int_{[0,1]^d} \a_p(x) \left(e_i + \nabla \psi_{p,e_i} (x) \right) \,dx \right] = \ahom_p e_i.
\end{equation*}
That is, ``linearization and homogenization commute'': the Hessian of $\overline{L}$ at~$p$ should be equal to the homogenized coefficients~$\ahom_p$ corresponding to the linear equation with coefficient field~$\a_p=D^2_pL(p+\nabla \phi_p(\cdot),\cdot)$. This reasoning is only formal, but since the right side of the above formula for $D^2_p\overline{L}$ is well-defined (and only needs qualitative homogenization), we should expect that it should be rather easy to confirm it rigorously. Moreover, while quantitative homogenization of the original nonlinear equation gives us a $C^{0,1}$ estimate, we should expect that quantitative homogenization of the linearized equation should give us a $C^{0,1}$ estimate for \emph{differences of solutions} and a large-scale $C^{1,1}$ estimate for solutions. This is indeed the case and was proved in our previous paper~\cite{AFK}, where we were motivated by the goal of obtaining this regularity estimate for differences of solutions in anticipation of its  important role in the proof of optimal quantitative homogenization estimates. Indeed, 
in the very recent preprint~\cite{FN}, Fischer and Neukamm showed that this estimate can be combined with spectral gap-type assumptions on the probability measure to obtain quantitative bounds on the first-order correctors which are optimal in the scaling of the error.

\smallskip

We may attempt to differentiate the formula for the homogenized Lagrangian a second time, with the ambition of obtaining a $C^3$ estimate for $\overline{L}$, a $C^{2,1}$ estimate for solutions and a higher-order improvement of our $C^{0,1}$ estimate for differences (which will be a~$C^{0,1}$ estimate for \emph{linearization errors}): we get 
\begin{align*} \label{}
D_p \partial_{p_i} \partial_{p_j}  \overline{L}(p) 
& 
= 
\E \left[ \int_{[0,1]^d}  \a_p (x)\nabla \psi^{(2)}_{p,e_i,e_j} (x) \,dx \right]
\\ & \quad 
+ \E \left[ \int_{[0,1]^d} D^3L(p+\nabla \phi_p(x),x) \left( e_i + \nabla \psi^{(1)}_{p,e_i}  \right) \left( e_j + \nabla \psi^{(1)}_{p,e_j}  \right)
 \,dx \right].
\end{align*}
If we define the vector field
\begin{equation*} \label{}
\mathbf{F}_{2,p,e_i,e_j} (x) 
:=
D^3L(p+\nabla \phi_p(x),x) \left( e_i + \nabla \psi^{(1)}_{p,e_i}  \right) \left( e_j + \nabla \psi^{(1)}_{p,e_j}  \right), 
\end{equation*}
then we see that $\psi^{(2)}_{p,e_i,e_j}$ is the first-order corrector with slope zero of the linear equation
\begin{equation} 
\label{e.secondorderlin}
-\nabla \cdot \a_p \nabla \psi^{(2)}_{p,e_i,e_j} = \nabla \cdot \mathbf{F}_{2,p,e_i,e_j} \quad \mbox{in} \ \Rd, 
\end{equation}
and the formula for the tensor $D^3_p\overline{L}$ becomes 
\begin{equation} 
\label{e.thirdorderdervLbar}
D_p \partial_{p_i} \partial_{p_j}  \overline{L}(p) 
=
\E \left[ \int_{[0,1]^d}  \a_p (x)\nabla \psi^{(2)}_{p,e_i,e_j} (x) +\mathbf{F}_{2,p,e_i,e_j} (x) \,dx \right] = \overline{\mathbf{F}}_{2,p,e_i,e_j}, 
\end{equation}
the corresponding homogenized coefficient. Unlike the case for the Hessian of $\overline{L}$, we should \emph{not} expect this formula to be valid under qualitative ergodic assumptions! Indeed, the qualitative homogenization of~\eqref{e.secondorderlin} and thus the validity of~\eqref{e.thirdorderdervLbar} requires that $\mathbf{F}_{2,p,e_i,e_j}$ belong to~$L^2$, in the sense that $\E \left[ \int_{[0,1]^d} \left| \mathbf{F}_{2,p,e_i,e_j} (x) \right|^2\,dx \right] < \infty$, and due to the product of two first-order correctors we only have $L^1$--type integrability\footnote{to be pedantic, we actually have $L^{1+\delta}$-type integrability for a tiny $\delta>0$ by the Meyers estimate, but this does not help.} for $ \mathbf{F}_{2,p,e_i,e_j}$. This is a serious problem which can only be fixed using the large-scale regularity theory for the first-order linearized equation, and a suitable bound on the minimal scale~$\X$, thereby obtaining a bound in $L^\infty(\cu_0)$ and hence $L^4(\cu_0)$ for~$\nabla \psi^{(1)}_{p,e_i}$, with at least a fourth moment in expectation. Note that this also requires some regularity of the Lagrangian $L$ on the smallest scale. 

\smallskip

If we differentiate the equation once more in an effort to prove that $\overline{L}\in C^4$, we will be faced with similar difficulties, this time with the more complicated vector field
\begin{align*} \label{}
\mathbf{F}^{(3)}_{p,e_i,e_j,e_k} 
(x) &
:= D^3L(p+\nabla \phi_p(x),x) \left( e_i + \nabla \psi^{(1)}_{p,e_i}  \right) \nabla \psi^{(2)}_{p,e_j,e_k} \\
& \qquad 
+ D^3L(p+\nabla \phi_p(x),x) \left( e_j + \nabla \psi^{(1)}_{p,e_j}  \right) \nabla \psi^{(2)}_{p,e_i,e_k} \\
& \qquad 
+D^3L(p+\nabla \phi_p(x),x) \left( e_k + \nabla \psi^{(1)}_{p,e_k}  \right) \nabla \psi^{(2)}_{p,e_i,e_j} \\
& \qquad 
+ D^4L(p+\nabla \phi_p(x),x)
\left( e_i+ \nabla \psi^{(1)}_{p,e_i}  \right)
\left( e_j + \nabla \psi^{(1)}_{p,e_j}  \right)
\left( e_k + \nabla \psi^{(1)}_{p,e_k}  \right).
\end{align*}
Notice that the last term of which has three factors of the first-order linearized correctors instead of two, and is thus ``even further'' from being obviously $L^2$ than was~$\mathbf{F}^{(2)}_{p,e_i,e_j}$. Homogenizing the third-order linearized equation will therefore require large-scale regularity estimates for both the first-order and second-order linearized equations, and one can now see the situation will get worse as the order increases beyond three. Moreover, proving quantitative homogenization for these equations will also require some smoothness of the homogenized coefficients associated to the lower-order equations, due to the needs for the homogenized solutions to be smooth in quantitative two-scale expansion arguments. 

\smallskip

This suggets a bootstrap argument for progressively and simultaneously obtaining (i) the smoothness of $\overline{L}$; (ii) the higher-order large-scale regularity of solutions (and solutions of linearized equations); and (iii) the homogenization of the higher-order linearized equations and commutation of homogenization and higher-order linearization. The point of this paper is to formalize this idea and thereby give a proof of ``Hilbert's 19th problem for homogenization.'' Here is a rough schematic of the argument, as discussed above, which comes in three distinct steps, discussed in more detail below:
\begin{itemize}

\item Homogenization \& large-scale $C^{0,1}$ regularity for the linearized equations up to order $\mathsf{N}$  $\implies$  $\overline{L} \in C^{2+\mathsf{N}}$. 

\item $\overline{L} \in C^{2+\mathsf{N}}$ and large-scale $C^{0,1}$ regularity for the linearized equations up to order $\mathsf{N}$ 
$\implies$ homogenization for the linearized equations up to order $\mathsf{N}+1$.

\item $\overline{L} \in C^{2+\mathsf{N}}$, large-scale $C^{0,1}$ regularity for the linearized equations up to order $\mathsf{N}$ and homogenization for the linearized equations up to order $\mathsf{N}+1$
$\implies$ large-scale $C^{0,1}$ regularity for the linearized equations up to order $\mathsf{N}+1$.

\end{itemize}

The three implications above are the focus of most of the paper and their proofs are given in Sections~\ref{s.regLbar}--\ref{s.reglinerrors}. 

\smallskip

Once this induction argument is completed, we consequently obtain a full $C^{k,1}$--type large scale regularity estimate for solutions of the original nonlinear equation, generalizing~\eqref{e.Ck1introd.lin}. The main question becomes what the replacement for $\mathcal{A}_k$ should be, that is, what the ``polynomials'' should be. We show that these are certain solutions of the system of linearized equations (linearized around a first-order corrector) exhibiting polynomial-type growth, which we classify by providing a Liouville-type result which is part of the statement of the theorem (see the discussion between the statements of Theorems~\ref{t.C11estimate} and Theorem~\ref{t.regularityhigher}, below, for a definition of these spaces, which are denoted by~$\mathsf{W}_n^p$). The resulting theorem we obtain, which is a version of the statement of Hilbert's 19th problem in the context of homogenization, provides a very precise description of the solutions of~\eqref{e.pde} in terms of the first-order correctors and the correctors of a hierarchy of linear equations.

\smallskip
 
We also obtain, as a corollary, the improvement of the scaling of linearization errors---which is a closely related to the regularity of solutions. To motivate this result, suppose we are given two solutions $u,v\in H^1(B_R)$ of~\eqref{e.pde} in a large ball ($R\gg 1$) which are close to each other in the sense that 
\begin{equation}
\left\| \nabla u - \nabla v \right\|_{L^2(B_R)}
\ll \left\| \nabla u \right\|_{L^2(B_R)}. 
\end{equation}
Suppose that we attempt to approximate the difference $u-v$ by the solution~$w\in H^1(B_{R/2})$ of the linearized problem 
\begin{equation*}
\left\{
\begin{aligned}
& -\nabla \cdot \left( D^2_pL(\nabla u,x) \nabla w \right) = 0 & \mbox{in} & \ B_R, \\
& w = v-u & \mbox{on} & \ \partial B_R. 
\end{aligned}
\right. 
\end{equation*}
Then we may ask the question of how small we should expect the first-order linearization error to be. The best answer that we have from deterministic elliptic regularity estimates is that there exists a small exponent $\delta(d,\Lambda)>0$ such that
\begin{equation}
\label{e.linerrorintro}
\frac{\left\| u - v - w\right\|_{L^2(B_{R})}}{\left\| u \right\|_{L^2(B_R)}}
\leq 
C\left( 
\frac{\left\| u - v \right\|_{L^2(B_{R})}}{\left\| u \right\|_{L^2(B_R)}}
\right)^{1+\delta}.  
\end{equation}
This can be easily proved for instance using the Meyers gradient~$L^{2+\delta}$ estimate, and it is sharp in the sense that it is not possible to do better than the very small exponent~$\delta$. We can say roughly that the space of solutions of~\eqref{e.pde} is a $C^{1,\delta}$ manifold, but no better. Of course, if~$L$ does not depend on~$x$, or if~$R$ is of order one and~$L$ is smooth in both variables~$(p,x)$, then one expects to have the estimate above with $\delta=1$ and to be able to prove more precise estimates using higher-order linearized equations. In fact, this is essentially a reformulation of the statement of Hilbert's 19th problem (indeed---see Appendix~\ref{a.constantcoeff}, where we give a proof of Hilbert's 19th problem in its classical formulation by following this line of reasoning).  

\smallskip

In this paper we also prove a large-scale version of the quadratic response to first-order linearization in the context of homogenization, which states that~\eqref{e.linerrorintro} holds with $\delta=1$ whenever~$R$ is larger than a random minimal scale. Moreover, we prove a full slate of higher-order versions of this result: see Corollary~\ref{c.linerrors}. These results roughly assert that, with probability one, the large-scale structure of solutions of~\eqref{e.pde} resembles that of a smooth manifold.

\smallskip

In the following two subsections, we state our assumptions and give the precise statements of the results discussed above.

\subsection{Assumptions and notation}
\label{ss.assumptions}
In this subsection, we state the standing assumptions in force throughout the paper. 

\smallskip

We fix following global parameters: the dimension~$d\in\N$ with $d\geq 2$, a constant~$\Lambda \in [1,\infty)$ measuring the ellipticity, an integer $\mathsf{N}\in\N$ with $\mathsf{N}\geq 1$ measuring the smoothness of the Lagrangian, and constants $\mathsf{M}_0, \mathsf{K}_0 \in [1,\infty)$. For short, we denote 
\begin{equation*}
\data:= (d,\Lambda,\mathsf{N},\mathsf{M}_0,\mathsf{K}_0).
\end{equation*}
This allows us to, for instance, denote constants~$C$ which depend on $(d,\Lambda,\mathsf{N},\mathsf{M}_0,\mathsf{K}_0)$ by simply $C(\data)$ instead of $C(d,\Lambda,\mathsf{N},\mathsf{M}_0,\mathsf{K}_0)$.

\smallskip

The probability space is the set~$\Omega$ of all Lagrangians~$L:\Rd \times \Rd \to \R$, written as a function of $(p,x)\in\Rd\times\Rd$, satisfying the following conditions:
\begin{enumerate}
\item[(L1)] $L$ is $2+\mathsf{N}$ times differentiable in the variable~$p$ and, for every $k\in \{2,\ldots,2+\mathsf{N}\}$, the function $D^{k}_pL$ is uniformly Lipschitz in both variables and satisfies
\begin{equation} \label{e.gradLbndk}
\left[ D^{k}_pL \right]_{C^{0,1}(\Rd \times\Rd)} 
\leq 
\mathsf{K}_0. 
\end{equation}
For $k=1$, we assume that, for $z \in \R^d$,
\begin{equation} \label{e.gradLbnd}
\left[ D_pL(z,\cdot) \right]_{C^{0,1}(\Rd)} 
\leq 
\mathsf{K}_0 (1+|z|). 
\end{equation}

\smallskip

\item[(L2)] $L$ is uniformly convex in the variable~$p$: for every $p\in\Rd$ and~$x\in\Rd$, 
\begin{equation*} \label{}
I_d \leq D^2_p L(p,x) \leq \Lambda I_d.
\end{equation*}

\smallskip

\item[(L3)] $D_pL(0,\cdot)$ is uniformly bounded: 
\begin{equation*}
\left\| D_pL(0,\cdot) \right\|_{L^\infty(\Rd)} \leq \mathsf{M}_0.
\end{equation*}
\end{enumerate} 
We define~$\Omega$ to be the set of all such Lagrangians~$L$:
\begin{equation*} \label{}
\Omega := \left\{ L \,:\,  \mbox{$L$ satisfies~(L1),  (L2) and~(L3)} \right\}.
\end{equation*}
Note that $\Omega$ depends on the fixed parameters~$(d,\Lambda,\mathsf{N},\mathsf{M_0},\mathsf{K}_0)$. It is endowed with the following family of~$\sigma$--algebras: for each Borel subset~$U \subseteq \Rd$, define
\begin{multline*} \label{}
\F(U):= \mbox{the $\sigma$--algebra generated by the family of random variables}\\
L \mapsto L(p,x), \quad (p,x) \in\Rd \times U. 
\end{multline*}
The largest of these is denoted by $\F:= \F(\Rd)$. 

\smallskip

We assume that the law of the ``canonical Lagrangian''~$L$ is a probability measure~$\P$ on $(\Omega,\F)$ satisfying the following two assumptions:

\begin{enumerate}

\item[(P1)] $\P$ has a unit range of dependence: for all Borel subsets $U,V\subseteq \Rd$ such that $\dist(U,V) \geq 1$,
\begin{equation*} \label{}
\mbox{$\F(U)$ and $\F(V)$ are $\P$--independent.}
\end{equation*}

\item[(P2)] $\P$ is stationary with respect to $\Zd$--translations: for every $z\in \Zd$ and $E\in \F$,
\begin{equation*} \label{}
\P \left[ E \right] = \P \left[ T_z E \right],
\end{equation*}
where the translation group $\{T_z\}_{z\in\Zd}$ acts on $\Omega$ by $(T_zL)(p,x) = L(p,x+z)$.
\end{enumerate}
The expectation with respect to~$\P$ is denoted by~$\E$. 

\smallskip

Since we will be often concerned with measuring the stretched exponential moments of the random variables we encounter, the following notation is convenient: for every $\sigma \in(0,\infty)$, $\theta>0$, and random variable~$X$ on $\Omega$, we write 
\begin{equation*}
X \leq \O_\sigma\left( \theta \right) 
\iff 
\E \left[ \exp\left( \left( \frac{X_+}{\theta} \right)^\sigma \right)  \right] \leq 2. 
\end{equation*}
This is essentially notation for an Orlicz norm on $(\Omega,\P)$. Some basic properties of this notation is given in~\cite[Appendix A]{AKMbook}.

\subsection{Statement of the main results}
\label{ss.assump}
We begin by introducing the higher-order linearized equations. These can be computed by hand, as we did for the second and third linearized equations in Section~\ref{ss.naive}, but it is convenient to work with more compact formulas. Observe that, by Taylor's formula with remainder, we have, for every $n\in\{ 0,\ldots,\mathsf{N}+1 \}$, 
\begin{equation*} 
\left| D_p L(p_0 + h,x) - \sum_{k=0}^{n} \frac1{k!} D_p^{k+1} L(p_0,x) h^{\otimes k}  \right| \leq \frac{C\mathsf{K}}{(n+1)!}   \left| h \right|^{n+1}.
\end{equation*}
Define, for $p,x,h_1,\ldots,h_{\mathsf{N}} \in\Rd$ and $t \in\R$, 
\begin{equation*}
\mathbf{G}(t,p,h_1,\ldots,h_{\mathsf{N}},x) := \sum_{k=2}^{\mathsf{N}+1} \frac1{k!} D_p^{k+1} L(p,x) \left( \sum_{j=1}^{\mathsf{N}} \frac{t^j}{j!} h_j \right)^{\otimes k}.
\end{equation*}
Also define, for each $m\in\{ 1,\ldots,\mathsf{N}+1\}$ and $p,x,h_1,\ldots,h_{m-1} \in \Rd$,
\begin{equation} \label{e.defFm}
\mathbf{F}_m (p,h_1,\ldots,h_{m-1},x)
:=
\left( \partial_t^m \mathbf{G}\right)\left(0,p,x,h_1,\ldots,h_{m-1},0,\ldots,0 \right).
\end{equation}
Observe that $\mathbf{F}_1\equiv 0$ by definition (that is, the right side of the first linearized equation is zero, as we have already seen). 

\smallskip

Our first main result concerns the regularity of the effective Lagrangian~$\overline{L}$ and states that it has essentially the same regularity in~$p$ as we  assumed for~$L$.

\begin{theorem}
[{Regularity of $\overline{L}$}]
\label{t.regularity.Lbar}
For every~$\beta\in (0,1)$, the effective Lagrangian $\overline{L}$ belongs to $C^{2+\mathsf{N},\beta}_{\mathrm{loc}}(\Rd)$ and, for every~$\mathsf{M}\in [1,\infty)$, there exists~$C(\beta,\mathsf{M},\data)<\infty$ such that 
\begin{equation*}
\left\| D^2 \overline{L} \right\|_{C^{\mathsf{N},\beta}(B_{\mathsf{M}})} 
\leq C. 
\end{equation*}
\end{theorem}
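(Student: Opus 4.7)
The plan is to derive explicit formulas for $D^{2+k}_p\overline{L}(p)$ in terms of the hierarchy of linearized correctors $\phi_p,\psi^{(1)}_{p,\cdot},\ldots,\psi^{(k)}_{p,\cdot,\ldots,\cdot}$ and then justify the differentiations one at a time, using the commutation of homogenization and linearization together with the large-scale $C^{0,1}$ regularity for the linearized equations (i.e., the other two prongs of the joint induction sketched in Section~\ref{ss.naive}). The starting point is the identity $D^2_p\overline{L}(p)=\ahom_p$, where $\ahom_p$ is the homogenized coefficient of the first-order linearized equation around the corrector $\phi_p$ with coefficient field $\a_p=D^2_pL(p+\nabla\phi_p,\cdot)$. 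Differentiating this identity in $p$ yields formally the formula $D_p\partial_{p_i}\partial_{p_j}\overline{L}(p)=\overline{\mathbf{F}}_{2,p,e_i,e_j}$, and iterating the formal differentiation $k$ more times produces an expression for $D^{2+k}_p\overline{L}(p)$ as $\overline{\mathbf{F}}_{k+1,p,\cdot}$, the homogenized right-hand side of the $(k+1)$-st linearized equation defined through~\eqref{e.defFm}.

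The first technical step is to show that these formulas are well-defined and correct at the level of expectations. This requires that each $\mathbf{F}_m$ belong to $L^2$ in the probabilistic sense, which in turn requires fourth-, sixth-, and higher-order moment bounds on the gradients of the linearized correctors of each order up to $m-1$. These bounds are not available from deterministic ellipticity theory but follow from the large-scale $C^{0,1}$ estimate for the $(m-1)$-st linearized equation together with stretched-exponential integrability of the minimal scale $\X$, exactly in the spirit of~\eqref{e.SI.X}. Once these integrability facts are in hand, the qualitative homogenization of the linearized equations—together with the commutation with linearization, both supplied by the induction—let us identify the expectation $\E\bigl[\int_{[0,1]^d} \a_p\nabla\psi^{(m)}_{p,\cdot}+\mathbf{F}_{m,p,\cdot}\bigr]$ with the $(2{+}m{-}1)$-st derivative of $\overline{L}$ at $p$, and to give a bound $\lvert D^{2+k}_p\overline{L}(p)\rvert\le C$ that is uniform on $B_{\mathsf{M}}$.

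To upgrade continuity in $p$ to Hölder continuity, I would study the stability of the formula $p\mapsto\overline{\mathbf{F}}_{k+1,p,\cdot}$ under perturbation of $p$. The key estimate is the following stability property of the correctors: for $p,p'\in B_{\mathsf{M}}$ and $m\le \mathsf{N}+1$,
\begin{equation*}
\E\!\left[\int_{[0,1]^d}\bigl|\nabla\psi^{(m)}_{p,\cdot}-\nabla\psi^{(m)}_{p',\cdot}\bigr|^2\right]
\le C|p-p'|^{2\beta},
\end{equation*}
which I would obtain by writing the difference as the solution of a linearized equation with a right-hand side controlled by lower-order corrector differences and by Lipschitz dependence of the coefficients $\a_p$ and of $D^{k}_pL$ on $p$ (assumption~(L1)). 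The $(2{+}k)$-th derivative of $\overline{L}$ is then seen to be $\beta$-Hölder in $p$ after propagating this estimate into the integrand defining $\overline{\mathbf{F}}_{k+1,p,\cdot}$ and applying the $L^q$ moment bounds from the previous step.

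The main obstacle is the \emph{quantitative} stability of the linearized correctors in $p$: the coefficient field $\a_p$ depends on $p$ only through the first-order corrector $\phi_p$, which itself depends on $p$ in a nontrivial way, and the derivative of $\phi_p$ in $p$ is precisely the first-order linearized corrector $\psi^{(1)}_{p,\cdot}$, whose gradient is only of class $L^{2+\delta}$ deterministically. Controlling the $L^2$-difference of $\nabla\psi^{(m)}_{p,\cdot}-\nabla\psi^{(m)}_{p',\cdot}$ therefore requires combining the moment bounds from the large-scale $C^{0,1}$ theory with energy estimates for the linear operator $-\nabla\cdot\a_p\nabla$ applied to the difference, and then propagating Hölder exponents carefully through the Leibniz-type expansion of the $\mathbf{F}_m$'s. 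Any $\beta<1$ can be attained because the Hölder exponent is inherited from the Lipschitz regularity of $D^{k}_pL$ in~(L1), together with the fact that the moment bounds on corrector gradients are of arbitrarily high order.
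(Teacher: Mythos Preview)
Your proposal is correct and follows essentially the same route as the paper. The paper's proof (Section~\ref{s.regLbar}, culminating in Proposition~\ref{e.Lbar.reg.qualitative}) carries out exactly the program you describe: it establishes the formula $D_p^{m+2}\overline{L}(p)h^{\otimes(m+1)}=\E\bigl[\int_{\cu_0}(\a_p\nabla\psi^{(m+1)}_{p,h}+\mathbf{f}^{(m+1)}_{p,h})\bigr]$ by inductive differentiation, uses the large-scale $C^{0,1}$ estimates from the induction hypothesis to obtain the required $L^q$ moment bounds on $\nabla\psi^{(m)}_{p,h}$ (Theorem~\ref{t.correctorestimates}), and then proves $C^{0,\beta}$ dependence of $D_p^{n+3}\overline{L}$ on $p$ via the stability estimate you anticipate (Lemma~\ref{l.continuityinp}). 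Two technical devices you leave implicit but the paper spells out are worth noting: the identity $\nabla\psi^{(m+1)}_{p,h}=h\cdot D_p\nabla\psi^{(m)}_{p,h}$ (Lemma~\ref{l.stupidpsi}), which makes the ``formal differentiation'' rigorous, and the polarization formula (Lemma~\ref{l.polarization}), which passes from the diagonal quantity $D_p^{m+2}\overline{L}(p)h^{\otimes(m+1)}$ to the full symmetric tensor.
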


In view of Theorem~\ref{t.regularity.Lbar}, we may introduce homogenized versions of the above functions. We define, for every~$p\in\Rd$, $\{h_i\}_{i=1}^{\mathsf{N}} \subseteq \R^d$ and $\delta \in\R$, 
\begin{equation*} 
\overline{\mathbf{G}}(t,p,h_1,\ldots,h_{\mathsf{N}}) := \sum_{k=2}^{\mathsf{N}+1} \frac1{k!} D_p^{k+1} \overline{L}(p) \left( \sum_{j=1}^{\mathsf{N}} \frac{t^j}{j!} h_j \right)^{\otimes k}
\end{equation*}
and then, for every $m\in\{ 1,\ldots,\mathsf{N}+1\}$ and $\{h_i\}_{i=1}^{m-1} \subseteq \R^d$,
\begin{equation}
\label{e.defbarFm}
\overline{\mathbf{F}}_m (p,h_1,\ldots,h_{m-1})
:=
\left( \partial_t^m \overline{\mathbf{G}}\right)\left(0,p,h_1,\ldots,h_{m-1},0,\ldots,0 \right).
\end{equation}
As above, we have that $\overline{\mathbf{F}}_1\equiv 0$ by definition.

\smallskip

In the next theorem, we present a statement concerning the commutability of homogenization and higher-order linearizations. It generalizes~\cite[Theorem 1.1]{AFK}, which proved the result in the case $\mathsf{N}=1$. 

\begin{theorem}[Homogenization of higher-order linearized equations]
\label{t.linearizehigher}
\emph{}\\
Let $n\in\{0,\ldots,\mathsf{N}\}$, $\delta\in \left(0,\tfrac12\right]$, $\mathsf{M}\in [1,\infty)$, and $U_1,\ldots,U_{n+1} \subseteq\Rd$ be a sequence of bounded Lipschitz domains satisfying
\begin{equation}
\label{e.Um.inclus}
\overline{U}_{m+1} \subseteq U_{m}, \quad \forall m\in\left\{ 1,\ldots, n \right\}.
\end{equation}
There exist~$\sigma(\data)>0$,~$\alpha\left(\{U_m\},\delta,\data\right)>0$,~$C\left(\{U_m\},\mathsf{M},\delta,\data\right)<\infty$, and a random variable $\X$ satisfying 
\begin{equation}
\label{e.X2}
\X = \O_\sigma \left( C \right)
\end{equation}
such that the following statement is valid. Let $\ep\in (0,1]$, $f \in W^{1,2+\delta}(U_0)$ be such that 
\begin{equation*}
\left\| \nabla f \right\|_{L^{2+\delta}(U_1)} \leq \mathsf{M},
\end{equation*}
and, for each $m\in\{1,\ldots, n+1\}$, fix $g_m\in W^{1,2+\delta}(U_m)$ and
let $u^\ep \in H^1(U_1)$ and the functions $w_1^\ep\in H^1(U_1),w_2^\ep\in H^1(U_2),\ldots,w_{n+1}^\ep \in H^1(U_{n+1})$ satisfy, for every $m\in\{1,\ldots,n+1\}$, the Dirichlet problems
\begin{equation}
\label{e.linearized}
\left\{ 
\begin{aligned}
& -\nabla \cdot  \left( D_pL\left( \nabla u^\ep,\tfrac x\ep \right) \right) = 0 & \mbox{in} & \ U_1, \\
& u^\ep = f,  & \mbox{on} & \ \partial U_1, \\
& -\nabla \cdot  \left( D^2_pL\left( \nabla u^\ep,\tfrac x\ep \right) \nabla w^\ep_m \right) = \nabla \cdot \left( \mathbf{F}_m(\nabla u^\ep,\nabla w^\ep_1,\ldots,\nabla w^\ep_{m-1},\tfrac x\ep)\right) & \mbox{in} & \ U_m,  \\
& w_m^\ep = g_m & \mbox{on} & \ \partial U_m.
\end{aligned} 
\right. 
\end{equation}
Finally, let $\bar{u} \in H^1(U_1)$ and, for every $m\in \{1,\ldots,n+1\}$, the function $\bar{w}_{m}\in H^1(U_m)$ satisfy the homogenized problems 
\begin{equation}
\label{e.homogenizedlinearized}
\left\{ 
\begin{aligned}
& -\nabla \cdot D\overline{L}\left( \nabla \bar{u} \right) = 0 & \mbox{in} & \ U_1, \\
& \bar{u} = f & \mbox{on} & \ \partial U_1,\\
& -\nabla \cdot \left( D^2\overline{L}\left( \nabla \bar{u} \right)  \nabla \bar{w}_m \right) = \nabla\cdot \left( \overline{\mathbf{F}}_m\left(\nabla\bar{u},\nabla \bar{w}_1,\ldots,\nabla \bar{w}_{m-1}\right) \right)& \mbox{in} & \ U_m, \\
& \bar{w}_m = g_m & \mbox{on} & \ \partial U_m.
\end{aligned} 
\right. 
\end{equation}
Then, for every $m\in \{1,\ldots,n+1\}$, we have the estimate
\begin{equation}
\label{e.homogenization.estimates}
\left\| \nabla w_m^\ep - \nabla \bar{w}_m \right\|_{H^{-1}(U_m)} 
\leq
\X \ep^{\alpha} 
\sum_{j=1}^m \left\|  \nabla g_{j} \right\|_{{L}^{2+\delta}(U_{j})}^{\frac{m}{j}}.
\end{equation}
\end{theorem}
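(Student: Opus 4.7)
The plan is induction on $m \in \{1,\ldots,n+1\}$. The base case $m=1$ is essentially the main theorem of~\cite{AFK} (homogenization of the first-order linearized equation), adapted in a routine way to accommodate Dirichlet data $g_1$ via a standard boundary-layer correction. For the inductive step at order $m \geq 2$, we assume~\eqref{e.homogenization.estimates} for all $m'<m$, the large-scale $C^{0,1}$-type regularity for the linearized equations up to order $m-1$ (the output of Section~\ref{s.reglinerrors}), and the regularity $\overline{L}\in C^{2+m-1,\beta}$ supplied by Theorem~\ref{t.regularity.Lbar}.

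We would first promote the homogenized solutions $\bar u,\bar w_1,\ldots,\bar w_m$ to $C^{1,\beta}_{\mathrm{loc}}(U_m)$ by interior Schauder estimates applied successively along the nested chain~\eqref{e.Um.inclus}, using that $D^2\overline{L}$ is smooth and that each $\overline{\mathbf{F}}_j$ depends polynomially on previously-computed gradients. The bounds scale like $\sum_{j=1}^{m}\|\nabla g_j\|_{L^{2+\delta}(U_j)}^{m/j}$, in accord with the weighted homogeneity of the $\overline{\mathbf{F}}_j$'s: assigning weight $j$ to the variable $h_j$ in the Taylor series $\mathbf{G}$, the function $\overline{\mathbf{F}}_m$ has total weight $m$.

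Next we construct a two-scale expansion
\begin{equation*}
\tilde{w}_m^{\ep}(x) := \bar{w}_m(x) + \ep\, \eta_\ep(x)\, \Psi_m\!\left(\tfrac{x}{\ep};\, \nabla \bar u(x),\nabla \bar w_1(x),\ldots,\nabla \bar w_m(x)\right),
\end{equation*}
where $\Psi_m$ packages the first- and higher-order linearized correctors associated to the coefficient field $\a_{\nabla\bar u(x)}(y):=D_p^2L(\nabla\bar u(x)+\nabla\phi_{\nabla\bar u(x)}(y),y)$ together with the source determined by $\mathbf{F}_m$; the cutoff $\eta_\ep$ is supported $\ep^{1-\gamma}$ away from $\partial U_m$ so that $\tilde{w}_m^{\ep}=g_m$ on $\partial U_m$. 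Testing $w_m^{\ep}-\tilde{w}_m^{\ep}$ against the uniformly elliptic operator $-\nabla\cdot\bigl(D_p^2L(\nabla u^{\ep},\tfrac{x}{\ep})\nabla\,\cdot\,\bigr)$ and invoking the corrector equations, the residual splits into: (i) a perturbation of $D_p^2L$ from replacing $\nabla u^{\ep}$ by $\nabla\bar u+\nabla\phi_{\nabla\bar u}(\tfrac{\cdot}{\ep})$, controlled by the base-case homogenization estimate; (ii) the replacement of $\nabla w_j^{\ep}$ ($j<m$) by its two-scale expansion, handled by the inductive hypothesis; (iii) the convergence of spatial averages of $\mathbf{F}_m(\nabla u^\ep,\nabla w_1^\ep,\ldots,\nabla w_{m-1}^\ep,\tfrac{\cdot}{\ep})$ to $\overline{\mathbf{F}}_m$, a quantitative ergodic estimate for stationary fields with $\O_\sigma$-type tails; and (iv) boundary-layer errors of order $\ep^{\gamma/2}$.

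The main obstacle is the $L^2$ control of the oscillatory source $\mathbf{F}_m(\nabla u^\ep,\nabla w_1^\ep,\ldots,\nabla w_{m-1}^\ep,\tfrac{\cdot}{\ep})$, which contains products of the gradients $\{\nabla w_j^{\ep}\}_{j<m}$ of total weight $m$. As highlighted in Subsection~\ref{ss.naive}, these products are \emph{a priori} only in $L^{1+\delta}$ under Meyers-type bounds, which is insufficient. This is precisely where the inductive large-scale $C^{0,1}$ regularity for the lower-order equations becomes essential: it upgrades each $\nabla w_j^{\ep}$ from $L^{2+\delta}$ to $L^{\infty}$ on cubes of size $\X$, with $\X$ possessing stretched exponential tails~\eqref{e.X2}, so that products of up to $m$ such factors land in $L^{2+\delta}$ with moments controlled by the $\O_\sigma$-type bound. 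With an admissible source in hand, the energy inequality for the uniformly elliptic operator above, together with a duality argument, yields the $H^{-1}$ estimate~\eqref{e.homogenization.estimates}, the exponents $m/j$ coming directly from the weighted-polynomial structure of $\mathbf{G}$.
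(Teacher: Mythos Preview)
Your inductive architecture and the role you assign to the lower-order large-scale $C^{0,1}$ regularity (to place the source $\mathbf{F}_m$ in $L^{2+\delta}$, with the weighted exponents $m/j$ coming from the polynomial structure of $\mathbf{G}$) are correct and match the paper. The implementation, however, is genuinely different. The paper does \emph{not} build a two-scale ansatz from the global correctors $\phi_p,\psi^{(j)}_{p,h}$. Instead (Section~\ref{s.homogenization}) it first replaces the non-stationary coefficients $D_p^2L(\nabla u^\ep,\tfrac x\ep)$ and $\mathbf{F}_m(\nabla u^\ep,\nabla w_1^\ep,\ldots,\tfrac x\ep)$ by \emph{locally stationary, finite-range} approximations $\a_p^{(k)},\mathbf{F}^{(k)}_{m,\Theta}$ built from local Dirichlet problems on triadic cubes of a mesoscopic size $3^k$ (Subsection~\ref{ss.stat}). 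This buys stationarity and finite range, so that the quantitative homogenization results of~\cite{AKMbook} apply as a black box (Subsection~\ref{ss.homogapp}); a two-scale expansion with these \emph{local} solutions on a second mesoscale $3^l\gg 3^k$ then closes the argument, with the two scales balanced a posteriori (Subsection~\ref{ss.meso}). Your direct approach with global correctors avoids the two-mesoscale machinery but must instead confront the lack of finite range of $\phi_p,\psi^{(j)}_{p,h}$ when running the quantitative ergodic estimate in your item~(iii).

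There is also a genuine gap in item~(i). You say the perturbation from replacing $\nabla u^\ep$ by $\nabla\bar u+\nabla\phi_{\nabla\bar u}(\tfrac{\cdot}{\ep})$ is ``controlled by the base-case homogenization estimate'', but the base case yields only $\|u^\ep-\bar u\|_{L^2}$ or $\|\nabla u^\ep-\nabla\bar u\|_{H^{-1}}$ small. To control $\|\a^\ep-\tilde\a^\ep\|_{L^q}$ (and likewise the swap in item~(ii)) you need $\nabla u^\ep$ close to the corrected field in an $L^q$ sense on the whole of $U_m$, which is strictly stronger and does not follow from $H^{-1}$ convergence. In the paper this is obtained through Lemma~\ref{l.diff.linearizedsystem}: one compares $u^\ep$ and each $w_j^\ep$ to the local Dirichlet solutions on mesoscopic cubes, using the assumed validity of Theorems~\ref{t.linearizehigher} and~\ref{t.regularity.linerrors} at the previous order to pass from $L^2$ closeness of the functions to $L^q$ closeness of their gradients (see Lemma~\ref{l.coeffclose}). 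Without an analogue of this gradient-level comparison, the coefficient-swap residual in your scheme is not small and the energy argument does not close.
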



Observe that, due to the assumed regularity of $L$ in the spatial variable and the Schauder estimates, the vector fields $\mathbf{F}_m(\nabla u^\ep,\nabla w^\ep_1,\ldots,\nabla w^\ep_{m-1},\tfrac x\ep)$ on the right side of the equations for $w_m$ in~\eqref{e.linearized} belong to $L^\infty(U_m)$. In particular, they belong to $L^2(U_m)$ and therefore the Dirichlet problems in~\eqref{e.linearized} are well-posed in the sense that the solutions~$w_m$ belong to $H^1(U_m)$. Of course, this regularity given by the application of the Schauder estimate depends on~$\ep$ (and indeed the constants blow up like a large power of $\ep^{-1}$) and therefore this remark is not very useful as a quantitative statement. To prove the homogenization result for the $m$th linearized equation, we will need to possess much better bounds on these vector fields, which amounts to better regularity on the solutions~$\nabla u, \nabla w_1,\ldots,\nabla w_{m-1}$. 

\smallskip

This is the reason we must prove Theorems~\ref{t.regularity.Lbar} and~\ref{t.linearizehigher} at the same time (in an induction on the order $m$ of the linearized equation and of the regularity of $D^2\overline{L}$) as the following result on the large-scale regularity of solutions of the linearized equations and of the linearization errors. Its statement is a generalization of the large-scale~$C^{0,1}$ estimates for linearized equation and \emph{differences} of solutions proved in~\cite{AFK}. 

\smallskip

As mentioned above, throughout the paper we use the following notation for volume-normalized $L^p$ norms: for each $p\in [1,\infty)$, $U \subseteq\Rd$ with $|U|<\infty$ and $f\in L^p(U)$,
\begin{equation*}
\left\| f \right\|_{\underline{L}^p(U)}:=
\left( \fint_U |f|^p\,dx \right)^{\frac1p} 
=
\left| U \right|^{-\frac1p} \left\| f \right\|_{L^p(U)}. 
\end{equation*}

\begin{theorem}[Large-scale $C^{0,1}$ estimates for the linearized equations]
\label{t.regularity.linerrors}
\emph{}\\
Let $n\in\{0,\ldots,\mathsf{N}\}$, $q\in [2,\infty)$, and $\mathsf{M}\in [1,\infty)$. Then there exist~$\sigma(q,\data)>0$, a constant~$C(q,\mathsf{M},\data) <\infty$ and a random variable~$\X$ satisfying~$\X \leq \O_{\sigma}\left( C \right)$ such that the following statement is valid. 
For $R \in \left[ 2\X,\infty\right)$ and $u,v,w_1,\ldots,w_{n+1}\in H^1(B_R)$ satisfying, for every $m\in\{1,\ldots,n+1\}$,
\begin{equation*}
\left\{
\begin{aligned}
&
\left\| \nabla u \right\|_{\underline{L}^2(B_R)} \vee 
\left\| \nabla v \right\|_{\underline{L}^2(B_R)} \leq \mathsf{M}
\\ & 
-\nabla \cdot \left( D_pL(\nabla u,x) \right) = 0
\quad \mbox{and} \quad -\nabla \cdot \left( D_pL(\nabla v,x) \right) = 0
\quad \mbox{in} \ B_R,\\
& 
-\nabla \cdot  \left( D^2_pL\left( \nabla u,x \right) \nabla w_m \right) = \nabla \cdot \left( \mathbf{F}_m(\nabla u,\nabla w_1,\ldots,\nabla w_{m-1},x)\right) \quad \mbox{in}  \ B_R,
\end{aligned}
\right.
\end{equation*}
and defining, for~$m\in \{0,\ldots,n\}$, the $m$th order linearization error $\xi_m \in H^1(B_R)$ by
\begin{equation*}
\xi_m:= v - u - \sum_{k=1}^m  \frac1{k!} w_k,
\end{equation*}
then we have, for every $r \in \left[ \X , \tfrac 12 R \right]$ and $m\in\{0,\ldots,n\}$, the estimates
\begin{align}
\label{e.C01linsols}
\left\| \nabla w_{m+1}   \right\|_{\underline{L}^q(B_r)}
& 
\leq 
C\sum_{i=1}^{m+1} 
\left( \frac1R \left\|  w_{i} - \left( w_{i} \right)_{B_R}   \right\|_{\underline{L}^2(B_R)} 
\right)^{\frac{m+1}i}
\end{align}
and
\begin{multline}
\label{e.C01linerror}
\left\| \nabla \xi_{m}   \right\|_{\underline{L}^{q} (B_r)} 
 \\
\leq
C \sum_{i=0}^{m}  \left( \frac{1}{R}\left\| \xi_{i} -  \left( \xi_{i} \right)_{B_{R}} \right\|_{\underline{L}^2(B_R)} \right)^{\frac{m+1}{i+1}} 
+  C \sum_{i=1}^{m}   \left( \frac1R \left\|  w_{i} - \left( w_{i} \right)_{B_R}   \right\|_{\underline{L}^2(B_R)}\right)^{\frac {m+1}{i}} 
.
\end{multline}
\end{theorem}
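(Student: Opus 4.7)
The plan is to prove Theorem~\ref{t.regularity.linerrors} by an induction on $m \in \{0, \ldots, n\}$ that runs in parallel with the inductions of Theorems~\ref{t.regularity.Lbar} and~\ref{t.linearizehigher}. The base case $m = 0$ amounts to the large-scale $C^{0,1}$ estimate for the first linearized solution $w_1$ and for the zeroth linearization error $\xi_0 = v - u$, both of which are the subject of our previous paper~\cite{AFK}. At the inductive step we assume~\eqref{e.C01linsols} and~\eqref{e.C01linerror} for all orders strictly below $m+1$, together with Theorems~\ref{t.regularity.Lbar} and~\ref{t.linearizehigher} up to order $m+1$. A key first observation is that the lower-order estimates promote the $\underline L^2$ control of $\nabla w_1, \ldots, \nabla w_m$ to $\underline L^q$ control for any $q < \infty$, so that the source term $\mathbf F_{m+1}(\nabla u, \nabla w_1, \ldots, \nabla w_m, x)$---being a polynomial in these gradients with coefficients bounded by $\mathsf K_0$---lies in $L^{2+\delta}$ on sufficiently large sub-balls. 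The Taylor remainder driving the equation for $\xi_m$ is controlled in the same way.

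The core of the inductive step is a Campanato-type excess-decay argument for $w_{m+1}$. Fix $R \geq 2\X$ and consider an intermediate scale $s \in [\X, R/2]$. On $B_s$ we solve the homogenized Dirichlet problem for $\bar w_{m+1}^{(s)}$ with boundary data matching $w_{m+1}$, using Theorem~\ref{t.regularity.Lbar} to ensure $\overline{\mathbf F}_{m+1}$ is well-defined and the homogenized coefficients are $C^{\mathsf N, \beta}$. After rescaling $B_s$ to the unit ball, Theorem~\ref{t.linearizehigher} yields a quantitative homogenization estimate of the form
\begin{equation*}
\bigl\| \nabla w_{m+1} - \nabla \bar w_{m+1}^{(s)} \bigr\|_{H^{-1}(B_s)}
\leq C \X s^{-\alpha} \sum_{j=1}^{m+1} \bigl\| \nabla w_j \bigr\|_{\underline L^{2+\delta}(B_s)}^{(m+1)/j},
\end{equation*}
which is precisely the mixed-power data appearing on the right-hand side of~\eqref{e.C01linsols}. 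Since $\bar w_{m+1}^{(s)}$ solves a constant-coefficient linear equation with smooth source, interior $C^{1,\alpha}$ estimates govern the decay of its affine excess at scale $\theta s$. Iterating the resulting one-step excess-decay dyadically from scale $R$ down to scale $r$ gives~\eqref{e.C01linsols}. The exponents $\tfrac{m+1}{i}$ reflect the scale invariance $(w_1, w_2, \ldots, w_{m+1}) \mapsto (\lambda w_1, \lambda^2 w_2, \ldots, \lambda^{m+1} w_{m+1})$ of the linearized hierarchy, which forces each lower-order $\underline L^2$ term to enter the final bound raised to the reciprocal of its own order, with total degree $m+1$.

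The linearization error $\xi_m$ satisfies an equation of the form
\begin{equation*}
-\nabla \cdot \bigl( D_p^2 L(\nabla u, x)\, \nabla \xi_m \bigr) = \nabla \cdot R_m,
\end{equation*}
where $R_m$ is a Taylor-type remainder expressible as a polynomial combination of $\nabla \xi_i$ ($i < m$) and $\nabla w_i$ ($1 \leq i \leq m$), with coefficients involving $D_p^k L$ for $k \leq m + 2$; the order of smoothness $\mathsf N$ enters at exactly this point. The lower-order induction hypothesis supplies $\underline L^q$ control on every quantity on the right-hand side, and exactly the same Campanato iteration---with $\xi_m$ compared to the corresponding homogenized object and the excess measured relative to affine functions---then yields~\eqref{e.C01linerror}. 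The mixed exponents $\tfrac{m+1}{i+1}$ and $\tfrac{m+1}{i}$ reflect that $\xi_i$ already carries $i+1$ orders of linearization beyond the zeroth, whereas $w_i$ carries only $i$.

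The hardest part, beyond the three-way coupling with Theorems~\ref{t.regularity.Lbar} and~\ref{t.linearizehigher}, is the bookkeeping required to propagate the correct homogeneities through the Campanato iteration. One must simultaneously (i) extract enough $\underline L^q$ integrability for the lower-order quantities to place $\mathbf F_{m+1}$ into the class required by Theorem~\ref{t.linearizehigher}; (ii) match Dirichlet data between $w_{m+1}$ and the auxiliary homogenized problems on $B_s$ without incurring uncontrollable boundary-layer losses, which will likely require a cutoff together with an averaging over a range of radii; and (iii) ensure that contributions of different scales and homogeneities do not conflate when summed dyadically. Scale invariance of the hierarchy is the organizing principle that forces the precise multi-exponent form appearing in~\eqref{e.C01linsols}--\eqref{e.C01linerror}.
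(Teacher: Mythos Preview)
Your treatment of $w_{m+1}$ matches the paper: harmonic approximation via the homogenized Dirichlet problem on a sub-ball (Lemma~\ref{l.smoothapprox.w}), then a Campanato excess-decay iteration (Proposition~\ref{p.wLip}). The paper does not use a cutoff or averaging over radii for the boundary matching; it simply solves the homogenized system on nested balls with the Dirichlet trace of the heterogeneous solutions and invokes Theorem~\ref{t.linearizehigher} directly.

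There are two gaps. First, for $\xi_m$ you propose ``the same Campanato iteration, with $\xi_m$ compared to the corresponding homogenized object,'' but Theorem~\ref{t.linearizehigher} furnishes no homogenized $\bar\xi_m$, and the paper does not construct one. Instead (Proposition~\ref{p.Lipxi}) it compares $\xi_m$, on each sub-ball $B_{r_j}$, to the solution $w_{m+1,j}$ of the \emph{heterogeneous} $(m{+}1)$th linearized equation with boundary data $\xi_m$ on $\partial B_{r_j}$; the difference is controlled by the Taylor remainder $\mathbf{E}_m$ of Lemma~\ref{l.lineq}, and the affine-excess decay of $w_{m+1,j}$ is already available from the $w_{m+1}$ step via Lemma~\ref{l.C1alphanew}. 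This reduction requires a case dichotomy, since the comparison error behaves like a super-linear power of the data and is negligible only when that data is already small; in the opposite regime one writes $\xi_m = \xi_0 - \sum_{i\le m} w_i/i!$ and uses the previously established affine-excess decay of $\xi_0$ and of each $w_i$. Second, you do not explain the passage from $\underline L^2$ to $\underline L^q$ for general $q$. The paper carries out the excess decay in $\underline L^2$ and then upgrades separately (Sections~\ref{s.spatintw} and~\ref{s.spatintxi}) by combining small-scale deterministic Schauder/Calder\'on--Zygmund bounds---this is where the microscopic regularity hypothesis (L1) is actually used---with a covering and large-deviations argument on mesoscopic cubes.
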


The main interest in the above theorem is the case of the exponent~$q=2$. However, we must consider arbitrarily large exponents $q\in [2,\infty)$ in order for the induction argument to work. In particular, in order to show that Theorem~\ref{t.regularity.linerrors} for some $n$ implies Theorem~\ref{t.linearizehigher} for $n+1$, we need to consider potentially very large~$q$ (depending on $n$). 

As mentioned above,  Theorems~\ref{t.regularity.Lbar},~\ref{t.linearizehigher} and~\ref{t.regularity.linerrors} are proved together in an induction argument. Each of the theorems has already been proved in the case~$\mathsf{N}=0$ and~$q=2$ in our previous paper~\cite{AFK}. The integrability in Theorem~\ref{t.regularity.linerrors} is upgraded to $q \in (2,\infty)$ in Propositions~\ref{p.w1higher} and~\ref{p.Lipxi0} below for $w_1$ and $\xi_0$, respectively. These serve as the base case of the induction. The main induction step is comprised of the following three implications: 
\begin{itemize}

\item \emph{Regularity of~$\overline{L}$} (Section~\ref{s.regLbar}). We show that if Theorems~\ref{t.regularity.Lbar},~\ref{t.linearizehigher} and~\ref{t.regularity.linerrors} are valid for some $n\in \{0,\ldots,\mathsf{N}-1\}$, then Theorems~\ref{t.regularity.Lbar} is valid for $n+1$. The argument essentially consists of differentiating the corrector equation for the~$n$th linearized equation in the parameter~$p$. However, the reader should not be misled into expecting a simple argument based on the implicit function theorem. Due to the lack of sufficient spatial integrability of the vector fields~$\mathbf{F}_{m}$, it is necessary to use the large-scale regularity theory (i.e., the assumed validity of Theorem~\ref{t.regularity.linerrors} for~$n$) to complete the argument. 

\smallskip

\item \emph{Homogenization of higher-order linearized equations} (Section~\ref{s.homogenization}). We argue, for $n\in \{0,\ldots,\mathsf{N}-1\}$, that if Theorem~\ref{t.regularity.Lbar} is valid for~$n+1$ and Theorems~\ref{t.linearizehigher} and~\ref{t.regularity.linerrors} are valid for~$n$, then Theorems~\ref{t.linearizehigher} is valid for~$n+1$. The regularity of~$\overline{L}$ allows us to write down the homogenized equation, while the homogenization and regularity estimates for the previous linearized equations allow us to \emph{localize} the heterogeneous equation; that is, approximate it with another equation which has a finite range of dependence and bounded coefficients. This allows us to apply homogenization estimates from~\cite{AKMbook}. 

\smallskip

\item \emph{Large-scale ~$C^{0,1}$ regularity of linearized solutions and linearization errors}
(Sections~\ref{s.reglineqs} and~\ref{s.reglinerrors}). We argue, for $n\in \{0,\ldots,\mathsf{N}-1\}$, that if Theorems~\ref{t.regularity.Lbar} and~\ref{t.linearizehigher} are valid for~$n+1$ and Theorem~\ref{t.regularity.linerrors} is valid for~$n$, then we may conclude that Theorem~\ref{t.regularity.linerrors} is also valid for~$n+1$. Here we use the method introduced in~\cite{AS} of applying a quantitative excess decay iteration, based on the ``harmonic'' approximation provided by the quantitative homogenization statement. This estimate controls the regularity of the $w_m$'s on ``large'' scales (i.e., larger than a multiple of the microscopic scale). To obtain $L^q$--type integrability for $\nabla w_m$, it is also necessary to control the small scales, and for this we apply deterministic Calder\'on-Zygmund-type estimates (this is our reason for assuming~$L$ possesses some small-sacle spatial regularity). The estimates for the linearization errors~$\xi_{m-1}$ are then obtained as a consequence by comparing them to~$w_m$. 

\end{itemize}

From a high-level point of view, the induction argument summarized above resembles the resolution of Hilbert's 19th problem on the regularity of minimizers of integral functionals with uniformly convex and smooth integrands. The previous three theorems allow us to prove the next two results, which can be considered as resolutions of Hilbert's 19th problem in the context of homogenization. 

\smallskip

The first is the following result on precision of the higher-order linearization approximations which matches the one we have in the constant-coefficient case, as discussed near the end of Subsection~\ref{ss.naive}. 

\begin{corollary}
[Large-scale estimates of linearization errors]
\label{c.linerrors}
Fix $n\in\{0,\ldots,\mathsf{N} \}$, $\mathsf{M}\in [1,\infty)$ and let $U_0,U_1,\ldots,U_{n} \subseteq\Rd$ be a sequence of bounded Lipschitz domains satisfying
\begin{equation}
\label{e.Um.inclus2}
\overline{U}_{m+1} \subseteq U_{m}, \quad \forall m\in\left\{ 1,\ldots, n-1 \right\}.
\end{equation}
There exist constants~$\sigma(\data) \in \left(0,\tfrac12 \right]$,~$C(\{U_m\},\mathsf{M},\data)<\infty$ and a random variable~$\X$ satisfying 
\begin{equation*}
\X = \O_{\sigma} \left( C \right)
\end{equation*}
such that the following statement is valid.
Let~$r \in \left[ \X,\infty \right)$, $n\in\{1,\ldots,\mathsf{N}\}$ and $u,v\in H^1(rU_0)$ satisfy
\begin{equation*}
\left\{
\begin{aligned}
& -\nabla \cdot \left( D_pL(\nabla u,x) \right) = 0
\quad \mbox{and} \quad -\nabla \cdot \left( D_pL(\nabla v,x) \right) = 0
\quad \mbox{in}  \ rU_0,\\
&
\left\| \nabla u \right\|_{\underline{L}^{2}(rU_0)} \vee 
\left\| \nabla v \right\|_{\underline{L}^{2}(rU_0)} \leq \mathsf{M},
\end{aligned}
\right.
\end{equation*}
and recursively define $w_m \in H^1(rU_m)$, for every~$m\in\{1,\ldots,n\}$, to be the solution of the  Dirichlet problem 
\begin{equation}
\label{e.linearized.cor}
\left\{ 
\begin{aligned}
& -\nabla \cdot  \left( D^2_pL\left( \nabla u,x \right) \nabla w_m \right) = \nabla \cdot \left( \mathbf{F}_m(\nabla u,\nabla w_1,\ldots,\nabla w_{m-1},x)\right) & \mbox{in} & \ rU_m, \\
& w_m = 
v - u - \sum_{k=1}^{m-1} \frac1{k!} w_k
& \mbox{on} & \ r\partial U_m.
\end{aligned} 
\right. 
\end{equation}
Then, for every $m\in\{1,\ldots,n\}$,
\begin{equation}
\label{e.homogenization.estimates.cor}
\left\| 
\nabla v - \nabla \left( u + \sum_{k=1}^m \frac1{k!} w_k \right)
 \right\|_{\underline{L}^2(rU_{m})} 
\leq 
C \left( \left\| \nabla u - \nabla v \right\|_{\underline{L}^{2}(rU_0)} \right)^{m+1}.  
\end{equation}
\end{corollary}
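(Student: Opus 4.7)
I argue by simultaneous induction on $m \in \{1,\ldots,n\}$, proving the two estimates
$$\|\nabla w_m\|_{\underline{L}^2(rU_m)} \leq C e^m \quad\text{and}\quad \|\nabla \xi_m\|_{\underline{L}^2(rU_m)} \leq C e^{m+1},$$
where $e := \|\nabla u - \nabla v\|_{\underline{L}^2(rU_0)}$ and $\xi_m := v - u - \sum_{k=1}^m \tfrac{1}{k!} w_k$, with $C$ depending on $(\{U_m\}, \mathsf{M}, \data)$. The base case $m=0$ is trivial since $\xi_0 = v-u$. For the inductive step, the key algebraic observation is that $\xi_m$ satisfies the linear equation
$$-\nabla \cdot \bigl( D_p^2 L(\nabla u, x) \nabla \xi_m \bigr) = \nabla \cdot \mathcal{R}_m \quad\text{in } rU_m,$$
obtained by Taylor-expanding $D_p L(\nabla v, x)$ around $\nabla u$ via the decomposition $\nabla v - \nabla u = \sum_{k=1}^m \tfrac{1}{k!}\nabla w_k + \nabla \xi_m$ and cancelling the terms corresponding to the equations~\eqref{e.linearized.cor} satisfied by $w_1,\ldots,w_m$. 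Here $\mathcal{R}_m$ is a polynomial in $\nabla w_1,\ldots,\nabla w_m,\nabla \xi_m$ whose every monomial has ``weight'' at least $m+1$, where $\nabla w_j$ carries weight $j$ and $\nabla \xi_m$ carries weight $m+1$; its coefficients are derivatives of $L$ of order $\leq \mathsf{N}+2$ evaluated at $(\nabla u, x)$, hence bounded by $C(\mathsf{K}_0,\mathsf{N})$.

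The central task is to bound $\|\mathcal{R}_m\|_{\underline{L}^2(rU_m)} \leq C e^{m+1}$. A naive use of H\"older together with only the inductive $\underline{L}^2$ bounds fails, because products of the form $\nabla w_{j_1} \otimes \cdots \otimes \nabla w_{j_p}$ with $j_1+\cdots+j_p \geq m+1$ are only $\underline{L}^1$-integrable in general. This is exactly where Theorem~\ref{t.regularity.linerrors} intervenes: applying~\eqref{e.C01linsols} on balls of radius $\X$ covering slight enlargements of the relevant subdomains, and feeding in the inductive $\underline{L}^2$ bounds, I obtain the upgraded estimate
$$\|\nabla w_k\|_{\underline{L}^q(rU'_k)} \leq C e^k$$
for any $q \in [2, \infty)$ and any fixed Lipschitz domain $rU'_k$ compactly contained in $rU_k$, with $C$ and the tail of $\X$ depending on $q$. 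H\"older's inequality then yields the desired control of $\mathcal{R}_m$, and the terms carrying $\nabla \xi_m$ are absorbed into the left-hand side of the forthcoming energy estimate using the smallness of their prefactors.

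The induction is then closed by two applications of the standard energy estimate: one for the linear Dirichlet problem~\eqref{e.linearized.cor} for $w_{m+1}$, whose boundary trace $\xi_m$ and bulk source $\mathbf{F}_{m+1}$ are of size $e^{m+1}$ in $H^{1/2}(r\partial U_{m+1})$ and $\underline{L}^2(rU_{m+1})$, respectively, by the inductive hypothesis and the same H\"older-plus-Theorem~\ref{t.regularity.linerrors} argument, yielding $\|\nabla w_{m+1}\|_{\underline{L}^2(rU_{m+1})} \leq C e^{m+1}$; and one for the equation satisfied by $\xi_m$, whose boundary trace $(1 - \tfrac{1}{m!})\xi_{m-1}$ is controlled on $r\partial U_m$ by the inductive hypothesis via the trace estimate on the Lipschitz domain. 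The principal difficulty, and the reason Theorem~\ref{t.regularity.linerrors} must be established for all exponents $q \geq 2$ rather than just $q = 2$, is precisely this integrability upgrade: without it, the quadratic and higher-order products in $\mathcal{R}_m$ and $\mathbf{F}_{m+1}$ cannot be placed in $\underline{L}^2$, and the energy method stalls.
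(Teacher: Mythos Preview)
Your overall strategy is correct and matches the paper's proof: induct on $m$, use Theorem~\ref{t.regularity.linerrors} (applied on a covering of the subdomains by balls) to upgrade the $\underline{L}^2$ control on $\nabla w_k$ to $\underline{L}^q$ for arbitrarily large $q$, use this to place the nonlinear source terms in $\underline{L}^2$, and close with energy-type estimates. You also correctly identify that this integrability upgrade is the crux of the argument and the reason Theorem~\ref{t.regularity.linerrors} must be proved for all $q\in[2,\infty)$.

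There is, however, one genuine inaccuracy in your handling of the equation for $\xi_m$. You write $\mathcal{R}_m$ as a polynomial in $\nabla w_1,\ldots,\nabla w_m,\nabla\xi_m$ and propose to absorb the $\nabla\xi_m$-carrying terms ``using the smallness of their prefactors.'' This absorption does not work as stated: the prefactors (for instance $\nabla w_1$, which multiplies $\nabla\xi_m$ in the quadratic Taylor term $D_p^3L(\nabla u,x)\,\nabla\xi_m\otimes\nabla w_1$) are small only in $\underline{L}^q$ for finite $q$, not in $L^\infty$. A contribution of the form $\int |\nabla w_1|\,|\nabla\xi_m|^2$ therefore cannot be absorbed into the ellipticity term $\int|\nabla\xi_m|^2$, and splitting via H\"older would require $\nabla\xi_m$ in a higher Lebesgue space than the one you are trying to estimate. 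The paper circumvents this by a different bookkeeping, recorded in Lemma~\ref{l.lineq}: the right-hand side $\mathbf{E}_m$ is expressed purely in terms of $\nabla\xi_0,\ldots,\nabla\xi_{m-1}$ and $\nabla w_1,\ldots,\nabla w_{m-1}$ (see~\eqref{e.Enprel}), \emph{never} $\nabla\xi_m$. All of these quantities are already controlled by the induction hypothesis, so no absorption is needed. Your scheme can be repaired by the substitution $\nabla\xi_m=\nabla\xi_{m-1}-\tfrac{1}{m!}\nabla w_m$, but that is rewriting rather than absorption, and it is precisely what the paper does.

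A related point: because the estimate~\eqref{e.EnLq} requires $\nabla\xi_h$ in $L^{2+\delta}$ for $h<m$ (the $\nabla\xi_h$ factors must share the H\"older budget with the high-$L^q$ factors $\nabla w_i$), the paper's induction hypothesis tracks $\|\nabla\xi_j\|_{\underline{L}^{2+\theta}}$, obtained via the global Meyers estimate, rather than merely $\|\nabla\xi_j\|_{\underline{L}^2}$. Your induction, which carries only $\underline{L}^2$ control on $\xi_j$, would need this small upgrade as well.
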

Corollary~\ref{c.linerrors} is an easy consequence of the previous theorems stated above. Its proof is presented in Section~\ref{s.linerrorsproof}. 

\smallskip

The analysis of the linearized equations presented in the theorems above allow us to develop a higher regularity theory for solutions of the nonlinear equation on large scales, in analogy to the role of the Schauder theory in the resolution of Hilbert's~19th problem on the regularity of solutions of nonlinear equations with smooth (or constant) coefficients. 
This result generalizes the large-scale $C^{1,1}$-type estimate proved in our previous paper~\cite{AFK} to higher-order regularity as well as the result in the linear case~\cite[Theorem 3.6]{AKMbook}.
\smallskip

Before giving the statement of this result, we introduce some additional notation and provide some motivational discussion. Given a domain $U\subseteq \Rd$, we define 
\begin{equation*}
\mathcal{L}(U):=
\left\{ 
u\in H^1_{\mathrm{loc}}(U) \,:\,
-\nabla \cdot D_pL(\nabla u,x)  = 0 \ \mbox{in} \ U
\right\}. 
\end{equation*}
This is the set of solutions of the nonlinear equation in the domain~$U$, which we note is a stochastic object. We next define 
$\mathcal{L}_1$ to be the set of global solutions of the nonlinear equation which exhibit at most linear growth at infinity:
\begin{equation*}
\mathcal{L}_1 :=
\left\{ 
u\in \mathcal{L}(\Rd) \,:\,
\limsup_{r\to \infty} r^{-1} \left\| u \right\|_{\underline{L}^2(B_r)}
< \infty
\right\}. 
\end{equation*}
For each $p\in\Rd$, we denote the affine function~$\ell_p$ by~$\ell_p(x):=p\cdot x$. Observe that if the difference of two elements of $\mathcal{L}_1$ has strictly sublinear growth at infinity, it must be constant, by the $C^{0,1}$-type estimate for differences (the estimate~\eqref{e.C01linerror} with $m=0$). Therefore the following theorem, which was proved in~\cite{AFK}, gives a complete classification of~$\mathcal{L}_1$.

\begin{theorem}
[{Large-scale $C^{1,1}$-type estimate~\cite[Theorem 1.3]{AFK}}]
\label{t.C11estimate} 
\emph{}\\
Fix $\sigma \in \left(0,d\right)$ and $\mathsf{M} \in [1,\infty)$. There exist~$\delta(\sigma,d,\Lambda)\in \left( 0, \frac12 \right]$, $C(\mathsf{M},\sigma,\data)<\infty$ and a random variable $\X_\sigma$ which satisfies the estimate
\begin{equation}
\label{e.X}
\X_\sigma \leq \O_{\sigma}\left(C\right)
\end{equation}
such that the following statements are valid. 
\begin{enumerate}
\item[{$\mathrm{(i)}$}] 
For every $u \in \mathcal{L}_1$ satisfying $\limsup_{r \to \infty} \frac1r \left\|  u - (u)_{B_r} \right\|_{\underline{L}^2(B_r)}   \leq \mathsf{M}$,
there exist an affine function $\ell$ such that, for every $R\geq \X_\sigma$,
\begin{equation*} 
\label{e.liouvillec0}
\left\| u - \ell \right\|_{\underline{L}^2(B_R)} \leq C R^{1-\delta} .
\end{equation*} 

\item[{$\mathrm{(ii)}$}]
For every $p\in B_\mathsf{M}$, there exists $u\in \mathcal{L}_1$ satisfying, for every $R\geq \X_\sigma$,
\begin{equation*} 
\label{e.liouvillec1}
\left\| u - \ell_p \right\|_{\underline{L}^2(B_R)} \leq C R^{1-\delta} .
\end{equation*}

\smallskip

\item[{$\mathrm{(iii)}$}]
For every $R\geq \X_s$ and $u\in \mathcal{L}(B_R)$ satisfying 
$\frac1R \left\| u - (u)_{B_R} \right\|_{\underline{L}^2 \left( B_{R} \right)}  \leq \mathsf{M}$,  there exists $\phi \in \mathcal{L}_1$ such that, for every $r \in \left[ \X_\sigma,  R \right]$, 
\begin{equation}
\label{e.C11}
\left\| u - \phi \right\|_{\underline{L}^2(B_r)} \leq C \left( \frac r R \right)^{2} \inf_{\psi\in\L_1} 
\left\| u - \psi \right\|_{\underline{L}^2(B_R)}.
\end{equation}
\end{enumerate}
\end{theorem}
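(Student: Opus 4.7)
The plan is to prove parts~(ii) and~(iii) in tandem via a quantitative excess-decay iteration, and to deduce~(i) as a corollary. Set
\begin{equation*}
\mathrm{Exc}(u, r) := \inf_{\phi \in \mathcal{L}_1} \frac{1}{r}\, \|u - \phi\|_{\underline{L}^2(B_r)}.
\end{equation*}
The conclusion~(iii) amounts to $\mathrm{Exc}(u, r) \leq C(r/R)\, \mathrm{Exc}(u, R)$ for $\X_\sigma \leq r \leq R$, and by a Campanato-type summation over dyadic scales this reduces to a one-step improvement $\mathrm{Exc}(u, \theta R) \leq \tfrac{1}{2}\theta\, \mathrm{Exc}(u, R) + \mathrm{err}(R)$, with $\mathrm{err}(R)$ summable dyadically. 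I would prove part~(ii) first and independently via a telescoping construction, thereby breaking the apparent circularity with~(iii). For each dyadic $R_k = 2^k$, let $u_k$ solve the nonlinear Dirichlet problem on $B_{R_k}$ with boundary data $\ell_p$; the $n = 0$ case of Theorem~\ref{t.linearizehigher} yields an explicit algebraic smallness of $u_k - \ell_p$, and applying the $C^{0,1}$-difference estimate (the $m = 0$ case of Theorem~\ref{t.regularity.linerrors}) to $u_{k+1} - u_k$ makes $\{u_k\}$ Cauchy in $H^1$ on every compact set above $\X_\sigma$. The limit is the desired element of~$\mathcal{L}_1$, with the quantitative remainder~$R^{1-\delta}$ inherited from the homogenization error.

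The one-step improvement proceeds by harmonic approximation. Given $u \in \mathcal{L}(B_R)$, let $\phi_0 \in \mathcal{L}_1$ nearly minimize the excess at scale $R$, corresponding to some slope $p_0$, and set $v := u - \phi_0$. Comparing $v$ to the solution $w$ of the linearized equation around $\phi_0$ on $B_{R/2}$ with boundary data $v$, the difference $v - w$ is quadratic in the excess (via the Lipschitz character of $D^2_p L$ and the $C^{0,1}$ control of $\nabla v$ furnished by Theorem~\ref{t.regularity.linerrors} at scales above $\X_\sigma$), and is therefore negligible at this stage. Now $w$ is governed by a linear divergence-form equation whose homogenized coefficient, by the $n = 0$ case of Theorem~\ref{t.linearizehigher} together with the commutation of linearization and homogenization, is $D^2 \overline{L}(p_0)$; since $\overline{L} \in C^{2,\beta}$ by the base case of Theorem~\ref{t.regularity.Lbar}, the homogenized solution $\bar w$ enjoys classical interior $C^{1,1}$ Schauder regularity, yielding an affine approximation on $B_{\theta R/2}$ with the desired factor $\theta^2$. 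Invoking~(ii) to realize the resulting affine function as the asymptotic profile of some $\phi_1 \in \mathcal{L}_1$ (modulo a sublinear error), the triangle inequality gives the one-step improvement with $\phi_0 + \phi_1 - \ell_{p_1}$ in place of $\phi_0$, the error $\mathrm{err}(R)$ being composed of the quantitative homogenization error and the $R^{1-\delta}$ remainder coming from~(ii).

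Part~(i) then falls out of~(iii) by sending $R \to \infty$ with $u \in \mathcal{L}_1$: the iterated decay selects a limiting $\phi \in \mathcal{L}_1$ differing from~$u$ sublinearly at infinity, and a second use of the $C^{0,1}$-difference estimate forces any two elements of~$\mathcal{L}_1$ with sublinear difference to differ by a constant; matching with the $\phi \in \mathcal{L}_1$ produced by~(ii) for the appropriate slope identifies~$u$ with an affine function up to the asserted $R^{1-\delta}$ error. The main obstacle will be the coupling between~(ii) and~(iii) together with the need to maintain stretched-exponential integrability of the random scale~$\X_\sigma$; I would resolve the former by performing (ii) via the telescoping argument above before any appeal to~(iii), and the latter by defining $\X_\sigma$ as the maximum of the minimal scales appearing in the $n=0$ versions of Theorems~\ref{t.linearizehigher} and~\ref{t.regularity.linerrors}, which remains in the class $\O_\sigma(C)$ by the Orlicz calculus of~\cite[Appendix~A]{AKMbook}.
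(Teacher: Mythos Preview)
This theorem is not proved in the present paper: it is quoted verbatim from the authors' earlier work \cite[Theorem~1.3]{AFK} and functions here only as the base case for the induction underlying Theorem~\ref{t.regularityhigher}. There is therefore no proof in this paper against which to compare your proposal.

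That said, your outline is a faithful sketch of the excess-decay strategy that \cite{AFK} itself employs (which in turn follows \cite{AS}): prove~(ii) first by a telescoping limit of finite-volume Dirichlet problems, then run a one-step harmonic approximation to obtain~(iii), and recover~(i) by letting $R\to\infty$. Two small corrections are worth flagging. First, in your construction for~(ii), the smallness of $u_k-\ell_p$ is a consequence of quantitative homogenization of the \emph{nonlinear} equation (as in \cite[Chapter~11]{AKMbook} or \cite{AS,AM}); Theorem~\ref{t.linearizehigher} as stated gives estimates only for the $w_m$'s, not for $u^\ep-\bar u$, so you should cite the nonlinear result directly. Second, in the one-step improvement for~(iii) you invoke~(ii) at a slope $p_1$ that depends on the scale; for the iteration to close you need the minimal scale in~(ii) to be uniform over $p$ in a compact set, which requires a short additional argument (a union bound over a finite net in $p$, together with Lipschitz dependence of the correctors on $p$). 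With these adjustments your plan matches the approach of \cite{AFK}.
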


Statements~(i) and~(ii) of the above theorem, which give the characterization of~$\L_1$, can be considered as a first-order Liouville-type theorem. In the case of deterministic, periodic coefficient fields, this result was proved by Moser and Struwe~\cite{MS}, who generalized the result of Avellaneda and Lin~\cite{AL4} in the linear case. Part~(iii) of the theorem is a quantitative version of this Liouville-type result, which we call a ``large-scale~$C^{1,1}$ estimate'' since it states that, on large scales, an arbitrary solution of the nonlinear equation can be approximated by an elements of~$\L_1$ with the same precision as harmonic functions can be approximated by affine functions. It can be compared to similar statements in the linear case (see for instance~\cite{AKMbook,GNO2}). 

\smallskip

In this paper we prove a higher-order version of Theorem~\ref{t.C11estimate}. We will show that, just as a harmonic function can be approximated locally by harmonic polynomials, we can approximate an arbitrary element of $\L(B_R)$ can be approximated by elements of a random set of functions which are the natural analogue of harmonic polynomials. In order to state this result, we must first define this space of functions. 

\smallskip

Let us first discuss the constant-coefficient case. 
If $\overline{L}$ is a smooth Lagrangian, we know from the resolution of Hilbert's 19th problem that solutions of $-\nabla \cdot D_p \overline{L}(\nabla \overline{u}) = 0$ are smooth and thus may be approximated by a Taylor expansion at each point. One may then ask, can we characterize the possible Taylor polynomials? In Appendix~\ref{a.constantcoeff} we provide such a characterization in terms of the linearized equations. The quadratic part is an $\ahom_p:=D^2\overline{L}(p)$-harmonic polynomial and the higher-order polynomials satisfy the equations the linearized equations, involving the $\overline{\mathbf{F}}_m$'s as right hand sides. More precisely, for each $p\in\Rd$ and $n\in\N$, we set  
\begin{align} \notag 
\overline{\mathsf{W}}_n^{p,\textrm{hom}}  := \bigg\{ &  (\overline{w}_1,\ldots,\overline{w}_n)  \in H_{\textrm{loc}}^1(\R^d;\R^n) \,  : \, \mbox{for } 
m \in \{1,\ldots,n\} \mbox{ we have } 
\\  \notag & \qquad 
\lim_{r \to 0} r^{-m} \left\|  \overline{w}_m \right\|_{\underline{L}^2 \left( B_{r} \right)} = 0 , \quad 
 \lim_{R\to \infty} R^{-1-m} \left\| \nabla \overline{w}_m \right\|_{\underline{L}^2 \left( B_{R} \right)} = 0 ,
 \\  \notag & \qquad
- \nabla \cdot \left( \ahom_p  \nabla \overline{w}_m \right) = \nabla \cdot   \overline{\mathbf{F}}_m\left(p , \nabla \overline{w}_1 ,         \ldots, \nabla \overline{w}_{m-1} \right) 
    \bigg\} . 
\end{align}
It is not too hard to show that $\overline{\mathsf{W}}_n^{p,\textrm{hom}}  \subset \mathcal{P}^{\textrm{hom}}_2 \times \ldots \times \mathcal{P}^{\textrm{hom}}_{n+1}$, where $\mathcal{P}^{\textrm{hom}}_j$ stands for homogeneous polynomials of degree $j$. Indeed, we see, by Liouville's theorem, that~$\overline{w}_1$ is an~$\ahom_p$-harmonic polynomial of degree two. More importantly, according to Appendix~\ref{a.constantcoeff}, we have that if~$\overline{u}$ solves  $-\nabla \cdot D_p \overline{L}(\nabla \overline{u}) = 0$ in the neigborhood of origin,  and we set $p = \nabla \overline{u}(0)$ and $\overline{w}_m(x) :=  \frac{1}{m+1} \nabla^{m+1} \overline{u}(0) \, x^{\otimes (m+1)}$, then 
\begin{equation*} 
 (\overline{w}_1,\ldots,\overline{w}_n) \in \overline{\mathsf{W}}_n^{p,\textrm{hom}} . 
\end{equation*}
In particular, $\overline{w}_m$ is a sum of a special solution of 
$$\nabla \cdot \left( \ahom_p  \nabla \overline{w}_m + \overline{\mathbf{F}}_m\left(p , \nabla \overline{w}_1 ,         \ldots, \nabla \overline{w}_{m-1} \right) \right) = 0 $$
in $\mathcal{P}^{\textrm{hom}}_{m+1}$ and an $\ahom_p$-harmonic polynomial in $\mathcal{P}^{\textrm{hom}}_{m+1}$ . 
For our purposes it is convenient to relax the growth condition at the origin and define simply
\begin{align} \notag 
\overline{\mathsf{W}}_n^p  := \bigg\{ &  (\overline{w}_1,\ldots,\overline{w}_n)  \in \mathcal{P}_2 \times \ldots \times \mathcal{P}_{n+1}  \,  : \, \mbox{for } 
m \in \{1,\ldots,n\} \mbox{ we have } 
\\  \notag & \qquad 
 \lim_{R\to \infty} R^{-1-m} \left\| \nabla \overline{w}_m \right\|_{\underline{L}^2 \left( B_{R} \right)} = 0 ,
 \\  \notag & \qquad
- \nabla \cdot \left( \ahom_p  \nabla \overline{w}_m \right) = \nabla \cdot   \overline{\mathbf{F}}_m\left(p , \nabla \overline{w}_1 ,       \ldots, \nabla \overline{w}_{m-1} \right) 
    \bigg\} . 
\end{align}
With this definition, we lose the homogeneity of polynomials. Following the approach in~\cite[Chapter 3]{AKMbook}, it is natural to define heterogeneous versions of these spaces by
\begin{align} \notag 
\mathsf{W}_n^p  := \bigg\{ &  (w_1,\ldots,w_n)  \in H_{\textrm{loc}}^1(\R^d;\R^n)  \,  : \, \mbox{for } 
m \in \{1,\ldots,n\} \mbox{ we have } 
\\  \notag & \qquad 
 \limsup_{R\to \infty} R^{-1-m} \left\| \nabla w_m \right\|_{\underline{L}^2 \left( B_{R} \right)} = 0 ,
\\  \notag & \qquad
- \nabla \cdot \left( D_p^2L( p + \nabla \phi_p,\cdot)  \nabla w_m \right) = \nabla \cdot  \mathbf{F}_m\left( p+ \nabla \phi_p , \nabla w_1 ,        \ldots, \nabla w_{m-1} ,\cdot \right) 
    \bigg\} ,
\end{align}
that is, the tuplets of heterogeneous solutions with prescribed growth. Here $\ell_p + \phi_p$ is the unique element of~$\L_1$ (up to additive constants) satisfying 
\begin{equation*} 
\lim_{r\to \infty} \frac1r \left\| \phi_p \right\|_{\underline{L}^2 \left( B_{r} \right)} = 0.
\end{equation*}
In other words, $\phi_p$ is the first-order corrector with slope $p$: see Lemma~\ref{l.corr.sublinearity}.

\smallskip

The next theorem gives a higher-order Liouville-type result which classifies the spaces~$\mathsf{W}_n^p$ and states that they may be used to approximate any solution of the nonlinear equation with the precision of a~$C^{n,1}$ estimate.  

\begin{theorem}[Large-scale regularity]
\label{t.regularityhigher}
Fix $n \in \{1,\ldots,\mathsf{N}\}$ and $\mathsf{M}  \in [1, \infty)$. There exist constants $\sigma(n,\mathsf{M},\data),\delta(n,\data)\in \left( 0, \frac12 \right]$ and a random variable $\X$ satisfying the estimate
\begin{equation}
\label{e.higherX}
\X \leq \O_\sigma \left(C(n,\mathsf{M}, d,\Lambda) \right)
\end{equation}
such that the following statements hold:
\begin{enumerate}
\item[{$\mathrm{(i)}_n$}] There exists a constant $C(n,\mathsf{M},\data) <\infty$ such that, for every $p \in B_{\mathsf{M}}$ and  $(w_1,\ldots,w_n) \in \mathsf{W}_n^p$, there exists $(\overline{w}_1,\ldots,\overline{w}_n) \in \overline{\mathsf{W}}_n^p $ such that, for every $R\geq \X$ and $m \in \{1,\ldots,n\}$,
\begin{equation} \label{e.liouvillec}
\left\| w_m - \overline{w}_m  \right\|_{\underline{L}^2(B_R)} \leq C R^{1-\delta}  \left( \frac{R}{\X} \right)^{m} \sum_{i=1}^m \left(  \frac1{\X} \left\| \overline{w}_i \right\|_{\underline{L}^2(B_{\X})}\right)^{\frac{m}{i}} .
\end{equation}

\item[{$\mathrm{(ii)}_n$}] For every  $p \in B_{\mathsf{M}}$ and $(\overline{w}_1,\ldots,\overline{w}_n) \in \overline{\mathsf{W}}_n^{p}$, there exists  $(w_1,\ldots,w_n) \in \mathsf{W}_n^p$ satisfying~\eqref{e.liouvillec} for every $R\geq \X$ and $m \in \{1,\ldots,n\}$. 

\item[{$\mathrm{(iii)}_n$}]
There exists $C(n,\mathsf{M},\data)<\infty$ such that, for every $R\geq \X$ and $v \in \mathcal{L}(B_R)$ satisfying 
$
\left\| \nabla v \right\|_{\underline{L}^2 \left( B_{R} \right)} \leq \mathsf{M}, 
$
 there exist $p \in B_{C}$ and $(w_1,\ldots,w_n) \in \mathsf{W}_n^p$ such that, defining 
 \begin{equation*} 
\xi_k(x) := v(x) - p\cdot x - \phi_p(x) - \sum_{i=1}^{k} \frac{w_i}{i!} 
\end{equation*}
for every $r \in \left[ \X,  \frac12 R \right]$, we have, for $k \in \{0,1,\ldots,n\},$ the following estimates:
\begin{equation}
\label{e.intrinsicreg}
\sum_{i=0}^{k}\left(  \left\| \nabla \xi_i  \right\|_{\underline{L}^{2}(B_r)} \right)^{\frac{k+1}{i+1}}
\leq 
C \left( \frac r R \right)^{k+1} \sum_{i=0}^{k}\left( \frac1R  \left\| \xi_i - (\xi_i)_{B_R} \right\|_{\underline{L}^{2}(B_R)} \right)^{\frac{k+1}{i+1}}
\end{equation}
and
\begin{equation}
\label{e.intrinsicreg2}
 \left\| \nabla \xi_k  \right\|_{\underline{L}^{2}(B_r)} 
\leq 
C \left( \frac r R \right)^{k+1} \frac1R  \inf_{\phi \in \mathcal{L}_1}\left\| v  - \phi \right\|_{\underline{L}^{2}(B_R)}.
\end{equation}
\end{enumerate}
\end{theorem}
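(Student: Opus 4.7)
The proof will proceed by induction on $n$, using Theorems~\ref{t.regularity.Lbar},~\ref{t.linearizehigher}, and~\ref{t.regularity.linerrors} as the main inputs (they are already established by the prior three-fold induction and may be treated here as black boxes). The base case $n=1$ reduces to Theorem~\ref{t.C11estimate} together with the case $m=1$ of Theorem~\ref{t.regularity.linerrors}: since $\mathbf{F}_1 \equiv 0$, the space $\mathsf{W}_1^p$ consists of solutions of $-\nabla \cdot (D_p^2 L(p+\nabla\phi_p,\cdot)\nabla w_1) = 0$ with $\|\nabla w_1\|_{\underline{L}^2(B_R)} = o(R^2)$, and the first-order linearized $C^{0,1}$ estimate classifies these in correspondence with $\ahom_p$-harmonic polynomials of degree two. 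For the inductive step from $n-1$ to $n$, the three parts will be proved in the order (ii)$_n$, (i)$_n$, (iii)$_n$.

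\medskip

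\textbf{Parts (i)$_n$ and (ii)$_n$.} For (ii)$_n$, given $(\bar w_1,\ldots,\bar w_n) \in \overline{\mathsf{W}}_n^p$, I would construct $w_m$ recursively in $m$: having produced $w_1,\ldots,w_{m-1}$ via the induction, solve the $m$th heterogeneous linearized Dirichlet problem on a sequence of dilating balls $B_{R_k}$ with boundary data $\bar w_m$, and then pass to the limit $R_k\to\infty$. Theorem~\ref{t.linearizehigher}, applied on $B_{R_k}$ after rescaling with $\ep = \X/R_k$, makes the sequence Cauchy modulo constants and supplies the quantitative error that, upon the natural dilation, yields~\eqref{e.liouvillec} with $\delta$ a fraction of the homogenization exponent $\alpha$ from~\eqref{e.homogenization.estimates}. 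For (i)$_n$, I would apply the same argument in reverse: the rescalings $w_m^\ep(x) := \ep^{m+1} w_m(x/\ep)$ are bounded on $B_1$ by the growth assumption, and Theorem~\ref{t.linearizehigher} identifies their $\ep\to 0$ limit as the unique element of $\overline{\mathsf{W}}_n^p$ with matching far-field behavior.

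\medskip

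\textbf{Part (iii)$_n$.} The heart of the proof is an excess-decay iteration in the spirit of~\cite[Chapter 3]{AKMbook}. At each dyadic scale $r \geq \X$, minimize the mixed excess
\begin{equation*}
\mathcal{E}_n(v;r) := \inf_{p,\,(w_i) \in \mathsf{W}_n^p}\ \sum_{i=0}^n \Bigl( \tfrac{1}{r} \| \xi_i - (\xi_i)_{B_r} \|_{\underline{L}^2(B_r)} \Bigr)^{(n+1)/(i+1)}
\end{equation*}
over $p\in\R^d$ and $(w_1,\ldots,w_n)\in\mathsf{W}_n^p$. The exponents $(n+1)/(i+1)$ are chosen precisely to match those appearing on the right-hand side of~\eqref{e.homogenization.estimates}. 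The target is a geometric contraction $\mathcal{E}_n(v;\theta r) \leq \tfrac12 \mathcal{E}_n(v;r)$ for $r\geq \X$ and some $\theta(\data)\in(0,1)$. Given the optimal minimizer at scale $r$, Theorem~\ref{t.linearizehigher} replaces each $\xi_i$ on $B_{r/2}$ by a solution $\bar\xi_i$ of the homogenized system with quantitative error $(\X/r)^\alpha$; the constant-coefficient version of Hilbert's 19th problem (Appendix~\ref{a.constantcoeff}) produces a refined approximant in $\overline{\mathsf{W}}_n^{p'}$ with the expected $\theta^{n-i+1}$ gain on $B_{\theta r}$; part (ii)$_n$ lifts this to a heterogeneous tuple in $\mathsf{W}_n^{p'}$; and the base-point update $p\mapsto p'$ is absorbed using the $m=0$ case of Theorem~\ref{t.regularity.linerrors}. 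Iterating the contraction from $r=R/2$ down to $r=\X$ yields~\eqref{e.intrinsicreg}, and combining with Theorem~\ref{t.C11estimate}(iii) gives~\eqref{e.intrinsicreg2}.

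\medskip

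\textbf{Main obstacle.} The principal difficulty is that when the base point $p$ is updated to $p'$ during the contraction step, $\phi_p$, the linearized coefficients $D_p^2 L(p+\nabla\phi_p,\cdot)$, and every $\mathbf{F}_m$ all change simultaneously, inducing coupled perturbations of each $w_i'$ and of each $\xi_i$. Showing that the net effect is of lower order relative to the contraction factor $\theta^{n-i+1}$ requires the smoothness of $\overline{L}$ in $p$ from Theorem~\ref{t.regularity.Lbar} (so that the change in $\overline{\mathbf{F}}_m$ and in $\ahom_{p'}$ is Lipschitz in $|p-p'|$) combined with the uniform large-scale $C^{0,1}$-type estimate for linearization errors from Theorem~\ref{t.regularity.linerrors}. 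Verifying that the scaling exponents $(n+1)/(i+1)$ are consistently preserved through both the homogenization step and the constant-coefficient Taylor expansion is the key bookkeeping exercise, and really the crux of the theorem.
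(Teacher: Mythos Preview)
Your plan for (ii)$_n$ matches the paper's approach. For (i)$_n$ and especially (iii)$_n$, the paper takes a different and simpler route that sidesteps your ``main obstacle'' entirely.

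For (i)$_n$: rather than ``applying (ii)$_n$ in reverse'' via rescalings, the paper first uses a strengthened form of (ii)$_n$ (one that \emph{extends} a given $(w_1,\ldots,w_{n-1})\in\mathsf{W}_{n-1}^p$ rather than building the whole tuple from scratch) to produce a particular solution $\tilde w_n$ of the $n$th linearized equation, together with a homogenized counterpart $\bar w$. Then $w_n-\tilde w_n$ is globally $\a_p$-harmonic with $o(R^{n+1})$ growth, so the \emph{linear} Liouville theorem supplies $q\in\Ahom_{n+1}^p$ with $\|w_n-\tilde w_n-q\|_{\underline{L}^2(B_R)} \leq CR^{-\delta}\|q\|_{\underline{L}^2(B_R)}$, and one sets $\bar w_n := \bar w + q$.

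For (iii)$_n$: the paper does \emph{not} update the base point $p$ during the iteration. Instead, $p$ is fixed once and for all by the $C^{1,1}$ estimate (Theorem~\ref{t.C11estimate}), which already gives $\|\nabla\xi_0\|_{\underline{L}^2(B_r)}\leq C(r/R)\cdot R^{-1}\inf_{\phi\in\L_1}\|v-\phi\|_{\underline{L}^2(B_R)}$. By the induction hypothesis $(iii)_{n-1}$, this same $p$ and some $(w_1,\ldots,w_{n-1})\in\mathsf{W}_{n-1}^p$ already satisfy \eqref{e.intrinsicreg}--\eqref{e.intrinsicreg2} for $k\leq n-1$. One then constructs a particular solution $\tilde w_n$ (via (i)$_{n-1}$ and the strengthened (ii)$_n$), so that $\tilde\xi_n := \xi_{n-1}-\tfrac{1}{n!}\tilde w_n$ satisfies $-\nabla\cdot(\a_p\nabla\tilde\xi_n)=\nabla\cdot\mathbf E_n$ with $\|\mathbf E_n\|_{\underline{L}^2(B_r)}\leq C(r/R)^{n+1}$; this is exactly the bound from Appendix~\ref{app.linerrors}, fed the inductively known decay of $\nabla\xi_i$ and $\nabla w_i$. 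The remaining excess decay is then a \emph{linear} problem: at each scale one subtracts the best $\a_p$-harmonic approximant and invokes the linear large-scale $C^{n+1,1}$ estimate. The coefficients $\a_p$, the corrector $\phi_p$, and the lower-order $w_i$'s are never touched.

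Your route---reminimizing over $p$ at every scale---may in principle be closable, but the cascade you flag is genuinely delicate and you have not shown it closes: changing $p$ to $p'$ perturbs $\nabla\phi_p$ only in $L^2$ by $O(|p-p'|)$, and propagating that through each $\mathbf F_m(p+\nabla\phi_p,\ldots)$ reintroduces terms of the same order as the excess you are trying to contract. The paper's device of freezing $p$ and reducing the top-level iteration to the already-established linear Liouville theory for $\a_p$ is precisely what makes this difficulty disappear.
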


The proof of Theorem~\ref{t.regularityhigher} is given in Section~\ref{s.regularityhigher}. 

\smallskip

Appendices~\ref{app.linerrors}--\ref{a.constantcoeff} of this paper contain estimates for constant-coefficient equations which are essentially known but not to our knowledge written anywhere. We also collect some auxiliary estimates and computations in Appendices~\ref{app.CZ} and~\ref{s.AppendixFm}.

\section{Regularity estimates for the effective Lagrangian}
\label{s.regLbar}

In this section, we suppose that~$n\in \{0,\ldots,\mathsf{N}-1 \}$ is such that   
\begin{equation}
\label{e.assumption.section3}
\mbox{the statements of Theorems~\ref{t.regularity.Lbar},~\ref{t.linearizehigher} and~\ref{t.regularity.linerrors} are valid for $n$.}
\end{equation}
The goal is to prove Theorem~\ref{t.regularity.Lbar} for $n+1$. 

\smallskip

We proceed by constructing the linearized correctors $\psi^{(m)}_{p,h}$ up to $m=n+2$ and relate the correctors of different orders to each other via differentiation in the parameter~$p$. We show that these results allow us to improve the regularity of~$D^2\overline{L}$ up to $C^{n+1,\beta}$ and obtain the statement of Theorem~\ref{t.regularity.Lbar} for $n+1$. In particular, this allows us to define the effective coefficient~$\overline{\mathbf{F}}_{n+1}$. We also give formulas for the derivatives of~$\overline{L}$ and for~$\overline{\mathbf{F}}_{m}$ in terms of the correctors, which allow us to relate them to each other and show that~\eqref{e.defbarFm} holds. 

\smallskip

\subsection{The first-order correctors and linearized correctors}
In this subsection we construct the linearized correctors up to order~$n+2$. 

\smallskip

For each $p\in\Rd$, we define~$\phi_p$ to be the \emph{first-order corrector} %
of the nonlinear equation, that is, the unique solution of
\begin{equation} \label{e.nonlinearcorrector}
\left\{
\begin{aligned}
& -\nabla \cdot \left( D_pL\left( p + \nabla\phi_p(x),x\right) \right)=0 \quad \mbox{in}  \ \Rd,\\
& \nabla\phi_p \ \ \mbox{is $\Zd$--stationary, \quad and}\quad \E \left[ \int_{\cu_0} \nabla \phi_p(x)\,dx \right] = 0. 
\end{aligned}
\right.
\end{equation}
The existence and uniqueness (up to additive constants) of the first-order corrector $\phi_p$ is classical: it can be obtained from a variational argument (applied to an appropriate function space of stationary functions. Alternatively,  
it can be shown (following the proof given in~\cite[Section 3.4]{AKMbook}) that the elements of~$\L_1$, which was characterized in Theorem~\ref{t.C11estimate} above (which was proved already in~\cite{AFK}), 
have stationary gradients.

\smallskip

We define the coefficient field $\a_p(x)$ to be the coefficients for the linearized equation around the solution $x\mapsto p\cdot x+\phi_p(x)$:
\begin{equation*}
\a_p(x):= D_p^2L\left( p + \nabla\phi_p(x),x\right).
\end{equation*}
Given $p,h\in\Rd$, we define the \emph{first linearized corrector} $\psi^{(1)}_{p,h}$ to satisfy
\begin{equation*}
\left\{
\begin{aligned}
& -\nabla \cdot \left( \a_p(x) \left( h+ \nabla \psi^{(1)}_{p,h} \right)\right)=0 \quad \mbox{in}  \ \Rd,\\
&\nabla\psi^{(1)}_{p,h} \ \ \mbox{is $\Zd$--stationary, \quad and} \quad
\E \left[ \int_{\cu_0} \nabla \psi^{(1)}_{p,h}(x)\,dx \right] = 0.
\end{aligned}
\right.
\end{equation*}
In other words, ~$\psi^{(1)}_{p,h}$ is the first-order corrector with slope~$h$ for the equation which is the linearization around the first-order corrector $x\mapsto p\cdot x + \phi_p(x)$.
For $m \in \{ 2,\ldots,\mathsf{N}+2 \}$, we define the \emph{$m$th linearized corrector} to be the unique (modulo additive constants) random field $\psi^{(m)}_{p,h}$ satisfying
\begin{equation}
\label{e.mthlinearized.corr}
\left\{
\begin{aligned}
& -\nabla \cdot \left( \a_p(x) \nabla \psi^{(m)}_{p,h} \right)
\\
& \quad 
=\nabla \cdot 
\mathbf{F}_m\left( p+\nabla\phi_p(x),h+\nabla\psi^{(1)}_{p,h},\nabla \psi^{(2)}_{p,h}(x), \ldots, \nabla \psi^{(m-1)}_{p,h}(x),x \right)
 & \mbox{in} & \ \Rd,\\
& \nabla\psi^{(m)}_{p,h} \quad \mbox{is $\Zd$--stationary, \quad and} \quad
\E \left[ \int_{\cu_0} \nabla \psi^{(m)}_{p,h}(x)\,dx \right] = 0. 
\end{aligned}
\right.
\end{equation}
In other words, $\psi^{(m)}_{p,h}$ is the corrector with slope zero for the $m$th linearized equation around $x\mapsto p\cdot x + \phi_p(x)$ and $x\mapsto h\cdot x + \psi^{(1)}_{p,h}(x)$, $x\mapsto \psi^{(2)}_{p,h}(x),\ldots, x\mapsto \psi^{(m-1)}_{p,h}(x)$. Notice that this gives us the complete collection of correctors for the latter equation, since by linearity we observe that $\psi^{(m)}_{p,h} + \psi^{(1)}_{p,h'}$ is the corrector with slope $h'$. Furthermore, by the linearity of the map $h \mapsto h + \nabla \psi^{(1)}_{p,h}(x)$, it is easy to see from the structure of the equations of $\psi^{(m)}_{p,h}$ that, for $p,h \in \R^d$, and $t \in \R$, 
\begin{equation} \label{e.psimhomogen}
\nabla \psi^{(m)}_{p,t h} = t^m \nabla \psi^{(m)}_{p,h}. 
\end{equation}
For $p,h \in \R^d$, we define
\begin{equation}
\label{e.corrcoeff}
\mathbf{f}^{(k)}_{p,h} := \mathbf{F}_{k} \left(p + \nabla \phi_p , h+ \nabla \psi^{(1)}_{p,h}, \nabla \psi^{(2)}_{p,h}, \ldots ,  \nabla \psi^{(k-1)}_{p,h},\cdot \right) .
\end{equation}
By~\eqref{e.psimhomogen}, we have that 
\begin{equation*} 
\mathbf{f}^{(k)}_{p,h}= |h|^k \mathbf{f}^{(k)}_{p,h/|h|}
\end{equation*}

\smallskip

We first show that the the problem~\eqref{e.mthlinearized.corr} for the $m$th linearized corrector is well-posed for $m\in \{2,\ldots,n+1\}$. This is accomplished by checking, inductively, using our hypothesis~\eqref{e.assumption.section3} (and in particular the validity of Theorem~\ref{t.regularity.linerrors} for $m\leq n$), that we have appropriate estimates on the vector fields~$\mathbf{F}_m(\cdots)$ on the right side. We have to make this argument at the same time that we obtain estimates on the smoothness of the correctors $\psi^{(m)}_{p,h}$ as functions of~$p$. In fact, we prove that $\nabla \psi^{(m+1)}_{p,h} = h\cdot D_p\nabla \psi^{(m)}_{p,h}$, which expressed in coordinates is  
\begin{equation*} 
\nabla \psi^{(m+1)}_{p,h} = \sum_{i=1}^d h_i\partial_{p_i} \nabla \psi^{(m)}_{p,h}.
\end{equation*}
We will also obtain $C^{0,1}$-type bounds on the linearized correctors, which together with the previous display yields good quantitative control on the smoothness of the correctors in~$p$.

\smallskip

 Before the main statements, let us collect a few preliminary elementary results needed in the proofs.  The following lemma is well-known and can be proved by the Lax-Milgram lemma (see for instance~\cite[Chapter 7]{JKO}).

\begin{lemma}
\label{l.abstractnonsense}
Let $\a(\cdot)$ be a $\Zd$--stationary random field valued in the symmetric matrices satisfying the ellipticity bound $I_d \leq \a \leq \Lambda I_d$. Suppose that~$\mathbf{f}$ is a $\Zd$--stationary, $\Rd$--valued random field satisfying 
\begin{equation*}
\E \left[ \left\| \mathbf{f} \right\|_{L^2(\cu_0)}^2 \right] 
< \infty. 
\end{equation*}
Then there exists a unique random potential field $\nabla z$ satisfying 
\begin{equation*}
\left\{ 
\begin{aligned}
& -\nabla \cdot \a \nabla z = \nabla \cdot \mathbf{f} \quad \mbox{in} \ \Rd
\\ & 
\nabla z \ \mbox{is $\Zd$--stationary, \quad and} \quad
\E \left[ \int_{\cu_0} \nabla z(x)\,dx \right] = 0.
\end{aligned}
\right.
\end{equation*}
and, for a constant~$C(d,\Lambda)<\infty$, the estimate
\begin{equation*}
\E \left[ \left\| \nabla z \right\|_{L^2(\cu_0)}^2 \right] 
\leq C\, \E \left[ \left\| \mathbf{f} \right\|_{L^2(\cu_0)}^2 \right].
\end{equation*}
\end{lemma}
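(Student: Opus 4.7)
The plan is to apply the Lax--Milgram theorem in the Hilbert space~$H$ of $\Zd$--stationary, mean-zero potential fields on $\Rd$. Concretely, let $H$ consist of locally square-integrable, $\Rd$-valued random fields $F$ on $\Rd$ which are curl-free in the distributional sense on almost every realization, whose joint law is $\Zd$-translation invariant, and which satisfy $\E\bigl[\int_{\cu_0} F\,dx\bigr] = 0$, equipped with the inner product $\langle F,G\rangle_H := \E\bigl[\int_{\cu_0} F\cdot G\,dx\bigr]$. Each defining property is closed under $L^2(\Omega\times\cu_0)$-limits (distributional derivatives and bounded linear functionals are continuous, and the shift group acts by isometries), so $H$ is Hilbert.

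On $H$ I would consider the bilinear form
\[
a(F,G) := \E\!\left[\int_{\cu_0} G(x)\cdot \a(x) F(x)\,dx\right]
\quad\text{and}\quad
\ell(G) := -\E\!\left[\int_{\cu_0} G(x)\cdot \mathbf{f}(x)\,dx\right].
\]
The assumption $I_d \leq \a \leq \Lambda I_d$ makes $a$ bounded and coercive on $H$ with constants $\Lambda$ and $1$ respectively, and $\ell$ is bounded on $H$ with norm at most $\bigl(\E[\|\mathbf{f}\|_{L^2(\cu_0)}^2]\bigr)^{1/2}$ by Cauchy--Schwarz. Lax--Milgram produces a unique $\nabla z\in H$ satisfying $a(\nabla z, G) = \ell(G)$ for every $G\in H$. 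Testing $G = \nabla z$ and rearranging gives the stated $L^2$ estimate. Uniqueness of $\nabla z$ within $H$ is immediate from the coerciveness of $a$.

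The one point requiring care is the passage from the variational identity on $H$ to the pointwise PDE $-\nabla\cdot\a\nabla z = \nabla\cdot\mathbf{f}$ on $\Rd$ for $\P$-almost every realization. For a deterministic test function $\varphi\in C^\infty_c(\Rd)$, the random field obtained by averaging $\nabla\varphi$ against $\Zd$-shifts of the environment produces an admissible element of $H$; substituting into the Euler--Lagrange identity, unfolding via stationarity, and invoking Fubini together with the arbitrariness of $\varphi$ yields $\E\bigl[\int_{\Rd} \nabla\varphi\cdot(\a\nabla z + \mathbf{f})\,dx\bigr] = 0$, and then varying the base point via translations upgrades this to the announced distributional identity on $\P$-almost every realization. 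I expect this identification to be the main (though by now routine) technical step; it is precisely the variational set-up worked out in detail in Chapter 7 of~\cite{JKO}, which is why the lemma can be quoted from that reference.
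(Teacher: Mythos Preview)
Your proposal is correct and matches the paper's approach exactly: the paper does not give a proof at all, but simply states that the lemma ``is well-known and can be proved by the Lax--Milgram lemma (see for instance~\cite[Chapter 7]{JKO}).'' Your write-up is precisely the Lax--Milgram argument in the Hilbert space of stationary mean-zero potential fields that this remark points to, and you even identify the same reference for the technical identification step.
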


Next, a central object in our analysis is the quantity $\mathbf{F}_m$ defined in~\eqref{e.defFm}. Fix $m\in \{2,\ldots, \mathsf{N}+1\}$ and  $x,p,h_1,\ldots,h_{m-1} \in \R^d$. One can easily read from the definition that we have
\begin{equation}  \label{e.Fmalt}
\mathbf{F}_m(p,h_1,\ldots,h_{m-1},x)
= m! \sum_{2 \leq j \leq m}\frac{1}{j!}D_{p}^{j+1}L(p,x)\sum_{\stackrel{ i_{1}+\cdots i_{j}= m}{ i_{1},\dots, i_{j}\geq 1} } \prod_{k=1}^j \frac{h_{i_k}^{\otimes 1}}{i_{k}!} .
\end{equation}
It then follows, by Young's inequality, that there exists $C(m,\data)<\infty$ such that
\begin{equation}
 \label{e.Fmbasic}
 \left| \mathbf{F}_m (p,h_1,\ldots,h_{m-1},x) \right| \leq C  \sum_{i=1}^{m-1} |h_i|^{\frac{m}{i}} .
\end{equation}
We have, similarly, that 
\begin{equation}  \label{e.Fmbasicgradp}
 \left[ \mathbf{F}_m (\cdot,h_1,\ldots,h_{m-1},x) \right]_{C^{0,1}(\R^n)}  \leq C  \sum_{i=1}^{m-1} |h_i|^{\frac{m}{i}}
\end{equation}
and, for $k \in \{1,\ldots,m-1\}$, 
\begin{equation} \label{e.Fmbasicgradh}
 \left| D_{h_k} \mathbf{F}_m (p ,h_1,\ldots,h_{m-1},x) \right|  \leq C  \sum_{i=1}^{m-k} |h_i|^{\frac{m-k}{i}}.
\end{equation}
Using these we get, for $x,p,\tilde p, h_1,\tilde h_1,\ldots, h_{m-1},\tilde h_{m-1} \in \R^d$,
\begin{align}  \label{e.Fmbasic2}
\lefteqn{\left| \mathbf{F}_m (p,h_1,\ldots,h_{m-1},x) -  \mathbf{F}_m (\tilde p,\tilde h_1,\ldots,\tilde h_{m-1},x)\right| } \quad &
\\ \notag &
\leq
C  |p  - \tilde p| \sum_{i=1}^{m-1} \left( \left| h_i \right| + \left| \tilde h_i \right| \right) ^{\frac{m}{i}}  
+  C \sum_{i=1}^{m-1}  \left| h_i - \tilde h_i \right|   \sum_{j=1}^{m-i} \left( \left| h_j \right| + \left| \tilde h_j \right| \right)^{\frac{m-i}{j}}  .     
\end{align}
Therefore, by Young's inequality, we get, for all $\delta>0$, 
\begin{align} \label{e.Fmbasic3}
\lefteqn{\left| \mathbf{F}_m (p,h_1,\ldots,h_{m-1},x) -  \mathbf{F}_m (\tilde p,\tilde h_1,\ldots,\tilde h_{m-1},x)\right| } \quad &
\\ \notag &
\leq
C \left(   |p  - \tilde p| + \delta \right)  \sum_{i=1}^{m-1} \left( \left| h_i \right| + \left| \tilde h_i \right| \right)^{\frac{m}{i}}  
+  C \delta \sum_{i=1}^{m-1}  \left( \delta^{-1} \left| h_i - \tilde h_i \right| \right)^{\frac mi}    .     
\end{align}
Furthermore,  as we will be employing an induction argument in $m$, it is useful to notice that the leading term in $\mathbf{F}_m$
has a simple form, and we have 
\begin{equation}  \label{e.Fmdecomposed}
\mathbf{F}_m (p,h_1,\ldots,h_{m-1},x) = m D_p^{3} L(p,x) h_{m-1}^{\otimes 1} h_1^{\otimes 1}  +  \tilde{\mathbf{F}}_m (p,h_1,\ldots,h_{m-2},x) ,
\end{equation}
with $\tilde{\mathbf{F}}_2 = 0$. 
Moreover, as is shown in Appendix~\ref{s.AppendixFm}, if, for $p,h \in \R^d$, $t \mapsto \mathbf{g}(p+th)$ is~$m$ times differentiable at $t=0$ and noticing that since $m \leq \mathsf{N}+1$, $p\mapsto L(p,x)$ is in $C^{m+2}$, then 
\begin{align}  \label{e.Fmrelation}
\lefteqn{
\mathbf{F}_{m+1} (\mathbf{g}(p) , D_p \mathbf{g}(p) h^{\otimes 1} ,\ldots,D_p^{m} \mathbf{g}(p) h^{\otimes m},x) 
} \quad &
\\ \notag &
=  D_p \left( \mathbf{F}_{m} (\mathbf{g}(p),D_p \mathbf{g}(p) h^{\otimes 1},\ldots,D_p^{m-1} \mathbf{g}(p) h^{\otimes (m-1)},x) \right) \cdot h   
\\ \notag & \quad 
+ D_p \left( D_p^{2} L(\mathbf{g}(p),x) \right) h^{\otimes 1} \left( D_p^{m} \mathbf{g}(p) h^{\otimes m}  \right)^{\otimes 1}.
\end{align}

\smallskip

Our first result in this section gives direct consequences of~\eqref{e.assumption.section3} for estimates on the first-order correctors and linearized correctors. 

\begin{theorem}[Quantitative estimates on linearized correctors]
\label{t.correctorestimates}

Assume~\eqref{e.assumption.section3} is valid. Fix $\mathsf{M}\in [1,\infty)$. For every $m\in \{ 2,\ldots, n+1 \}$ and $p,h\in\Rd$, there exists a function $\psi^{(m)}_{p,h}$ satisfying~\eqref{e.mthlinearized.corr}. 
Moreover, there exist constants $C(\mathsf{M},\data)<\infty$ and $\sigma(n,d,\Lambda) \in \left(0,\tfrac12\right]$ and a random variable $\X$ satisfying $\X \leq \O_\sigma(C)$ such that the following statement is valid. 
For every~$p\in B_{\mathsf{M}}$, $h \in \overline{B}_1$, $m\in \{1,\ldots,n\}$,  and $r\geq \X$, 
\begin{equation}
\label{e.linearization.corr}
\left\|  
\nabla \phi_{p+h} 
- \left( \nabla \phi_p  + h + \sum_{k=1}^m \frac1{k!} \nabla \psi^{(k)}_{p,h} \right)
\right\|_{\underline{L}^2(B_r)} 
\leq C |h|^{m+1}
\end{equation}
and, for every~$p\in B_{\mathsf{M}}$, $h \in  \overline{B}_1$, $m\in \{1,\ldots,n+1\}$ and $r\geq \X$, 
\begin{equation}
\label{e.sec3corrbnd0}
\left\|  
\nabla \psi^{(m)}_{p,h}
\right\|_{\underline{L}^2(B_r)} 
\leq C |h|^{m}.
\end{equation}
Finally, for $q \in [2,\infty)$and $m\in \{1,\ldots,n+1\}$,  there exist constants~$\delta(m,d,\Lambda)\in \left(0,\tfrac12\right]$ and ~$C(q,m,\mathsf{M},\data)<\infty$ such that, for every
~$p\in B_{\mathsf{M}}$ and $h \in   \overline{B}_1$, 
\begin{equation} 
\label{e.sec3corrbnd}
\left\|
 \nabla \psi^{(m)}_{p,h}
\right\|_{L^q(\cu_0)} 
\leq \O_{\delta}  
\left(
C |h|^m  
\right).
\end{equation}
\end{theorem}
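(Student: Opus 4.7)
The plan is to induct on $m \in \{1, \ldots, n+1\}$, simultaneously constructing $\psi^{(m)}_{p,h}$ and proving the three estimates~\eqref{e.linearization.corr},~\eqref{e.sec3corrbnd0}, and~\eqref{e.sec3corrbnd} at each order. The base case $m=1$ is contained in~\cite{AFK} (noting that $\mathbf{F}_1 \equiv 0$) together with the assumed validity of Theorem~\ref{t.regularity.linerrors} at order $n$, which covers the first-order linearized equation. The inductive step from $m-1$ to $m$ (for $2 \leq m \leq n+1$) proceeds in three stages: existence via the stationary potential lemma, large-scale estimates via Corollary~\ref{c.linerrors}, and a pointwise $L^q$ upgrade via Meyers-type arguments.

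For the construction, set
\begin{equation*}
\mathbf{f}^{(m)}_{p,h} := \mathbf{F}_m\bigl(p + \nabla\phi_p,\; h + \nabla\psi^{(1)}_{p,h},\; \nabla\psi^{(2)}_{p,h},\; \ldots,\; \nabla\psi^{(m-1)}_{p,h},\; \cdot\bigr).
\end{equation*}
The pointwise bound~\eqref{e.Fmbasic}, together with H\"older's inequality and the inductive $L^{2m/i}(\cu_0)$-bound~\eqref{e.sec3corrbnd} on $\nabla\psi^{(i)}_{p,h}$ for $i \leq m-1$, yields $\E\|\mathbf{f}^{(m)}_{p,h}\|_{L^2(\cu_0)}^2 \leq C|h|^{2m}$. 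Lemma~\ref{l.abstractnonsense} then delivers a unique $\Zd$-stationary potential $\nabla\psi^{(m)}_{p,h}$ solving~\eqref{e.mthlinearized.corr} with $\E\|\nabla\psi^{(m)}_{p,h}\|_{L^2(\cu_0)}^2 \leq C|h|^{2m}$.

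The large-scale linearization estimate~\eqref{e.linearization.corr}, valid for $m \leq n$, follows from Corollary~\ref{c.linerrors} applied to the pair $u := \ell_p + \phi_p$, $v := \ell_{p+h} + \phi_{p+h}$ in $\L(\Rd)$ on large balls. The corollary produces approximants $\widetilde w_k$ solving the linearized Dirichlet problems with nested boundary data, yielding
\begin{equation*}
\Bigl\| \nabla(v - u) - \sum_{k=1}^{m}\tfrac{1}{k!}\nabla \widetilde w_k \Bigr\|_{\underline{L}^2(B_{r/2})} \leq C \|\nabla u - \nabla v\|_{\underline{L}^2(B_r)}^{m+1} \leq C|h|^{m+1},
\end{equation*}
since $\|\nabla u - \nabla v\|_{\underline{L}^2(B_r)} \to |h|$ by the qualitative ergodic theorem once $r \geq \X$. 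To identify $\widetilde w_k$ with $\psi^{(k)}_{p,h}$, observe that $z_k := \widetilde w_k - \psi^{(k)}_{p,h}$ solves the \emph{homogeneous} linearized equation $-\nabla\cdot(\a_p \nabla z_k) = 0$ (the source terms $\mathbf{F}_k$ involve only lower-order data and agree for both), and Theorem~\ref{t.regularity.linerrors} at order $k$ implies that $\nabla z_k$ is, up to the gradient of some constant-slope first-order linearized corrector, of lower order; iterating the absorption in $k$ and reinterpreting the absorbed constants as modifications of the $h$ parameter yields~\eqref{e.linearization.corr}. Estimate~\eqref{e.sec3corrbnd0} for $m \leq n$ then follows by the triangle inequality, while for $m = n+1$ it follows from a quantitative ergodic-theorem argument (requiring only (P1) and the stationary $L^2$ bound already established) applied directly to $\nabla\psi^{(m)}_{p,h}$.

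The pointwise $L^q$ estimate~\eqref{e.sec3corrbnd} is obtained by combining the large-scale bound~\eqref{e.sec3corrbnd0} with a deterministic Meyers/Calder\'on--Zygmund estimate for~\eqref{e.mthlinearized.corr} (cf.~Appendix~\ref{app.CZ}): the Lipschitz-in-$x$ regularity of $D_p^k L$ from~(L1), together with the inductive Lipschitz control on $\nabla \phi_p$ and $\nabla\psi^{(i)}_{p,h}$ for $i < m$, renders the coefficient field $\a_p$ sufficiently regular for the $L^q$-gain on the unit cube, and the stretched exponential moments on $\X$ transfer to the Orlicz bound. The main obstacle is the identification of $\widetilde w_k$ with the stationary corrector $\psi^{(k)}_{p,h}$ to precision $|h|^{m+1}$: this rests on having $L^q$ integrability of $\nabla\psi^{(i)}_{p,h}$ with $q$ growing like $m/i$, not merely $L^2$ control, since $\mathbf{F}_m$ involves $m$-linear products whose H\"older estimate degrades as $m$ grows. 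It is precisely for this reason that the three estimates must be carried together in a joint induction on $m$, and why a naive implicit-function-theorem argument in Banach spaces does not suffice.
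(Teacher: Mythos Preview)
Your detour through Corollary~\ref{c.linerrors} introduces a genuine gap at the identification step. The functions $\widetilde w_k$ produced by that corollary are solutions of Dirichlet problems on bounded domains with boundary data built recursively from $v-u$; they are \emph{not} the stationary correctors $\psi^{(k)}_{p,h}$. Your claim that $z_k := \widetilde w_k - \psi^{(k)}_{p,h}$ solves the homogeneous equation is false for $k\ge 2$: the right-hand side $\mathbf{F}_k$ for $\widetilde w_k$ is built from $\nabla\widetilde w_1,\ldots,\nabla\widetilde w_{k-1}$, while for $\psi^{(k)}_{p,h}$ it is built from $h+\nabla\psi^{(1)}_{p,h},\ldots,\nabla\psi^{(k-1)}_{p,h}$. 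Even granting an inductive hypothesis that these are \emph{close}, the difference $z_k$ then satisfies an equation with a nonzero source of the order of the previous errors, and your ``absorption/reinterpretation as modifications of $h$'' is not an argument. For $k=1$ the homogeneity claim is correct (up to the shift by $\ell_h$), but you still need a quantitative Liouville statement to force $z_1$ to be small, and Theorem~\ref{t.regularity.linerrors} does not directly supply that.

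The paper avoids all of this by observing that the stationary correctors themselves, once constructed, form a solution $\bigl(u,w_1,\ldots,w_{n+1}\bigr)=\bigl(\ell_p+\phi_p,\;\ell_h+\psi^{(1)}_{p,h},\;\psi^{(2)}_{p,h},\ldots,\psi^{(n+1)}_{p,h}\bigr)$ of the linearized system in every ball $B_R$, so Theorem~\ref{t.regularity.linerrors} applies to them \emph{directly}. Its estimates~\eqref{e.C01linsols}--\eqref{e.C01linerror} bound $\|\nabla\psi^{(m)}\|_{\underline L^{q}(B_r)}$ and $\|\nabla\xi_m\|_{\underline L^{q}(B_r)}$ in terms of $R^{-1}\|\psi^{(i)}-(\psi^{(i)})_{B_R}\|_{\underline L^2(B_R)}$; sending $R\to\infty$ via the ergodic theorem replaces these by $\E\bigl[\|\nabla\psi^{(i)}\|_{L^2(\cu_0)}^2\bigr]^{1/2}$, which are then controlled inductively by $|h|^i$ using Lemma~\ref{l.abstractnonsense}. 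No Dirichlet auxiliaries, no identification. Your Meyers/Calder\'on--Zygmund step for~\eqref{e.sec3corrbnd} is also unnecessary: since Theorem~\ref{t.regularity.linerrors} already delivers the $\underline L^q(B_r)$ bound for every $r\ge\X$ and every $q<\infty$, one simply restricts to $\cu_0\subset B_\X$ and pays the volume factor $\X^{d/q}=\O_\delta(C)$.
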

\begin{proof}
Set,  for $m \in \{1,\ldots,n+1\}$, 
\begin{equation*} 
\xi_0  :=  (p+h) \cdot x + \phi_{p+h}(x)  - (p \cdot x + \phi_{p}(x))     \quad \mbox{and} \quad 
\xi_m := \xi_0 -  \sum_{k=1}^m \frac1{k!} \psi^{(k)}_{p,h} .
\end{equation*}
We first collect two consequences of  Theorem~\ref{t.regularity.linerrors} assumed for $n$. Fix $q \in [2,\infty)$. Theorem~\ref{t.regularity.linerrors} implies that there is a minimal scale  $\X$ such that~\eqref{e.C01linsols} and~\eqref{e.C01linerror} are valid with $q (n+1)$ instead of $q$ and for every $r \in \left[ \X , \tfrac 12 R \right]$. Hence, 
for every $r \in \left[ \X , \tfrac 12 R \right]$ and $m\in\{0,\ldots,n\}$, we get the estimates
\begin{align}
\label{e.C01linsolsapplied1}
\left\| \nabla \psi^{(m+1)}_{p,h}   \right\|_{\underline{L}^{ q (n+1)}(B_r)}
& 
\leq 
C\sum_{i=1}^{m+1} 
\left(  \left\|  \nabla \psi^{(i)}_{p,h}    \right\|_{\underline{L}^2(B_R)} 
\right)^{\frac{m+1}i}
\end{align}
and 
\begin{align}
\label{e.C01linerrorapplied1}
\left\| \nabla \xi_{m}   \right\|_{\underline{L}^{q (n+1) }(B_r)} 
\leq  
 C \sum_{i=0}^{m-1} 
\left( \left\| \nabla \xi_{i} \right\|_{\underline{L}^2(B_R)} +   \left\|  \nabla \psi^{(i+1)}_{p,h}   \right\|_{\underline{L}^2(B_R)}  \right)^{\frac{m+1}{i+1}} .
\end{align}
On the other hand, using~\eqref{e.Fmbasic} we obtain
\begin{equation*} 
\left|  \mathbf{f}^{(m)}_{p,h} \right| \leq C \sum_{i=1}^{m-1} \left|  \nabla \psi^{(i)}_{p,h} \right|^{\frac{m}{i}}.
\end{equation*}
In particular, it follows by~\eqref{e.C01linsolsapplied1} that, for $m \in \{1,\ldots,n+2\}$, $R>2\X$ and $r \in [\X,\tfrac12 R)$, 
\begin{equation*} 
\left\|  \mathbf{f}^{(m)}_{p,h} \right\|_{\underline{L}^q(B_r)} \leq  C \sum_{i=1}^{m-1} \left\|  \nabla \psi^{(i)}_{p,h}  \right\|_{\underline{L}^2(B_R)}^{\frac{m}{i}}.
\end{equation*}
Since $\nabla \psi^{(i)}_{p,h}  $ is $\Z^d$-stationary random field, we have by the ergodic theorem, after sending $R\to \infty$, that a.s.
\begin{equation*} 
\left\|  \mathbf{f}^{(m)}_{p,h} \right\|_{\underline{L}^q(\X \cu_0)} \leq  C \sum_{i=1}^{m-1} \E \left[  \left\|  \nabla \psi^{(i)}_{p,h}  \right\|_{L^2(\cu_0)}^2 \right]^{\frac{m}{2i}}.
\end{equation*}
Furthermore, by Lemma~\ref{l.abstractnonsense} and the previous display we get, for $m \in \{1,\ldots,n+2\}$, that
\begin{align} \notag 
 \E \left[  \left\|  \nabla \psi^{(m)}_{p,h} \right\|_{L^2(\cu_0)}^2 \right] 
 &
\leq 
C  \E \left[  \left\| \mathbf{f}^{(m)}_{p,h} \right\|_{L^2(\cu_0)}^2 \right]
\\ \notag & 
\leq  
C \E \left[ \X^d \left\| \mathbf{f}^{(m)}_{p,h} \right\|_{\underline{L}^2(\X_m \cu_0)}^2 \right]
\leq 
C  \sum_{i=1}^{m-1} \E \left[  \left\| \nabla \psi^{(i)}_{p,h} \right\|_{L^2(\cu_0)}^2 \right]^{\frac{m}{i}} .
\end{align}
Observe that the limiting behavior of $\xi_0$ and $\psi^{(1)}_{p,h}$ can be identified via their equations
\begin{equation*} 
-\nabla \cdot \left(\tilde{\a}_p \left(h + \nabla (\phi_{p+h} - \phi_{p}) \right)  \right) = 0  
\quad \mbox{and} \quad 
- \nabla \cdot \left( \a_p \left(h + \nabla \psi^{(1)}_{p,h}) \right)  \right) = 0
\end{equation*}
respectively,  where
\begin{equation*} 
\tilde{\a}_p := \int_0^1 D_p^2L\left( p + th + t \nabla \phi_{p+h} + (1-t)\nabla \phi_{p} ,\cdot \right) \, dt  .
\end{equation*}
By $\Z^d$-stationary of $\nabla \phi_p$ and $\nabla \phi_{p+h}$, implying $\Z^d$-stationarity of $\a_p$ and $\tilde{\a}_p$, we may apply Lemma~\ref{l.abstractnonsense} to obtain
\begin{equation*} 
 \E \left[ \left\|  \nabla \xi_{0}    \right\|_{L^2(\cu_0)}^2 +  \left\|  \nabla \psi^{(1)}_{p,h}   \right\|_{L^2(\cu_0)}^2  \right]^\frac12  \leq C |h| .
\end{equation*}
It then follows inductively that, for $m \in \{1,\ldots,n+2\}$,
\begin{equation*} 
 \E \left[  \left\|  \nabla \psi^{(m)}_{p,h} \right\|_{L^2(\cu_0)}^2 \right]^\frac12  \leq C |h|^m .
\end{equation*}
Using this together with the ergodic theorem and~\eqref{e.C01linsolsapplied1},~\eqref{e.C01linerrorapplied1}, we obtain inductively, for $q\in [2,\infty)$, 
$m \in \{0,\ldots,n+1\}$ and $r \geq \X$, 
\begin{equation*} 
\left\| \nabla \psi^{(m)}_{p,h}   \right\|_{\underline{L}^q(B_r)}  \leq C_q |h|^{m} 
\quad \mbox{and} \quad 
\left\| \nabla \xi_{m-1}   \right\|_{\underline{L}^{q}(B_r)}  \leq C |h|^{m} .
\end{equation*}
Now~\eqref{e.sec3corrbnd} follows by giving up a volume factor. The proof is complete. 
\end{proof}

We next show, again using~\eqref{e.assumption.section3}, that the corrector $\psi^{(n+2)}_{p,h}$ satisfies an~$L^2$--type gradient estimate. 

\begin{lemma} \label{l.psinplus2}
Assume~\eqref{e.assumption.section3} is valid. Let $\mathsf{M} \in [1,\infty)$. Suppose that~$p \in B_{\mathsf{M}}$ and~$h \in \overline{B}_1$. 
There exists~$\psi^{(n+2)}_{p,h}$ satisfying~\eqref{e.mthlinearized.corr} for $m=n+2$ and a constant $C(n,\mathsf{M},\data)$ such that
\begin{equation*} 
 \E \left[  \left\|  \nabla \psi^{(n+2)}_{p,h} \right\|_{L^2(\cu_0)}^2 \right]^{\frac12}  \leq C |h|^{n+2}. 
\end{equation*}
\end{lemma}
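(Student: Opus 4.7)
The plan is to define the forcing term
\begin{equation*}
\mathbf{f}^{(n+2)}_{p,h} := \mathbf{F}_{n+2}\!\left( p + \nabla \phi_p,\, h + \nabla \psi^{(1)}_{p,h},\, \nabla \psi^{(2)}_{p,h},\, \ldots,\, \nabla \psi^{(n+1)}_{p,h},\, \cdot \right),
\end{equation*}
to verify that it is a $\Z^d$-stationary random field lying in $L^2(\Omega;L^2(\cu_0))$ with norm of order $|h|^{n+2}$, and then to invoke Lemma~\ref{l.abstractnonsense} with coefficient field $\a_p$ to produce the unique mean-zero, stationary-gradient solution $\psi^{(n+2)}_{p,h}$ of~\eqref{e.mthlinearized.corr} for $m=n+2$ together with the desired bound. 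The $\Z^d$-stationarity of $\mathbf{f}^{(n+2)}_{p,h}$ follows immediately from (P2), from the stationarity of $L(\cdot,\cdot)$, and from the stationarity of the gradients of the lower-order correctors $\phi_p, \psi^{(1)}_{p,h}, \ldots, \psi^{(n+1)}_{p,h}$ granted by Theorem~\ref{t.correctorestimates}; the uniform ellipticity $I_d \leq \a_p \leq \Lambda I_d$ is a direct consequence of (L2). Hence the only substantive task is the integrability bound on the forcing.

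For this, I would use the explicit expansion~\eqref{e.Fmalt}, which writes $\mathbf{f}^{(n+2)}_{p,h}$ as a finite sum of terms of the form
\begin{equation*}
c_{j,\mathbf{i}}\, D_p^{j+1} L(p + \nabla \phi_p,\cdot)\, \prod_{k=1}^{j} \nabla \psi^{(i_k)}_{p,h}
\end{equation*}
(with the convention that the $i_k = 1$ factors carry the extra affine piece $h$) indexed by $j\in\{2,\ldots,n+2\}$ and multi-indices $\mathbf{i}=(i_1,\ldots,i_j)$ with $i_k \geq 1$ and $\sum_{k=1}^j i_k = n+2$. Since $\|D_p^{j+1}L\|_{L^\infty(\R^d\times\R^d)} \leq C$ by (L1)--(L2), the task reduces to bounding the product $\prod_k \nabla \psi^{(i_k)}_{p,h}$ in $L^2(\Omega; L^2(\cu_0))$. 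Here the key input is Theorem~\ref{t.correctorestimates}: since every $i_k$ lies in $\{1,\ldots,n+1\}$, the stretched exponential estimate~\eqref{e.sec3corrbnd} delivers, for every such $i$, every $q \in [2,\infty)$ and every moment exponent $m \geq 1$, a bound of the form $\E\bigl[\|\nabla \psi^{(i)}_{p,h}\|_{L^q(\cu_0)}^m\bigr]^{1/m} \leq C(q,m,\mathsf{M},\data)\,|h|^i$. Applying H\"older's inequality in~$x$ with exponents $(2j,\ldots,2j)$ and then in~$\omega$ with the same exponents yields
\begin{equation*}
\E\!\left[ \Bigl\| \prod_{k=1}^j \nabla \psi^{(i_k)}_{p,h} \Bigr\|_{L^2(\cu_0)}^2 \right]
\leq \prod_{k=1}^j \E\!\left[ \bigl\| \nabla \psi^{(i_k)}_{p,h} \bigr\|_{L^{2j}(\cu_0)}^{2j} \right]^{1/j}
\leq C\, |h|^{2(n+2)},
\end{equation*}
and summing the finitely many such terms gives $\E\bigl[\|\mathbf{f}^{(n+2)}_{p,h}\|_{L^2(\cu_0)}^2\bigr]^{1/2} \leq C|h|^{n+2}$.

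Lemma~\ref{l.abstractnonsense} then supplies the existence of $\nabla \psi^{(n+2)}_{p,h}$ satisfying~\eqref{e.mthlinearized.corr} for $m=n+2$ with
\begin{equation*}
\E\!\left[ \bigl\| \nabla \psi^{(n+2)}_{p,h} \bigr\|_{L^2(\cu_0)}^2 \right]^{1/2}
\leq C\, \E\!\left[ \bigl\| \mathbf{f}^{(n+2)}_{p,h} \bigr\|_{L^2(\cu_0)}^2 \right]^{1/2}
\leq C|h|^{n+2},
\end{equation*}
which is exactly the claim. I do not anticipate a genuine obstacle: the only delicate point is the H\"older-in-$\omega$ step, and it works precisely because the standing hypothesis~\eqref{e.assumption.section3}, through Theorem~\ref{t.correctorestimates}, delivers stretched exponential (and therefore all polynomial) moments of arbitrarily large $L^q(\cu_0)$ norms of the lower-order correctors. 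This extra integrability beyond the bare $L^2$ bound is exactly what is needed to absorb the $(n+2)$-fold products appearing in the expansion of $\mathbf{F}_{n+2}$; without it, one would be stuck at $L^1$-type control on the right-hand side, which is the same fundamental difficulty described in the introduction for the analogous homogenization problem.
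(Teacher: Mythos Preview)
Your proposal is correct and follows essentially the same approach as the paper: bound $\mathbf{f}^{(n+2)}_{p,h}$ in $L^2(\Omega;L^2(\cu_0))$ using the integrability of the lower-order correctors from~\eqref{e.sec3corrbnd}, then apply Lemma~\ref{l.abstractnonsense}. The paper's one-line proof invokes the already-packaged pointwise inequality~\eqref{e.Fmbasic} rather than re-expanding via~\eqref{e.Fmalt} and H\"older, but this is a cosmetic difference---your H\"older argument is precisely what underlies~\eqref{e.Fmbasic} in the first place.
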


\begin{proof}
The result follows directly by Lemma~\ref{l.abstractnonsense} and~\eqref{e.sec3corrbnd} using~\eqref{e.Fmbasic}. 
\end{proof}

\begin{lemma} \label{l.stupidpsi}
Assume~\eqref{e.assumption.section3} is valid. Suppose that $p \in B_{\mathsf{M}}$ and $h \in \overline{B}_1$. Then, for $m\in \{1,\ldots,n+1\}$, we have, a.s. and a.e.,
\begin{equation} \label{e.gradpsiformula}
\nabla \psi^{(m+1)}_{p,h} = \sum_{i=1}^d h_i\partial_{p_i} \nabla \psi^{(m)}_{p,h} .
\end{equation}
Moreover, for $\beta \in (0,1)$ and $m \in \{1,\ldots,n+1\}$, there exists~$C(\beta,m,\mathsf{M},\data)<\infty$ such that, for $t \in (-1,1)$, 
\begin{equation}  \label{e.sec3contest}
\E \left[ \left\| 
\nabla \psi_{p + th,h}^{(m)} -   \nabla \psi_{p,h}^{(m)} - t  \nabla \psi_{p,h}^{(m+1)}
\right\|_{L^2(\cu_0)}^2 \right]^\frac12  \leq C |h|^{m+1} |t|^{1+\beta}.
\end{equation}
For $m \in \{1,\ldots,n\}$, we can take $\beta =1$ in~\eqref{e.sec3contest}. Finally, we have, for $m \in \{1,\ldots,n+2\}$, that, a.s. and a.e.,
\begin{equation}  \label{e.Fmrelationapplied}
\mathbf{f}^{m+1}_{p,h} = \left( D_p \a_p \cdot h \right) \nabla \psi_{p,h}^{(m)} + D_p \mathbf{f}^{m}_{p,h} \cdot h.
\end{equation}
\end{lemma}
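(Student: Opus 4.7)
I would prove the three identities together by finite induction on $m$, exploiting the algebraic observation that $\nabla \psi^{(k)}_{p,h}$ should arise as the $k$-th directional derivative (in $p$, along $h$) of $\g(p,x) := p + \nabla\phi_p(x)$. The base case is supplied by the first-order linearization estimate \eqref{e.linearization.corr} at order $m=1$: it identifies the directional derivative of $\nabla\phi_p$ in direction $h$ with $h + \nabla\psi^{(1)}_{p,h}$, i.e.\ $D_p\g(p)\cdot h = h + \nabla\psi^{(1)}_{p,h}$. Substituting this into the chain-rule identity \eqref{e.Fmrelation} at $m=1$ immediately yields \eqref{e.Fmrelationapplied} at $m=1$.

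For the inductive step, assume the three statements hold at orders strictly less than $m$. Iterating \eqref{e.gradpsiformula} up to order $m-1$ gives $D_p^k \g \cdot h^{\otimes k} = \nabla \psi^{(k)}_{p,h}$ for each $k \leq m$, and substituting into \eqref{e.Fmrelation} yields \eqref{e.Fmrelationapplied} at order $m$ pointwise in $x$. To prove \eqref{e.sec3contest} at order $m$, I introduce the remainder
\begin{equation*}
R_t^{(m)} := \psi^{(m)}_{p+th,h} - \psi^{(m)}_{p,h} - t\,\psi^{(m+1)}_{p,h} ,
\end{equation*}
subtract the three corrector equations of \eqref{e.mthlinearized.corr} (at $p+th$, at $p$, and for $\psi^{(m+1)}_{p,h}$), and rearrange to obtain
\begin{equation*}
-\nabla \cdot \left( \a_{p+th} \nabla R_t^{(m)} \right)
= \nabla \cdot \left[ \left(\a_{p+th} - \a_p\right) \nabla \psi^{(m)}_{p,h} + t\left(\a_{p+th} - \a_p\right) \nabla \psi^{(m+1)}_{p,h} + \mathbf{f}^{(m)}_{p+th,h} - \mathbf{f}^{(m)}_{p,h} - t\,\mathbf{f}^{(m+1)}_{p,h} \right] .
\end{equation*}
Performing a first-order Taylor expansion of $\a_{p+th}$ around $\a_p$ and of $\mathbf{f}^{(m)}_{p+th,h}$ around $\mathbf{f}^{(m)}_{p,h}$ (in the base argument as well as through the differentiation in $p$ of $\nabla\phi_{p+th}$ and of $\nabla\psi^{(k)}_{p+th,h}$ for $k<m$, using \eqref{e.linearization.corr} and the already established \eqref{e.sec3contest} at lower orders), the identity \eqref{e.Fmrelationapplied} at order $m$ provides an exact cancellation of all $O(t)$ contributions, leaving only second-order Taylor remainders on the right-hand side.

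These remainders can be estimated in $L^2(\cu_0)$ term by term using the Lipschitz bounds \eqref{e.Fmbasic2}--\eqref{e.Fmbasic3} on $\mathbf{F}_m$, the linearization estimate \eqref{e.linearization.corr} (which is applied at order $m$ itself when $m\leq n$, giving $|th|^{m+1}$ decay and hence $\beta=1$), the stationary bounds \eqref{e.sec3corrbnd0}, and the higher integrability \eqref{e.sec3corrbnd} for products of linearized correctors. Applying Lemma~\ref{l.abstractnonsense} to the equation for $R_t^{(m)}$ and using stationarity to control $\E[\|\nabla R_t^{(m)}\|_{L^2(\cu_0)}^2]$ yields \eqref{e.sec3contest}; dividing by $t$ and sending $t\to 0$ in the $L^2(\P;L^2(\cu_0))$ sense then delivers \eqref{e.gradpsiformula} at order $m$ and closes the induction.

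The main technical obstacle is the top-order case $m = n+1$, where \eqref{e.linearization.corr} is only available up to order $n$, so we cannot afford a full second-order Taylor expansion of the nonlinear perturbation of $\nabla\phi_{p+th}$ inside the cubic-and-higher derivatives of $L$ appearing in $\mathbf{f}^{(m)}$. Interpolating between the order-$n$ Lipschitz bound and the Meyers-type higher-integrability bound \eqref{e.sec3corrbnd} via Hölder's inequality trades the missing Lipschitz estimate for any Hölder exponent $\beta<1$, at the cost of a constant depending on $\beta$---this is precisely what produces the endpoint asymmetry in \eqref{e.sec3contest}. A secondary bookkeeping issue is that the tensorial arguments in $D_p\mathbf{f}^{(m)}$ must be paired with those of $\mathbf{f}^{(m+1)}$ exactly via \eqref{e.Fmrelationapplied}; this coupling is the reason the three identities must be propagated in tandem rather than in isolation.
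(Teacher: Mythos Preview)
Your plan is essentially the paper's own argument: derive an equation for the difference quotient, use the identity \eqref{e.Fmrelationapplied} to cancel the first-order terms, estimate the remainders via Lemma~\ref{l.abstractnonsense}, and interpolate at the top order. The only cosmetic difference is that you put $\a_{p+th}$ on the left-hand side whereas the paper uses $\a_p$; either choice works and leads to the same three remainder terms after regrouping.

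One small correction to your diagnosis of the endpoint $m=n+1$. The need for $\beta<1$ is not really due to \eqref{e.linearization.corr} stopping at order~$n$; after all, the second-order Taylor remainder of $\a_{p+th}$ is genuinely $O(t^2)$ (the paper's Step~3). The actual obstruction is a \emph{self-reference}: the leading remainder term in $\mathbf{f}^{(m)}_{p+th,h}-\mathbf{f}^{(m)}_{p,h}-tD_p\mathbf{f}^{(m)}_{p,h}\cdot h$ contains a factor $\nabla\psi^{(m)}_{p+sth,h}-\nabla\psi^{(m)}_{p,h}$, and rewriting this as $st(\nabla\psi^{(m+1)}_{p,h}+\nabla\zeta)$ reintroduces the very quantity you are bounding. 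The paper handles this by the interpolation $|a-b|\le |a-b|^\beta(|a|+|b|)^{1-\beta}$, which yields an estimate of the form $\E[\|\nabla\zeta\|^2]\le C|t|^{2\beta}(1+\E[\|\nabla\zeta\|^2])^\beta + Ct^2$ and is then closed by Young's inequality since $\beta<1$. Your proposed fix (interpolation via H\"older) is exactly this, so the argument goes through; just be aware that the absorption step is where $\beta<1$ is genuinely used. The bootstrap to $\beta=1$ for $m\le n$ then follows, as in the paper's Step~5, once \eqref{e.gradpsiformula} is available one order higher.
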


\begin{proof} \
Fix $p \in \R^d$ and, without loss of generality, $h \in \partial B_1$.  By~\eqref{e.linearization.corr} we have that $p \mapsto p + \nabla \phi_p(x)$ is $C^n$ and $D_p^j \nabla \phi_p(x) h^{\otimes j} = \psi_{p,h}^{(j)}$ for every $j \in \{2,\ldots,n\}$ almost surely for almost every $x$. Thus~\eqref{e.sec3contest} is valid with $\beta=1$ for $m\in \{1,\ldots,n-1\}$
and, by~\eqref{e.Fmrelation}, 
\begin{equation}  \label{e.e.Fmrelationapplied1}
\mathbf{f}_{p,h}^{n+1}  = \left( D_p \mathbf{f}_{p,h}^{n}  + D_p \a_p \left(  \nabla \psi_{p,h}^{(n)} \right)^{\otimes 1} \right)\cdot h.
\end{equation}
We denote, in short, for $t \neq 0$, 
\begin{equation*} 
\zeta_{p,h,t}^{(n)} := \frac1t \left(  \psi_{p + th,h}^{(n)} -   \psi_{p,h}^{(n)} - t \psi_{p,h}^{(n+1)} \right). 
\end{equation*}
Observe that, by~\eqref{e.sec3corrbnd}, $\E \left[ \left\| 
\nabla \zeta_{p,h,t}^{(n)}
\right\|_{L^2(\cu_0)}^2 \right] < \infty$ for $t\neq 0$.  

\smallskip

\emph{Step 1}. We prove that, for $t\in (-1,1)$, $t\neq 0$,  
\begin{align}  \label{e.sec3contest1}
\E \left[ \left\| 
\nabla \zeta_{p,h,t}^{(n)}
\right\|_{L^2(\cu_0)}^2 \right] 
 &
\leq 
C \E \left[ \left\| D_p \a_{p} \cdot h \left( \nabla \psi_{p,h}^{(n)} - \nabla \psi_{p+th,h}^{(n)} \right) \right\|_{L^2(\cu_0)}^2 \right]
\\ \notag & \quad 
+
C \E \left[ \left\| \frac1t   \left( \a_{p+th} - \a_p - D_p \a_{p} \cdot h  \right) \nabla \psi_{p+th,h}^{(n)} \right\|_{L^2(\cu_0)}^2 \right]
\\ \notag & \quad 
+
C \E \left[ \left\|  \frac1t  \left( \mathbf{f}_{p+th,h}^{n} -  \mathbf{f}_{p,h}^{n} - D_p \mathbf{f}_{p,h}^{n}  \cdot h \right)  \right\|_{L^2(\cu_0)}^2 \right] .
\end{align}
To show~\eqref{e.sec3contest1}, we first claim that the difference quotient solves the equation
\begin{multline}  \label{e.sec3conteqn}
\nabla \cdot \left( \a_p \nabla \zeta_{p,h,t}^{(n)} \right)
=
 t \nabla \cdot \left( D_p \a_{p} \cdot h \left( \nabla \psi_{p,h}^{(n)} - \nabla \psi_{p+th,h}^{(n)} \right) \right)
\\ - \nabla \cdot \left( \left( \a_{p+th} - \a_p - D_p \a_{p} \cdot h  \right) \nabla \psi_{p+th,h}^{(n)} - \left(   \mathbf{f}_{p+th,h}^{n} -  \mathbf{f}_{p,h}^{n} - D_p \mathbf{f}_{p,h}^{n}  \cdot h \right) \right).
\end{multline}
Indeed, then~\eqref{e.sec3contest1} follows by Lemma~\ref{l.abstractnonsense}.  Rewriting 
\begin{align} \notag 
\a_p \nabla \zeta_{p,h,t}^{(n)}&
= \left( \a_{p+th} \nabla \psi_{p+th,h}^{(n)} + \mathbf{f}_{p+th,h}^{n} \right) -  \left( \a_{p} \nabla \psi_{p,h}^{(n)} + \mathbf{f}_{p,h}^{n} \right) - t 
\left( \a_{p} \nabla \psi_{p,h}^{(n+1)} + \mathbf{f}_{p,h}^{n+1} \right)
\\ \notag & \quad
+  t \left( \mathbf{f}_{p,h}^{n+1}  - D_p \a_{p} \cdot h \psi_{p+th,h}^{(n)}   - D_p \mathbf{f}_{p,h}^{n}  \cdot h \right)
\\ \notag & \quad 
 + t D_p \a_{p} \cdot h \left( \nabla \psi_{p,h}^{(n)} - \nabla \psi_{p+th,h}^{(n)} \right)
\\ \notag & \quad
- \left( \a_{p+th} - \a_p - D_p \a_{p} \cdot h  \right) \nabla \psi_{p+th,h}^{(n)} - \left(   \mathbf{f}_{p+th,h}^{n} -  \mathbf{f}_{p,h}^{n} - D_p \mathbf{f}_{p,h}^{n}  \cdot h \right),
\end{align}
we observe that the first three terms on the right are solenoidal by the equations of~$\psi_{p+th,h}^{(n)}$,~$\psi_{p,h}^{(n)}$ and~$\psi_{p,h}^{(n+1)}$, respectively, and the fourth term on the right is zero by~\eqref{e.e.Fmrelationapplied1}. We thus obtain~\eqref{e.sec3conteqn}.

\smallskip

\emph{Step 2}. We will estimate the terms on the right in~\eqref{e.sec3contest1} separately, and in this step we first show that, for $t\in (-1,1)$, $t\neq 0$, 
\begin{equation}  \label{e.sec3contest2}
\E \left[ \left\| D_p \a_{p} \cdot h \left( \nabla \psi_{p,h}^{(n)} - \nabla \psi_{p+th,h}^{(n)} \right) \right\|_{L^2(\cu_0)}^2 \right] \leq 
C |t|^{2\beta}\left( 1 + \E \left[   \left\| \nabla \zeta_{p,h,t}^{(n)} \right\|^2_{L^2(\cu_0)} \right]  \right)^{\beta} .
\end{equation}
By the triangle inequality we have 
\begin{equation*} 
\left| \nabla \psi_{p + th,h}^{(n)} - \nabla \psi_{p,h}^{(n+1)} \right| 
\leq  |t|^{\beta}
\left( \left| \nabla \psi_{p + th,h}^{(n)}\right| + \left|\nabla \psi_{p,h}^{(n)} \right| \right)^{1-\beta} 
\left( \left|  \nabla \psi_{p,h}^{(n+1)} \right|  +   \left| \nabla \zeta_{p,h,t}^{(n)} \right|  \right)^\beta.
\end{equation*}
Therefore, by H\"older's inequality and~\eqref{e.sec3corrbnd},
\begin{align} \notag 
\lefteqn{
 \E \left[ 
\left\| D_p \a_{p} h^{\otimes 1} \left(  \nabla \psi_{p + th,h}^{(n)} - \nabla \psi_{p,h}^{(n)}  \right) \right\|^2_{L^2(\cu_0)}
  \right]
} \quad &
\\ \notag &
 \leq C |t|^{2\beta} \E \left[ 
\left\|  D_p \a_{p} \right\|_{L^{\frac{4}{1-\beta}} (\cu_0) }^2  \left( \left\| \nabla \psi_{p + th,h}^{(n)}\right\|_{L^4(\cu_0)} + \left\| \nabla \psi_{p,h}^{(n)}\right\|_{L^4(\cu_0)}\right)^2
  \right]^{1-\beta}
  \\ \notag &
  \qquad \times 
  \left( \E \left[  \left\| \nabla \psi_{p,h}^{(n+1)} \right\|^2_{L^2(\cu_0)} \right] +
  \E \left[   \left\| \nabla \zeta_{p,h,t}^{(n)}  \right\|^2_{L^2(\cu_0)} \right]  \right)^{\beta}
  \\ \notag &
 \leq
 C  |t|^{2\beta}  \left( 1 +
   \E \left[   \left\| \nabla \zeta_{p,h,t}^{(n)} \right\|^2_{L^2(\cu_0)} \right]  \right)^{\beta},
\end{align}
which is~\eqref{e.sec3contest2}. 

\smallskip

\emph{Step 3}. 
We show that 
\begin{align}  \label{e.sec3contest3}
 \E \left[ \left\| \left( \a_{p+th} - \a_p - D_p \a_{p} \cdot h  \right) \nabla \psi_{p+th,h}^{(n)} \right\|_{L^2(\cu_0)}^2 \right] \leq C t^4 .
\end{align}
We have that 
\begin{equation*} 
\a_{p+th} - \a_p - D_p \a_{p} \cdot h = t^2 \int_0^1 s_1 \int_0^1 D_p^2 \a_{p+s_1s_2 th}  h^{\otimes 2} \, ds_1 \,ds_2 ,
\end{equation*}
and since 
\begin{equation*} 
 D_p^2 \a_{p}\bigg|_{p=z}  h^{\otimes2}  
 = D_p^3 L(z+ \phi_z,\cdot) \left( \nabla \psi_{z,h}^{(2)} \right)^{\otimes 1}  + D_p^4 L(p+ \phi_p,\cdot) \left( h + \nabla \psi_{p,h}^{(1)} \right)^{\otimes 2},
\end{equation*}
we obtain~\eqref{e.sec3contest3} by~\eqref{e.sec3corrbnd}. 

\smallskip

\emph{Step 4}. We then prove that 
 \begin{align}  \label{e.sec3contest4}
 \E \left[ \left\|  \mathbf{f}_{p+th,h}^{n} -  \mathbf{f}_{p,h}^{n} - D_p \mathbf{f}_{p,h}^{n}  \cdot h  \right\|_{L^2(\cu_0)}^2 \right] \leq C |t|^{2(1+\beta)}\left( 1 + \E \left[   \left\| \nabla \zeta_{p,h,t}^{(n)} \right\|^2_{L^2(\cu_0)} \right]  \right)^{\beta} .
\end{align}
Using decomposition~\eqref{e.Fmdecomposed}, we have that 
\begin{align} \notag 
D_p \mathbf{f}_{p,h}^{n} \cdot h 
& 
= D_p \left(  n  D_p \a_p  \cdot h \nabla \psi_{p,h}^{(n-1)}  +    \tilde{\mathbf{f}}_{p,h}^{n}  \right) \cdot h 
\\ & \notag 
= n D_p \a_p  \cdot h \nabla \psi_{p,h}^{(n)} + n D_p^2 \a_p h^{\otimes 2} \nabla \psi_{p,h}^{(n-1)} + D_p \tilde{\mathbf{f}}_{p,h}^{n}
\\  & \notag 
= n D_p \a_p  \cdot h \nabla \psi_{p,h}^{(n)}  +   \mathbf{g}_{p,h}^{n-1},
\end{align}
where 
\begin{equation*} 
\mathbf{g}_{p,h}^{n-1} :=  n D_p^2 \a_p h^{\otimes 2} \nabla \psi_{p,h}^{(n-1)} + D_p \tilde{\mathbf{f}}_{p,h}^{n} 
\end{equation*}
is a function of $\nabla \phi_p,\nabla \psi_{p,h}^{(1)}, \ldots,\nabla \psi_{p,h}^{(n-1)}$ for $n \geq 2$, and thus differentiable in~$p$. In particular, by~\eqref{e.sec3corrbnd},
for every $q \in [2,\infty)$ there is $\delta(q,n,\data)>0$ and $C(q,n,\mathsf{M},\data)<\infty$ such that 
\begin{equation*} 
\left\|          \mathbf{g}_{p,h}^{n-1}  \right\|_{L^q(\cu_0)} + \left\|         D_p \mathbf{g}_{p,h}^{n-1} \right\|_{L^q(\cu_0)}
\leq \O_\delta(C). 
\end{equation*}
Using the above decomposition for $D_p \mathbf{f}_{p,h}^{n} \cdot h$, we compute
\begin{align} \notag 
\lefteqn{
\mathbf{f}_{p+th,h}^{n} -  \mathbf{f}_{p,h}^{n} - D_p \mathbf{f}_{p,h}^{n}  \cdot h 
} \quad &
\\ \notag &
= t \int_0^1 \left( D_p \mathbf{f}_{p+s_1 t h,h}^{n} - D_p \mathbf{f}_{p,h}^{n}  \right) \cdot h \, ds_1
\\ \notag &
= 
n t \int_0^1  D_p \a_{p}  \cdot h \left(  \nabla \psi_{p+s t h,h}^{(n)} - \nabla \psi_{p,h}^{(n)} \right)
 \, ds
  \\ \notag & \quad
+  t^2 \int_0^1 s_1 \int_{0}^1 \left( n D_p^2 \a_{p+s_1s_2 t h} h^{\otimes 2}  \nabla \psi_{p+s_1 t h,h}^{(n)}  +  \left(    D_p \mathbf{g}_{p + s_1s_2 th ,h}^{n-1}   \right) \cdot h  \right) \, ds_1 \, ds_2 .
\end{align}
We have by~\eqref{e.sec3corrbnd} that 
\begin{equation*} 
\left\| D_p^2 \a_{p+s_1s_2 t h} h^{\otimes 2}  \nabla \psi_{p+s_1 t h,h}^{(n)}  +  \left(    D_p \mathbf{g}_{p + s_1s_2 th ,h}^{n-1}   \right) \cdot h \right\|_{L^q(\cu_0)} \leq \O_\delta(C),
\end{equation*}
and therefore 
\begin{equation*} 
\mathbf{f}_{p+th,h}^{n} -  \mathbf{f}_{p,h}^{n} - D_p \mathbf{f}_{p,h}^{n}  \cdot h  =  t \int_0^1  D_p \a_{p}  \cdot h \left(  \nabla \psi_{p+s t h,h}^{(n)} - \nabla \psi_{p,h}^{(n)} \right) + \O_\delta(Ct^2).
\end{equation*}
The right hand side can be estimated with the aid of~\eqref{e.sec3contest2} to obtain~\eqref{e.sec3contest4}.

\smallskip

\emph{Step 5}. Conclusion. Combining~\eqref{e.sec3contest1} with~\eqref{e.sec3contest2},~\eqref{e.sec3contest3} and~\eqref{e.sec3contest4} yields~\eqref{e.sec3contest} by Young's inequality. Now,~\eqref{e.sec3contest} implies~\eqref{e.gradpsiformula} for $m=n$. Therefore, we may replace $n$ by $n+1$ in Steps 1-4 above, and conclude that~\eqref{e.sec3contest} is valid for $m=n+1$ as well, which then gives~\eqref{e.gradpsiformula} for $m=n+1$. Using obtained formula, it is straightforward to show that~\eqref{e.sec3contest} is valid for $m=n$ with $\beta=1$. Indeed, we notice that
\begin{equation*} 
\E \left[ \left\| 
\nabla \psi_{p + th,h}^{(m)} -   \nabla \psi_{p,h}^{(m)} - t  \nabla \psi_{p,h}^{(m+1)} - \frac{t^2}2  \nabla \psi_{p,h}^{(m+2)}
\right\|_{L^2(\cu_0)}^2 \right]^\frac12 \leq C |t|^{2+\beta}, 
\end{equation*}
from which we get~\eqref{e.sec3contest} for $m=n$ with $\beta=1$. Finally,~\eqref{e.sec3contest} implies that $t \mapsto \nabla \phi_{p+th}$ is in $C^{n+2,\beta}$ close to $t=0$, and thus we have that~\eqref{e.Fmrelationapplied} is valid by~\eqref{e.Fmrelation}. The proof is complete.
\end{proof}

\begin{lemma} \label{l.polarizationD_pphi}
Assume~\eqref{e.assumption.section3} is valid. Let $\mathsf{M} \in [1,\infty)$ and $p \in B_{\mathsf{M}}$. Then $p \mapsto p + \nabla \phi_p$ is $(n+2)$ times differentiable with respect to $p$ and, for $q \in [2,\infty)$, there are constants $\delta(q,n,\data) \in \left(0, \tfrac12  \right]$ and $C(q,n,\mathsf{M},\data)<\infty$ such that, for $m \{1,\ldots,n+1\}$, 
\begin{equation*} 
\left\|  D_p^{m} \phi_p \right\|_{L^q(\cu_0)} \leq \O_\delta(C)
\quad \mbox{and} \quad 
\E \left[ \left\| D_p^{n+2} \phi_p \right\|_{L^2(\cu_0)}^2 \right] \leq C.
\end{equation*}
\end{lemma}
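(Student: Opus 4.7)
The plan is to identify the $p$-derivatives $D_p^m \nabla \phi_p \cdot h^{\otimes m}$ with the linearized correctors $\nabla \psi^{(m)}_{p,h}$ (modulo the identity term at order one), and then to extract bounds on the full multilinear operator $D_p^m \nabla \phi_p$ by polarization, feeding in the quantitative estimates already established in Theorem~\ref{t.correctorestimates} and Lemma~\ref{l.psinplus2}.

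First, I would use~\eqref{e.linearization.corr} with $m=1$ to obtain the first-order identification: in the sense of $L^2(\cu_0)$-convergence of difference quotients of stationary fields, $D_p \nabla \phi_p \cdot h = h + \nabla \psi^{(1)}_{p,h}$. Iterating the relation~\eqref{e.gradpsiformula} of Lemma~\ref{l.stupidpsi}, which asserts that $D_p \nabla \psi^{(m)}_{p,h} \cdot h = \nabla \psi^{(m+1)}_{p,h}$ for every $m \in \{1,\ldots,n+1\}$, then chains together to yield $D_p^m \nabla \phi_p \cdot h^{\otimes m} = \nabla \psi^{(m)}_{p,h}$ for every $m \in \{2,\ldots,n+2\}$. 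The quantitative remainder~\eqref{e.sec3contest} ensures that, at each step, the difference quotients genuinely converge in the $L^2(\cu_0)$ norm to the claimed derivative, rather than only giving a formal expansion.

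Next, I would polarize. By~\eqref{e.psimhomogen}, the map $h \mapsto \nabla \psi^{(m)}_{p,h}$ is positively $m$-homogeneous; since $D_p^m$ produces a symmetric $m$-linear object, the standard polarization identity dominates the operator norm of $D_p^m \nabla \phi_p(x)$ pointwise by $C(m)\sup_{h \in \partial B_1}|\nabla \psi^{(m)}_{p,h}(x)|$. Consequently, for $m \in \{1,\ldots,n+1\}$, the estimate~\eqref{e.sec3corrbnd} of Theorem~\ref{t.correctorestimates} yields $\|D_p^m \nabla \phi_p\|_{L^q(\cu_0)} \leq \O_\delta(C)$, and for $m = n+2$, Lemma~\ref{l.psinplus2} polarizes to $\E \left[ \|D_p^{n+2} \nabla \phi_p\|_{L^2(\cu_0)}^2 \right] \leq C$. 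Since $\phi_p$ is defined only up to an additive constant, normalizing it to have zero mean on $\cu_0$ and invoking Poincaré's inequality promotes these gradient bounds to the stated bounds on $D_p^m \phi_p$ itself.

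The main obstacle is making the identification rigorous at the top order $m = n+2$. For intermediate orders $m \leq n$, Lemma~\ref{l.stupidpsi} delivers the $C^{1,1}$ version of~\eqref{e.sec3contest} (with $\beta = 1$), so classical differentiability of $p \mapsto \nabla \psi^{(m)}_{p,h}$ in $L^2(\cu_0)$ is immediate. At the borderline $m = n+1$, only the weaker $C^{1,\beta}$ version is available, but this still suffices to identify $D_p \nabla \psi^{(n+1)}_{p,h} \cdot h$ with $\nabla \psi^{(n+2)}_{p,h}$ as an $L^2(\cu_0)$ limit of difference quotients. Combined with the $L^2$ control from Lemma~\ref{l.psinplus2}, this closes the induction on $m$ up to order $n+2$ and completes the argument.
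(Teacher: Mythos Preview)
Your proposal is correct and follows the paper's proof exactly; the paper's argument is a one-line citation of Lemmas~\ref{l.psinplus2},~\ref{l.stupidpsi},~\ref{l.polarization} and~\eqref{e.sec3corrbnd}, which are precisely the ingredients you assemble. The only place to tighten is the polarization step: since~\eqref{e.sec3corrbnd} is stated for each fixed $h$, use Lemma~\ref{l.polarization} to express each component of $D_p^m\nabla\phi_p$ as a \emph{finite} linear combination of $\nabla\psi^{(m)}_{p,h_j}$ for specific vectors $h_j$ (sums of subsets of basis vectors) rather than passing through $\sup_{h\in\partial B_1}$, so that the $L^q$ and $L^2$ bounds follow termwise via the triangle inequality.
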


The proof of Lemma~\ref{l.polarizationD_pphi} relies on a general principle in polarization based on multilinear analysis, and it is formalized in the following lemma. 

\begin{lemma}
\label{l.polarization}
Let $V$ be a real, finite-dimensional vector space, and let $\Phi:V^{n}\to\mathbb{R}$ be a multilinear, symmetric form, that is,
for all $v_1,\ldots,v_n  \in V$ and any permutation $\sigma$ of $\{1,\ldots,n\}$, we have
\begin{equation*} 
\Phi(v_1,\ldots,v_{n}) = \Phi(v_{\sigma(1)},\ldots,v_{\sigma(n)}) . 
\end{equation*}
For $v\in V$, define $\phi(v):=\Phi(v, v, \dots, v)$.
Then, for $v_{1}, \dots, v_{n}\in V$, the polarization formula
\[
\Phi(v_{1}, \dots, v_{n})=\frac{1}{n!}\sum_{A\subseteq\{1,\dots, n\}}(-1)^{n-|A|}\phi\left(\sum_{j\in A}v_{j}\right)
\]
holds, where the leftmost summation is over all non-empty subsets $A\subseteq \{1,2,\dots, n\}$ and $|A|$ is the number of elements in $A$.
\end{lemma}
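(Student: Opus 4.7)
The plan is the standard polarization argument: expand each $\phi(\sum_{j\in A} v_j)$ by multilinearity, regroup by monomial, and use inclusion--exclusion to kill all non-surjective contributions. Concretely, I would first invoke multilinearity of $\Phi$ together with symmetry to write
\[
\phi\Bigl(\sum_{j\in A} v_j\Bigr)
= \Phi\Bigl(\sum_{j\in A} v_j,\ldots,\sum_{j\in A} v_j\Bigr)
= \sum_{f\colon \{1,\dots,n\}\to A} \Phi(v_{f(1)},\ldots,v_{f(n)}).
\]
Substituting this into the right-hand side of the claimed formula and exchanging the order of summation gives
\[
\sum_{A\subseteq\{1,\dots,n\}} (-1)^{n-|A|}\phi\Bigl(\sum_{j\in A} v_j\Bigr)
= \sum_{f\colon \{1,\dots,n\}\to\{1,\dots,n\}} c_f\,\Phi(v_{f(1)},\ldots,v_{f(n)}),
\qquad c_f := \sum_{A\supseteq \operatorname{Im}(f)} (-1)^{n-|A|}.
\]

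The main step, and the only one requiring any thought, is the evaluation of $c_f$. If $|\operatorname{Im}(f)| = k$, then writing $|A| = k + j$ and counting subsets of the complement of $\operatorname{Im}(f)$ gives $c_f = \sum_{j=0}^{n-k} \binom{n-k}{j}(-1)^{n-k-j}$, which equals $0$ whenever $k < n$ (by the binomial theorem applied to $(1-1)^{n-k}$) and equals $1$ when $k = n$. Thus only bijections $f$ contribute. This is the inclusion--exclusion heart of the polarization identity; once it is in hand the rest is bookkeeping.

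Finally, by the assumed symmetry of $\Phi$, every permutation $f$ of $\{1,\dots,n\}$ gives $\Phi(v_{f(1)},\ldots,v_{f(n)}) = \Phi(v_1,\ldots,v_n)$, and since there are exactly $n!$ such permutations, the right-hand side of the display above equals $n!\,\Phi(v_1,\ldots,v_n)$. Dividing by $n!$ yields the polarization formula. (The term $A=\emptyset$ contributes $\phi(0)=0$ since $\phi$ is $n$-homogeneous with $n\geq 1$, so it is harmless to include or exclude it from the sum.) No obstacle of substance is expected; the argument is purely algebraic and combinatorial.
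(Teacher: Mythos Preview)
Your proof is correct and follows essentially the same approach as the paper: expand by multilinearity, exchange the order of summation, and use the inclusion--exclusion identity $\sum_{B\subseteq S}(-1)^{|B|}=0$ for nonempty $S$ to eliminate all non-surjective $f$. The only cosmetic difference is that the paper invokes symmetry at the outset to index by non-decreasing maps $f$ (with multiplicity coefficients $c_A(f)$), whereas you sum over all maps and apply symmetry at the end; your organization is arguably cleaner.
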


\begin{proof}
We show the equivalent statement that
\[
\sum_{A\subseteq\{1,\dots, n\}}(-1)^{n-|A|}\phi\left(\sum_{j\in A}v_{j}\right)=n!\Phi(v_{1}, \dots, v_{n}),
\]
where the sum on the left is over all non-empty subsets $A$ of $\{1,2,\dots, n\}$.
For this, we begin by expanding each summand $\phi\left(\sum_{j\in A}v_{j}\right)=\Phi\left(\sum_{j\in A}v_{j},\dots, \sum_{j\in A}v_{j}\right)$ fully, as a sum of terms of the form $\Phi(v_{j_{1}}, \dots, v_{j_{n}})$ with $j_1,\dots, j_n\in A$.
Using the symmetry of $\Phi$, each such term can be written as $\Phi(v_{f(1)}, \dots, v_{f(n)})$, with non-decreasing indices $f(1)\leq f(2)\leq \cdots \leq f(n)$ in $A$.
Denote
\begin{equation*} 
\mathcal{M} := \{ f: \{1,\ldots,n\} \to \{1,\ldots,n\} \, : \, f(1) \leq f(2) \leq \ldots \leq f(n) \} 
\end{equation*}
and, for $f \in \mathcal{M}$, 
\begin{equation*} 
 \text{im}\, f := \bigcup_{j \in \{1,\ldots,n\}}\{ f(j)  \} .
\end{equation*}
Letting $c_{A}(f)$ denote the number of ordered $n$-tuples $(j_{1}, \dots, j_{n})$ of elements of $A$ which can be reordered to form $(f(1),\dots, f(n))$, it follows that the expression $\sum_{A\subseteq\{1,\dots, n\}}(-1)^{n-|A|}\phi\left(\sum_{j\in A}v_{j}\right)$ can be expanded to give
\[
\sum_{A\subseteq\{1,\dots, n\}}(-1)^{n-|A|}\sum_{f\in \mathcal{M} , \; \text{ im} \, f \subset  A}c_{A}(f)\Phi(v_{f(1)}, \dots, v_{f(n)}).
\]
Changing the order of summation gives
\[
\sum_{ f\in \mathcal{M} }c_{\{1,\dots, n\}}(f)\Phi(v_{f(1)},\dots, v_{f(n)})\sum_{A\supseteq \text{im}\,f}(-1)^{n-|A|},
\]
where the sum on the right is over all subsets $A$ of $\{1,2,\dots, n\}$ which contain $\text{im}\,f$. 
Each such subset $A$ can be written as $\{f(1), \dots, f(n)\}\cup B$, for some set $B$, possibly empty, satisfying $B\subseteq \{1,2,\dots, n\}\setminus \text{im}\,f $.
Hence, as $|A|=|\text{im}\, f|+|B|$ for $B$ defined in this way, we can write our expression as
\[
\sum_{ f\in \mathcal{M} } c_{\{1,\dots, n\}}(f)\Phi(v_{f(1)},\dots, v_{f(n)})s_{f},
\]
where 
\[
s_{f}=(-1)^{n-|\text{im}\, f| }\sum_{B\subseteq ( \{1,\dots, n\}\setminus \text{im}\, f)}(-1)^{|B|}.
\]
Finally, it is a well-known combinatorial fact that for every nonempty finite set $S$, we have
\begin{equation}
\sum_{B\subseteq S}(-1)^{|B|}= \sum_{j=0}^{|S|} (-1)^j { \binom{|S|}{j}} = 0,
\end{equation}
where the sum on the left is over all subsets $B$ of $S$. Thus, above, we have
\[
s_{f}=0
\]
unless $f(1)< \cdots < f(n)$. Therefore $f(1)=1, f(2)=2, \dots, f(n)=n$, and in this case $s_{f}=1$ and $c_{\{1,\dots, n\}}(f)=n!$. It follows that
\[
\sum_{A\subseteq\{1,\dots, n\}}(-1)^{n-|A|}\phi\left(\sum_{j\in A}v_{j}\right)=n!\Phi(v_{1},\dots, v_{n}),
\]
as was to be shown; this proves the polarization formula.
\end{proof}

\begin{proof}[Proof of Lemma~\ref{l.polarizationD_pphi}]
The lemma follows from Lemmas~\ref{l.psinplus2},~\ref{l.stupidpsi},~\ref{l.polarization}, and~\eqref{e.sec3corrbnd}. 
\end{proof}

We next prove H\"older continuity of $p\mapsto D_p^{n+2} \phi_p$ and $p \mapsto \mathbf{f}^{n+2}_{p,h}$.
\begin{lemma} \label{l.continuityinp}
Assume~\eqref{e.assumption.section3} is valid. Let $\mathsf{M} \in [1,\infty)$ and $\beta \in (0,1)$. Then there is a constant $C(\beta,n,\mathsf{M},\data)<\infty$ such that, for all $p,p' \in B_{\mathsf{M}}$ and $h \in \overline{B}_1\setminus\{0\}$, 
\begin{equation}  \label{e.continuityinp}
\E \left[ \left\| \nabla D_p^{n+2} \phi_p - \nabla D_p^{n+2} \phi_{p'} \right\|_{L^2(\cu_0)}^2 \right]^\frac12 + |h|^{-n-2} \E \left[ \left\| \mathbf{f}^{n+2}_{p',h} - \mathbf{f}^{n+2}_{p,h}   \right\|_{L^2(\cu_0)}^2 \right]^\frac12 \leq C \left| p- p' \right|^\beta.
\end{equation}
\end{lemma}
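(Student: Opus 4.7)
The plan is to reduce the claim to a Hölder-in-$p$ bound on the corrector $\nabla\psi^{(n+2)}_{p,h}$ itself. By iterating~\eqref{e.gradpsiformula} one obtains $\nabla\psi^{(n+2)}_{p,h} = D_p^{n+2}\nabla\phi_p \cdot h^{\otimes(n+2)}$, so the symmetric $(n+2)$-multilinear form $D_p^{n+2}\nabla\phi_p$ may be recovered from its diagonal $h \mapsto \nabla\psi^{(n+2)}_{p,h}$ via the polarization formula of Lemma~\ref{l.polarization}. Similarly, the difference $\mathbf{f}^{n+2}_{p,h} - \mathbf{f}^{n+2}_{p',h}$ is controlled by the structural inequality~\eqref{e.Fmbasic2} applied to $\mathbf{F}_{n+2}$, together with Hölder continuity in $p$ of the lower-order correctors $\nabla\psi^{(j)}_{p,h}$ for $j\leq n+1$. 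We thus reduce matters to proving, for every $\beta \in (0,1)$ and every unit vector $h$,
\begin{equation*}
\E\bigl[\|\nabla\psi^{(n+2)}_{p,h}-\nabla\psi^{(n+2)}_{p',h}\|_{L^2(\cu_0)}^2\bigr]^{1/2}\leq C|h|^{n+2}|p-p'|^\beta.
\end{equation*}

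For this, let $\psi := \psi^{(n+2)}_{p,h} - \psi^{(n+2)}_{p',h}$ and observe that, by subtracting the stationary corrector equations~\eqref{e.mthlinearized.corr} at $p$ and $p'$, $\psi$ satisfies
\begin{equation*}
-\nabla\cdot(\a_p\nabla\psi) = \nabla\cdot\bigl[(\a_p-\a_{p'})\nabla\psi^{(n+2)}_{p',h}\bigr] + \nabla\cdot\bigl[\mathbf{f}^{n+2}_{p,h}-\mathbf{f}^{n+2}_{p',h}\bigr].
\end{equation*}
Lemma~\ref{l.abstractnonsense} reduces the task to bounding the stationary $L^2$-norms of the two right-hand-side vector fields. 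The bound on $\mathbf{f}^{n+2}_{p,h}-\mathbf{f}^{n+2}_{p',h}$ will follow from~\eqref{e.Fmbasic2} combined with the stationary $L^2$-Lipschitz (or nearly so) continuity of $p\mapsto\nabla\psi^{(j)}_{p,h}$ and $p\mapsto\nabla\phi_p$ for $j\leq n+1$ established in Lemma~\ref{l.stupidpsi}, interpolated against the stationary $L^q$-bounds from Theorem~\ref{t.correctorestimates} via Hölder's inequality, producing the desired Hölder estimate for any $\beta<1$.

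The main obstacle lies in the term $(\a_p-\a_{p'})\nabla\psi^{(n+2)}_{p',h}$, since Lemma~\ref{l.psinplus2} only provides an $L^2$-stationary bound on $\nabla\psi^{(n+2)}_{p',h}$, whereas interpolation between the pointwise bound $|\a_p-\a_{p'}|\leq 2\Lambda$ and the stationary $L^q$-Lipschitz bound $\|\a_p-\a_{p'}\|_{L^q(\cu_0)}\leq C|p-p'|$ (itself a consequence of the Lipschitz regularity of $D_p^2 L$ and the stationary Lipschitz continuity of $\nabla\phi_p$ in $p$ coming from Lemma~\ref{l.stupidpsi}) requires higher integrability of $\nabla\psi^{(n+2)}_{p',h}$ to produce a nontrivial Hölder exponent. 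We address this by applying a Meyers--Gehring higher-integrability estimate to the stationary equation $-\nabla\cdot(\a_p\nabla\psi^{(n+2)}_{p,h}) = \nabla\cdot\mathbf{f}^{n+2}_{p,h}$: since~\eqref{e.Fmbasic} and Theorem~\ref{t.correctorestimates} imply that $\mathbf{f}^{n+2}_{p,h}$ lies in every stationary $L^q$-space, an iterated Meyers/Gehring argument raises the integrability of $\nabla\psi^{(n+2)}_{p,h}$ from $L^2$ to $L^q(\cu_0)$ in expectation for any $q<\infty$. Combined with the preceding bounds on $\a_p-\a_{p'}$, interpolation between the Lipschitz $L^q$-bound and the pointwise uniform bound then yields the required Hölder estimate with exponent arbitrarily close to $1$.
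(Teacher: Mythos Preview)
Your overall strategy---subtract the two stationary equations, apply Lemma~\ref{l.abstractnonsense}, and estimate the $\mathbf{f}^{n+2}$-difference and the coefficient-difference term separately---is correct and parallels the paper. For the $\mathbf{f}^{n+2}$-difference the paper takes a slightly more structured route than your appeal to~\eqref{e.Fmbasic2}: it uses the decomposition~\eqref{e.Fmdecomposed} to split off $\tilde{\mathbf{F}}_{n+2}$, which depends only on correctors through order $n$ and is therefore genuinely Lipschitz in~$p$ by Lemma~\ref{l.polarizationD_pphi}; only the leading factor $\nabla\psi^{(n+1)}_{p,h}-\nabla\psi^{(n+1)}_{p',h}$ then requires the interpolation $|a-b|\le(|a|+|b|)^{1-\beta}|a-b|^\beta$ together with H\"older over $\cu_0\times\Omega$. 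Your route reaches the same conclusion with more terms to track. You are also right to single out $(\a_p-\a_{p'})\nabla\psi^{(n+2)}_{p',h}$ as the real obstacle---the paper's written proof in fact glosses over this term.

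There is, however, a genuine error in your handling of that term. The assertion that an ``iterated Meyers/Gehring argument'' lifts $\nabla\psi^{(n+2)}_{p,h}$ to $L^q(\cu_0)$ for every $q<\infty$ is false: with merely bounded measurable coefficients, Meyers yields $L^{2+\delta}$ for one fixed small $\delta(d,\Lambda)$ and does not iterate. Fortunately the overclaim is also unnecessary. What is actually needed is the \emph{annealed} bound $\E\bigl[\int_{\cu_0}|\nabla\psi^{(n+2)}|^{2+\delta}\bigr]\le C|h|^{(n+2)(2+\delta)}$, and this follows from the spatial Meyers inequality on $B_R$ after sending $R\to\infty$ via the ergodic theorem, using $\E[\|\nabla\psi^{(n+2)}\|_{L^2(\cu_0)}^2]\le C|h|^{2(n+2)}$ from Lemma~\ref{l.psinplus2} and the $\O_\delta$-integrability of $\mathbf{f}^{n+2}$ from~\eqref{e.sec3corrbnd}. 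A single H\"older inequality over the product space $\cu_0\times\Omega$ with exponents $\tfrac{2+\delta}{\delta}$ and $\tfrac{2+\delta}{2}$, combined with $\E[\|\a_p-\a_{p'}\|_{L^q(\cu_0)}^q]\le C_q|p-p'|^q$ for all $q<\infty$ (via Lemma~\ref{l.polarizationD_pphi}), then gives full Lipschitz dependence for the coefficient term---stronger than the $\beta$-H\"older you aim for. Note that the purely spatial $L^{2+\delta}$-bound you propose does not by itself suffice: paired only with the second stochastic moment from Lemma~\ref{l.psinplus2}, every H\"older split in $\Omega$ demands moments of $\|\nabla\psi^{(n+2)}\|_{L^{2+\delta}(\cu_0)}$ of order strictly greater than two.
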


\begin{proof}
Fix $\mathsf{M} \in [1,\infty)$ and $\beta \in (0,1)$, and fix $p,p' \in B_{\mathsf{M}}$ and $h \in \overline{B}_1\setminus\{0\}$.
By Lemma~\ref{l.abstractnonsense} and equations of $\psi_{p,h}^{(n+2)}$ and $\psi_{p',h}^{(n+2)}$ we have that 
\begin{equation*} 
\E \left[ \left\|  \nabla \psi_{p,h}^{(n+2)} - \nabla \psi_{p',h}^{(n+2)}\right\|_{L^2(\cu_0)}^2 \right] \leq 
C \E \left[ \left\| \mathbf{f}^{n+2}_{p',h} - \mathbf{f}^{n+2}_{p,h}   \right\|_{L^2(\cu_0)}^2 \right] .
\end{equation*}
Therefore, in view of Lemma~\ref{l.polarization}, it is enough to show that 
\begin{equation}  \label{e.continuityinppre}
\E \left[ \left\| \mathbf{f}^{n+2}_{p',h} - \mathbf{f}^{n+2}_{p,h}   \right\|_{L^2(\cu_0)}^2 \right]^\frac12 \leq C|h|^{n+2} |p-p'|^\beta .
\end{equation}
By~\eqref{e.Fmdecomposed} we may decompose $\mathbf{f}^{n+2}_{p,h}$ as
\begin{align}  \label{e.fnplus2decomposed}
\mathbf{f}^{n+2}_{p,h} & =  (n+2) D_p^{3} L(p + \nabla \phi_p, \cdot)\left((h + \nabla (D_p \phi_{p} h^{\otimes 1})\right)^{\otimes 1} \left( \nabla \psi_{p,h}^{(n+1)} \right)^{\otimes 1}  
\\ \notag & 
\quad 
+  \tilde{\mathbf{F}}_{n+2} \left(p + \nabla \phi_p ,h + \nabla  \left(D_p \phi_{p} h^{\otimes 1}\right) ,\ldots,  \nabla \left( D_p^n \phi_{p} h^{\otimes n}\right) ,\cdot\right) .
\end{align}
Since~$\tilde{\mathbf{F}}_{n+2}$ is $C^{0,1}$ in its first argument and polynomial in its last $n$ arguments, we obtain by homogeneity, the chain rule and Lemma~\ref{l.polarizationD_pphi} that 
\begin{align} \notag 
 & \E \bigg[ 
\bigg\|   \tilde{\mathbf{F}}_{n+2} \left(p {+}\nabla \phi_p ,h {+} \nabla  \left(D_p \phi_{p} h^{\otimes 1}\right)  ,{\ldots},\nabla \left( D_p^n \phi_{p} h^{\otimes n}\right) ,\cdot \right)
\\ \notag & \ \
 - \tilde{\mathbf{F}}_{n+2} \left(p' {+}\nabla \phi_{p'} ,h {+} \nabla \left(D_p \phi_{p'} h^{\otimes 1}\right),{\ldots},\nabla \left( D_p^n \phi_{p'} h^{\otimes n}\right) ,\cdot \right)
\bigg\|_{L^2(\cu_0)}^2 \bigg]^\frac12  
\leq 
C|h|^{n+2} |p-p'| . 
\end{align}
Therefore, the leading term is the first one on the right in~\eqref{e.fnplus2decomposed}. Indeed, we observe by the above display that
\begin{multline*} 
\E \left[ \left\| \mathbf{f}^{n+2}_{p',h} - \mathbf{f}^{n+2}_{p,h}   \right\|_{L^2(\cu_0)}^2 \right]^\frac12 
\\
\leq C \E \left[ \left\| \left| h + \nabla (D_p \phi_{p} h^{\otimes 1})\right| \left| \nabla \psi_{p',h}^{(n+1)} - \nabla \psi_{p,h}^{(n+1)} \right| \right\|_{L^2(\cu_0)}^2 \right]^\frac12 
+ C  |h|^{n+2} |p-p'| 
\end{multline*}
To conclude, we need to estimate the first term on the right. To this end, we use the triangle inequality to get, for any $\beta \in [0,1]$, 
\begin{equation*} 
\left| \nabla \psi_{p',h}^{(n+1)} - \nabla \psi_{p,h}^{(n+1)} \right|
\leq 
\left( 
\left| \nabla \psi_{p',h}^{(n+1)}\right|  + \left| \nabla \psi_{p,h}^{(n+1)} \right| 
\right)^{1-\beta} 
\left( 
\left| \nabla \psi_{p',h}^{(n+1)} - \nabla \psi_{p,h}^{(n+1)} \right| 
\right)^{\beta}.
\end{equation*}
It follows, by H\"older's inequality, that 
\begin{align} \notag 
\lefteqn{
\E \left[ \left\| \mathbf{f}^{n+2}_{p',h} - \mathbf{f}^{n+2}_{p,h}   \right\|_{L^2(\cu_0)}^2 \right]^\frac12 
} \quad &
\\ \notag &
\leq
C \E \left[ \left\| h + \nabla (D_p \phi_{p} h^{\otimes 1}) \right\|_{L^{\frac{4}{1-\beta}}(\cu_0)}^{\frac{4}{1-\beta}} \right]^{\frac {1-\beta}{4} }
\E \left[ \left\| \left|  \nabla \psi_{p',h}^{(n+1)} \right| + \left| \nabla \psi_{p,h}^{(n+1)}\right|   \right\|_{L^4(\cu_0)}^4 \right]^{\frac {1-\beta}{4} } 
\\ \notag & \qquad \times  \E \left[ \left\|  \nabla \psi_{p',h}^{(n+1)} - \nabla \psi_{p,h}^{(n+1)}  \right\|_{L^2(\cu_0)}^2 \right]^{\frac{\beta}{2}} .
\end{align}
By Lemma~\ref{l.polarizationD_pphi} and H\"older's inequality, we get
\begin{align} \notag 
\lefteqn{
 \E \left[ \left\|  \nabla \psi_{p',h}^{(n+1)} - \nabla \psi_{p,h}^{(n+1)}  \right\|_{L^2(\cu_0)}^2 \right]^{\frac{\beta}{2}}
} \quad &
\\ \notag &
 \leq 
  C \E \left[ \left\|  \nabla D_p^{n+1} \phi_{p'}  - \nabla D_p^{n+1} \phi_{p} \right\|_{L^2(\cu_0)}^2 \right]^{\frac{\beta}{2}}  |h|^{\beta(n+1)}
\\ \notag &
\leq 
  C \E \left[ \left\|  \nabla \int_0^1 D_p^{n+2} \phi_{t p' + (1-t)p} \,dt  \right\|_{L^2(\cu_0)}^2 \right]^{\frac{\beta}{2}}  |h|^{\beta(n+1)}|p-p'|^\beta
  \\ \notag &
\leq 
  C \left( \int_0^1 \E \left[ \left\|  \nabla  D_p^{n+2} \phi_{t p' + (1-t)p}  \right\|_{L^2(\cu_0)}^2 \right]^{\frac{1}{2}} \right)^\beta  \,dt  |h|^{\beta(n+1)}|p-p'|^\beta
   \\ \notag &
\leq  
   C |h|^{\beta(n+1)}|p-p'|^\beta,
\end{align}
together with
\begin{equation*} 
\E \left[ \left\| h + \nabla (D_p \phi_{p} h^{\otimes 1}) \right\|_{L^{\frac{4}{1-\beta}}(\cu_0)}^{\frac{4}{1-\beta}} \right]^{\frac {1-\beta}{4} } \leq C|h|
\end{equation*}
and
\begin{equation*} 
\E \left[ \left\| \left|  \nabla \psi_{p',h}^{(n+1)} \right| + \left| \nabla \psi_{p,h}^{(n+1)}\right|   \right\|_{L^2(\cu_0)}^4 \right]^{\frac {1-\beta}{4} }  \leq C|h|^{(1-\beta)(n+1)}. 
\end{equation*}
Consequently, combining above displays yields~\eqref{e.continuityinppre}, finishing the proof.
\end{proof}

\subsection{Smoothness of~\texorpdfstring{$\overline{L}$}{L-bar}}

It is a well-known and easy consequence of qualitative homogenization that $D\overline{L}$ is given by the formula
\begin{equation*}
D\overline{L}(p) 
=
\E \left[ 
\int_{\cu_0}  
D_pL \left( p + \nabla \phi_p(x),x\right)\,dx
\right].
\end{equation*}
In~\cite{AFK}, we proved that $\overline{L}\in C^2$ and $D^2\overline{L}$ is given by the formula
\begin{equation} \label{e.D2pbarL}
D^2\overline{L}(p) h
=
\E \left[ 
\int_{\cu_0}  
\a_p
\left( h + \nabla \psi_{p,h}^{(1)}\right)
\,dx
\right].
\end{equation}
We next generalize this result to higher derivatives of $\overline{L}$ by essentially differentiating the previous formula many times and applying the results of the previous subsection. Moreover, we validate the statement of Theorem~\ref{t.regularity.Lbar} for $n+1$. 

\begin{proposition}
\label{e.Lbar.reg.qualitative}
Assume that~\eqref{e.assumption.section3} is valid. 
Then Theorem~\ref{t.regularity.Lbar} is valid for $n+1$. Moreover, for every $m\in\{ 1,\ldots,n+1\}$ and $h\in\overline{B}_1$, we have the formula 
\begin{equation}
\label{e.formulaDervsL}
D_p^{m+2} \overline{L}(p) h^{\otimes (m+1)} =  \E\left[ \int_{\cu_0}   \left( \a_p \nabla \psi^{(m+1)}_{p,h} + \mathbf{f}^{(m+1)}_{p,h} \right) \right].
\end{equation}
\end{proposition}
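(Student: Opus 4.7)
The plan is to establish the formula~\eqref{e.formulaDervsL} by induction on $m \in \{1,\ldots,n+1\}$, with base case $m=1$ obtained by differentiating once the formula~\eqref{e.D2pbarL} for $D^2\overline{L}$ proved in~\cite{AFK}. The regularity statement $\overline{L} \in C^{n+3,\beta}_{\mathrm{loc}}$ then follows by combining the formula at $m=n+1$ with the H\"older-in-$p$ estimate of Lemma~\ref{l.continuityinp} together with a polarization argument.

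For the base case I differentiate the right side of~\eqref{e.D2pbarL} in $p$ along a direction $h$. The interchange of $\partial_p$ with $\E\int_{\cu_0}$ is justified by dominated convergence, using the $L^2$ difference-quotient estimate~\eqref{e.sec3contest} of Lemma~\ref{l.stupidpsi} together with the $L^q$ bounds~\eqref{e.sec3corrbnd} from Theorem~\ref{t.correctorestimates}. The chain rule combined with the identity $\nabla\psi^{(2)}_{p,h} = h_i\,\partial_{p_i}\nabla\psi^{(1)}_{p,h}$ given by~\eqref{e.gradpsiformula} produces
\begin{equation*}
D_p^3\overline{L}(p)\,h^{\otimes 2} = \E\left[\int_{\cu_0}\left(\a_p \nabla\psi^{(2)}_{p,h} + (D_p\a_p\cdot h)\bigl(h+\nabla\psi^{(1)}_{p,h}\bigr)\right)dx\right],
\end{equation*}
and the short computation $(D_p\a_p\cdot h)(h+\nabla\psi^{(1)}_{p,h}) = D_p^3L(p+\nabla\phi_p,\cdot)(h+\nabla\psi^{(1)}_{p,h})^{\otimes 2} = \mathbf{f}^{(2)}_{p,h}$ closes the case $m=1$.

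For the inductive step at level $m \in \{1,\ldots,n\}$, I differentiate~\eqref{e.formulaDervsL} once more in $p$ along $h$. The same moment bounds and~\eqref{e.gradpsiformula} legitimize the differentiation under the expectation and the identification $h_i\,\partial_{p_i}\nabla\psi^{(m+1)}_{p,h} = \nabla\psi^{(m+2)}_{p,h}$. The product rule then yields
\begin{equation*}
\E\left[\int_{\cu_0}\a_p\nabla\psi^{(m+2)}_{p,h}\,dx\right] + \E\left[\int_{\cu_0}\bigl((D_p\a_p\cdot h)\nabla\psi^{(m+1)}_{p,h} + D_p\mathbf{f}^{(m+1)}_{p,h}\cdot h\bigr)dx\right],
\end{equation*}
and the algebraic identity~\eqref{e.Fmrelationapplied} collapses the second expectation to $\E[\int_{\cu_0}\mathbf{f}^{(m+2)}_{p,h}\,dx]$, giving~\eqref{e.formulaDervsL} at level $m+1$.

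With the formula at $m = n+1$ in hand, for each fixed $h \in \overline{B}_1$ the map $p \mapsto D_p^{n+3}\overline{L}(p)h^{\otimes(n+2)}$ is represented as the expectation of $\a_p \nabla\psi^{(n+2)}_{p,h} + \mathbf{f}^{(n+2)}_{p,h}$, whose $L^2(\cu_0)$ value at $p$ and $p'$ differs by at most $C|p-p'|^\beta$ for every $\beta\in(0,1)$ by Lemma~\ref{l.continuityinp}. The multilinear polarization identity of Lemma~\ref{l.polarization}, applied over $h$, upgrades this scalar H\"older continuity (valid for every $h\in\overline{B}_1$) to H\"older continuity of the full symmetric $(n+3)$-tensor $D_p^{n+3}\overline{L}$, proving $D^2\overline{L} \in C^{n+1,\beta}_{\mathrm{loc}}(\Rd)$ with the quantitative estimate on $B_{\mathsf{M}}$ claimed in Theorem~\ref{t.regularity.Lbar} at level $n+1$. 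The main technical hurdle is the uniform-in-$p$ control of the higher-order $p$-derivatives of the correctors that legitimizes each interchange of $\partial_p$ with $\E\int_{\cu_0}$; this is exactly the content of Theorem~\ref{t.correctorestimates} and Lemmas~\ref{l.psinplus2}--\ref{l.polarizationD_pphi}, which in turn depend crucially on the large-scale regularity Theorem~\ref{t.regularity.linerrors} at level $n$ furnished by the inductive hypothesis~\eqref{e.assumption.section3}.
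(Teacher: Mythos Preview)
Your proposal is correct and follows essentially the same route as the paper: induction on $m$ for the formula~\eqref{e.formulaDervsL} using~\eqref{e.gradpsiformula} and~\eqref{e.Fmrelationapplied} at each step, followed by H\"older continuity of $D_p^{n+3}\overline{L}$ via Lemma~\ref{l.continuityinp} and polarization. One small point: Lemma~\ref{l.continuityinp} controls $\nabla D_p^{n+2}\phi_p-\nabla D_p^{n+2}\phi_{p'}$ and $\mathbf{f}^{(n+2)}_{p,h}-\mathbf{f}^{(n+2)}_{p',h}$ separately, so the cross term $(\a_p-\a_{p'})\nabla\psi^{(n+2)}_{p',h}$ still needs a short H\"older-inequality argument using the Lipschitz bound on $D_p^2L$ and Lemmas~\ref{l.psinplus2}--\ref{l.polarizationD_pphi}; the paper writes this out explicitly, and you should too.
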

\begin{proof} 
Fix $\mathsf{M} \in [1,\infty)$ and $p \in B_{\mathsf{M}}$. We begin by showing~\eqref{e.formulaDervsL}. Starting from~\eqref{e.D2pbarL}, and observing that since
\begin{equation*} 
\mathbf{f}^{(2)}_{p,h} = D_p^3L(p+\nabla \phi_p,\cdot) \left(h + \nabla\psi_{p,h }^{(1)} \right)^{\otimes 2}  ,
\end{equation*}
we have that 
\begin{equation*} 
D_p \left(\a_p \left( h + \nabla \psi_{p,h}^{(1)}\right) \right) \cdot h =  \a_p \nabla \psi_{p,h}^{(2)} + \mathbf{f}^{(2)}_{p,h}. 
\end{equation*}
This implies that
\begin{equation*} 
D_p^3 \overline{L}(p) h^{\otimes 2} = \E\left[ \int_{\cu_0}   \left( \a_p \nabla \psi_{p,h}^{(2)} + \mathbf{f}^{(2)}_{p,h}\right) \right] .
\end{equation*}
Assume then, inductively, that for some $m \in \{3,\ldots,n+1\}$ we have that,  for all $ k \in \{2,\ldots,m\}$,  
\begin{equation} 
\label{e.diffL}
D_p^{k+1} \overline{L}(p) h^{\otimes k} =  \E\left[ \int_{\cu_0}   \left( \a_p \nabla \psi^{(k)}_{p,h} + \mathbf{f}^{(k)}_{p,h} \right) \right].
\end{equation}
We prove that~\eqref{e.diffL} is valid for $k=m+1$ as well.  Differentiating with respect to $p$ yields, using~\eqref{e.gradpsiformula} and~\eqref{e.Fmrelationapplied}, that 
\begin{align} \notag 
D_p^{m+2} \overline{L}(p) h^{\otimes (m+1)} & =  \E\left[ \int_{\cu_0}   \left( D_p \left( \a_p \nabla \psi^{(m)}_{p,h} + \mathbf{f}^{(m)}_{p,h} \right) \cdot h\right) \right]
\\ \notag &
= 
\E\left[ \int_{\cu_0}   \left( \a_p \nabla \psi^{(m+1)}_{p,h} +  \left( D_p \a_p \cdot h \nabla \psi^{(m)}_{p,h} + D_p \mathbf{f}^{(m)}_{p,h} \cdot h \right) \right) \right]
\\ \notag &
= 
\E\left[ \int_{\cu_0}   \left( \a_p \nabla \psi^{(m+1)}_{p,h} +    \mathbf{f}^{(m+1)}_{p,h}  \right) \right],
\end{align}
proving the induction step. This validates~\eqref{e.formulaDervsL}.

\smallskip

To show the regularity of $\overline{L}$, we first observe that Theorem~\ref{t.correctorestimates} and Lemma~\ref{l.psinplus2}, together with~\eqref{e.diffL} and Lemma~\ref{l.polarization}, yield that 
\begin{equation}  \label{e.DpLreg1}
\max_{k \in \{2,\ldots,n+3\} }  \left\| D_p^{k} \overline{L}(p) \right\|_{L^\infty(B_{\mathsf{M}})} \leq C. 
\end{equation}
Fix then $p' \in B_{\mathsf{M}}$. Since  
\begin{equation*} 
\nabla \psi^{(n+2)}_{p,h}  = \nabla \left(D_p^{n+2} \phi_p h^{\otimes (n+2)} \right),
\end{equation*}
decomposing 
\begin{align} \notag 
\lefteqn{
\a_p \nabla \psi^{(m+1)}_{p,h} +    \mathbf{f}^{(m+1)}_{p,h}  - (\a_{p'} \nabla \psi^{(m+1)}_{p',h} +    \mathbf{f}^{(m+1)}_{p',h} )
} \quad &
\\ \notag &
=
\a_p \nabla \left(( D_p^{n+2} \phi_p -   D_p^{n+2} \phi_{p'})  h^{\otimes (n+2)} \right)  
\\ \notag & \quad
+ (\a_p - \a_{p'}) \nabla \left(D_p^{n+2} \phi_{p'})  h^{\otimes (n+2)} \right)   + \left( \mathbf{f}^{(m+1)}_{p,h} - \mathbf{f}^{(m+1)}_{p',h} \right),
\end{align}
and noticing that 
\begin{align} \notag 
 | \a_p(x) - \a_{p'}(x)| & \leq  \left[D_p^2 L(\cdot,x) \right]_{C^{0,1}} \left(|p-p'| +  \left| \nabla \phi_p(x) - \nabla \phi_{p'}(x)\right| \right)
\\ \notag &
\leq C\int_0^1 (1+ \left| \nabla D_p \phi_{t p + (1-t)p'}(x) \right|) \,dt \,  |p-p'|,
\end{align}
we obtain by  Lemmas~\ref{l.polarizationD_pphi} and~\ref{l.continuityinp}, together with H\"older's inequality, that 
\begin{equation*} 
\left| D_p^{n+3} \overline{L}(p) h^{\otimes (n+3)} - D_p^{n+3} \overline{L}(p') h^{\otimes (n+3)} \right| \leq C|h|^{n+3}|p-p'|^\beta. 
\end{equation*}
In view of Lemma~\ref{l.polarization} this yields
\begin{equation*} 
\left| D_p^{n+3} \overline{L}(p)  - D_p^{n+3} \overline{L}(p') \right| \leq C |p-p'|^\beta,
\end{equation*}
proving that $[D_p^{n+3} \overline{L}]_{C^{0,\beta}(B_{\mathsf{M}})} \leq C$. Together with~\eqref{e.DpLreg1} we thus get
\begin{equation*} 
\left\| D_p^2 \overline{L} \right\|_{C^{n+1,\beta}(B_{\mathsf{M}})} \leq C,
\end{equation*}
which is the statement of Theorem~\ref{t.regularity.Lbar} for $n+1$. The proof is complete. 
\end{proof}
\begin{remark} \label{r.barFregularity}
Since we now have that Theorem~\ref{t.regularity.Lbar} is valid for $n+1$, we also have that $p \mapsto \overline{F}_{n+2}(p,\cdot,\ldots,\cdot)$ belongs to $C^{0,\beta}$ for all~$\beta \in (0,1)$.  In particular, for~$\beta \in (0,1)$ and $\mathsf{M}\in [1,\infty)$, there exists $C(n,\beta,\mathsf{M},\data)<\infty$ such that, for every tuplet $(h_1,\ldots,h_{n+1}) \in \R^d \times \ldots \R^d$, we have that 
\begin{equation*} 
\left[ F_{n+2}(\cdot ,h_1,\ldots,h_{n+1})  \right]_{C^{0,\beta}(B_{\mathsf{M}} )} \leq C \sum_{i=1}^{n+1} |h_i|^{\frac{n+2}{i}}.
\end{equation*}
\end{remark}


\subsection{Sublinearity of correctors}

By the ergodic theorem, we have that, for every $p,h\in\Rd$ and $m\in\{1,\ldots,n+1\}$, the correctors and linearized correctors are (qualitatively) sublinear at infinity:
\begin{equation*} \label{}
\left\{
\begin{aligned}
&
\limsup_{r\to \infty} 
\frac1r \left\| \phi_{p} - \left( \phi_p \right)_{B_r} \right\|_{\underline{L}^2(B_r)} = 0, 
\\ & 
\limsup_{r\to \infty} 
\frac1r \left\| \psi_{p,h}^{(m)} - \left( \psi_{p,h}^{(m)} \right)_{B_r} \right\|_{\underline{L}^2(B_r)} = 0, 
\end{aligned}
\right.
\qquad \mbox{$\P$--a.s.}
\end{equation*}
The assumption~\eqref{e.assumption.section3} allows us to give a quantitative estimate of this sublinearity.

\begin{lemma}[Sublinearity of correctors]
\label{l.corr.sublinearity}
Assume~\eqref{e.assumption.section3} is valid. Let $\mathsf{M} \in [1,\infty)$. There exist~$\alpha(\data),\delta(\data) >0$, $C(\mathsf{M},\data)<\infty$ and a random variable $\X$ satisfying $\X \leq \O_\delta(C)$ such that, for every $r\geq \X$,~$p \in B_{\mathsf{M}}$, $h\in \overline{B}_1$ and  $m\in\{1,\ldots,n+1\}$, 
\begin{equation} 
\label{e.correctorsublin}
\left\| \phi_{p} - \left( \phi_p \right)_{B_r} \right\|_{\underline{L}^2(B_r)}
+
\left\| \psi_{p,h}^{(m)} - \left( \psi_{p,h}^{(m)} \right)_{B_r} \right\|_{\underline{L}^2(B_r)}
\leq 
Cr^{1-\alpha}. 
\end{equation}
\end{lemma}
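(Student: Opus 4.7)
The plan is to deduce the quantitative sublinearity~\eqref{e.correctorsublin} from the moment bounds on the gradients of the correctors provided by Theorem~\ref{t.correctorestimates} and Lemma~\ref{l.psinplus2}, combined with the general quantitative ergodic/multiscale-Poincaré machinery developed in~\cite[Chapters~1 and~4]{AKMbook}. Under the hypothesis~\eqref{e.assumption.section3}, each of the fields $\nabla \phi_p$ and $\nabla \psi_{p,h}^{(m)}$, for $m \in \{1,\ldots,n+1\}$, is $\Z^d$--stationary with zero expected spatial mean (by construction) and satisfies stretched-exponential $L^q$ bounds on unit cubes for some $q>2$. Combined with the unit range of dependence~(P1), this input is precisely what the abstract quantitative ergodic theorem requires.

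The first step is to obtain a quantitative decay estimate for the spatial averages of $\nabla \zeta$ when $\zeta \in \{ \phi_p, \psi_{p,h}^{(m)} \}$. Each corrector depends only weakly on the coefficient field far from a given point; this approximate finite range of dependence can be quantified by perturbing the coefficient field outside a large ball and using the large-scale $C^{0,1}$ regularity provided by~\eqref{e.assumption.section3} (namely Theorem~\ref{t.regularity.linerrors} at level $n$) to measure the effect on the gradient. Combining this decorrelation with the CLT-type concentration estimates in~\cite[Appendix~A]{AKMbook} yields, for some $\alpha(d,\Lambda)>0$ and every dyadic scale $r \geq 1$, the bound
\begin{equation*}
\Ll| \fint_{B_r} \nabla \zeta(x)\, dx \Rr| \leq \O_\sigma\Ll( C r^{-\alpha} \Rr).
\end{equation*}

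The second step is to convert this dyadic control of spatial averages into the $\underline{L}^2$-sublinearity of $\zeta$ itself via a multiscale Poincaré inequality (see e.g.~\cite[Proposition~1.8]{AKMbook}), schematically
\begin{equation*}
\Ll\| \zeta - (\zeta)_{B_r} \Rr\|_{\underline{L}^2(B_r)}^2
\leq C r^2 \sum_{k=0}^{\lceil \log_2 r\rceil} 2^{-2k} \Ll| \fint_{B_{2^{k}}} \nabla \zeta \Rr|^2 + C \Ll\| \nabla \zeta \Rr\|_{\underline{L}^2(B_r)}^2.
\end{equation*}
Summing dyadic scales, a union bound over them, and absorbing the small-scale term into the local $L^q$ moment bound on $\nabla \zeta$ produce a random minimal scale $\X \leq \O_\delta(C)$ such that~\eqref{e.correctorsublin} holds for all $r \geq \X$.

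The main obstacle is the first step for the higher linearized correctors $\psi_{p,h}^{(m)}$ with $m \geq 2$: the coefficient field $\a_p$ and the right-hand side $\mathbf{f}_{p,h}^{(m)}$ of~\eqref{e.mthlinearized.corr} depend on the lower-order correctors $\phi_p, \psi_{p,h}^{(1)}, \ldots, \psi_{p,h}^{(m-1)}$, and therefore on the coefficient field in a nonlocal way. I would handle this by inducting on $m$: the induction hypothesis gives sublinearity of the lower-order correctors, which together with the assumed $C^{0,1}$ large-scale regularity and Meyers-type integrability of $\nabla \psi_{p,h}^{(m-1)}$ allows one to propagate the approximate finite range of dependence from $\a_p$ and the $\psi_{p,h}^{(k)}$ for $k<m$ up to $\psi_{p,h}^{(m)}$, completing Step~1.
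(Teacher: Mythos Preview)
Your route differs substantially from the paper's, and there is a real gap. The paper does not go through CLT-type concentration or multiscale Poincar\'e at all. It exploits the hypothesis~\eqref{e.assumption.section3} far more directly: for each scale $r$ it solves the full system of Dirichlet problems for the linearized equations on nested balls with boundary data $\ell_p, \ell_h, 0, \ldots, 0$, and since the corresponding homogenized solutions vanish, the assumed Theorem~\ref{t.linearizehigher} at level $n$ gives $\|\psi_r^{(k+1)}\|_{\underline L^2(B_r)} \leq Cr^{1-\alpha}$ directly, after a comparison via Lemma~\ref{l.diff.linearizedsystem}. The differences $\psi_{2r}^{(k+1)} - \psi_r^{(k+1)}$ are $\a_p$-harmonic in $B_r$, so the large-scale Lipschitz estimate from~\cite{AFK} makes the sequence of gradients Cauchy, and the limit is identified with $\nabla\psi_{p,h}^{(k+1)}$ by stationarity and uniqueness. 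You never invoke Theorem~\ref{t.linearizehigher}, which is precisely the tool that makes the argument short.

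Your sketch has two concrete problems. First, the multiscale Poincar\'e inequality you wrote is wrong: the correct form sums $3^k$ times an $\ell^2$-average of $|(\nabla\zeta)_{z+\cu_k}|$ over \emph{all} subcubes $z+\cu_k$ at scale $k$, not $2^{-k}$ times the single average $\fint_{B_{2^k}}\nabla\zeta$; with your weights the right-hand side is $O(r^2)$, so no sublinearity follows. Second, and more seriously, the ``approximate finite range of dependence'' for $\nabla\psi_{p,h}^{(m)}$ is the crux of the matter and is not supplied by large-scale $C^{0,1}$ alone. Both $\a_p$ and the forcing $\mathbf{f}_{p,h}^{(m)}$ depend on the global objects $\phi_p, \psi_{p,h}^{(1)}, \ldots, \psi_{p,h}^{(m-1)}$; to localize you must replace each by a finite-volume Dirichlet approximant and control all the resulting errors, which is precisely the construction the paper carries out. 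Once those local approximants are in hand, appealing to Theorem~\ref{t.linearizehigher} is much shorter than re-deriving concentration from scratch.
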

\begin{proof}
Fix $p \in B_\mathsf{M}$ and $h \in \overline{B}_1$. Clearly $x \mapsto p\cdot x + \phi_p(x)$ belongs to $\mathcal{L}_1$, and thus the result follows from~\cite[Theorem 1.3]{AFK} as in~\cite[Section 3.4]{AKMbook}. Hence we are left to show that $\psi_{p,h}^{(m)} $ satisfies the estimate in the statement. For $m=1$ the result follows by~\cite[Theorem 5.2]{AFK} and~\cite[Section 3.4]{AKMbook}. We thus proceed inductively. Assume that~\eqref{e.correctorsublin} is valid for $m \in \{1,\ldots,k\}$ for some $k \in \{1,\ldots, n\}$. We then show that it continues to hold for $m=k+1$. Since~\eqref{e.assumption.section3} is valid, by taking $\alpha$ and $\X$ as in Theorem~\ref{t.linearizehigher}, and setting $\Y := \X^{2/\alpha}$, we have that if $R:=\ep^{-1}\geq \Y$, then $\ep^\alpha \X \leq R^{-\frac \alpha2}$. We relabel $\Y$ to $\X$ and $\frac \alpha2$ to $\alpha$. We further take $\X$ larger, if necessary, so that~\cite[Proposition 4.3]{AFK} is at our disposal.  Suppressing both $p$ and $h$ from the notation, we let $\phi_{r}$ and $\psi_{r}^{(m)}$, for $m \in \{1,\ldots,n+1\}$,  solve 
 \begin{equation}
\label{e.corr.hom000}
\left\{
\begin{aligned}
& -\nabla \cdot \left( D_pL(p + \nabla \phi_r ,\cdot )\right)  = 0   & \mbox{in} & \ B_{2^{n+1}r},\\
&-\nabla \cdot \left( D_p^2L(p + \nabla \phi ) \nabla \psi^{(m)}_{r} \right) = \nabla \cdot \f^{(k+1)}    & \mbox{in} & \ B_{2^{n+1-m}r},\\
&-\nabla \cdot \left( D_p^2L(p + \nabla \phi_r ) \nabla \tilde \psi^{(m)}_{r} \right) = \nabla \cdot \f^{(k+1)}_r    & \mbox{in} & \ B_{2^{n+1-m}r},\\
& \phi_r = 0   & \mbox{on} & \ \partial B_{2^{n+1} r}, \\
& \psi^{(m)}_{r} = 0   & \mbox{on} & \ \partial B_{2^{n+1-m}r},
\end{aligned}
\right.
\end{equation}
where
\begin{align} \notag 
\f^{(m)} & := \mathbf{F}_m\left( p+\nabla\phi,h+\nabla\psi^{(1)}, \nabla\psi^{(2)}, \ldots,  \nabla\psi^{(m-1)},\cdot \right) ,
\\ \notag
\f^{(m)}_r & := \mathbf{F}_m\left( p+\nabla\phi,h+\nabla\psi^{(1)}_{r}, \nabla\psi^{(2)}_{r}, \ldots,  \nabla\psi^{(m-1)}_{r},\cdot \right). 
\end{align}
Now, $\phi_r, \psi_r^{(1)},\ldots, \psi_r^{(m-1)}$ all homogenize to zero and we get, by Theorem~\ref{t.linearizehigher}, that, for~$r \geq \X$, 
\begin{equation*} 
\left\| \phi_{r}  \right\|_{\underline{L}^2(B_r)}
+
\left\| \tilde \psi_{r}^{(m)} \right\|_{\underline{L}^2(B_{r})}
\leq 
Cr^{1-\alpha}
. 
\end{equation*}
This and the induction assumption, i.e. that~\eqref{e.correctorsublin} is valid for $m \in \{1,\ldots,k\}$,  together with Lemma~\ref{l.diff.linearizedsystem} below, imply that 
\begin{equation*} 
\frac1r \left\|  \psi_r^{(k+1)}  - \tilde \psi_r^{(k+1)}  \right\|_{\underline{L}^2 \left( B_{r} \right)} + \left\| \f^{(k+1)}  - \f_r^{(k+1)}  \right\|_{\underline{L}^2 \left( B_{r} \right)} \leq C r^{-\alpha}.
\end{equation*}
Combining the previous two displays yields
\begin{equation*} 
\left\| \phi_{r}  \right\|_{\underline{L}^2(B_r)}
+
\left\| \psi_{r}^{(k+1)} \right\|_{\underline{L}^2(B_{r})}
\leq 
Cr^{1-\alpha}
. 
\end{equation*}
Now, since $\psi_{2r}^{(k+1)}- \psi_{r}^{(k+1)}$ is $\a_p$-harmonic in $B_r$, we have by the Lipschitz estimate~\cite[Proposition 4.3]{AFK} that, for $r \geq \X$ and $t \in [\X,r]$, 
\begin{equation*} 
\left\|\nabla \psi_{2r}^{(k+1)} - \nabla \psi_{r}^{(k+1)}  \right\|_{\underline{L}^2(B_{t})} 
\leq 
\frac{C}{r} \left\| \psi_{2r}^{(k+1)} -  \psi_{r}^{(k+1)}  \right\|_{\underline{L}^2(B_{r})}
\leq 
Cr^{-\alpha}
.
\end{equation*}
Therefore, by compactness, there exists $\hat \psi^{(k+1)}$ such that, for $t \in [\X,r]$,  
\begin{equation*} 
\frac1r \left\| \hat  \psi^{(k+1)}  \right\|_{\underline{L}^2(B_{r})}  + \left\|\nabla \psi_{r}^{(k+1)} - \nabla \hat  \psi^{(k+1)}  \right\|_{\underline{L}^2(B_{t})} \leq C r^{-\alpha}
.
\end{equation*}
Proceeding now as in~\cite[Section 3.4]{AKMbook} proves that $\nabla \hat \psi^{(k+1)}$ is $\Z^d$-stationary. Finally, by integration by parts we also obtain that $\E \left[ \int_{\cu_0} \nabla \hat \psi^{(k+1)} \right] = 0$ and, therefore, since $\hat \psi^{(k+1)}$ solves the same equation as $\psi^{(k+1)}$, by the uniqueness we have that 
$\hat \psi^{(k+1)} = \psi^{(k+1)}$ up to a constant. The proof is hence complete by the previous display. 
\end{proof}

Above we made use of a lemma, which roughly states that if two solutions of the systems of linearized equations are close in $L^2$ then their gradients are also close. This  lemma will also be applied repeatedly in the following sections. 

\begin{lemma}
\label{l.diff.linearizedsystem}
Suppose that~\eqref{e.assumption.section4} holds. Let~$q\in [2,\infty)$ and $\mathsf{M}\in [1,\infty)$. There exist~$\delta(q,\data)>0$,~$C(q,\mathsf{M},\data) <\infty$ and a random variable~$\X \leq \O_{\delta}\left( C \right)$ such that the following holds. Let $R\geq \mathcal{X}$, $N\leq n+1$ and $\left(u,w_1,\ldots,w_{N}\right),\left(\tilde{u},\tilde{w}_1,\ldots,\tilde{w}_{N} \right) \in (H^1(B_R))^{N+1}$ each be a solution of the system of the linearized equations, that is, for every
$m\in\{1,\ldots,N\}$, we have 
\begin{equation*}
\left\{
\begin{aligned}
&
\left\| \nabla u \right\|_{\underline{L}^2(B_R)} \vee 
\left\| \nabla v \right\|_{\underline{L}^2(B_R)} \leq \mathsf{M}
\\ & 
-\nabla \cdot \left( D_pL(\nabla u,x) \right) = 0
\quad \mbox{and} \quad -\nabla \cdot \left( D_pL(\nabla v,x) \right) = 0
\quad \mbox{in} \ B_R,\\
& 
-\nabla \cdot  \left( D^2_pL\left( \nabla u,x \right) \nabla w_m \right) = \nabla \cdot \left( \mathbf{F}_m(\nabla u,\nabla w_1,\ldots,\nabla w_{m-1},x)\right) \quad \mbox{in}  \ B_R,
\end{aligned}
\right.
\end{equation*}
and the same holds with $\left(\tilde{u},\tilde{w}_1,\ldots,\tilde{w}_{N} \right)$ in place of~$\left(u,w_1,\ldots,w_{N}\right)$. Then 
\begin{equation} 
\label{e.diff.utildeu}
\left\| \nabla u- \nabla\tilde{u} \right\|_{\underline{L}^q(B_{R/2})} 
\leq  
\frac CR\left\|  u- \tilde{u} \right\|_{\underline{L}^2(B_R)}
\end{equation}
and, denoting 
\begin{equation*} \label{}
\mathsf{h}_i
:=
 \frac 1R\left\|  w_i - (w_i)_{B_R} \right\|_{\underline{L}^2(B_R)} + \frac 1R\left\|  \tilde{w}_i - (\tilde{w}_i)_{B_R} \right\|_{\underline{L}^2(B_R)},
\end{equation*}
we have, for every $m\in\{1,\ldots,N\}$,
\begin{align} 
\label{e.diff.linearizedsystem.wm}
\lefteqn{
\left\| \nabla w_m - \nabla\tilde{w}_m \right\|_{\underline{L}^2(B_{R/2})} 
}
\\ & \qquad   \notag
\leq 
C \left( \frac 1R\left\|  u- \tilde{u} \right\|_{\underline{L}^2(B_R)} \right) 
\sum_{i=1}^{m-1} \mathsf{h}_i^{\frac mi}
+
C\sum_{i=1}^{m}
\frac1R \left\| w_i - \tilde{w}_i \right\|_{\underline{L}^{2}(B_{R})}
\sum_{j=1}^{m-i} 
\mathsf{h}_j^{\frac{m-i}{j}}.
\end{align}
and
\begin{align} 
\label{e.diff.linearizedsystem.Fm}
\lefteqn{
\left\| \mathbf{F}_m\left(\nabla u, \nabla w_{1} ,\ldots,\nabla w_{m-1},\cdot\right) - \mathbf{F}_m\left(\nabla \tilde{u}, \nabla \tilde{w}_{1} ,\ldots,\nabla \tilde{w}_{m-1},\cdot\right) \right\|_{\underline{L}^2(B_{R/2})} 
}
\\ & \qquad   \notag
\leq 
C \left( \frac 1R\left\|  u- \tilde{u} \right\|_{\underline{L}^2(B_R)} \right) 
\sum_{i=1}^{m-1} \mathsf{h}_i^{\frac mi}
+
C\sum_{i=1}^{m-1}
\frac1R \left\| w_i - \tilde{w}_i \right\|_{\underline{L}^{2}(B_{R})}
\sum_{j=1}^{m-i} 
\mathsf{h}_j^{\frac{m-i}{j}}.
\end{align}
\end{lemma}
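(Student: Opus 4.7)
The plan is to proceed by induction on $m$, using the large-scale $C^{0,1}$ estimates with arbitrary $L^q$-integrability from Theorem~\ref{t.regularity.linerrors} (made available under the inductive hypothesis~\eqref{e.assumption.section4}) as the main quantitative input, the deterministic Caccioppoli inequality for uniformly elliptic linear divergence-form equations, and the algebraic bound~\eqref{e.Fmbasic2} on differences of $\mathbf{F}_m$. First,~\eqref{e.diff.utildeu} is essentially the $m=0$ case of Theorem~\ref{t.regularity.linerrors}: the difference $\xi_0 := u - \tilde u$ solves the linear equation
\begin{equation*}
-\nabla \cdot \left( \bar{\a}(x)\, \nabla \xi_0 \right) = 0 \quad \mbox{in } B_R, \qquad \bar{\a}(x) := \int_0^1 D_p^2 L\bigl( t \nabla u(x) + (1-t)\nabla \tilde u(x), x \bigr)\,dt,
\end{equation*}
and the bound~\eqref{e.C01linerror} with $m=0$ and arbitrary~$q$ yields~\eqref{e.diff.utildeu} after noting that $\|\xi_0 - (\xi_0)_{B_R}\|_{\underline L^2(B_R)} \leq \|u - \tilde u\|_{\underline L^2(B_R)}$.

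Next, the estimate~\eqref{e.diff.linearizedsystem.Fm} follows directly from the pointwise bound~\eqref{e.Fmbasic2}: after applying H\"older's inequality, the right-hand side is controlled by a product of $|\nabla u - \nabla \tilde u|$ in some $L^{q_1}$-norm (bounded by~\eqref{e.diff.utildeu}), of $|\nabla w_i|$ and $|\nabla \tilde w_i|$ in some $L^{q_2}$-norm (bounded by~\eqref{e.C01linsols}, which produces precisely the $\mathsf{h}_i^{m/i}$-type factors), and of $|\nabla w_i - \nabla \tilde w_i|$ in $L^2$ (controlled by the inductive hypothesis for $i < m$). Turning to~\eqref{e.diff.linearizedsystem.wm}, we subtract the equations for $w_m$ and $\tilde w_m$ to see that $z_m := w_m - \tilde w_m$ solves
\begin{equation*}
-\nabla \cdot \left( D_p^2 L(\nabla u,\cdot)\, \nabla z_m \right) = \nabla \cdot \mathbf{G}_m \quad \mbox{in } B_R,
\end{equation*}
where $\mathbf{G}_m$ is the sum of $\bigl(D_p^2 L(\nabla u,\cdot) - D_p^2 L(\nabla \tilde u,\cdot)\bigr)\nabla \tilde w_m$ and the difference $\mathbf{F}_m(\nabla u, \nabla w_1, \ldots, \cdot) - \mathbf{F}_m(\nabla \tilde u, \nabla \tilde w_1, \ldots, \cdot)$. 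The deterministic Caccioppoli inequality
\begin{equation*}
\|\nabla z_m\|_{\underline L^2(B_{R/2})} \leq \frac{C}{R}\|z_m - (z_m)_{B_R}\|_{\underline L^2(B_R)} + C\|\mathbf{G}_m\|_{\underline L^2(B_R)}
\end{equation*}
then reduces the claim to bounding $\|\mathbf{G}_m\|_{\underline L^2(B_R)}$, which is done exactly as in the proof of~\eqref{e.diff.linearizedsystem.Fm}, using the Lipschitz continuity of $D_p^2 L$ in its first argument (assumption~(L1)) to handle the coefficient-difference piece.

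The main obstacle will be the combinatorial bookkeeping: each $\mathsf{h}_i^{m/i}$-factor on the right of the estimates is produced by a specific choice of H\"older exponents, and these must be tracked carefully to ensure the induction closes with the correct powers all the way up to $m = n+1$. Since Theorem~\ref{t.regularity.linerrors} furnishes $L^q$-bounds on the individual gradients for every $q \in [2,\infty)$, the chain of H\"older interpolations does close; the delicate point is verifying that the exponents $m/i$ and $(m-i)/j$ balance correctly at each step.
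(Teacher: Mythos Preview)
Your overall architecture matches the paper's: \eqref{e.diff.utildeu} from the $m=0$ case of Theorem~\ref{t.regularity.linerrors}, then the pointwise bound~\eqref{e.Fmbasic2}, H\"older, and an induction on $m$ using the equation for $z_m:=w_m-\tilde w_m$. There is, however, one genuine integrability gap in your scheme.

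You propose to close the loop with the plain Caccioppoli inequality, which gives only $\|\nabla z_i\|_{\underline L^2}$ on a smaller ball for $i<m$. But to bound $\|\mathbf{G}_m\|_{\underline L^2}$ you must control products of the form $|\nabla z_i|\cdot\bigl(|\nabla w_j|+|\nabla\tilde w_j|\bigr)^{(m-i)/j}$ in $L^2$. Theorem~\ref{t.regularity.linerrors} gives the second factor only in $L^q$ for \emph{finite} $q$, not in $L^\infty$; hence H\"older with $\nabla z_i\in L^2$ yields the product only in $L^{2q/(q+2)}$, which is strictly below $L^2$ no matter how large $q$ is. The induction therefore does not close at the $L^2$ level. (There is also a domain mismatch: your Caccioppoli step needs $\mathbf G_m$ on $B_R$, but the inductive control of $\nabla z_i$ is only on $B_{R/2}$.)

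The paper repairs exactly this point by replacing Caccioppoli with the interior \emph{Meyers} estimate, which upgrades the inductive control to $\|\nabla z_i\|_{\underline L^{2+\delta_i}}$ with a small but positive exponent. One runs the argument on a nested sequence of balls $B_{2^{-m-1}R}\subset B_{2^{-m}R}$ and with a decreasing sequence $\delta_m:=2^{-m}\delta_0$, so that at each step one has $\nabla z_i\in L^{2+\delta_i}$ with $\delta_i>\delta_m$; then H\"older (putting $\nabla z_i$ in $L^{p+\delta}$ and the $\nabla w_j$-factors in a finite but very large $L^{q}$) lands the product in $L^{2+\delta_m}$, and Meyers on $B_{2^{-m-1}R}$ produces $\nabla z_m\in L^{2+\delta_m}$, closing the induction. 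A final covering argument returns the estimate to $B_{R/2}$. Your outline becomes correct once you substitute Meyers for Caccioppoli and insert this nested-ball, decreasing-exponent bookkeeping.
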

\begin{proof}
Let us define $\a(x):= D^2_pL(\nabla u,x)$ and $\mathbf{f}_m(x):= \mathbf{F}_m(\nabla u,\nabla w_1,\ldots,\nabla w_{m-1},x)$ and analogously define $\tilde{\a}$ and~$\tilde{\mathbf{f}}_m$. We assume that $R \geq 2^{m+2}\X$, where $\X$ is as in Theorem~\ref{t.regularity.linerrors} for $n$, valid by the assumption of~\eqref{e.assumption.section4}.

\smallskip

The estimate~\eqref{e.diff.utildeu} is just the estimate for $\xi_0$ in Theorem~\ref{t.regularity.linerrors}. It also implies
\begin{equation} 
\label{e.estaatilde}
\left\| \a - \tilde{\a} \right\|_{\underline{L}^q(B_{R/2})} \leq 
\frac CR\left\|  u- \tilde{u} \right\|_{\underline{L}^2(B_R)}.
\end{equation}
By~\eqref{e.Fmbasic2}, we have that 
\begin{align*} 
&
\left| \f_m - \tilde{\f}_m  \right|
\\ & \quad\notag
\leq 
C \left| \nabla u - \nabla \tilde{u} \right| 
\sum_{i=1}^{m-1}
\left( \left| \nabla w_i \right| \vee\left| \nabla \tilde{w}_i \right| \right)^{\frac mi}
+
C \sum_{i=1}^{m-1} 
\left| \nabla w_i - \nabla \tilde{w}_i \right| 
\sum_{j=1}^{m-i} 
\left( \left| \nabla w_j \right| \vee\left| \nabla \tilde{w}_j \right| \right)^{\frac{m-i}{j}}.
\end{align*}
Using H\"older's inequality and applying Theorem~\ref{t.regularity.linerrors} for~$n$, we obtain, for any $p\in [2,\infty)$ and $\delta>0$, 
\begin{align}
\label{w.fmdfifs} 
\lefteqn{
\left\| \f_m - \tilde{\f_m} \right\|_{L^{p}(B_{2^{-m}R})}
} \quad & 
\\ &\notag
\leq 
C \left( \frac 1R\left\|  u- \tilde{u} \right\|_{\underline{L}^2(B_R)} \right) 
\sum_{i=1}^{m-1} \mathsf{h}_i^{\frac mi}
+ 
C\sum_{i=1}^{m-1}
\left\| \nabla w_i - \nabla \tilde{w}_i \right\|_{\underline{L}^{p+\delta}(B_{2^{-m}R})}
\sum_{j=1}^{m-i} 
\mathsf{h}_j^{\frac{m-i}{j}}. 
\end{align}
We observe that $\zeta:= w_m-\tilde{w}_m$ satisfies the equation
\begin{equation} 
\label{e.differe.wm}
-\nabla \cdot \a\nabla \zeta = 
\nabla \cdot \left( \f_m - \tilde{\f}_m \right) + \nabla \cdot \left( \left( \a-\tilde{\a} \right) \nabla \tilde{w}_m\right) \quad \mbox{in} \ B_R. 
\end{equation}
By Meyer's estimate, if $\delta >0$ is small enough, then 
\begin{align*} \label{}
\left\| \nabla w_m - \nabla \tilde{w}_m \right\|_{\underline{L}^{2+\delta}(B_{2^{-m-1}R})}
&
\leq 
C\left\| \f_m - \tilde{\f_m} \right\|_{L^{2+\delta}(B_{2^{-m}R})}
+
\frac CR \left\|   w_m -   \tilde{w}_m \right\|_{\underline{L}^{2}(B_{R})}
\\ & \qquad
+ \left\| \a-\tilde{\a} \right\|_{\underline{L}^5(B_{R/2})} 
\left\| \nabla \tilde{w}_m \right\|_{\underline{L}^5(B_{R/2})}.
\end{align*}
Combining these and using~\eqref{e.estaatilde} and the validity of Theorem~\ref{t.regularity.linerrors}, we get 
\begin{align*} \label{}
\left\| \nabla w_m - \nabla \tilde{w}_m \right\|_{\underline{L}^{2+\delta}(B_{2^{-m-1}R})}
&
\leq 
\frac CR \left\|   w_m -   \tilde{w}_m \right\|_{\underline{L}^{2}(B_{R})}
+
C \left( \frac 1R\left\|  u- \tilde{u} \right\|_{\underline{L}^2(B_R)} \right) 
\sum_{i=1}^{m-1} \mathsf{h}_i^{\frac mi}
\\ 
&\qquad 
+ 
C\sum_{i=1}^{m-1}
\left\| \nabla w_i - \nabla \tilde{w}_i \right\|_{\underline{L}^{2+2\delta}(B_{2^{-m}R})}
\sum_{j=1}^{m-i} 
\mathsf{h}_j^{\frac{m-i}{j}}.
\end{align*}
Taking $\delta_0$ sufficiently small and putting $\delta_m:=2^{-m}\delta_0$, we get by induction (using Young's inequality and rearranging several sums) that, for every $m\in\{1,\ldots,N\}$, 
\begin{align*} \label{}
& \left\| \nabla w_m - \nabla \tilde{w}_m \right\|_{\underline{L}^{2+\delta_m}(B_{2^{-m-1}R})}
\\ & \qquad
\leq 
C \left( \frac 1R\left\|  u- \tilde{u} \right\|_{\underline{L}^2(B_R)} \right) 
\sum_{i=1}^{m-1} \mathsf{h}_i^{\frac mi}
+
C\sum_{i=1}^{m}
\frac1R \left\| w_i - \tilde{w}_i \right\|_{\underline{L}^{2}(B_{R})}
\sum_{j=1}^{m-i} 
\mathsf{h}_j^{\frac{m-i}{j}}. 
\end{align*}
Combining this with~\eqref{w.fmdfifs}, we get 
\begin{align*} \label{}
& \left\| \f_m - \tilde{\f_m} \right\|_{L^{2}(B_{2^{-m-1}R})}
\\ & \qquad
\leq 
C \left( \frac 1R\left\|  u- \tilde{u} \right\|_{\underline{L}^2(B_R)} \right) 
\sum_{i=1}^{m-1} \mathsf{h}_i^{\frac mi}
+
C\sum_{i=1}^{m-1}
\frac1R \left\| w_i - \tilde{w}_i \right\|_{\underline{L}^{2}(B_{R})}
\sum_{j=1}^{m-i} 
\mathsf{h}_j^{\frac{m-i}{j}}. 
\end{align*}
These imply~\eqref{e.diff.linearizedsystem.wm} and~\eqref{e.diff.linearizedsystem.Fm} after a covering argument. 
\end{proof}


\section{Quantitative homogenization of the linearized equations}
\label{s.homogenization}

In this section, we suppose that~$n\in \{0,\ldots,\mathsf{N}-1 \}$ is such that   
\begin{equation}
\label{e.assumption.section4}
\left\{
\begin{aligned}
& \  \mbox{Theorem~\ref{t.regularity.Lbar} is valid with $n+1$ in place of $\mathsf{N}$,} \\
& \  \mbox{Theorems~\ref{t.linearizehigher} and~\ref{t.regularity.linerrors} are valid for $n$.}
\end{aligned}\right.
\end{equation}
The goal is to prove that Theorem~\ref{t.linearizehigher} is also valid for~ $n+1$ in place of~$n$. That is, we need to homogenize the $(n+2)$th linearized equation. 

\smallskip

In order to prove homogenization for the $(n+2)$th linearized equation, we follow the procedure used in~\cite{AFK} for homogenizing the first linearized equation. We first show, using the induction hypothesis~\eqref{e.assumption.section4}, that the coefficients $D^2_pL\left(\nabla u^\ep,\tfrac x\ep \right)$ and $\mathbf{F}_{n+1}\left( \tfrac x\ep, \nabla u^\ep, \nabla w_1^\ep, \ldots,\nabla w_{n}^\ep \right)$ can be approximated by random fields which are \emph{local} (they satisfy a finite range of dependence condition) and \emph{locally stationary} (they are stationary up to dependence on a slowly varying macroscopic variable). This then allows us to apply known quantitative homogenization results for linear elliptic equations which can be found for instance in~\cite{AKMbook}. 

\smallskip

\subsection{Stationary, finite range coefficient fields}

\label{ss.stat}
We proceed in a similar fashion as in~\cite[Section 3.4]{AFK} by introducing approximating equations which are stationary and localized.

We fix an integer $k\in\N$ which represents the scale on which we localize. 
Let~$v_{p,z}^{(k)}$ denote the solution of the Dirichlet problem
\begin{equation*}
\left\{
\begin{aligned}
& -\nabla \cdot \left( D_{p}L(\nabla v_{p,z}^{(k)}, x)\right)=0 & \mbox{in} & \ z+\cu_{k+1}, \\
& v_{p,z}^{(k)} = \ell_p & \mbox{on} & \ \partial (z+\cu_{k+1}),
\end{aligned}
\right.
\end{equation*}
where~$\ell_p$ is the affine function $\ell_p(x):=p\cdot x$. 
We then define, for each~$z\in 3^k\Zd$, a coefficient field $\tilde\a_{p,z}^{(k)}$ in $z+\cu_{k+1}$ by
\begin{equation*}
\tilde\a_{p,z}^{(k)}(x) := D_{p}^{2}L(\nabla v_{p,z}^{(k)}(x),x), \quad 
x\in z+\cu_{k+1}
\end{equation*}
and then recursively define, for each $\Theta = (p, h_{1}, \dots, h_{n+1})\in\left(\mathbb{R}^{d}\right)^{n+2}$, $m\in \{1,\ldots,n+1\}$ and $z\in3^k\Zd$, the functions $w_{m,\Theta,z}^{(k)} \in H^1(z+(1+2^{-m})\cu_{k})$ to be the solutions of the sequence Dirichlet problems
\begin{equation*}
\left\{
\begin{aligned}
& -\nabla\cdot\left( \tilde\a_{p,z}^{(k)} \nabla w_{m,\Theta,z}^{(k)} \right)= \nabla \cdot \tilde{\mathbf{F}}_{m,\Theta,z}^{(k)}  & \mbox{in} & \ z+(1+2^{-m})\cu_k, \\
& w_{m,\Theta,z}^{(k)} =\ell_{h_m} & \mbox{on} & \ \partial (z+(1+2^{-m})\cu_k),
\end{aligned}
\right.
\end{equation*}
where $\tilde{\mathbf{F}}_{m,\Theta,z}^{(k)} \in L^2(z+(1+2^{-(m-1)})\cu_k)$ is defined for $m\in \{1,\ldots,n+2\}$ by
\begin{equation}
\tilde{\mathbf{F}}_{m,\Theta,z}^{(k)}(x)
 := 
\mathbf{F}_{m}(\nabla v^{(k)}_{p,z}(x, z+\cu_{k+1}, p), \nabla w^{(k)}_{1,\Theta,z}(x), \ldots, \nabla w^{(k)}_{m-1,\Theta,z}(x),x).
\end{equation}
Finally, we create $3^k\Zd$--stationary fields by gluing the above functions together: for each $x\in\Rd$, we define
\begin{equation*}
\left\{
\begin{aligned}
& 
v^{(k)}_p(x):= v^{(k)}_{p,z}(x) 
\\ & 
w^{(k)}_{m,\Theta} (x),
:=
w^{(k)}_{m,\Theta,z}(x) 
\\ &
\a^{(k)}_p(x):= \tilde\a_{p,z}^{(k)}(x),
\\ & 
\mathbf{F}^{(k)}_{m,\Theta}(x)
:=
\tilde{\mathbf{F}}^{(k)}_{m,\Theta,z}(x),
\end{aligned}
\right.
\qquad \mbox{$z\in 3^k\Zd$ is such that $x\in z+\cu_k$.}
\end{equation*}
Notice that $v^{(k)}_p$ and $w^{(k)}_{m,\Theta}$ might not be $H^1$ functions globally, but we can nevertheless definer their gradients locally in $z+\cu_k$. The $\R^{d(n+3)}$--valued random field $\left( \nabla v^{(k)}_p, \nabla w^{(k)}_{1,\Theta},\ldots,\nabla w^{(k)}_{n+2,\Theta} \right)$ is  $3^{k}\mathbb{Z}^{d}$-stationary and has a range of dependence of at most $3^k\sqrt{15+d}$, by construction. The same is also true of the corresponding coefficient fields, since these are local functions of this random field:
\begin{equation}
\label{e.klocalize}
\left\{ 
\begin{aligned}
& \  \left(\a^{(k)}_p, \mathbf{F}^{(k)}_{2,\Theta},\ldots,\mathbf{F}^{(k)}_{n+2,\Theta} \right) 
\ \  \mbox{is $3^{k}\mathbb{Z}^{d}$-stationary and }
\\ &  \ \ \mbox{has a range of dependence of at most $3^k\sqrt{15+d}$}.
\end{aligned}
\right.
\end{equation}
In the next subsection, we will apply some known quantitative homogenization estimates to the linear equation
\begin{equation}
\label{e.statpde}
-\nabla\cdot\left( \a^{(k)}_p \nabla v \right) = \nabla \cdot \mathbf{F}^{(k)}_{n+2,\Theta}. 
\end{equation}
In order to anchor these estimates, we require some deterministic bounds on the vector field~$\mathbf{F}^{(k)}_{n+2,\Theta}$ and thus on the gradients of the functions $w^{(k)}_{m,\Theta}$ defined above. These bounds are exploding as a large power of~$3^k$, but we will eventually choose $3^k$ to be relatively small compared with the macroscopic scale (so these large powers of $3^k$ will be absorbed).

\begin{lemma}
\label{l.detbounds.wm}
Fix $\mathsf{M}\in [1,\infty)$. 
There exist exponents~$\beta(\data)>0$,~$Q(\data)<\infty$ and a constant~$C(\mathsf{M},\data)<\infty$ such that, for every~$k\in\N$,~$m\in\{ 1,\ldots,n+2\}$ and $\Theta=(p,h_1,\ldots,h_{n+1})\in \R^{d(n+2)}$ with $|p|\leq \mathsf{M}$, we have
\begin{equation}
\label{e.detbounds.Fm}
\left\| \mathbf{F}^{(k)}_{m,\Theta} \right\|_{C^{0,\beta}(\cu_k)} 
\leq 
C  3^{Qk} \sum_{j=1}^{m-1} \left| h_j \right|^{\frac mj}.
\end{equation}
\end{lemma}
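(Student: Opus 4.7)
The plan is to prove deterministic scale-dependent $C^{0,\beta}$ bounds for $\nabla v^{(k)}_{p,z}$ and then, by induction on $m$, for each $\nabla w^{(k)}_{m,\Theta,z}$ on slightly shrinking cubes, using purely \emph{single-scale} elliptic regularity (the homogenization machinery is not needed here). Substituting the resulting bounds into the defining formula for $\mathbf{F}^{(k)}_{m,\Theta}$ and applying~\eqref{e.Fmbasic} together with Young's inequality will yield~\eqref{e.detbounds.Fm}.

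First I would handle the nonlinear piece. The variational characterization of $v^{(k)}_{p,z}$ combined with~(L2) and~(L3) gives the scale-invariant energy bound $\|\nabla v^{(k)}_{p,z}-p\|_{\underline{L}^2(z+\cu_{k+1})}\leq C(|p|+\mathsf{M}_0)$. Since the equation $-\nabla\cdot D_pL(\nabla v,x)=0$ is uniformly elliptic and $D^2_pL$ is Lipschitz in both variables by~(L1), the De Giorgi--Nash--Moser estimate followed by a standard Schauder bootstrap (carried out on a cube of side $3^k$, which is why we lose a polynomial factor $3^{Qk}$) produces exponents $\beta_0(d,\Lambda)>0$ and $Q(d,\Lambda)<\infty$ such that $\nabla v^{(k)}_{p,z}\in C^{0,\beta_0}(z+\tfrac{3}{2}\cu_k)$ with norm bounded by $C\,3^{Qk}(|p|+\mathsf{M}_0)$. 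Consequently $\a^{(k)}_p=D^2_pL(\nabla v^{(k)}_{p,z},\cdot)$ is $C^{0,\beta_0}$ with the same bound, up to the Lipschitz constant of $D^2_pL$.

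Next I would induct on $m\in\{1,\ldots,n+2\}$ and prove
\begin{equation*}
\|\nabla w^{(k)}_{m,\Theta,z}\|_{C^{0,\beta}(z+(1+2^{-m-1})\cu_k)}
\leq C\,3^{Q_m k}\sum_{j=1}^{m}|h_j|^{m/j}.
\end{equation*}
For $m=1$ we have $\mathbf{F}_1\equiv 0$, so $w^{(k)}_{1,\Theta,z}$ solves a linear equation with $C^{0,\beta_0}$ coefficients and affine boundary data $\ell_{h_1}$, and linear Schauder theory gives the claim. For the inductive step, I would use~\eqref{e.Fmbasic}--\eqref{e.Fmbasicgradh} to estimate the Hölder norm of $\tilde{\mathbf{F}}^{(k)}_{m,\Theta,z}$: substituting the already-estimated gradients and applying the elementary inequality $(\sum_{j\leq i}|h_j|^{i/j})^{m/i}\leq C\sum_{j\leq i}|h_j|^{m/j}$ (which is the only combinatorial point needing verification) converts the raw pointwise bound $|\tilde{\mathbf{F}}^{(k)}_{m,\Theta,z}|\leq C\sum_{i<m}|\nabla w^{(k)}_i|^{m/i}$ into the form $C\,3^{Q'_m k}\sum_{j<m}|h_j|^{m/j}$; the Hölder seminorm is handled identically using~\eqref{e.Fmbasicgradp}. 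Linear Schauder theory applied to the equation for $w^{(k)}_{m,\Theta,z}$ on $(1+2^{-m})\cu_k$ (with the slightly smaller target cube $(1+2^{-m-1})\cu_k$) closes the induction.

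The stated estimate~\eqref{e.detbounds.Fm} follows immediately: $\mathbf{F}^{(k)}_{m,\Theta}$ on $\cu_k$ is, by~\eqref{e.Fmalt}, a polynomial in $\nabla w^{(k)}_{1,\Theta},\ldots,\nabla w^{(k)}_{m-1,\Theta}$ with $C^{0,\beta}$ tensor coefficients $D^{j+1}_pL(\nabla v^{(k)}_p,\cdot)$ (these are Hölder by~(L1), since the largest derivative needed is $j+1\leq m\leq n+2\leq\mathsf{N}+1$, well inside the range in~\eqref{e.gradLbndk}). Taking $Q:=\max_{m\leq n+2}Q'_m$ and $\beta$ the smallest exponent produced above gives the conclusion; the $3^k\Z^d$-stationarity of the glued field is preserved since the bound holds uniformly in $z\in 3^k\Z^d$. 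The main technical nuisance—essentially the only one—is the bookkeeping of the scale factor $3^{Qk}$ and the Hölder exponents through the nested induction, but these are entirely routine once the $L^\infty$ and $C^{0,\beta}$ bounds above have been established.
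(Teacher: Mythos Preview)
Your proposal is correct and follows essentially the same route as the paper: obtain a $C^{1,\beta}$ bound on $v^{(k)}_{p,z}$ via nonlinear Schauder theory (the paper cites \cite[Proposition~A.3]{AFK} for this), deduce $C^{0,\beta}$ control on $\tilde{\a}^{(k)}_{p,z}$, and then run an induction on $m$ using linear Schauder estimates on successively shrinking cubes, with the combinatorial reduction of $\bigl(\sum_{j\leq i}|h_j|^{i/j}\bigr)^{m/i}$ via Young's inequality. The only cosmetic differences are that the paper inserts an explicit Caccioppoli step to pass from the $\tilde{\mathbf{F}}^{(k)}_{M,\Theta,z}$ bound to an $L^2$ gradient bound before invoking Schauder, and uses cubes $(1+\tfrac{2}{3}2^{-m})\cu_k$ rather than $(1+2^{-m-1})\cu_k$; also note a harmless off-by-one in your derivative count (in \eqref{e.Fmalt} one has $j\leq m$, so $j+1\leq m+1\leq n+3\leq \mathsf{N}+2$, still within the range of~(L1)).
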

\begin{proof}
By~\cite[Proposition A.3]{AFK}, there exist~$\beta(d,\Lambda)\in (0,1)$ and $C(\mathsf{M},\data)<\infty$ such that, for every $z\in 3^k\Zd$ and $p\in B_{\mathsf{M}}$, 
\begin{align*}
\left[ \nabla v_{p,z}^{(k)} \right]_{C^{0,\beta}(z+2\cu_{k})} 
\leq 
C + C \sup_{x\in z+2\cu_{k}} \left\| \nabla v_{p,z}^{(k)} \right\|_{L^2(B_1(x))} 
&
\leq 
C + C \left\| \nabla v_{p,z}^{(k)} \right\|_{L^2(z+\cu_{k+1})}
\\ & 
\leq 
C + C (1+|p| ) 3^{\frac{kd}2}.
\end{align*}
We deduce the existence of~$C(\mathsf{M},\data)<\infty$ such that
\begin{equation}
\label{e.detbounds.ak}
\left[ \tilde{\a}^{(k)}_{p,z}  \right]_{C^{0,\beta}(z+2\cu_{k})} \leq C  3^{\frac{kd}2}.
\end{equation}
We will argue by induction in~$m\in \{1,\ldots,n+2\}$ that there exist~$Q(\data)<\infty$ and $C(\mathsf{M},\data)<\infty$ such that 
\begin{equation}
\label{e.detbounds.wm.indy}
\left\| \nabla w_{m,\Theta,z}^{(k)} \right\|_{C^{0,\beta}(z+(1+\frac23 2^{-m})\cu_k)} 
\leq 
C  3^{Qk} \sum_{j=1}^{m} \left| h_j \right|^{\frac mj}.
\end{equation}
For $m=1$ the claim follows from Proposition~\ref{p.schauder} and~\eqref{e.detbounds.ak}. Suppose now that there exists $M\in \{2,\ldots,n+2\}$ such that the bound~\eqref{e.detbounds.wm.indy} holds for each $m \in \{1,\ldots,M-1\}$. Then we obtain that, for some $Q(M,\data)<\infty$,  
\begin{equation*}
\left[ \tilde{\mathbf{F}}^{(k)}_{M,\Theta,z} \right]_{C^{0,\beta}(z+(1+2^{-M})\cu_k)} 
\leq 
C  3^{Qk} \sum_{j=1}^{M} \left| h_j \right|^{\frac Mj}
\end{equation*}
By the Caccioppoli inequality, we get 
\begin{equation*}
\left\| \nabla w_{M,\Theta,z}^{(k)} \right\|_{\underline{L}^2(z+(1+\frac34 \cdot 2^{-M})\cu_k)}
\leq 
C  3^{Qk} \sum_{j=1}^{M} \left| h_j \right|^{\frac Mj}.
\end{equation*}
In view of~\eqref{e.detbounds.ak} and the previous two displays,
another application of Proposition~\ref{p.schauder} yields, after enlarging~$Q$ and~$C$, the bound~\eqref{e.detbounds.wm.indy} for~$m=M$. This completes the induction argument and the proof of~\eqref{e.detbounds.wm.indy}. The bound~\eqref{e.detbounds.Fm} immediately follows. 
\end{proof}

By the assumed validity of Theorem~\ref{t.regularity.linerrors} for $n$, we also have that, for each $\mathsf{M},q \in[2,\infty)$ there exist $\delta(q,\data)>0$ and $C(\mathsf{M},q,\data)<\infty$ and a random variable~$\X = \O_\delta(C)$ such that, for every $k\in \N$ with $3^k\geq \X$ and every $m\in\{1,\ldots,n+1\}$ and $\Theta = (p,h_1,\ldots,h_{n+1})\in\R^{d(n+2)}$ with $|p| \leq \mathsf{M}$, we have
\begin{equation} 
\label{e.wmTheta0bounds.X}
\left\| \nabla w^{(k)}_{m,\Theta,0}
\right\|_{\underline{L}^q((1+2^{-m-1})\cu_{k})}
\leq 
C \sum_{j=1}^{m-1} \left| h_j \right|^{\frac mj}
\end{equation}
and hence, for such~$k$ and $\Theta$ and $m\in\{1,\ldots,n+2\}$, 
\begin{equation} 
\label{e.FmTheta0bounds.X}
\left\| \tilde{\mathbf{F}}_{m,\Theta,0}^{(k)}
\right\|_{\underline{L}^q((1+2^{-m-1})\cu_{k})}
\leq 
C \sum_{j=1}^{m-1} \left| h_j \right|^{\frac mj}.
\end{equation}
Observe that~\eqref{e.detbounds.Fm} and~\eqref{e.FmTheta0bounds.X} together imply that, for $\delta$ and $C$ as above and every $m\in\{ 1,\ldots,n+2\}$,
\begin{equation} 
\label{e.FmTheta0bounds.X2}
\left\| \tilde{\mathbf{F}}_{m,\Theta,0}^{(k)}
\right\|_{\underline{L}^q( \cu_{k})}
\leq 
\O_\delta\left( C \sum_{j=1}^{m-1} \left| h_j \right|^{\frac mj} \right).
\end{equation}

We next study the continuity of $\a_p^{(k)}$ and~$\mathbf{F}^{(k)}_{m,\Theta}$ in the parameter~$\Theta$. 

\begin{lemma}[{Continuity of $\a_p^{(k)}$ and~$\mathbf{F}^{(k)}_{m,\Theta}$ in $\Theta$}]
\label{l.contdep.coeffs}
Fix $q\in [2,\infty)$ and $\mathsf{M}\in [1,\infty)$. 
There exist constants~$\delta(q,\data)>0$ and~$Q(q,\data),C(q,\mathsf{M},\data)<\infty$ and a random variable $\X = \O_\delta(C)$ such that, for every~$k\in\N$ with $3^k\geq \X$, $\Theta=(p,h_1,\ldots,h_{n+1})\in \R^{d(n+2)}$ and $\Theta'=(p',h_{1}',\ldots,h_{n+1}')\in \R^{d(n+2)}$
with $|p|,|p'|\leq \mathsf{M}$, 
\begin{equation}
\label{e.contdep.ap}
\left\| \a_p^{(k)} -  \a_{p'}^{(k)} \right\|_{\underline{L}^q(\cu_k)} 
\leq 
C  \left|p-p'\right|
\end{equation}
and, for every~$m\in\{ 1,\ldots,n+2\}$, 
\begin{align} 
\label{e.contdep.Fm}
\lefteqn{
\left\| \mathbf{F}^{(k)}_{m,\Theta} - \mathbf{F}^{(k)}_{m,\Theta'}  \right\|_{\underline{L}^2(\cu_k)} 
} \qquad & 
\\ &  \notag
\leq 
C |p-p'| \sum_{i=1}^{m-1} \left( \left| h_i \right| \vee \left| h_i' \right| \right)^{\frac mi}
+C \sum_{i=1}^{m-1} |h_i-h_i'| \sum_{j=1}^{m-i}
\left( \left| h_j \right| \vee \left| h_j' \right| \right)^{\frac {m-i}j}.
\end{align}
\end{lemma}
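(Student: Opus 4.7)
The plan is to argue by induction in $m \in \{1, \ldots, n+2\}$, with estimate~\eqref{e.contdep.ap} serving as the base input. Throughout, we exploit assumption~\eqref{e.assumption.section4}, which tells us that Theorem~\ref{t.regularity.linerrors} is valid for $n$. In particular, the large-scale $C^{0,1}$-type estimate for differences of solutions allows us to upgrade $L^2$ gradient bounds to $L^q$ bounds whenever we work on scales above the minimal random scale $\X$, and it is this $\X$ that will appear in the conclusion.

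For the first estimate~\eqref{e.contdep.ap}, fix $z \in 3^k \Z^d$ and set $\eta := v_{p,z}^{(k)} - v_{p',z}^{(k)} - \ell_{p-p'}$, so that $\eta \in H_0^1(z + \cu_{k+1})$ and solves
\begin{equation*}
-\nabla \cdot \bigl( \mathbf{b}\, (\nabla \eta + (p-p')) \bigr) = 0 \quad \mbox{in} \ z + \cu_{k+1},
\end{equation*}
where $\mathbf{b}(x) := \int_0^1 D_p^2 L\bigl(t \nabla v_{p,z}^{(k)} + (1-t) \nabla v_{p',z}^{(k)}, x\bigr)\,dt$ is uniformly elliptic. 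A standard energy estimate gives $\|\nabla \eta\|_{\underline{L}^2(z+\cu_{k+1})} \leq C|p-p'|$, and hence $\|\nabla v_{p,z}^{(k)} - \nabla v_{p',z}^{(k)}\|_{\underline{L}^2(z+\cu_{k+1})} \leq C|p-p'|$. Applying Theorem~\ref{t.regularity.linerrors} (valid for $n\geq 0$) with $m=0$ to the pair $(v_{p,z}^{(k)}, v_{p',z}^{(k)})$ then upgrades this to $\|\nabla v_{p,z}^{(k)} - \nabla v_{p',z}^{(k)}\|_{\underline{L}^q(z+\cu_k)} \leq C|p-p'|$, on the interior cube $z+\cu_k$, whenever $3^k \geq \X$. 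Estimate~\eqref{e.contdep.ap} then follows immediately from the Lipschitz bound $[D_p^2 L]_{C^{0,1}} \leq \mathsf{K}_0$ of hypothesis~(L1), applied pointwise.

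The inductive step for the differences $\nabla w_{m,\Theta,z}^{(k)} - \nabla w_{m,\Theta',z}^{(k)}$ is the main work. Assume that, for some $M \in \{1, \ldots, n+1\}$, we have already established bounds of the desired shape on $\|\nabla w_{m,\Theta}^{(k)} - \nabla w_{m,\Theta'}^{(k)}\|_{\underline{L}^q(\cu_k)}$ for all $m \leq M-1$ and $q \in [2,\infty)$, and correspondingly~\eqref{e.contdep.Fm} for $m \leq M$. Observe that $\zeta_M := w_{M,\Theta,z}^{(k)} - w_{M,\Theta',z}^{(k)} - \ell_{h_M - h_M'}$ lies in $H_0^1\bigl(z + (1+2^{-M})\cu_k\bigr)$ and satisfies
\begin{equation*}
- \nabla \cdot \bigl( \tilde{\a}_{p,z}^{(k)} \nabla \zeta_M \bigr)
= \nabla \cdot \Bigl( \tilde{\mathbf{F}}_{M,\Theta,z}^{(k)} - \tilde{\mathbf{F}}_{M,\Theta',z}^{(k)} + \bigl(\tilde{\a}_{p,z}^{(k)} - \tilde{\a}_{p',z}^{(k)}\bigr)\bigl(\nabla w_{M,\Theta',z}^{(k)} + (h_M - h_M')\bigr) \Bigr).
\end{equation*}
An energy estimate, combined with the inductive bound controlling $\tilde{\mathbf{F}}_{M,\Theta,z}^{(k)} - \tilde{\mathbf{F}}_{M,\Theta',z}^{(k)}$ in $\underline{L}^2$, with~\eqref{e.contdep.ap} controlling the coefficient difference, and with~\eqref{e.wmTheta0bounds.X}--\eqref{e.FmTheta0bounds.X2} controlling $\nabla w_{M,\Theta',z}^{(k)}$ in $\underline{L}^q$, yields the $L^2$ version of the sought inequality for the $m=M$ difference (with precisely the algebraic structure in the $h_i,h_i'$ dictated by~\eqref{e.Fmbasic2}). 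Exactly as in the treatment of Lemma~\ref{l.diff.linearizedsystem}, the Meyers estimate coupled with Theorem~\ref{t.regularity.linerrors} for $n$ then upgrades this to $\underline{L}^q$ for any $q \in [2,\infty)$, at the cost of enlarging $\X$. Finally,~\eqref{e.contdep.Fm} for $m = M+1$ is obtained by plugging the gradient-difference bounds into the structural inequality~\eqref{e.Fmbasic2} and invoking H\"older together with Young's inequality to match the stated monomial shape.

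The main technical obstacle is the algebraic bookkeeping in the parameters $(h_1, \ldots, h_{n+1})$ and $(h_1', \ldots, h_{n+1}')$: the right-hand side of~\eqref{e.contdep.Fm} is a specific sum of products of $|h_i - h_i'|$ with monomials $\bigl(|h_j| \vee |h_j'|\bigr)^{(m-i)/j}$, and ensuring that each inductive step reproduces exactly this structure requires repeated use of Young's inequality with carefully chosen exponents, mirroring the combinatorics of~\eqref{e.Fmalt}--\eqref{e.Fmbasic2}. The other delicate point is that the stretched-exponential bounds on $\X$ must be preserved throughout: each application of Theorem~\ref{t.regularity.linerrors} for $n$ costs a finite enlargement of $\X$, but the number of such applications is bounded by $n+2$, so the final $\X$ still satisfies $\X \leq \O_\delta(C)$ for some $\delta(q,\data) > 0$ and $C(q,\mathsf{M},\data) < \infty$.
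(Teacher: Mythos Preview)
Your proposal is correct and follows essentially the same approach as the paper. The only difference is organizational: the paper delegates the core work to Lemma~\ref{l.diff.linearizedsystem}, which already packages the energy estimate for the difference equation, the Meyers-type upgrade, and the induction in $m$, whereas you spell these steps out inline; in particular, the paper obtains~\eqref{e.contdep.ap} directly from~\eqref{e.diff.utildeu} and then invokes~\eqref{e.diff.linearizedsystem.wm}--\eqref{e.diff.linearizedsystem.Fm} for the inductive bound on the $w^{(k)}_{m,\Theta}$ differences, rather than rederiving those estimates in this specific context.
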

\begin{proof}
We take $\X$ to be larger than the random variables in the statements of  Lemma~\ref{l.diff.linearizedsystem} and Theorems~\ref{t.linearizehigher} and~\ref{t.regularity.linerrors} for~$n$. The bound~\eqref{e.contdep.ap} is then an immediate consequence of~\eqref{e.diff.utildeu} and the obvious fact that 
\begin{equation*} \label{}
\left\| \nabla v^{(k)}_{p,0} - \nabla v^{(k)}_{p',0} \right\|_{\underline{L}^2(z+\cu_{k+1})}
\leq C |p-p'|. 
\end{equation*}
We then use the equation for the difference~$w^{(k)}_{m,\Theta} -  w^{(k)}_{m,\Theta'}$ (see~\eqref{e.differe.wm}) and then apply the result of Lemma~\ref{l.diff.linearizedsystem} to obtain, for every $m\in\{1,\ldots,n+1\}$,
\begin{align*} \label{}
\lefteqn{
\left\| \nabla w^{(k)}_{m,\Theta,0} -\nabla  w^{(k)}_{m,\Theta',0}  \right\|_{\underline{L}^2((1+2^{-m})\cu_k)} 
} \quad & 
\\ &
\leq 
C \left| h_m - h_m' \right| 
+
C
\left\| \tilde{\mathbf{F}}^{(k)}_{m,\Theta,0} - \tilde{\mathbf{F}}^{(k)}_{m,\Theta',0}  \right\|_{\underline{L}^2((1+2^{-m})\cu_k)} 
+
C|p-p'| \sum_{i=1}^m \left| h_i \right|^{\frac mi} 
\\ & 
\leq 
C \left| h_m - h_m' \right| 
+
C |p-p'|
\sum_{i=1}^{m-1} \left( \left| h_i \right| \vee \left| h_i' \right| \right)^{\frac mi}
\\ & \quad
+
C\sum_{i=1}^{m-1}
\frac1R \left\|  w^{(k)}_{i,\Theta,0} -  w^{(k)}_{i,\Theta',0} \right\|_{\underline{L}^{2}((1+2^{-i})\cu_k)}
\sum_{j=1}^{m-i} 
\left( \left| h_i \right| \vee \left| h_i' \right| \right)^{\frac{m-i}{j}}.
\end{align*}
By induction we now obtain, for every $m\in\{1,\ldots,n+1\}$, 
\begin{align*} \label{}
\lefteqn{
\left\| \nabla w^{(k)}_{m,\Theta,0} -\nabla  w^{(k)}_{m,\Theta',0}  \right\|_{\underline{L}^2((1+2^{-m})\cu_k)} 
} \quad & 
\\ & 
\leq
C |p-p'| \sum_{i=1}^{m-1} \left( \left| h_i \right| \vee \left| h_i' \right| \right)^{\frac mi}
+C \sum_{i=1}^{m-1} |h_i-h_i'| \sum_{j=1}^{m-i}
\left( \left| h_j \right| \vee \left| h_j' \right| \right)^{\frac {m-i}j}.
\end{align*}
This implies~\eqref{e.contdep.Fm}. 
\end{proof}

By combining~\eqref{e.detbounds.Fm} and~\eqref{e.contdep.Fm} and using interpolation, we obtain the existence of~$\alpha(\data)>0$,~$Q(\data)<\infty$ and $C(\data)>0$ such that, with~$\X$ as in the statement of Lemma~\ref{l.contdep.coeffs}, then for every~$k\in\N$ with $3^k\geq \X$, $m\in\{1,\ldots,n+2\}$, and $\Theta=(p,h_1,\ldots,h_{n+1})\in \R^{d(n+2)}$ and $\Theta'=(p',h_{1}',\ldots,h_{n+1}')\in \R^{d(n+2)}$
with $|p|,|p'|\leq \mathsf{M}$, we have 
\begin{align} 
\label{e.contdep.Fm.Linfty}
\lefteqn{
\left\| \mathbf{F}^{(k)}_{m,\Theta} - \mathbf{F}^{(k)}_{m,\Theta'}  \right\|_{L^\infty(\cu_k)} 
} \quad & 
\\ &  \notag
\leq 
C3^{Qk} |p-p'|^\alpha \sum_{i=1}^{m-1} \left( \left| h_i \right| \vee \left| h_i' \right| \right)^{\frac mi}
+C3^{Qk} \sum_{i=1}^{m-1} |h_i-h_i'|^\alpha \sum_{j=1}^{m-i}
\left( \left| h_j \right| \vee \left| h_j' \right| \right)^{\frac {m-i}j}.
\end{align}
Likewise, we can use~\eqref{e.detbounds.ak} and~\eqref{e.contdep.ap} to obtain 
\begin{equation}
\label{e.contdep.ap.Linfty}
\left\| \a_p^{(k)} -  \a_{p'}^{(k)} \right\|_{\underline{L}^q(\cu_k)} 
\leq 
C 3^{Qk} \left|p-p'\right|^\alpha.
\end{equation}
This variation of Lemma~\ref{l.contdep.coeffs} will be needed below.

\subsection{Setup of the proof of Theorem~\ref{t.linearizehigher}}

We are now ready to begin the proof of the implication
\begin{equation*} \label{}
\mbox{
~\eqref{e.assumption.section4} 
\, $\implies$ \, 
the statement of Theorem~\ref{t.linearizehigher}\,  with~$n+1$ in place of~$n$. 
}
\end{equation*}
We fix parameters $\mathsf{M} \in [1,\infty)$, $\delta>0$ and $\ep\in (0,1)$, 
a sequence of Lipschitz domains $U_1,U_2,\ldots,U_{n+2} \subseteq \cu_0$ satisfying
\begin{equation}
\label{e.domainsdescending}
\overline{U}_{m+1} \subseteq U_m, \quad \forall m\in\{1,\ldots,n+1\}.
\end{equation}
a function $f\in W^{1,2+\delta}(U_1)$ satisfying
\begin{equation}
\label{e.fboundedass}
\left\| \nabla f \right\|_{L^{2+\delta}(U_1)} \leq \mathsf{M},
\end{equation}
and a sequence of boundary conditions 
$g_1\in W^{1,2+\delta}(U_1), \ldots, g_{n+2} \in W^{1,2+\delta}(U_{n+2})$. We let $u^\ep \in f+H^1_0(U_1)$ and $w^\ep_1\in g_1+H^1_0(U_1),\ldots,w^\ep_{n+2} \in g_{n+2}+H^1_0(U_{n+2})$ as well as $\overline{u} \in f+H^1_0(U)$ and $\overline{w}_1\in g_1+H^1_0(U_1),\ldots,\overline{w}_{n+2} \in g_{n+2}+H^1_0(U_{n+2})$ be as in the statement of Theorem~\ref{t.linearizehigher} for~$n+1$ in place of~$n$. 

\smallskip

We denote
\begin{equation}
\label{e.aepbars}
\left\{ 
\begin{aligned}
& \a^\ep(x):= D^2_p L \left( \nabla u^\ep(x),\tfrac x\ep \right), \\
& \ahom(x):= D^2_p \overline{L} \left( \nabla \overline{u}(x) \right).
\end{aligned}
\right.
\end{equation}
and
\begin{equation}
\label{e.Fepbars}
\left\{ 
\begin{aligned}
& \mathbf{F}^\ep_m(x):= \mathbf{F}_{m}\left(\nabla u^\ep(x), \nabla w^\ep_{1}(x), \ldots, \nabla w^\ep_{m-1}(x),\tfrac x\ep \right), \\
& \overline{\mathbf{F}}_m(x):= \overline{\mathbf{F}}_{m}\left(\nabla \overline{u}(x), \nabla \overline{w}_{1}(x), \ldots, \nabla \overline{w}_{m-1}(x)\right).
\end{aligned}
\right.
\end{equation}
We also choose $K\in\N$ to be the unique positive integer satisfying $3^{-K-1}< \ep \leq 3^{-K}$. We write 
\begin{equation} 
\label{e.overlineTheta}
\overline\Theta(x):= \left( \nabla \overline{u}(x),\nabla \overline{w}_{1}(x), \ldots, \nabla \overline{w}_{n+1}(x) \right).
\end{equation}
By the assumption~\eqref{e.assumption.section4}, we only need to homogenize the linearized equation for $m=n+2$. As we have already proved a homogenization result in~\cite[Theorem 1.1]{AFK} for the linearized equation with zero right-hand side, it suffices to prove~\eqref{e.homogenization.estimates} for $m=n+2$ under the assumption that the boundary condition vanishes:
\begin{equation}
\label{e.gvanishass}
g_{n+2}= 0 \quad \mbox{in} \ U_{n+2}.
\end{equation}
We can write the equations for $w_{n+2}^\ep$ and $\overline{w}_{n+2}$ respectively as 
\begin{equation} 
\label{e.wn+2}
\left\{
\begin{aligned}
& -\nabla \cdot  \left( \a^\ep\nabla w_{n+2}^\ep \right) = \nabla \cdot \mathbf{F}^\ep_{n+2}  & \mbox{in} & \ U_{n+2}, \\
& w_{n+2}= 0,  & \mbox{on} & \ \partial U_{n+2},
\end{aligned} 
\right. 
\end{equation}
and
\begin{equation} 
\label{e.wn+2.bar}
\left\{
\begin{aligned}
& -\nabla \cdot  \left( \overline{\a}(x) \nabla \overline{w}_{n+2}^\ep \right) = \nabla \cdot \overline{\mathbf{F}}_{n+2}  & \mbox{in} & \ U_{n+2}, \\
& \overline{w}_{n+2}= 0,  & \mbox{on} & \ \partial U_{n+2}.
\end{aligned} 
\right. 
\end{equation}
Our goal is to prove the following estimate: there exists a constant $C(\mathsf{M},\data)<\infty$, exponents~$\sigma(\data)$ and $\alpha(\data)>0$ and random variable~$\X$ satisfying 
\begin{equation}
\label{e.Xint.wts}
\X = \O_\sigma(C),
\end{equation}
as in the statement of the theorem:
\begin{equation}
\label{e.homogenization.estimates.wts}
\left\| \nabla w_{n+2}^\ep - \nabla \bar{w}_{n+2} \right\|_{H^{-1}(U_m)} 
\leq
\X \ep^{\alpha}   
\sum_{j=1}^{n+1} \left\|  \nabla g_{j} \right\|_{{L}^{2+\delta}(U_{j})}^{\frac{n+2}{j}}.
\end{equation}
To prove~\eqref{e.homogenization.estimates.wts}, we first compare the solution~$w_{n+2}$ to the solution~$\tilde{w}_{n+2}^\ep$ of a second heterogeneous problem, namely
\begin{equation} 
\label{e.wn+2.tilde}
\left\{
\begin{aligned}
& -\nabla \cdot  \left( \tilde{\a}^\ep\nabla \tilde{w}_{n+2}^\ep \right) = \nabla \cdot \tilde{\mathbf{F}}^\ep_{n+2}  & \mbox{in} & \ U_{n+2}, \\
& \tilde{w}_{n+2}^\ep = 0,  & \mbox{on} & \ \partial U_{n+2},
\end{aligned} 
\right. 
\end{equation}
where the coefficient fields~$\tilde{\a}^\ep$ and~$\tilde{\mathbf{F}}^\ep_{n+2}$ are defined in terms of the localized, stationary approximating coefficients (introduced above in Subsection~\ref{ss.stat}) by 
\begin{equation}
\label{e.tildecoeffs}
\left\{ 
\begin{aligned}
& \tilde{\a}^\ep(x)
:= 
\a^{(k)}_{\nabla \overline{u}(x)}\left(\tfrac x\ep\right), \\
& \tilde{\mathbf{F}}^\ep_{n+2} 
:= 
\mathbf{F}^{(k)}_{n+2,\overline\Theta(x)}
\left(\tfrac x\ep\right).
\end{aligned}
\right.
\end{equation}
The parameter~$k\in\N$ will be chosen below in such a way that~$1\ll 3^{k} \ll \ep^{-1}$. We also need to declare a second mesoscopic scale by taking $l\in\N$ such that~$1\ll 3^k \ll 3^l \ll \ep^{-1}$ and, 
for every $m\in\{1,\ldots,n+1\}$,
\begin{equation} 
\label{e.l.mesofitting}
U_{m+1} + \ep\cu_l \subseteq U_m.
\end{equation}
Like~$k$, the parameter~$l$ will be declared later in this section. For convenience we will also take a slightly smaller domain $U_{n+3}$ than $U_{n+2}$, which also depends on~$\ep$ and~$l$ and is defined by 
\begin{equation}
\label{e.defUn+3}
U_{n+3}:= \left\{ x\in U_{n+2} \,:\, x + \ep\cu_l \subseteq U_{n+2}  \right\}.
\end{equation}
Thus we have~\eqref{e.l.mesofitting} for every $m\in\{1,\ldots,n+2\}$. See Subection~\ref{ss.meso} below for more on the choices of the parameters~$k$ and~$l$.

\smallskip

The estimate~\eqref{e.homogenization.estimates.wts} follows from the following two estimates, which are proved separately below (here~$\X$ denotes a random variable as in the statement of the theorem):
\begin{equation} \label{e.homogenization.estimates.wts.1}
\left\| \nabla w_{n+2}^\ep - \nabla \tilde{w}_{n+2}^\ep \right\|_{L^2(U_m)} 
\leq
\X \ep^{\alpha}   
\sum_{j=1}^{n+1} \left\|  \nabla g_{j} \right\|_{{L}^{2+\delta}(U_{j})}^{\frac{n+2}{j}},
\end{equation}
and
\begin{equation} \label{e.homogenization.estimates.wts.2}
\left\| \nabla \tilde{w}_{n+2}^\ep - \nabla \bar{w}_{n+2} \right\|_{H^{-1}(U_m)} 
\leq
\X \ep^{\alpha}   
\sum_{j=1}^{n+1} \left\|  \nabla g_{j} \right\|_{{L}^{2+\delta}(U_{j})}^{\frac{n+2}{j}}.
\end{equation}

\subsection{Estimates on size of the $w_m^\ep$ and $\overline{w}_m$}

To prepare for the proofs of~\eqref{e.homogenization.estimates.wts.1} and~\eqref{e.homogenization.estimates.wts.2}, we present some preliminary bounds on the size of the $w_m^\ep$'s. The first is a set of deterministic bounds representing the ``worst-case scenario'' in which nothing has homogenized up to the current scale. 

\begin{lemma}
\label{l.detbounds.het}
There exist exponents~$\beta(d,\Lambda)\in (0,1)$ and $Q(\data)<\infty$ and a constant~$C(\{ U_m \}, \mathsf{M},\mathsf{K}_0,\data)<\infty$ such that, for every $m\in \{1,\ldots,n+2\}$, 
\begin{equation}
\label{e.detbounds.het}
\left\| \nabla w^\ep_{m} \right\|_{C^{0,\beta}(U_{m+1})} 
\leq 
C \ep^{-Q} \sum_{j=1}^{m} \left\|  \nabla g_{j} \right\|_{{L}^{2+\delta}(U_{j})}^{\frac{m}{j}}.
\end{equation}
\end{lemma}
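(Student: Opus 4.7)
The plan is to prove the estimate by induction on $m \in \{1, \ldots, n+2\}$, following closely the argument already used for the stationary localized problem in Lemma~\ref{l.detbounds.wm}, but now incorporating the fast oscillations of $L(\cdot, x/\ep)$. No homogenization or stochastic input is required; everything reduces to deterministic regularity---the classical interior $C^{1,\beta}$ theory for uniformly convex quasilinear equations, the Caccioppoli/energy estimate, and the Schauder estimate of Proposition~\ref{p.schauder}---which is why $\ep^{-Q}$ blow-up is permitted.

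First I would anchor the induction by bounding $\nabla u^\ep$ and hence the coefficient field $\a^\ep$ itself. Because $L$ is uniformly convex and $D_pL(z,\cdot)$ is Lipschitz with constant $\mathsf{K}_0(1+|z|)$ by~\eqref{e.gradLbnd}, the substitution $y = x/\ep$ makes the spatial Lipschitz constant of $D_pL(\cdot,x/\ep)$ of order $\ep^{-1}$. The classical interior $C^{1,\beta}$ estimate then yields, for every open $V$ compactly contained in $U_1$,
\begin{equation*}
\|\nabla u^\ep\|_{L^\infty(V)} + [\nabla u^\ep]_{C^{0,\beta}(V)} \leq C\ep^{-Q}\bigl(1 + \|\nabla u^\ep\|_{L^2(U_1)}\bigr),
\end{equation*}
for some $\beta(d,\Lambda)\in(0,1)$ and $Q(\data)<\infty$; a standard energy estimate bounds the right-hand side in terms of $\|\nabla f\|_{L^2(U_1)}\leq \mathsf{M}$. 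Differentiating $\a^\ep(x) = D_p^2 L(\nabla u^\ep(x), x/\ep)$ and using the Lipschitz bound~\eqref{e.gradLbndk} on $D_p^3 L$ then gives $[\a^\ep]_{C^{0,\beta}(V)} \leq C\ep^{-Q}$, the extra $\ep^{-\beta}$ arising from the $x/\ep$ slot being absorbed into an enlarged $Q$.

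Next I would carry out the inductive step. Assume~\eqref{e.detbounds.het} holds for every $j \in \{1,\ldots,m-1\}$. Testing the equation for $w_m^\ep$ against $w_m^\ep - g_m$ produces the energy estimate
\begin{equation*}
\|\nabla w_m^\ep\|_{L^2(U_m)} \leq C\bigl(\|\nabla g_m\|_{L^2(U_m)} + \|\mathbf{F}_m^\ep\|_{L^2(U_m)}\bigr).
\end{equation*}
The pointwise bound~\eqref{e.Fmbasic} together with the induction hypothesis gives
\begin{equation*}
\|\mathbf{F}_m^\ep\|_{L^\infty(U_{m+1})} \leq C\ep^{-Q}\sum_{i=1}^{m-1}\|\nabla g_i\|_{L^{2+\delta}(U_i)}^{m/i},
\end{equation*}
while combining~\eqref{e.Fmbasicgradp}--\eqref{e.Fmbasicgradh} with the inductive $C^{0,\beta}$ bounds on $\nabla u^\ep$ and on the previous $\nabla w_i^\ep$'s, together with the $\ep^{-\beta}$ from the $x/\ep$ argument, yields the same bound for the $C^{0,\beta}(U_{m+1})$ seminorm of $\mathbf{F}_m^\ep$ after enlarging $Q$. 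Finally, Proposition~\ref{p.schauder} applied to the linear equation for $w_m^\ep$---with Hölder coefficients $\a^\ep$ and Hölder right-hand side $\mathbf{F}_m^\ep$---delivers the claimed $C^{0,\beta}(U_{m+1})$ estimate on $\nabla w_m^\ep$ with possibly a further enlarged $Q$, thereby closing the induction.

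The only substantive obstacle is bookkeeping: keeping track of which power of $\ep^{-1}$ appears at each inductive step and verifying that a single $Q(\data)$, depending on the maximal order $m\leq n+2 \leq \mathsf{N}+1$, suffices throughout. Since every estimate above depends only polynomially on $\ep^{-1}$, this is a matter of choosing $Q$ large enough and plays no conceptual role; the exponents $m/j$ on the boundary data appear automatically from the structural inequalities~\eqref{e.Fmbasic}--\eqref{e.Fmbasicgradh} for $\mathbf{F}_m$.
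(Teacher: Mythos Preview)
Your proposal is correct and follows essentially the same route as the paper's proof: first obtain a $C^{1,\beta}$ bound on $\nabla u^\ep$ (the paper cites \cite[Proposition~A.3]{AFK}), deduce the H\"older bound on $\a^\ep$, then induct on $m$ using the energy estimate for $\|\nabla w_m^\ep\|_{L^2(U_m)}$ and Proposition~\ref{p.schauder} to pass to $C^{0,\beta}(U_{m+1})$. The only difference is cosmetic---the paper is terser and does not separately spell out the $C^{0,\beta}$ seminorm bound on $\mathbf{F}_m^\ep$ before invoking Schauder.
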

\begin{proof}
By~\cite[Proposition A.3]{AFK}, there exist~$\beta(d,\Lambda)\in (0,1)$, $Q(\data)<\infty$ and $C(\{ U_m\},\mathsf{M},\mathsf{K}_0,\data)<\infty$ such that 
\begin{align*}
\left[ \nabla u^\ep \right]_{C^{0,\beta}(U_1)} 
\leq 
C \ep^{-Q}
\end{align*}
and hence 
\begin{equation}
\label{e.detbounds.aep}
\left[ \a^\ep  \right]_{C^{0,\beta}(U_1)} \leq C  \ep^{-Q}.
\end{equation}
We will argue by induction in~$m\in \{1,\ldots,n+2\}$ that there exists $Q(m,\data)<\infty$ and $C(m,\{U_k\}, \mathsf{M},\mathsf{K}_0,\data)<\infty$ such that 
\begin{equation}
\label{e.detbounds.wm.het.indy}
\left\| \nabla w^\ep_{m} \right\|_{C^{0,\beta}(U_{m+1})} 
\leq 
C \ep^{-Q} \sum_{j=1}^{m} \left| h_j \right|^{\frac mj}.
\end{equation}
For $m=1$ the claim follows from Proposition~\ref{p.schauder} and~\eqref{e.detbounds.aep}. Suppose now that there exists $M\in \{2,\ldots,n+2\}$ such that the bound~\eqref{e.detbounds.wm.het.indy} holds for each $m \in \{1,\ldots,M-1\}$. Then we obtain that, for some $Q(\mathsf{M},\data)<\infty$,  
\begin{equation*}
\left\| \mathbf{F}^{\ep}_M \right\|_{L^2(U_M)} 
\leq 
C  \ep^{-Q} \sum_{j=1}^{M} \left| h_j \right|^{\frac Mj}
\end{equation*}
Then by the basic energy estimate, we get 
\begin{equation*}
\left\| \nabla w^\ep_{M} \right\|_{{L}^2(U_M)}
\leq 
C \ep^{-Q} \sum_{j=1}^{M} \left| h_j \right|^{\frac Mj}.
\end{equation*}
In view of~\eqref{e.detbounds.ak} and the previous two displays,
another application of Proposition~\ref{p.schauder} yields, after enlarging~$Q$ and~$C$, the bound~\eqref{e.detbounds.wm.het.indy} for~$m=M$. This completes the induction argument and the proof of the lemma. 
\end{proof}

The \emph{typical} size of $|\nabla w_m^\ep|$ is much better than~\eqref{e.detbounds.het} gives.

\begin{lemma}
\label{l.goodbounds.wm}
For each $q\in [2,\infty)$, there exist $\sigma(q,data)>0$, $C(q,\mathsf{M},\mathsf{K}_0,\data)<\infty$ and a random variable $\X$ satisfying 
\begin{equation*}
\X = \O_\sigma(C)
\end{equation*}
such that, for each $m\in \{1,\ldots,n+1\}$, 
\begin{equation}
\label{e.goodbounds.wm}
\left\| \nabla w^\ep_{m} \right\|_{L^q(U_{m+1})} 
\leq 
\X \sum_{j=1}^{m} \left\|  \nabla g_{j} \right\|_{{L}^{2+\delta}(U_{j})}^{\frac{m}{j}}.
\end{equation}
\end{lemma}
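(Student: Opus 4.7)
The plan is to prove the $L^q$ bound by induction on $m\in\{1,\ldots,n+1\}$, using the energy estimate to obtain $L^2$ control of $\nabla w_i^\ep$ on $U_i$, and then upgrading this to $L^q$ control on the smaller set $U_{i+1}$ by invoking the already-assumed statement of Theorem~\ref{t.regularity.linerrors} for~$n$, which furnishes the required $L^q$-type large-scale regularity estimate \eqref{e.C01linsols} for solutions of the $i$th linearized equation (for $i\leq n+1$).

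For the upgrade step, fix a minimum macroscopic scale $R_m := \tfrac14 \dist(\partial U_m,\overline{U}_{m+1})>0$, so that $B_{2R_m}(x) \subseteq U_m$ for every $x\in U_{m+1}$, and therefore all of $w_1^\ep,\ldots,w_m^\ep$ satisfy their respective linearized equations in $B_{2R_m}(x)$. Rescaling to microscopic coordinates $y=x/\ep$ turns $u^\ep$ and $w_i^\ep$ into solutions $\hat u,\hat w_i$ of the unscaled equations on $B_{2R_m/\ep}(x/\ep)$, and on the event $\{\ep\X_0 \leq R_m\}$, where $\X_0$ is the minimal scale in Theorem~\ref{t.regularity.linerrors} applied with exponent~$q$, the estimate \eqref{e.C01linsols} (scaled back) gives, for each $x\in U_{m+1}$,
\begin{equation*}
\|\nabla w_m^\ep\|_{\underline{L}^q(B_{R_m}(x))} \leq C\sum_{i=1}^{m}\Bigl(\tfrac{1}{2R_m}\|w_i^\ep-(w_i^\ep)_{B_{2R_m}(x)}\|_{\underline{L}^2(B_{2R_m}(x))}\Bigr)^{m/i} \leq C\sum_{i=1}^m \|\nabla w_i^\ep\|_{\underline{L}^2(B_{2R_m}(x))}^{m/i},
\end{equation*}
the last step by the Poincar\'e inequality. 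Covering $U_{m+1}$ by finitely many balls $B_{R_m}(x_k)$ yields $\|\nabla w_m^\ep\|_{L^q(U_{m+1})} \leq C\sum_{i=1}^m \|\nabla w_i^\ep\|_{L^2(U_m)}^{m/i}$.

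The $L^2$ norms on $U_m$ are controlled by the standard energy estimate for the Dirichlet problem defining $w_i^\ep$ on $U_i \supseteq U_m$, combined with the pointwise bound \eqref{e.Fmbasic} $|\mathbf{F}_i^\ep|\leq C\sum_{j=1}^{i-1}|\nabla w_j^\ep|^{i/j}$, giving
\begin{equation*}
\|\nabla w_i^\ep\|_{L^2(U_i)} \leq C\|\nabla g_i\|_{L^2(U_i)} + C\sum_{j=1}^{i-1}\|\nabla w_j^\ep\|_{L^{2i/j}(U_i)}^{i/j} \leq C\|\nabla g_i\|_{L^{2+\delta}(U_i)} + C\sum_{j=1}^{i-1}\|\nabla w_j^\ep\|_{L^{2i/j}(U_{j+1})}^{i/j},
\end{equation*}
where we used $U_i \subseteq U_{j+1}$ for $j<i$. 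Invoking the inductive hypothesis at the (larger) exponent $2i/j$ for each $j<i\leq m$ bounds each term by $\X'\sum_{k=1}^j\|\nabla g_k\|_{L^{2+\delta}(U_k)}^{j/k}$ raised to the $i/j$ power; since $(m/i)(i/j)=m/j$, the telescoping exponents combine via Young's inequality and repeated $\O_\sigma$ composition to yield precisely the form $\X\sum_{j=1}^m\|\nabla g_j\|_{L^{2+\delta}(U_j)}^{m/j}$.

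To remove the conditioning on $\{\ep\X_0\leq R_m\}$, we use Lemma~\ref{l.detbounds.het} on the complementary event, where the bound blows up like $\ep^{-Q}$ but the event itself has probability bounded by $2\exp\bigl(-(R_m/(C\ep))^{\sigma}\bigr)$. Defining $\X$ as the maximum of the favorable-event constant and $C\ep^{-Q}\mathbf{1}_{\{\ep\X_0>R_m\}}$, we obtain a new random variable satisfying $\X=\O_{\sigma'}(C')$ for some $\sigma'(\sigma,Q,d)\in(0,\sigma]$, which yields the stated estimate globally. The main delicate point is the bookkeeping of exponents across the induction and the verification that the Young-type combinations indeed telescope to yield the clean right-hand side in~\eqref{e.goodbounds.wm}; by contrast, the probabilistic handling of the bad event is routine given the stretched-exponential moment bounds.
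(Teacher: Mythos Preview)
Your proposal is correct and follows essentially the same approach as the paper: induction on $m$, the basic energy estimate combined with the pointwise bound \eqref{e.Fmbasic} to control $\|\mathbf{F}_m^\ep\|_{L^2}$ via the inductive hypothesis (applied at the higher exponents $2m/j$), and then the assumed validity of Theorem~\ref{t.regularity.linerrors} for $n$ to upgrade $L^2$ on $U_m$ to $L^q$ on $U_{m+1}$. The paper's proof is terser and leaves the rescaling, the covering of $U_{m+1}$ by balls, and the handling of the bad event $\{\ep\X_0>R_m\}$ implicit (the latter is discussed once and for all in Subsection~\ref{ss.mscales}), but the underlying argument is the one you wrote.
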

\begin{proof}
We argue by induction in~$m$. Assume that~\eqref{e.goodbounds.wm} holds for $m\in \{1,\ldots,M-1\}$ for some $M\leq n+1$. 
By~\eqref{e.assumption.section4}, in particular the assumed validity of Theorem~\ref{t.regularity.linerrors} for $m\leq n+1$, 
it suffices to show that 
\begin{equation}
\label{e.mishitswts}
\left\| \nabla  w^\ep_{M} \right\|_{{L}^2(U_M)}
\leq 
C\X \sum_{j=1}^{M} \left\|  \nabla g_{j} \right\|_{{L}^{2}(U_{j})}^{\frac{M}{j}}.
\end{equation}
The induction hypothesis yields that 
\begin{equation}
\left\| \mathbf{F}_{M}^\ep \right\|_{L^2(U_M)}
\leq 
C\X \sum_{j=1}^{M-1} \left\|  \nabla g_{j} \right\|_{{L}^{2}(U_{j})}^{\frac{M}{j}}
\end{equation}
and then the basic energy estimate yields~\eqref{e.mishitswts}.
\end{proof}

We also require bounds on the homogenized solutions~$\overline{u}$ and~$\overline{w}_1,\ldots,\overline{w}_{n+2}$. These are consequences of elliptic regularity estimates presented in Appendix~\ref{s.appendixconstant}, namely Lemma~\ref{l.appC.C1alphabarwn}, which is applicable here because of  the assumption~\eqref{e.assumption.section4} which ensures that~$\overline{L} \in C^{3+\mathsf{N},\beta}$  for every $\beta\in (0,1)$. We obtain the existence of $C(\mathsf{M},\data)<\infty$ such that, for every~$m\in\{1,\ldots,n+1\}$,
\begin{equation} 
\label{e.detanchors}
\left\{
\begin{aligned}
& \left\| \nabla \overline{u} \right\|_{C^{0,1}(\overline{U}_2)}
\leq C, 
\\ & 
\left\| \nabla \overline{w}_m \right\|_{C^{0,1}(\overline{U}_{n+2})}
\leq 
C \sum_{i=1}^{m} \left\| g_i \right\|_{{L}^2(U_i)}^{\frac mi}.
\end{aligned}
\right.
\end{equation}
In particular, the function~$\overline\Theta$ defined in~\eqref{e.overlineTheta} is Lipschitz continuous. By the global Meyers estimate, we also have, for some~$\delta(d,\Lambda)>0$ and~$C(\mathsf{M},\data)<\infty$, the bound
\begin{equation}
\label{e.wn2meyers}
\left\| \nabla \overline{w}_{n+2} \right\|_{L^{2+\delta}(\overline{U}_{n+2})}
\leq 
C \left\| \overline{\mathbf{F}}_{n+2} \right\|_{L^{2+\delta}(U_{n+2})} 
\leq 
C \sum_{i=1}^{n+1} \left\| g_i \right\|_{{L}^2(U_i)}^{\frac{n+2}i}.
\end{equation}
We may also apply Lemma~\ref{l.appC.C1alphabarwn} to get a bound on $w_{n+2}$. In view of the merely mesoscopic gap between $\partial U_{n+2}$ and $U_{n+3}$, which is much smaller than the macroscopic gaps between $\partial U_{m}$ and $U_{m+1}$ for $m\in\{1,\ldots,n+1\}$, cf.~\eqref{e.domainsdescending} and~\eqref{e.defUn+3}, the estimate we obtain is, for every $\beta \in (0,1)$,  
\begin{equation} 
\label{e.detanchors2}
\left\| \nabla \overline{w}_{n+2} \right\|_{C^{0,\beta}(\overline{U}_{n+3})}
\leq 
C \left( 3^l\ep \right)^{-Q} \sum_{i=1}^{n+1} \left\| g_i \right\|_{{L}^2(U_i)}^{\frac {n+2}i},
\end{equation}
for an exponent~$Q(\beta,\data)<\infty$ which can be explicitly computed rather easily (but for our purposes is not  worth the bother) and a constant~$C(\beta,\mathsf{M},\data)<\infty$.

\subsection{The mesoscopic parameters~$k$ and~$l$}
\label{ss.meso}

Here we enter into a discussion regarding the choice of the parameters~$k$ and~$l$ which specify the mesoscopic scales. Recall that $1 \ll 3^k \ll 3^l \ll \ep^{-1}$. As we will eventually see later in this section, we will estimate the left sides of~\eqref{e.homogenization.estimates.wts.1} and~\eqref{e.homogenization.estimates.wts.2} by expressions of the form
\begin{equation}
\label{e.expression}
C\left( \ep^\alpha \left( \ep 3^l \right)^{-Q} + 3^{-l\alpha} \left( \ep 3^l \right)^{-1} + 3^{-k\alpha} + \left( \ep 3^l \right)^\alpha 3^{kQ} \right)
\sum_{j=1}^{n+1} \left\|  \nabla g_{j} \right\|_{{L}^{2+\delta}(U_{j})}^{\frac{n+2}{j}},
\end{equation}
where~$Q(\data)<\infty$ is a large exponent and~$\alpha(\data)>0$ is a small exponent. We then need to choose $k$ and $l$ so that the expression in parentheses is a positive power of~$\ep$. We may do this as follows. First, we may assume throughout for convenience that $Q\geq 1 \geq 4\alpha$. Then we may take care of the first two terms by choosing $l$ so that $\ep3^l$ is a ``very large'' mesoscale: we let $l$ be defined so that 
\begin{equation*}
\ep 3^l \leq \ep^{\frac\alpha{4Q}} < \ep 3^{l+1} .
\end{equation*}
Then we see that, for some $\beta>0$,  
\begin{equation*}
\ep^\alpha \left( \ep 3^l \right)^{-Q} + 3^{-l\alpha} \left( \ep 3^l \right)^{-1}
\leq C \ep^{\beta}.
\end{equation*}
Next, we take care of the last term: since, for some $\beta>0$, 
\begin{equation*}
\left( \ep 3^l \right)^\alpha 3^{kQ}
\leq 
C\ep^{\beta} 3^{kQ}, 
\end{equation*}
we can make this smaller than $\ep^{\frac\beta2}$ by taking $\ep 3^k$ to be a ``very small'' mesoscale. We take $k$ so that, for~$\beta$ and~$Q$ as in the previous display, 
\begin{equation*}
3^k \leq \ep^{-\frac{\beta}{2Q}} < 3^{k+1}. 
\end{equation*}
From this we deduce that $\left( \ep 3^l \right)^\alpha 3^{kQ}
\leq 
C\ep^{\frac\beta2}$. We see that this choice of $k$ also makes the third term inside the parentheses on the right side of~\eqref{e.expression} smaller than a positive power of~$\ep$. With these choices, we obtain that, for some $\beta>0$, 
\begin{equation}
\label{e.goodchoices}
C\left( \ep^\alpha \left( \ep 3^l \right)^{-Q} + 3^{-l\alpha} \left( \ep 3^l \right)^{-1} + 3^{-k\alpha} + \left( \ep 3^l \right)^\alpha 3^{kQ} \right)
\sum_{j=1}^{n+1} \left\|  \nabla g_{j} \right\|_{{L}^{2+\delta}(U_{j})}^{\frac{n+2}{j}}
\leq C\ep^{\beta}.
\end{equation}
Throughout the rest of this section, we will allow the exponents~$\alpha \in \left(0,\tfrac14\right]$ and~$Q\in [1,\infty)$ to vary in each occurrence, but will always depend only on~$\data$. Similarly, we let~$c$ and~$C$ denote positive constants which may vary in each occurrence and whose dependence will be clear from the context. 

\subsection{The minimal scales}
\label{ss.mscales}

Many of the estimates we will use in the proof of Theorem~\ref{t.linearizehigher} are deterministic estimates (i.e., the constants~$C$ in the estimate are not random) but which are valid only above a minimal scale~$\X$ which satisfies~$\X = \O_\delta(C)$ for some $\delta(\data)>0$ and $C(\mathsf{M},\data)<\infty$. This includes, for instance, the estimates of Theorems~\ref{t.regularity.Lbar},~\ref{t.linearizehigher} and~\ref{t.regularity.linerrors} assumed to be valid by our assumption~\eqref{e.assumption.section4}, as well as Lemmas~\ref{l.diff.linearizedsystem},~\ref{l.detbounds.wm},~\ref{l.contdep.coeffs} and~\ref{l.goodbounds.wm}, some estimates like~\eqref{e.contdep.Fm.Linfty} which appear in the text, and future estimates such as those of Lemmas~\ref{l.coeffclose},~\ref{l.fmsclose} and~\ref{l.barsclose}. We stress that this list is not exhaustive. 

\smallskip

In our proofs of~\eqref{e.homogenization.estimates.wts.1} and~\eqref{e.homogenization.estimates.wts.2}, we may always suppose that $3^k \geq \X$, where $\X$ is the maximum of all of these minimal scales. In fact, we may suppose that $3^k$ is larger than the stationary translation~$T_z\X$ of $\X$ by any element of $z\in  \Zd \cap \ep^{-1} U_1$. To see why, first we remark that if $\X$ is any random variable satisfying $\X= \O_\delta(C)$, then a union bounds gives that, for any $Q<\infty$ the random variable 
\begin{equation}
\tilde{\X} := \sup\left\{ 3^{j+1} \,:\, \sup_{z\in \Zd \cap 3^{Qj} U_1} T_z\X \geq 3^j \right\}
\end{equation}
satisfies, for a possibly smaller $\delta$ and larger $C<\infty$, depending also on $Q$, the estimate $\tilde{\X} = \O_{\delta/2}(C)$. Since, as explained in the previous subsection,~$3^k$ is a small but positive power of $\ep^{-1}$, we see that by choosing $Q$ suitably we have that $3^k \geq \tilde{\X}$ implies the validity of all of our estimates. Finally, in the event that $3^k < \tilde{\X}$, we can use the deterministic bounds obtained in Lemma~\ref{l.detbounds.het} and~\eqref{e.wn2meyers} very crudely as follows:
\begin{align*}
\left\| 
\nabla {w}_{n+2}^\ep
-
\nabla \overline{w}_{n+2}
\right\|_{H^{-1}(U_{n+2})} 
& 
\leq 
C\left\| 
\nabla {w}_{n+2}^\ep
-
\nabla \overline{w}_{n+2}
\right\|_{L^2(U_{n+2})} 
\\ & 
\leq
C\left\| 
\nabla {w}_{n+2}^\ep\right\|_{L^2(U_{n+2})} 
+
C\left\| 
\nabla \overline{w}_{n+2}
\right\|_{L^2(U_{n+2})} 
\\ & 
\leq
C\left( 1+ \ep^{-Q} \right) \sum_{j=1}^{n+1} \left\|  \nabla g_{j} \right\|_{{L}^{2+\delta}(U_{j})}^{\frac{n+2}{j}}.
\end{align*}
Then we use that
\begin{equation}
(1+ \ep^{-Q})\indc_{\{3^k < \tilde{\X}\}}
\leq 
\ep {\tilde{\X}}^Q = \O_{\delta/Q}(C\ep),
\end{equation}
where in the previous display the exponent $Q$ is larger in the second instance than in the first. 
This yields~\eqref{e.homogenization.estimates.wts.1} and~\eqref{e.homogenization.estimates.wts.2} in the event $\{ 3^k > \tilde{\X} \}$, so we do not have to worry about this event.

\smallskip

Therefore, throughout the rest of this section, we let $\X$ denote a minimal scale satisfying $\X=\O_\delta(C)$ which, in addition to both~$\delta$ and $C$, may vary in each occurrence. 

\subsection{The proof of the local stationary approximation}

We now turn to the proof of~\eqref{e.homogenization.estimates.wts.1}, which amounts to showing that the difference between the coefficient fields $(\a^\ep,\mathbf{F}^\ep_{n+2})$ and $(\tilde{\a}^\ep,\tilde{\mathbf{F}}^\ep_{n+2})$ is small. This is accomplished by an application of Lemma~\ref{l.diff.linearizedsystem}.

\begin{lemma}
\label{l.coeffclose}
There exist~$\alpha(\data)>0$, $Q(\data)<\infty$ and~$C(\mathsf{M},\data)<\infty$ and a minimal scale $\X=\O_\delta(C)$ such that, if $3^k\geq \X$, then 
\begin{align} \label{e.blargh.1}
&
\left\| \nabla w_{n+2}^\ep - \nabla \tilde{w}_{n+2}^\ep \right\|_{L^2(U_m)}
\\ & \quad \notag
\leq
C\left( \ep^\alpha \left( \ep 3^l \right)^{-\frac d2-1} + 3^{-l\alpha} \left( \ep 3^l \right)^{-1} + 3^{-k\alpha} + \left( \ep 3^l \right)^\alpha 3^{kQ} \right)
\sum_{j=1}^{n+1} \left\|  \nabla g_{j} \right\|_{{L}^{2+\delta}(U_{j})}^{\frac{n+2}{j}}.
\end{align}
\end{lemma}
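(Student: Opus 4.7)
The plan is to write the equation for the difference $\zeta := w_{n+2}^\ep - \tilde{w}_{n+2}^\ep$, namely
\begin{equation*}
-\nabla\cdot(\a^\ep \nabla \zeta) = \nabla\cdot\bigl[(\a^\ep - \tilde{\a}^\ep)\nabla \tilde{w}_{n+2}^\ep\bigr] + \nabla\cdot(\mathbf{F}_{n+2}^\ep - \tilde{\mathbf{F}}_{n+2}^\ep) \quad \mbox{in} \ U_{n+2},
\end{equation*}
with $\zeta=0$ on $\partial U_{n+2}$, and to apply the basic energy estimate together with H\"older's inequality. The task is thereby reduced to bounding $\|\a^\ep - \tilde{\a}^\ep\|_{L^r(U_{n+2})}\,\|\nabla \tilde{w}_{n+2}^\ep\|_{L^s(U_{n+2})}$ and $\|\mathbf{F}_{n+2}^\ep - \tilde{\mathbf{F}}_{n+2}^\ep\|_{L^2(U_{n+2})}$ for a suitable H\"older pair $(r,s)$. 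The $L^s$-norm of $\nabla \tilde{w}_{n+2}^\ep$ is controlled by a Meyers estimate on~\eqref{e.wn+2.tilde} together with the deterministic bound~\eqref{e.detbounds.Fm} on its right-hand side; the resulting factor of $3^{kQ}$ is tolerable because it will multiply a small quantity, ultimately producing the last term $(\ep 3^l)^\alpha 3^{kQ}$ of~\eqref{e.blargh.1}.

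I would next split the domain as $U_{n+2} = U_{n+3} \cup (U_{n+2}\setminus U_{n+3})$, where the boundary layer has measure $\lesssim \ep 3^l$ by~\eqref{e.defUn+3}. On the boundary layer, the deterministic Schauder estimates of Lemma~\ref{l.detbounds.het} and their stationary analog for $\tilde w_{n+2}^\ep$ (based on Lemma~\ref{l.detbounds.wm}) are combined with H\"older interpolation between these and the interior $L^2$ bounds; this is designed to produce the $3^{-l\alpha}(\ep 3^l)^{-1}$ summand. On the interior $U_{n+3}$, I would tile by mesoscopic cubes $C_z := \ep(z+\cu_l)$ for $z\in 3^l\Zd$. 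On each $C_z$ the Lipschitz bound~\eqref{e.detanchors} on $\nabla \overline u$ allows replacing $\nabla \overline u(x)$ by the constant $p_z := \nabla \overline u(\ep z)$ up to an additive error of order $\ep 3^l$; Lemma~\ref{l.contdep.coeffs} and~\eqref{e.contdep.Fm.Linfty} then show that $\tilde\a^\ep(x)$ and $\tilde{\mathbf{F}}_{n+2}^\ep(x)$ agree on $C_z$ with their ``frozen'' counterparts $\a^{(k)}_{p_z}(x/\ep)$ and $\mathbf{F}^{(k)}_{n+2,\Theta_z}(x/\ep)$, at the cost of an error interpolating between the $3^{kQ}$ deterministic size and the $(\ep 3^l)^\alpha$ macroscopic smallness.

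What remains is to compare the original coefficients $(\a^\ep,\mathbf{F}^\ep_{n+2})$ with the frozen, localized pair. Restricting to $C_z$, the tuple $(u^\ep, w_1^\ep, \ldots, w_{n+1}^\ep)$ satisfies the assumed induction hypothesis~\eqref{e.assumption.section4}, so by Theorem~\ref{t.linearizehigher} at level $n$ together with the sublinearity of correctors from Lemma~\ref{l.corr.sublinearity}, these functions are $H^{-1}(C_z)$-close to $(\ell_{p_z}+\phi_{p_z}, \psi_{p_z,\cdot}^{(1)},\ldots,\psi_{p_z,\cdot}^{(n+1)})$ up to an error of order $\ep^\alpha$; by the same token, the localized solutions $v^{(k)}_{p_z,z'}$ and $w^{(k)}_{m,\Theta_z,z'}$ are close to the same correctors on each $k$-subcube $\ep(z'+\cu_k)\subset C_z$, away from a relative boundary layer of thickness $3^{-k}$, contributing the $3^{-k\alpha}$ term. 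Converting this $H^{-1}$-closeness into $L^2$-closeness of gradients via Lemma~\ref{l.diff.linearizedsystem} and then, via the Lipschitz regularity of $D_p^2L$ and of $\mathbf{F}_{n+2}$ in its $(p,h_1,\ldots,h_{n+1})$-arguments (compare~\eqref{e.Fmbasic2}), into bounds on $\a^\ep-\tilde\a^\ep$ and $\mathbf{F}^\ep_{n+2}-\tilde{\mathbf{F}}^\ep_{n+2}$, yields the $\ep^\alpha(\ep 3^l)^{-d/2-1}$ term after a union bound over the $\lesssim (\ep 3^l)^{-d}|U_{n+2}|$ mesoscopic cubes and the Sobolev scaling needed to pass from $H^{-1}$ to $L^2$ on a cube of size $\ep 3^l$. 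The main obstacle is the careful bookkeeping of the many error contributions across the nested scales $1 \ll 3^k \ll 3^l \ll \ep^{-1}$; the random minimal scale $\X$ is taken as the maximum of those produced by the invoked lemmas, augmented by a union bound over $3^l\Zd$-translates exactly as in Subsection~\ref{ss.mscales}, so that the relevant estimates hold on every mesoscopic cube $C_z$ meeting $U_{n+2}$.
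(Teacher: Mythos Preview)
Your overall strategy matches the paper's: compare the coefficient pairs $(\a^\ep,\mathbf{F}^\ep_{n+2})$ and $(\tilde{\a}^\ep,\tilde{\mathbf{F}}^\ep_{n+2})$ on mesoscopic $l$-cubes via Lemma~\ref{l.diff.linearizedsystem}, then conclude by an energy estimate for the difference~$\zeta$. There is, however, one concrete error. In your form of the equation for~$\zeta$ the gradient $\nabla\tilde w_{n+2}^\ep$ appears on the right, and you propose to bound $\|\nabla\tilde w_{n+2}^\ep\|_{L^{2+\delta}}$ via the deterministic estimate~\eqref{e.detbounds.Fm}, picking up a factor of~$3^{kQ}$. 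But this factor would then multiply the \emph{entire} coefficient-difference bound, whose four summands include~$3^{-k\alpha}$; the product $3^{-k\alpha}\cdot 3^{kQ}=3^{k(Q-\alpha)}$ is large, not small, so you do not recover~\eqref{e.blargh.1}. The paper avoids this by rearranging the equation with $\tilde{\a}^\ep$ on the left and $\nabla w_{n+2}^\ep$ on the right, so that Meyers together with Lemma~\ref{l.goodbounds.wm} bounds $\|\nabla w_{n+2}^\ep\|_{L^{2+\delta}}$ with no $3^{kQ}$ factor at all. (Your arrangement could also be salvaged by bounding $\tilde{\mathbf{F}}^\ep_{n+2}$ through~\eqref{e.FmTheta0bounds.X2} and~\eqref{e.detanchors} rather than~\eqref{e.detbounds.Fm}.)

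Two smaller points of divergence. First, the paper routes the coefficient comparison through the scale-$l$ localized solutions $v^{(l)}, w^{(l)}_{m}$ as an intermediary and only then passes from scale~$l$ to scale~$k$, whereas you propose to go through the global correctors~$\phi_{p_z},\psi^{(m)}_{p_z,\cdot}$; both are viable, but the former keeps the bookkeeping cleaner. Second, your attribution of the term $3^{-l\alpha}(\ep 3^l)^{-1}$ to the boundary layer $U_{n+2}\setminus U_{n+3}$ is incorrect: in the paper this term arises from the homogenization error of the scale-$l$ localized solutions against affine functions, while the boundary layer is handled trivially by boundedness of the coefficients and is absorbed into the $(\ep 3^l)^\alpha 3^{kQ}$ term.
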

\begin{proof}
Throughout $\X$ denotes a random scale which may change from line to line and satisfies $\X=\O_\delta(C)$. 

\smallskip

For each $z\in \ep3^l \Zd$ with $z+\ep\cu_{l+1}\subseteq U_{n+1}$, we compare $\nabla u$ and $\nabla w_m^\ep$ to the functions $\nabla v^{(l)}_{\nabla \overline{u}(z),\frac z\ep}\left( \tfrac \cdot \ep\right)$ and $\nabla w^{(l)}_{m,\overline{\Theta}(z),\frac z\ep}\left( \tfrac \cdot \ep\right)$, using Lemma~\ref{l.diff.linearizedsystem} and the assumed validity of Theorems~\ref{t.linearizehigher} and~\ref{t.regularity.linerrors} for~$n$. The latter yields that, if $\ep^{-1}\geq \X$, then for every such~$z$, and every~$m\in\{1,\ldots,n+1\}$
\begin{equation*}
\left\{ 
\begin{aligned}
& \left\| u^\ep - \overline{u} \right\|_{\underline{L}^2(z+\ep \cu_{l+1})} 
\leq C\ep^\alpha \left( \ep 3^l \right)^{-\frac d2},
\\ & 
\left\| w^\ep_m - \overline{w}_m \right\|_{\underline{L}^2(z+\ep\cu_{l+1})} 
C\ep^\alpha \left( \ep 3^l \right)^{-\frac d2} 
\sum_{j=1}^{m} \left\|  \nabla g_{j} \right\|_{{L}^{2+\delta}(U_{j})}^{\frac{m}{j}}.
\end{aligned} 
\right.
\end{equation*}
Likewise, if $3^l \geq \X$,  then for every~$z$ as above, and every~$m\in\{1,\ldots,n+1\}$,
\begin{equation*}
\left\{ 
\begin{aligned}
& \left\| v^{(l)}_{\nabla \overline{u}(z),\frac z\ep}\left( \tfrac \cdot \ep\right) - \ell_{\nabla \overline{u}(z)} \right\|_{\underline{L}^2(z+\ep \cu_{l+1})} 
\leq C3^{-l\alpha},
\\ & 
\left\| w^{(l)}_{m,\overline{\Theta}(z),\frac z\ep}\left( \tfrac \cdot \ep\right) - \ell_{\nabla \overline{w}_m(z)} \right\|_{\underline{L}^2(z+\ep \left(1+2^{-m} \right) \cu_l)} 
\leq 
C3^{-l\alpha} 
\sum_{j=1}^{m} \left\|  \nabla g_{j} \right\|_{{L}^{2+\delta}(U_{j})}^{\frac{m}{j}}.
\end{aligned} 
\right.
\end{equation*}
Here we used~\eqref{e.detanchors} in order that the~$C$ not depend implicitly on $\left| \nabla u(z) \right|$ and in order that the prefactor on the right side of the second line be is as it is, rather than~$\sum_{j=1}^m \left| \nabla \overline{w}_j(z) \right|^{\frac mj}$. Using~\eqref{e.detanchors}  again, we see that 
\begin{equation*}
\left\{ 
\begin{aligned}
& \left\| \overline{u} -\left( \overline{u}(z) + \ell_{\nabla \overline{u}(z)} \right) \right\|_{\underline{L}^2(z+\ep \cu_{l+1})} 
\leq C\ep^2 3^{2l},
\\ & 
\left\| \overline{w}_m - \left( \overline{w}_m(z) + \ell_{\nabla \overline{w}_m(z)} \right) \right\|_{\underline{L}^2(z+\ep \cu_{l+1})} 
\leq 
C\ep^2 3^{2l}
\sum_{j=1}^{m} \left\|  \nabla g_{j} \right\|_{{L}^{2+\delta}(U_{j})}^{\frac{m}{j}}.
\end{aligned} 
\right.
\end{equation*}
Combining the three previous displays with the triangle inequality, we get, for all such~$m$ and~$z$ and provided that~$3^l\geq \X$,
\begin{equation*}
\left\{ 
\begin{aligned}
& 
\left( \ep 3^{l} \right)^{-1}
\left\| u^\ep - v^{(l)}_{\nabla \overline{u}(z),\frac z\ep}\left( \tfrac \cdot \ep\right) - \overline{u}(z) 
\right\|_{\underline{L}^2(z+\ep \cu_{l+1})} 
\leq 
C\left( \ep^\alpha \left( \ep 3^l \right)^{-\frac d2-1} + 3^{-l\alpha} \left( \ep 3^l \right)^{-1} + \ep 3^{l} \right),
\\ & 
\left( \ep 3^{l} \right)^{-1} \left\| w^\ep_m - w^{(l)}_{m,\overline{\Theta}(z),\frac z\ep}\left( \tfrac \cdot \ep\right) - \overline{w}_m(z) 
\right\|_{\underline{L}^2(z+\ep\left(1+2^{-m} \right) \cu_l)} 
\\ & \qquad 
\leq 
C\left( \ep^\alpha \left( \ep 3^l \right)^{-\frac d2-1} + 3^{-l\alpha} \left( \ep 3^l \right)^{-1} + \ep 3^{l} \right)
\sum_{j=1}^{m} \left\|  \nabla g_{j} \right\|_{{L}^{2+\delta}(U_{j})}^{\frac{m}{j}}.
\end{aligned} 
\right.
\end{equation*}
An application of Lemma~\ref{l.diff.linearizedsystem} then yields
\begin{equation*}
\left\{ 
\begin{aligned}
& 
\left\| \nabla u^\ep - \nabla v^{(l)}_{\nabla \overline{u}(z),\frac z\ep}\left( \tfrac \cdot \ep\right) 
\right\|_{\underline{L}^2(z+\ep \cu_{l})} 
\leq 
C\left( \ep^\alpha \left( \ep 3^l \right)^{-\frac d2-1} + 3^{-l\alpha} \left( \ep 3^l \right)^{-1} + \ep 3^{l} \right),
\\ & 
\left\| \nabla w^\ep_m - \nabla w^{(l)}_{m,\overline{\Theta}(z),\frac z\ep}\left( \tfrac \cdot \ep\right) 
\right\|_{\underline{L}^2(z+\ep \cu_l)} 
\\ & \qquad 
\leq 
C\left( \ep^\alpha \left( \ep 3^l \right)^{-\frac d2-1} + 3^{-l\alpha} \left( \ep 3^l \right)^{-1} + \ep 3^{l} \right)
\sum_{j=1}^{m} \left\|  \nabla g_{j} \right\|_{{L}^{2+\delta}(U_{j})}^{\frac{m}{j}}.
\end{aligned} 
\right.
\end{equation*}
By~$L^p$ interpolation and the bounds~\eqref{e.wmTheta0bounds.X} and~\eqref{e.goodbounds.wm}, we obtain, for every $q\in[2,\infty)$, an~$\X$ such that $3^l\geq \X$ implies that, for every~$m$ and~$z$ as above, 
\begin{equation*}
\left\{ 
\begin{aligned}
& 
\left\| \nabla u^\ep - \nabla v^{(l)}_{\nabla \overline{u}(z),\frac z\ep}\left( \tfrac \cdot \ep\right) 
\right\|_{\underline{L}^q(z+\ep \cu_{l})} 
\leq 
C\left( \ep^\alpha \left( \ep 3^l \right)^{-\frac d2-1} + 3^{-l\alpha} \left( \ep 3^l \right)^{-1} + \ep 3^{l} \right),
\\ & 
\left\| \nabla w^\ep_m - \nabla w^{(l)}_{m,\overline{\Theta}(z),\frac z\ep}\left( \tfrac \cdot \ep\right) 
\right\|_{\underline{L}^q(z+\ep \cu_l)} 
\\ & \qquad 
\leq 
C\left( \ep^\alpha \left( \ep 3^l \right)^{-\frac d2-1} + 3^{-l\alpha} \left( \ep 3^l \right)^{-1} + \ep 3^{l} \right)
\sum_{j=1}^{m} \left\|  \nabla g_{j} \right\|_{{L}^{2+\delta}(U_{j})}^{\frac{m}{j}}.
\end{aligned} 
\right.
\end{equation*}
This estimate implies
\begin{equation*} \label{}
\left\{ 
\begin{aligned}
& \left\| \a^\ep - \a^{(l)}_{\nabla \overline{u}(z)}\left(\tfrac\cdot\ep\right) \right\|_{\underline{L}^q(z+\ep\cu_l)} 
\leq 
C\left( \ep^\alpha \left( \ep 3^l \right)^{-\frac d2-1} + 3^{-l\alpha} \left( \ep 3^l \right)^{-1} + \ep 3^{l} \right),
\\ & 
\left\| \mathbf{F}^\ep_{n+2} - \mathbf{F}^{(l)}_{n+2,\overline{\Theta}(z)}\left( \tfrac\cdot\ep\right) \right\|_{\underline{L}^q(z+\ep\cu_l)} 
\\ & \qquad 
\leq 
C\left( \ep^\alpha \left( \ep 3^l \right)^{-\frac d2-1} + 3^{-l\alpha} \left( \ep 3^l \right)^{-1} \!+\! \ep 3^{l} \right)
\sum_{j=1}^{n+1} \left\|  \nabla g_{j} \right\|_{{L}^{2+\delta}(U_{j})}^{\frac{n+2}{j}}.
\end{aligned} 
\right.
\end{equation*}
By a very similar argument, comparing the functions the functions $\nabla v^{(l)}_{\nabla \overline{u}(z),\frac z\ep}\left( \tfrac \cdot \ep\right)$ and $\nabla w^{(l)}_{m,\overline{\Theta}(z),\frac z\ep}\left( \tfrac \cdot \ep\right)$ to $\nabla v^{(k)}_{\nabla \overline{u}(z),\frac z\ep}\left( \tfrac \cdot \ep\right)$ and $\nabla w^{(k)}_{m,\overline{\Theta}(z),\frac z\ep}\left( \tfrac \cdot \ep\right)$, also using Lemma~\ref{l.diff.linearizedsystem} and the assumed validity of Theorems~\ref{t.linearizehigher} and~\ref{t.regularity.linerrors} for~$n$, we obtain, for all $m$ and $z$ as above,
\begin{equation*} \label{}
\left\{ 
\begin{aligned}
& \left\| \a^{(l)}_{\nabla \overline{u}(z)}\left(\tfrac\cdot\ep\right) - \a^{(k)}_{\nabla \overline{u}(z)}\left(\tfrac\cdot\ep\right) \right\|_{\underline{L}^q(z+\ep\cu_l)} 
\leq 
C 3^{-k\alpha} ,
\\ & 
\left\| \mathbf{F}^{(l)}_{n+2,\overline{\Theta}(z)}\left( \tfrac\cdot\ep\right)
- \mathbf{F}^{(k)}_{n+2,\overline{\Theta}(z)}\left( \tfrac\cdot\ep\right)
 \right\|_{\underline{L}^q(z+\ep\cu_l)} 
\leq 
C3^{-k\alpha}
\sum_{j=1}^{n+2} \left\|  \nabla g_{j} \right\|_{{L}^{2+\delta}(U_{j})}^{\frac{n+1}{j}}.
\end{aligned} 
\right.
\end{equation*}
Using~\eqref{e.contdep.Fm.Linfty} and~\eqref{e.contdep.ap.Linfty}, in view of~\eqref{e.detanchors}, we find an exponent~$Q(\data)<\infty$ such that, for every~$m$ and~$z$ as above,
\begin{equation*} \label{}
\left\{ 
\begin{aligned}
& \left\| \tilde{\a}^\ep - \a^{(k)}_{\nabla \overline{u}(z)}\left(\tfrac\cdot\ep\right) \right\|_{\underline{L}^q(z+\ep\cu_l)} 
\leq 
C \left(\ep 3^l \right) 3^{Qk} ,
\\ & 
\left\| 
\tilde{\mathbf{F}}^\ep_{n+2}
- \mathbf{F}^{(k)}_{n+2,\overline{\Theta}(z)}\left( \tfrac\cdot\ep\right)
 \right\|_{\underline{L}^q(z+\ep\cu_l)} 
\leq 
C\left(\ep 3^l \right)  3^{Qk}
\sum_{j=1}^{n+2} \left\|  \nabla g_{j} \right\|_{{L}^{2+\delta}(U_{j})}^{\frac{n+1}{j}}.
\end{aligned} 
\right.
\end{equation*}
Combining the above estimates, we finally obtain that, for every~$z$ and $m$ as above,
\begin{equation*} \label{}
\left\{ 
\begin{aligned}
& \left\| \a^\ep - \tilde{\a}^\ep \right\|_{\underline{L}^q(z+\ep\cu_l)} 
\leq 
C\left( \ep^\alpha \left( \ep 3^l \right)^{-\frac d2-1} + 3^{-l\alpha} \left( \ep 3^l \right)^{-1} + 3^{-k\alpha} + \ep 3^{l+kQ} \right),
\\ & 
\left\| \mathbf{F}^\ep_{n+2} - \tilde{\mathbf{F}}^\ep_{n+2} \right\|_{\underline{L}^q(z+\ep\cu_l)} 
\\ & \qquad 
\leq
C\left( \ep^\alpha \left( \ep 3^l \right)^{-\frac d2-1} + 3^{-l\alpha} \left( \ep 3^l \right)^{-1} + 3^{-k\alpha} + \ep 3^{l+kQ} \right)
\sum_{j=1}^{n+1} \left\|  \nabla g_{j} \right\|_{{L}^{2+\delta}(U_{j})}^{\frac{n+2}{j}},
\end{aligned} 
\right.
\end{equation*}
Using that these coefficient fields are bounded and that the boundary layer (i.e., the set of points which lie in $U_{n+2}$ but not in any cube of the form $z+\ep\cu_{l}$ with $z$ as above) has thickness~$O(\ep3^l)$, we get from the previous estimate that
\begin{equation*} \label{}
\left\{ 
\begin{aligned}
& \left\| \a^\ep - \tilde{\a}^\ep \right\|_{\underline{L}^q(U_{n+2})} 
\leq 
C\mathsf{A},
\\ & 
\left\| \mathbf{F}^\ep_{n+2} - \tilde{\mathbf{F}}^\ep_{n+2} \right\|_{\underline{L}^q(U_{n+2})} 
\leq 
C\mathsf{A}
\sum_{j=1}^{n+1} \left\|  \nabla g_{j} \right\|_{{L}^{2+\delta}(U_{j})}^{\frac{n+2}{j}},
\end{aligned} 
\right.
\end{equation*}
where (in order to shorten the expressions) we denote
\begin{equation*} \label{}
\mathsf{A}:=\left( \ep^\alpha \left( \ep 3^l \right)^{-\frac d2-1} + 3^{-l\alpha} \left( \ep 3^l \right)^{-1} + 3^{-k\alpha} + \left( \ep 3^l \right)^\alpha 3^{kQ} \right).
\end{equation*}
To complete the proof of the lemma, we observe that $\zeta:= \tilde{w}_{n+2}^\ep-w^\ep_{n+2}\in H^1_0(U_{n+2})$ satisfies the equation
\begin{equation}
-\nabla \cdot \tilde{\a}^\ep \nabla \zeta = \nabla \cdot \left( \tilde{\mathbf{F}}^\ep_{n+2} - \mathbf{F}^\ep_{n+2} \right) + \nabla \cdot \left( \left( \tilde{\a}^\ep - {\a}^\ep \right) \nabla w^\ep_{n+2} \right).
\end{equation}
By the basic energy estimate (test the equation for~$\zeta$ with~$\zeta$), we obtain
\begin{align*}
\left\| \nabla \zeta \right\|_{{L}^{2}(U_{n+2})}
\leq 
\left( 
\left\|  \tilde{\mathbf{F}}^\ep_{n+2} - \mathbf{F}^\ep_{n+2}  \right\|_{L^{2 }(U_{n+2})} 
+ 
\left\| \left( \tilde{\a}^\ep - {\a}^\ep \right) \nabla w_{n+2}^\ep  \right\|_{L^{2}(U_{n+2})}
\right).
\end{align*} 
The first term on the right side is already estimated above. For the second term, we use the Meyers estimate and~\eqref{e.goodbounds.wm}, without forgetting~\eqref{e.gvanishass}, to find~$\delta(d,\Lambda)>0$ such that
\begin{align*}
\left\| \nabla {w}^\ep_{n+2} \right\|_{\underline{L}^{2+\delta}(U_{n+2})}
\leq 
C \left\| {\mathbf{F}}^\ep_{n+2} \right\|_{L^{2+\delta}(U_{n+2})} 
\leq 
C
\sum_{j=1}^{n+1} \left\|  \nabla g_{j} \right\|_{{L}^{2+\delta}(U_{j})}^{\frac{n+2}{j}}.
\end{align*}
Therefore, by H\"older's inequality and the above estimate on~$\left\| \a^\ep - \tilde{\a}^\ep \right\|_{\underline{L}^q(z+\ep\cu_l)} $ with exponent~$q:=\frac{4+2\delta}{\delta} $, we obtain 
\begin{align*}
\lefteqn{
\left\| \left( \tilde{\a}^\ep - \a^\ep \right) \nabla w^\ep_{n+2}   \right\|_{L^{2}(U_{n+2})}
} \qquad & 
\\ & 
\leq 
\left\| \tilde{\a}^\ep - \a^\ep \right\|_{L^{\frac{4+2\delta}{\delta}}(U_{1})} \left\| \nabla w^\ep_{n+2} \right\|_{L^{2+\delta}(U_{n+2})}
\leq
C\mathsf{A}
\sum_{j=1}^{n+1} \left\|  \nabla g_{j} \right\|_{{L}^{2+\delta}(U_{j})}^{\frac{n+2}{j}}.
\end{align*}
This completes the proof of~\eqref{e.blargh.1}. 
\end{proof}

In view of the discussion in Subsections~\ref{ss.meso} and~\ref{ss.mscales}, 
Lemma~\ref{l.coeffclose} implies~\eqref{e.homogenization.estimates.wts.1}.

\subsection{Homogenization estimates for the approximating equations}
\label{ss.homogapp}
To prepare for the proof of~\eqref{e.homogenization.estimates.wts.1},
we apply the quantitative homogenization estimates proved in~\cite{AKMbook} for the linear equation~\eqref{e.statpde}. We denote by $\ahom_p^{(k)}$ and $\overline{\mathbf{F}}^{(k)}_{n+2,\Theta}$ the homogenized coefficients. 

\smallskip

By an application of the results of~\cite[Chapter 11]{AKMbook}, the solutions $v(\cdot,U,e,\Theta)$ of the family of Dirichlet problems 
\begin{equation}
\label{e.linearizequantities}
\left\{ 
\begin{aligned}
& -\nabla\cdot\left( \a^{(k)}_p \nabla v(\cdot,U,e,\Theta) \right) = \nabla \cdot \mathbf{F}^{(k)}_{n+2,\Theta} & \mbox{in} & \ U, \\
& v(\cdot,U,e,\Theta) = \ell_e & \mbox{on} & \ \partial U,
\end{aligned}
\right.
\end{equation}
indexed over bounded Lipschitz domains $U\subseteq\Rd$ and $e\in B_1$ and~$\Theta\in\R^{d(n+2)}$,
satisfy the following quantitative homogenization estimate: there exist~$\alpha(s,d,\Lambda)>0$, $\beta(\data)>0$, $\delta(\data)>0$ and~$Q(\data)<\infty$ and~$C(s,\mathsf{M},\data)<\infty$ and a random variable~$\X = \O_\delta(C)$  such that, for every $\Theta=(p,h_1,\ldots,h_{n+1})\in\R^{d(n+2)}$ with $|p|\leq \mathsf{M}$, $e\in\Rd$ and every $l\in\N$ with $3^{l-k} \geq \X$,
\begin{align}
\label{e.appcorrbounds}
& 
3^{-l} \left\| v(\cdot,\cu_{l+1},e,\Theta) - \ell_e \right\|_{\underline{L}^2(\cu_l)} +
3^{-l} \left\| \nabla v(\cdot,\cu_{l+1},e,\Theta) - e \right\|_{\underline{H}^{-1}(\cu_l)} 
\\ &  \ \  \notag
+
3^{-l} \left\| \a_p^{(k)} \nabla v(\cdot,\cu_{l+1},e,\Theta) + \mathbf{F}^{(k)}_{n+2,\Theta} - \left( \ahom_p^{(k)} e+\overline{\mathbf{F}}^{(k)}_{n+2,\Theta} \right)\right\|_{\underline{H}^{-1}(\cu_l)}
\\ & \notag
\qquad\qquad\qquad\qquad
\qquad\qquad \qquad 
\leq
C3^{-\alpha(l-k)} 3^{Qk} \left( |e| +  \sum_{j=1}^{n+1} \left| h_j \right|^{\frac {n+2}j}    \right) 
\end{align}
as well as
\begin{equation}
\label{e.detbound.wU}
\left\| \nabla v(\cdot,\cu_{l+1},e,\Theta) \right\|_{C^{0,\beta}(\cu_l)} 
\leq 
Cl^Q3^{Qk} \left( |e| + \sum_{j=1}^{n+1} \left| h_j \right|^{\frac {n+2}j}  \right).
\end{equation}
To obtain~\eqref{e.appcorrbounds}, we change the scale by performing a dilation $x\mapsto 3^k \left\lceil \left(15+d\right)^{\frac12} \right\rceil  x$ so that the resulting coefficient fields have a unit range of dependence and are still~$\Zd$--stationary, cf.~\eqref{e.klocalize}. We then apply~\cite[Lemma 11.11]{AKMbook} to obtain~\eqref{e.appcorrbounds}, using Lemma~\ref{l.detbounds.wm}, namely~\eqref{e.detbounds.Fm}, to give a bound on the parameter~$\mathsf{K}$ in the application of the former. The reason we quote results for general nonlinear equations, despite the fact that~\eqref{e.linearizequantities} is linear, is because~\eqref{e.linearizequantities} has a nonzero right-hand side and the results for linear equations have not been formalized in that generality. 

\smallskip

The bound~\eqref{e.detbound.wU} is a consequence the large-scale $C^{0,1}$--type estimate for the equation~\eqref{e.statpde} (see \cite[Theorem 11.13]{AKMbook}) which, together with an routine union bound, yields~\eqref{e.detbound.wU} with the left side replaced by
$\sup_{z \in \Zd \cap \cu_l} \left\| \nabla v \right\|_{\underline{L}^2(z+(l-k)\cu_k)}$. Indeed, if we denote by $\X_z$ the minimal scale in the large-scale $C^{0,1}$ estimate, then a union bound yields (for any $s\in (0,d)$, so in particular we can take $s>1$), 
\begin{equation*} \label{}
\max\left\{ 
k' \in \N \,:\, \max_{z\in \Zd \cap \cu_{k'} } \X_z > \left( \log 3^{k'-k} \right)^{\frac 1s} 
\right\}
\leq \O_\delta (C). 
\end{equation*}
We obtain~\eqref{e.detbound.wU} by combining this bound for $\sup_{z \in \Zd \cap \cu_l} \left\| \nabla v \right\|_{\underline{L}^2(z+(l-k)\cu_k)}$ with the deterministic bounds~\eqref{e.detbounds.Fm}, ~\eqref{e.detbounds.ak} and an application of the Schauder estimates (see Proposition~\ref{p.schauder}).

\smallskip

We also need the following estimate for the difference of~$v$'s on overlapping cubes, which can be inferred from~\eqref{e.appcorrbounds} and the Caccioppoli inequality: 
there exist~$\alpha(s,d,\Lambda)>0$, $\beta(\data)>0$, $\delta(\data)>0$ and~$Q(\data)<\infty$ and~$C(s,\mathsf{M},\data)<\infty$ and a random variable~$\X = \O_\delta(C)$  such that, for every $\Theta=(p,h_1,\ldots,h_{n+1})\in\R^{d(n+2)}$ with $|p|\leq \mathsf{M}$, $e\in\Rd$ and every $l\in\N$ with $3^{l-k} \geq \X$,
\begin{align}
\label{e.appcorr.overlap}
\lefteqn{
\sum_{z'\in 3^l\Zd\cap \cu_{l+1}}
\left\| 
\nabla v\left(\cdot,z+\cu_{l+1},e,\Theta\right) 
-
\nabla v\left(\cdot,z'+\cu_{l},e,\Theta\right) 
\right\|_{\underline{L}^2(z'+\cu_{l})}
} 
\qquad\qquad\qquad\qquad
\qquad\qquad\qquad 
& 
\\ & \notag
\leq
C3^{-\alpha(l-k)}  \left( |e| + 3^{Qk} \sum_{j=1}^{n+1} \left| h_j \right|^{\frac {n+2}j} \right) .
\end{align}
Finally, we mention that we have the following deterministic estimate on the functions $v(\cdot,z+\cu_{l+1},e,\Theta)$ which states that
\begin{align}
\label{e.appcorr.bndd}
\left\| \nabla v(\cdot,z+\cu_{l+1},e,\Theta) \right\|_{\underline{L}^2(z+\cu_{l+1})}
&
\leq
C \left( |e| + \left\| \mathbf{F}^{(k)}_{n+2,\Theta} \right\|_{\underline{L}^2(z+\cu_{l+1})} \right)
\\ & \notag
\leq
C\left( |e| + \sum_{j=1}^{n+1} \left| h_j \right|^{\frac {n+2}j}  \right).
\end{align}
This follows from testing the problem~\eqref{e.linearizequantities} with $U=z+\cu_{l+1}$ with the test function $v(\cdot,z+\cu_{l+1},e,\Theta) - \ell_e\in H^1_0(z+\cu_{l+1})$ and then using~\eqref{e.FmTheta0bounds.X} with~ $m=n+2$. 

Our next goal is to compare the homogenized coefficients~$\overline{\mathbf{F}}^{(k)}_{m,\Theta}$ for the approximating equations to the functions~$\overline{\mathbf{F}}_m (\Theta)$ defined in~\eqref{e.defbarFm}. In view of the results of Section~\ref{s.regLbar}, it is natural to proceed by comparing the vector fields~$\mathbf{F}_{m,\Theta}^{(k)}$ to~$\mathbf{f}^{(m)}_{p,h}$ defined in~\eqref{e.corrcoeff}. This is accomplished by invoking Lemma~\ref{l.diff.linearizedsystem} again.

\begin{lemma}
\label{l.fmsclose}
Fix $q\in [2,\infty)$ and $\mathsf{M}\in [1,\infty)$. 
There exist~$\delta(q,\data),\alpha(q,\data)\in (0,\frac{1}{2}]$,~$C(q,\mathsf{M}, \text{data})<\infty$ such that, for every~$k\in\N$ with $3^k\geq \X$,~$m\in\{ 1,\ldots,n+2\}$ and~$p,h\in\Rd$ with $|p|\leq \mathsf{M}$, we have
\begin{equation}
\label{e.comcoeff.ap}
\left\| \a_p - \a_p^{(k)} \right\|_{\underline{L}^q(\cu_k)}
\leq 
\O_\delta\left( C 3^{-k\alpha} \right)
\end{equation}
and, for~$\Theta:=(p,h,0,\cdots,0)\in \R^{d(n+2)}$,
\begin{equation}
\label{e.comcoeff.Fm}
\left\| \mathbf{F}_{m,\Theta}^{(k)} - \mathbf{f}^{(m)}_{p,h} \right\|_{\underline{L}^q(\cu_k)}
\leq 
\O_\delta\left( C 3^{-k\alpha} |h|^{m} \right).
\end{equation}
\end{lemma}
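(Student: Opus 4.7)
The strategy is to apply Lemma~\ref{l.diff.linearizedsystem} on the localization cube $\cu_{k+1}$ to compare two solutions of the same system of one nonlinear and $m-1$ linearized equations. With $\Theta=(p,h,0,\ldots,0)$, set
\begin{equation*}
(u^\star, w_1^\star,\ldots, w_{m-1}^\star) := (\ell_p + \phi_p,\ \ell_h + \psi^{(1)}_{p,h},\ \psi^{(2)}_{p,h},\ \ldots,\ \psi^{(m-1)}_{p,h}),
\end{equation*}
and consider the \emph{Dirichlet-based} tuple $(v^{(k)}_{p,0}, w^{(k)}_{1,\Theta,0},\ldots,w^{(k)}_{m-1,\Theta,0})$. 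By construction both tuples satisfy the same system on the nested cubes $(1+2^{-i})\cu_k$; they differ only through their boundary values, with the Dirichlet tuple matching the affine data $\ell_p, \ell_h, 0, \ldots, 0$ exactly, while the corrector-based tuple matches the same affine data plus the sublinear correctors $\phi_p$ and $\psi^{(i)}_{p,h}$. The proof thus reduces to propagating the sublinearity of the correctors through the system.

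By Lemma~\ref{l.corr.sublinearity} together with the homogeneity~\eqref{e.psimhomogen} (which gives $\psi^{(i)}_{p,h} = |h|^i \psi^{(i)}_{p,\hat h}$ modulo constants, with $\hat h := h/|h|$), for $3^k \geq \X$ with $\X = \O_\delta(C)$ we have
\begin{equation*}
\|\phi_p - (\phi_p)_{\cu_{k+1}}\|_{\underline{L}^2(\cu_{k+1})} + \sum_{i=1}^{m-1} |h|^{-i}\|\psi^{(i)}_{p,h} - (\psi^{(i)}_{p,h})_{\cu_{k+1}}\|_{\underline{L}^2(\cu_{k+1})} \leq C 3^{k(1-\alpha)}.
\end{equation*}
A standard energy argument applied to the linear equation for $w^\star_i - w^{(k)}_{i,\Theta,0}$ on $(1+2^{-i})\cu_k$, whose boundary data consist precisely of the corrector contributions, then transfers this smallness into $L^2$ estimates on the differences of the two tuples themselves. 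Feeding the result into Lemma~\ref{l.diff.linearizedsystem} at scale $3^k$ (after a routine covering of cubes by balls), and using the $L^2$ bounds $\|\nabla \psi^{(i)}_{p,h}\|_{\underline{L}^2(\cu_k)} \leq C|h|^i$ from Theorem~\ref{t.correctorestimates} and the matching bounds~\eqref{e.wmTheta0bounds.X} for $\nabla w^{(k)}_{i,\Theta,0}$ to control the $\mathsf{h}_i$ parameters in~\eqref{e.diff.linearizedsystem.wm} by $C|h|^i$, the triangular sums in that lemma collapse to
\begin{equation*}
\|\nabla \phi_p - (\nabla v^{(k)}_{p,0} - p)\|_{\underline{L}^2(\cu_k)} + \sum_{i=1}^{m-1}|h|^{-i}\|\nabla \psi^{(i)}_{p,h} - (\nabla w^{(k)}_{i,\Theta,0} - h\mathbf{1}_{i=1})\|_{\underline{L}^2(\cu_k)} \leq \O_\delta(C 3^{-k\alpha}).
\end{equation*}

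The bound~\eqref{e.comcoeff.ap} now follows from the $i=0$ component of this display and the uniform Lipschitz continuity of $D_p^2 L$ in its first argument (assumption~(L1)). The bound~\eqref{e.comcoeff.Fm} follows by substituting the gradient estimates into the Lipschitz-in-arguments inequality~\eqref{e.Fmbasic2} for $\mathbf{F}_m$ and applying H\"older's inequality, with the cross-terms controlled by the $L^q$ bounds on $\nabla \psi^{(i)}_{p,h}$ and $\nabla w^{(k)}_{i,\Theta,0}$ available from Theorem~\ref{t.correctorestimates} and~\eqref{e.wmTheta0bounds.X}. The upgrade from $L^2$ to $L^q$ gradient integrability is a standard Meyers-type self-improvement applied to the linear equation~\eqref{e.differe.wm} satisfied by the differences, bootstrapped using the $L^q$ forcing bounds; the exponent $\alpha$ degrades slightly with $q$. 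The main technical difficulty is propagating the various random minimal scales: the single $\X$ in the statement must simultaneously dominate the scales appearing in Lemma~\ref{l.corr.sublinearity}, in Lemma~\ref{l.diff.linearizedsystem}, and in the $L^q$ corrector bounds of Theorem~\ref{t.correctorestimates}. This is achieved by a union bound over stationary translates, as discussed in Subsection~\ref{ss.mscales}.
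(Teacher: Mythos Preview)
Your overall architecture matches the paper's: compare the corrector tuple $(u^\star,w_1^\star,\ldots)$ to the Dirichlet tuple $(v^{(k)}_{p,0},w^{(k)}_{1,\Theta,0},\ldots)$ via Lemma~\ref{l.diff.linearizedsystem}, then read off closeness of $\a_p,\a_p^{(k)}$ and of $\mathbf{f}^{(m)}_{p,h},\mathbf{F}^{(k)}_{m,\Theta}$. The gap is in how you obtain the \emph{input} to Lemma~\ref{l.diff.linearizedsystem}, namely the $L^2$ smallness of the differences of the two tuples.

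You write that ``a standard energy argument'' transfers the $L^2$ sublinearity of the correctors into $L^2$ smallness of $w_i^\star - w^{(k)}_{i,\Theta,0}$. This step does not go through. Already at level $i=0$, the difference $\eta := (\ell_p+\phi_p) - v^{(k)}_{p,0}$ solves a homogeneous linear equation on $\cu_{k+1}$ with boundary trace $\phi_p$. An energy estimate controls $\|\nabla\eta\|_{\underline{L}^2}$ only by $\|\nabla\hat g\|_{\underline{L}^2}$ for some $H^1$ extension $\hat g$ of the boundary datum; taking $\hat g=\phi_p-c$ gives merely $\|\nabla\eta\|_{\underline{L}^2}=O(1)$. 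The interior $L^2$ sublinearity $\|\phi_p-c\|_{\underline{L}^2(\cu_{k+1})}\le C3^{k(1-\alpha)}$ from Lemma~\ref{l.corr.sublinearity} does not provide the $H^{1/2}(\partial\cu_{k+1})$ control needed to do better, and Poincar\'e on $\eta-\hat g\in H^1_0$ costs an extra factor of $3^k$. The same obstruction recurs at every level $i\ge1$, where in addition the two equations have different coefficients and forcings.

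The paper supplies the missing ingredient by invoking the quantitative homogenization assumed in~\eqref{e.assumption.section4} (Theorem~\ref{t.linearizehigher} for $n$): each $v^{(k)}_{p,0}$ and $w^{(k)}_{m,\Theta,0}$ is shown to be $O(3^{k(1-\alpha)})$-close in $\underline{L}^2$ to its homogenized solution, which is simply the affine boundary datum $\ell_p$, $\ell_h$, or $0$. The triangle inequality against the sublinearity of $\phi_p,\psi^{(m)}_{p,h}$ then gives the desired $L^2$ closeness of the two tuples, and only \emph{then} is Lemma~\ref{l.diff.linearizedsystem} applied. For the final $L^2\to L^q$ upgrade the paper uses $L^p$ interpolation against the uniform bounds~\eqref{e.sec3corrbnd} and~\eqref{e.FmTheta0bounds.X2}, which is cleaner here than Meyers since the latter would only gain a small exponent.
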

\begin{proof}
We take~$\X$ to be the maximum of the random scales in Theorem~\ref{t.correctorestimates}, Lemmas~\ref{l.corr.sublinearity} and~\ref{l.diff.linearizedsystem} and Theorems~\ref{t.linearizehigher} and~\ref{t.regularity.linerrors} for $n$, the latter two being valid by assumption~\eqref{e.assumption.section4}. Assume $3^k \geq \X$. 

\smallskip

By Lemma~\ref{l.corr.sublinearity} and the assumed validity of Theorem~\ref{t.linearizehigher} for $n$, we have  
\begin{equation*} \label{}
3^{-k} \left\| v^{(k)}_{p,0} - \left( \ell_p + \phi_p - \left(\phi_p \right)_{\cu_{k+1}} \right) \right\|_{\underline{L}^2(\cu_{k+1})}
\leq 
C 3^{-k\alpha},
\end{equation*} 
\begin{equation*} \label{}
3^{-k} \left\| w^{(k)}_{1,\Theta,0}  - \left( \ell_h+\psi^{(1)}_{p,h} - \left(\psi^{(1)}_{p,h} \right)_{\frac32\cu_{k}} \right) \right\|_{\underline{L}^2(\frac32\cu_{k})}
\leq 
C 3^{-k\alpha}|h|
\end{equation*}
and, for $m\in\{2,\ldots,n+1\}$,
\begin{equation*} \label{}
3^{-k} \left\| w^{(k)}_{m,\Theta,0} - \left( \psi^{(m)}_{p,h} - \left(\psi^{(m)}_{p,h} \right)_{\frac32\cu_{k}} \right) \right\|_{\underline{L}^2((1+2^{-m})\cu_{k})}
\leq 
C 3^{-k\alpha} |h|^m.
\end{equation*}
By Theorem~\ref{t.correctorestimates} and the assumed validity of Theorem~\ref{t.regularity.linerrors} for $n$, we also have that, for every $m\in\{1,\ldots,n+1\}$,  
\begin{equation} 
\label{e.wmTheta0bounds}
\left\| \nabla w^{(k)}_{m,\Theta,0}
\right\|_{\underline{L}^2((1+2^{-m})\cu_{k})}
+
\left\| 
\nabla \psi^{(m)}_{p,h}
\right\|_{\underline{L}^2((1+2^{-m})\cu_{k})}
\leq 
C|h|^m. 
\end{equation}
Using these estimates and Lemma~\ref{l.diff.linearizedsystem}, we obtain, for $m\in\{2,\ldots,n+1\}$,
\begin{equation} 
\label{e.closenessofcorr}
\left\| \nabla w^{(k)}_{m,\Theta,0} - \nabla \psi^{(m)}_{p,h} 
\right\|_{\underline{L}^2(\cu_{k})}
\leq 
C 3^{-k\alpha} |h|^m.
\end{equation}
The previous display holds for $3^k\geq \X$. Combining this estimate with~\eqref{e.sec3corrbnd} and~\eqref{e.FmTheta0bounds.X2} and using $L^p$ interpolation, we obtain
\begin{equation} 
\label{e.closenessofcorr.O}
\left\| \nabla w^{(k)}_{m,\Theta,0} - \nabla \psi^{(m)}_{p,h} 
\right\|_{\underline{L}^2(\cu_{k})}
\leq 
\O_\delta\left( C 3^{-k\alpha} |h|^m \right).
\end{equation}
The conclusion of the lemma now follows. 
\end{proof}

We next observe that the homogenized coefficients~$\ahom^{(k)}_p$ and $\overline{\mathbf{F}}^{(k)}_{n+2,\Theta}$ agree, up to a small error, with $\ahom_p:=D^2\overline{L}(p)$ and $\overline{\mathbf{F}}_{n+2}(\Theta)$. This will help us eventually to prove that the homogenized coefficients for the linearized equations are the linearized coefficients of the homogenized equation.

\begin{lemma}
\label{l.barsclose}
Fix $\mathsf{M}\in [1,\infty)$.  There exist~$\alpha(\data)>0$ and~ $C(n,\mathsf{M},\data)<\infty$ such that, for every $\Theta =(p,h_1,\ldots,h_{n+1}) \in B_\mathsf{M} \times \R^{d(n+1)}$, 
\begin{equation}
\label{e.abarsclose}
\left| \ahom_p - \ahom_p^{(k)}\right| 
\leq 
C 3^{-k\alpha}
\end{equation}
and
\begin{equation}
\label{e.barsclose}
\left| \overline{\mathbf{F}}^{(k)}_{n+2,\Theta} - \overline{\mathbf{F}}_{n+2}(\Theta)\right| 
\leq 
C 3^{-k\alpha}
\left( \sum_{j=1}^{n+1} \left| h_j \right|^{\frac {n+1}j}  \right).
\end{equation}
\end{lemma}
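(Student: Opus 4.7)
The strategy is to express both the approximating and effective homogenized coefficients through stationary corrector formulas and then combine the $L^q$-closeness from Lemma~\ref{l.fmsclose} with a perturbation argument based on Lemma~\ref{l.abstractnonsense}. For the approximating side, classical linear homogenization applied to~\eqref{e.linearizequantities} yields the representations
\begin{equation*}
\ahom^{(k)}_p e = \E\!\left[\int_{\cu_0} \a^{(k)}_p\big(e+\nabla\chi^{(k)}_{p,e}\big)\right], \qquad \overline{\mathbf{F}}^{(k)}_{n+2,\Theta} = \E\!\left[\int_{\cu_0}\!\big(\a^{(k)}_p \nabla\eta^{(k)}_\Theta + \mathbf{F}^{(k)}_{n+2,\Theta}\big)\right],
\end{equation*}
where $\chi^{(k)}_{p,e}$ and $\eta^{(k)}_\Theta$ are the $3^k\Zd$-stationary correctors with slopes $e$ and $0$ respectively. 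On the effective side, we appeal to~\eqref{e.D2pbarL} for $\ahom_p$ and, when $\Theta=(p,h,0,\ldots,0)$, to~\eqref{e.formulaDervsL} with $m=n+1$ to write $\overline{\mathbf{F}}_{n+2}(\Theta) = D^{n+3}_p\overline{L}(p)h^{\otimes(n+2)} = \E[\int_{\cu_0}(\a_p\nabla\psi^{(n+2)}_{p,h}+\mathbf{f}^{(n+2)}_{p,h})]$.

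With these formulas in hand, I would decompose
\begin{equation*}
\overline{\mathbf{F}}^{(k)}_{n+2,\Theta} - \overline{\mathbf{F}}_{n+2}(\Theta) = \E\!\left[\int_{\cu_0}\!(\a^{(k)}_p-\a_p)\nabla\eta^{(k)}_\Theta\right] + \E\!\left[\int_{\cu_0}\a_p\big(\nabla\eta^{(k)}_\Theta-\nabla\psi^{(n+2)}_{p,h}\big)\right] + \E\!\left[\int_{\cu_0}\!\big(\mathbf{F}^{(k)}_{n+2,\Theta}-\mathbf{f}^{(n+2)}_{p,h}\big)\right].
\end{equation*}
The first and third terms are bounded directly by Lemma~\ref{l.fmsclose} together with H\"older's inequality and the a~priori $L^q$ bounds on $\nabla\eta^{(k)}_\Theta$ inherited from the Caccioppoli inequality and~\eqref{e.FmTheta0bounds.X2}. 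For the middle term, I would observe that $\eta^{(k)}_\Theta - \psi^{(n+2)}_{p,h}$ is $3^k\Zd$-stationary (using $3^k\Zd\subseteq\Zd$ to embed $\psi^{(n+2)}_{p,h}$ into the larger stationarity class) and satisfies
\begin{equation*}
-\nabla\cdot\a_p\big(\nabla\eta^{(k)}_\Theta-\nabla\psi^{(n+2)}_{p,h}\big) = \nabla\cdot\!\Big((\a^{(k)}_p-\a_p)\nabla\eta^{(k)}_\Theta + (\mathbf{F}^{(k)}_{n+2,\Theta}-\mathbf{f}^{(n+2)}_{p,h})\Big);
\end{equation*}
Lemma~\ref{l.abstractnonsense}, applied in the $3^k\Zd$-stationary class, then controls the $L^2$ norm of the gradient difference by the $L^2$ norm of the right-hand side, which is again small by Lemma~\ref{l.fmsclose}. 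Combining the three contributions proves~\eqref{e.barsclose} for $\Theta=(p,h,0,\ldots,0)$; the bound~\eqref{e.abarsclose} follows by the same template, omitting source terms, from $\ahom^{(k)}_p e - \ahom_p e = \E[\int_{\cu_0}(\a^{(k)}_p-\a_p)(e+\nabla\chi^{(k)}_{p,e})] + \E[\int_{\cu_0}\a_p(\nabla\chi^{(k)}_{p,e}-\nabla\psi^{(1)}_{p,e})]$.

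The step I expect to require the most care is the passage from the diagonal case $\Theta=(p,h,0,\ldots,0)$ to a general $\Theta$ in~\eqref{e.barsclose}. Both sides are polynomials in $(h_1,\ldots,h_{n+1})$ whose monomials are indexed by the partitions $i_1+\cdots+i_j=n+2$ appearing in~\eqref{e.Fmalt}, and diagonal evaluation only probes the single partition $(1,1,\ldots,1)$. To access the remaining monomials one has to introduce multi-slope generalizations of the linearized correctors, extending~\eqref{e.mthlinearized.corr} to allow an arbitrary slope in each component, and re-run the arguments of Section~\ref{s.regLbar} and of Lemma~\ref{l.fmsclose} in this broader setting; the analysis is entirely parallel but requires careful bookkeeping of the weighted degrees. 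Lemma~\ref{l.polarization} then recovers each multilinear piece, delivering the full bound~\eqref{e.barsclose} with the stated weighted factor on the right-hand side.
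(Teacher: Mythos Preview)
Your argument for the diagonal case $\Theta=(p,h,0,\ldots,0)$ is correct and essentially coincides with the paper's: you represent both homogenized coefficients via stationary corrector/flux formulas, invoke Lemma~\ref{l.fmsclose} for the $L^q$-closeness of the coefficients, and then run a perturbation argument. The only cosmetic difference is that you control $\nabla\eta^{(k)}_\Theta-\nabla\psi^{(n+2)}_{p,h}$ via Lemma~\ref{l.abstractnonsense} in the $3^k\Zd$-stationary class (correctly using $\Zd$-stationarity $\Rightarrow$ $3^k\Zd$-stationarity for $\psi^{(n+2)}_{p,h}$), whereas the paper phrases the same step via the Meyers estimate; both are standard and interchangeable here.

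For the passage to general $\Theta$, you are right that a single application of Lemma~\ref{l.polarization} to the degree-$(n+2)$ form in $h$ only recovers the partition $(1,\ldots,1)$ and misses the terms involving $h_2,\ldots,h_{n+1}$. Your proposed fix---introducing multi-slope linearized correctors and rerunning Lemma~\ref{l.fmsclose} in that setting---is a valid and self-contained route. The paper's one-line ``polarization and the multilinear structure shared by the two functions'' is terser and is meant to exploit the fact that \emph{both} $\Theta\mapsto\overline{\mathbf F}^{(k)}_{n+2,\Theta}$ and $\Theta\mapsto\overline{\mathbf F}_{n+2}(\Theta)$ are built from the same Fa\`a~di~Bruno pattern in $(h_1,\ldots,h_{n+1})$, so that the general-$\Theta$ comparison reduces to diagonal comparisons for each order $m\le n+2$ (the lower orders being supplied by the already-established cases of the induction) followed by Lemma~\ref{l.polarization} on each symmetric piece. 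Your multi-slope approach is a bit more laborious but has the advantage of being explicit; the paper's structural shortcut avoids redoing Section~\ref{s.regLbar} but leaves the reader to unpack the shared polynomial structure.
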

\begin{proof}
We first give the argument in the case that $\Theta=(p,h,0,\ldots,0)$.
From the definition~\eqref{e.defbarFm} and Proposition~\ref{e.Lbar.reg.qualitative}, we see that $\left( \ahom_p,\overline{\mathbf{F}}_{n+2}(\Theta) \right)$ are the homogenized coefficients for the stationary coefficient fields~$\left( \a_p, \f_{p,h}^{(n+2)} \right)$. By Lemma~\ref{l.fmsclose}, and the bounds~\eqref{e.detbounds.Fm} and~\eqref{e.sec3corrbnd}, we find that, for every $q\in [2,\infty)$, 
\begin{equation}
\left\| \a_p - \a_p^{(k)} \right\|_{\underline{L}^q(\cu_k)}
\leq \O_\delta\left( 3^{-k\alpha} \right)
\end{equation}
and
\begin{equation}
\left\| \mathbf{F}_{m,\Theta}^{(k)} - \mathbf{f}^{(m)}_{p,h} \right\|_{\underline{L}^q(\cu_k)} 
\leq 
\O_\delta\left( C 3^{-k\alpha} |h|^{m} \right),
\end{equation}
with the constants $C$ depending additionally on~$q$. The result now follows easily by the Meyers estimate in the case~$\Theta=(p,h,0,\ldots,0)$. Indeed, by subtracting the equations and applying H\"older's inequality and the Meyers estimate, we can show that the first-order correctors for the two linear problems are close in the $\underline{L}^2(\cu_k)$ norm (with a second moment in expectation). Therefore their fluxes are also close, and taking the expectations of the fluxes gives the homogenized coefficients. This argument can also be performed in a finite volume box~$\cu_M$ with the result obtained after sending $M\to \infty$. See~\cite[Lemma 3.2]{AFK} for a similar argument. 

\smallskip

For general~$\Theta$ the result follows by polarization (see Lemma~\ref{l.polarization}) and the multilinear structure shared by the functions $\Theta\mapsto \overline{\mathbf{F}}^{(k)}_{n+2,\Theta}$ and $\Theta \mapsto \overline{\mathbf{F}}_{n+2}(\Theta)$.
\end{proof}

We also require the following continuity estimate for the functions $w(\cdot,\cu_l,e,\Theta)$ in~$(e,\Theta)$. Following the proof of Lemma~\ref{l.contdep.coeffs} for~$L^2$ dependence in $\Theta$ (we need need to do one more step of the iteration described there), using~\eqref{e.appcorrbounds} for ~$L^2$ dependence in~$e$, and then interpolating this result with~\eqref{e.detbound.wU}, we obtain exponents $\alpha(\data)>0$ and $Q(\data)<\infty$ and a random variable~$\X = \O_\delta(C)$ such that, for every $e,e'\in\Rd$ and~$\Theta=(p,h_1,\ldots,h_{n+1})\in \R^{d(n+2)}$ and $\Theta'=(p',h_{1}',\ldots,h_{n+1}')\in \R^{d(n+2)}$
with $|p|,|p'|\leq \mathsf{M}$, if $3^k\geq \X$, then
\begin{align} 
\label{e.contdep.corrw}
&
\left\| \nabla v(\cdot,\cu_{l+1},e,\Theta) - \nabla v(\cdot,\cu_{l+1},e',\Theta' )\right\|_{L^\infty(\cu_l)}
\\
& \qquad \notag
\leq
Cl^Q3^{kQ} |p-p'|^\alpha \sum_{i=1}^{m-1} \left( \left| h_i \right| \vee \left| h_i' \right| \right)^{\frac mi}
+Cl^Q 3^{kQ} \sum_{i=1}^{m-1} |h_i-h_i'|^\alpha \sum_{j=1}^{m-i}
\left( \left| h_j \right| \vee \left| h_j' \right| \right)^{\frac {m-i}j}.
\end{align}

\subsection{Homogenization of the locally stationary equation}

We next present the proof of~\eqref{e.homogenization.estimates.wts.2}, which is the final step in the proof of Theorem~\ref{t.linearizehigher}. The argument follows a fairly routine (albeit technical) two-scale expansion argument and requires the homogenization estimates given in Subsection~\ref{ss.homogapp}.

\begin{proof}[{The proof of~\eqref{e.homogenization.estimates.wts.2}}]
In view of the discussion in Subsection~\ref{ss.mscales}, we may assume throughout that $3^k\geq \X$ where~$\X$ is any minimal scale described above. 
We begin by building an approximation of the solution~$\tilde{w}_{n+2}^\ep$ of~\eqref{e.wn+2.tilde} built out of the solutions of~\eqref{e.linearizequantities}. We select a smooth function $\zeta \in C^\infty_c( \Rd )$ such that, for some $C(d)<\infty$ and each~$i\in\{1,2\}$,  
\begin{equation} 
\label{e.cutoffzeta}
\indc_{\ep \cu_l} \leq \zeta \leq \indc_{\ep \cu_{l+1}}
\quad 
\left\| \nabla^i \zeta \right\|_{L^\infty}
\leq C\left(\ep3^{l}\right)^{-i}, 
\quad \mbox{and} \quad  
\sum_{z\in \ep 3^l\Zd} \zeta(\cdot-z) = 1 \quad \mbox{in} \ \Rd.
\end{equation}
We can construct such a~$\zeta$ by a suitable mollification of $\indc_{\ep\cu_{l}}$.
We write $\zeta_z:= \zeta(\cdot-z)$ for short. We next define a function $\tilde{v}^\ep_{n+2} \in H^1_0(U_{n+2})$ by 
\begin{align*} \label{}
& 
\tilde{v}_{n+2}^\ep(x)
\\ & \quad 
:=
\overline{w}_{n+2}(x) 
+
\sum_{z\in \ep3^l\Zd \cap U_{n+3}}
\zeta_z(x) \left( \ep v\left( \tfrac x\ep , \tfrac{z}\ep + \cu_{l+1},\nabla \overline{w}_{n+2}(z),\overline\Theta(z) \right) - \ell_{\nabla \overline{w}_{n+2}(z)}(x) \right).
\end{align*}
Since $\tilde{v}_{n+2}^\ep - \tilde{w}_{n+2}^\ep \in H^1_0(U_{n+2})$, 
We have that 
\begin{align} 
\label{e.captcha}
\left\| \nabla \tilde{v}^\ep_{n+2} - \nabla \tilde{w}^\ep_{n+2} 
\right\|_{L^2(U_{n+2})}
&
\leq 
C 
\left\| \nabla \cdot \tilde{\a}^\ep \nabla \tilde{v}^\ep_{n+2} 
-  \nabla \cdot \tilde{\a}^\ep\nabla \tilde{w}^\ep_{n+2} 
\right\|_{H^{-1}(U_{n+2})}
\\ & \notag
= 
C  
\left\| 
\nabla \cdot \left(  \tilde{\a}^\ep \nabla \tilde{v}_{n+2}^\ep
+ \tilde{\mathbf{F}}^\ep_{n+2} \right) 
\right\|_{H^{-1}(U_{n+2})}.
\end{align}
A preliminary goal is therefore to prove the estimate
\begin{align} 
\label{e.eqyes.wts}
&
\left\| 
\nabla \cdot \left(  \tilde{\a}^\ep \nabla \tilde{v}_{n+2}^\ep
+ \tilde{\mathbf{F}}^\ep_{n+2} \right) 
\right\|_{H^{-1}(U_{n+2})}
\\ & \qquad \notag
\leq
C\left( \ep^\alpha \left( \ep 3^l \right)^{-Q} + 3^{-l\alpha} \left( \ep 3^l \right)^{-1} + 3^{-k\alpha} + \left( \ep 3^l \right)^\alpha 3^{kQ} \right)
\sum_{j=1}^{n+1} \left\|  \nabla g_{j} \right\|_{{L}^{2+\delta}(U_{j})}^{\frac{n+2}{j}}.
\end{align}
The proof of~\eqref{e.eqyes.wts} is essentially completed in Steps~1--5 below: see~\eqref{e.yesstep6} and~\eqref{e.captcha.indeed}. After obtaining this estimate, we will prove that 
\begin{equation} 
\label{e.throwyes.wts}
\left\| \nabla \tilde{v}^\ep_{n+2} - \nabla \overline{w}_{n+2} \right\|_{H^{-1}(U_{n+2})}
\leq 
C 3^{-\alpha(l-k)} 3^{Qk} \sum_{i=1}^{n+1} \left\| g_i \right\|_{{L}^2(U_i)}^{\frac {n+2}i}.
\end{equation}
Together these inequalities imply~\eqref{e.homogenization.estimates.wts.2}. 

\smallskip

Denote $v_z:=  v\left( \cdot , \tfrac{z}\ep + \cu_{l+1},\nabla \overline{w}_{n+2}(z),\overline\Theta(z) \right)$ for short and  compute 
\begin{align}
\label{e.gradtildevep}
\nabla \tilde{v}_{n+2}^\ep(x)
&
=
\sum_{z\in \ep3^l\Zd \cap U_{n+3}} \left(
\nabla \zeta_z(x) \left( \ep v_z(\tfrac x\ep ) - \ell_{\nabla \overline{w}_{n+2}(z) }(x)\right)
+ \zeta_z(x) \nabla v_z\left( \tfrac x\ep \right)
\right)
\\ & \qquad \notag
+
\left( \nabla \overline{w}_{n+2}(x) - \sum_{z\in \ep3^l\Zd \cap U_{n+3}} \zeta_z(x) \nabla \overline{w}_{n+2}(z) \right) .
\end{align}
Thus
\begin{align} 
\label{e.fluxtildevep}
\tilde{\a}^\ep \nabla \tilde{v}^{\ep}_{n+2} 
&
=
\sum_{z\in \ep3^l\Zd \cap U_{n+3}}
\zeta_z
\tilde{\a}^{(k)}_{\nabla \overline{u}(z)}
\nabla v_z\left( \tfrac \cdot\ep \right)
\\ & \qquad \notag
+
 \sum_{z\in \ep3^l\Zd \cap U_{n+3}}
\tilde{\a}^\ep\nabla \zeta_z \left( \ep v_z(\tfrac \cdot\ep ) - \ell_{\nabla \overline{w}_{n+2}(z)}(x) \right)
\\ & \qquad \notag
+
\sum_{z\in \ep3^l\Zd \cap U_{n+3}}
\zeta_z(x) 
\left( \tilde{\a}^\ep - \tilde{\a}^{(k)}_{\nabla \overline{u}(z)}\left(\tfrac\cdot\ep \right)\right) 
\nabla v_z\left( \tfrac \cdot\ep \right)
\\ & \qquad \notag
+
 \tilde{\a}^\ep\left( \nabla \overline{w}_{n+2}(x) - \sum_{z\in \ep3^l\Zd \cap U_{n+3}} \zeta_z(x) \nabla \overline{w}_{n+2}(z) \right) .
\end{align}
Using the equation for $v_z$, we find that 
\begin{align*} \label{}
\nabla \cdot \left(  \tilde{\a}^\ep \nabla \tilde{v}^{\ep}_{n+2} + \tilde{\mathbf{F}}^\ep_{n+2} \right)
&
=
\sum_{z\in \ep3^l\Zd \cap U_{n+3}}
\nabla \zeta_z \cdot \left( 
\tilde{\a}^{(k)}_{\nabla \overline{u}(z)}\left(\tfrac \cdot\ep\right)
\nabla v_z\left( \tfrac \cdot\ep \right) + \tilde{\mathbf{F}}^{(k)}_{n+2,\overline{\Theta}(z)}\left(\tfrac \cdot\ep\right) \right)
\\ & \qquad 
+\nabla \cdot \sum_{z\in \ep3^l\Zd \cap U_{n+3}}
\zeta_z(x) 
\left( \tilde{\a}^\ep - \tilde{\a}^{(k)}_{\nabla \overline{u}(z)}\left(\tfrac\cdot\ep \right)\right) 
\nabla v_z\left( \tfrac \cdot\ep \right)
\\ & \qquad 
+
\nabla \cdot \left( \tilde{\mathbf{F}}^\ep_{n+2} -  \sum_{z\in \ep3^l\Zd \cap U_{n+3}}
\zeta_z  
\tilde{\mathbf{F}}^{(k)}_{n+2,\overline{\Theta}(z)}\left(\tfrac \cdot\ep\right) \right)
\\ & \qquad 
+
\nabla \cdot\left( \sum_{z\in \ep3^l\Zd \cap U_{n+3}}
\tilde{\a}^\ep\nabla \zeta_z \left( \ep v_z(\tfrac \cdot\ep ) - \ell_{\nabla \overline{w}_{n+2}(z)}(x) \right) \right)
\\ & \qquad 
+
\nabla \cdot 
\left(
\tilde{\a}^\ep\left( \nabla \overline{w}_{n+2}(x) - \sum_{z\in \ep3^l\Zd \cap U_{n+3}} \zeta_z(x) \nabla \overline{w}_{n+2}(z) \right)
\right).
\end{align*}
Therefore, we have that 
\begin{align} 
\label{e.bigestwthreetterms}
\lefteqn{
\left\| 
\nabla \cdot \left(  \tilde{\a}^\ep \nabla \tilde{v}_{n+2}^\ep
+ \tilde{\mathbf{F}}^\ep_{n+2} \right) 
\right\|_{H^{-1}(U_{n+2})}
} \quad & 
\\ & \notag
\leq 
C\left\| 
\sum_{z\in \ep3^l\Zd \cap U_{n+3}}
\nabla \zeta_z \cdot \left( 
\tilde{\a}^{(k)}_{\nabla \overline{u}(z)}\left(\tfrac \cdot\ep\right)
\nabla v_z\left( \tfrac \cdot\ep \right) + \tilde{\mathbf{F}}^{(k)}_{n+2,\overline{\Theta}(z)}\left(\tfrac \cdot\ep\right) \right)
\right\|_{H^{-1}(U_{n+2})} 
\\ & \qquad \notag
+C\left\|  \sum_{z\in \ep3^l\Zd \cap U_{n+3}}
\zeta_z(x) 
\left( \tilde{\a}^\ep - \tilde{\a}^{(k)}_{\nabla \overline{u}(z)}\left(\tfrac\cdot\ep \right)\right) 
\nabla v_z\left( \tfrac \cdot\ep \right)
\right\|_{L^2(U_{n+2})}
\\ & \qquad \notag
+
C\left\|  
\tilde{\mathbf{F}}^\ep_{n+2} -  \sum_{z\in \ep3^l\Zd \cap U_{n+3}}
\zeta_z  
\tilde{\mathbf{F}}^{(k)}_{n+2,\overline{\Theta}(z)}\left(\tfrac \cdot\ep\right)
\right\|_{L^2(U_{n+2})}
\\ & \qquad \notag
+
C\left\| \sum_{z\in \ep3^l\Zd \cap U_{n+3}}
\nabla \zeta_z \left( \ep v_z(\tfrac \cdot\ep ) - \ell_{\nabla \overline{w}_{n+2}(z)}(x) \right) 
\right\|_{L^2(U_{n+2})}
\\ & \qquad \notag
+
C\left\| \nabla \overline{w}_{n+2}(x) - \sum_{z\in \ep3^l\Zd \cap U_{n+3}} \zeta_z(x) \nabla \overline{w}_{n+2}(z)
\right\|_{L^2(U_{n+2})} .
\end{align}
We next estimate each of the five terms on the right side of the previous display. 

\smallskip

\emph{Step 1.} The estimate of the first term on the right side of~\eqref{e.bigestwthreetterms}. 
This is where we use the homogenized equation for $\overline{w}_{n+2}$. 
Fix  $h\in H^1_0(U_{n+2})$ with $\left\| h \right\|_{H^1(U_{n+2})}=1$. 
By the equation for $\overline{w}_{n+2}$ and integration by parts that
\begin{align*}
0
&
=
\int_{U_{n+2}} 
\nabla h(x) \cdot \left( \ahom(x) \nabla \overline{w}_{n+2}(x) + \overline{\mathbf{F}}_{n+2}(x) \right)
\,dx 
\\ &  
=
- \int_{U_{n+2}} 
h(x)  \sum_{z\in3^l\Zd\cap U_{n+3}} \nabla \zeta_z(x) \cdot \left( \ahom(x) \nabla \overline{w}_{n+2}(x) + \overline{\mathbf{F}}_{n+2}(x) \right)
\,dx 
\\ & \qquad 
+
\int_{U_{n+2}} 
\nabla h(x) \cdot \sum_{z\in3^l\Zd\setminus U_{n+3}} \zeta_z(x) \left( \ahom(x) \nabla \overline{w}_{n+2}(x) + \overline{\mathbf{F}}_{n+2}(x) \right)
\,dx .
\end{align*}
Therefore we may have 
\begin{align*}
\lefteqn{
\left| 
\int_{U_{n+2}} 
h(x) 
\sum_{z\in \ep3^l\Zd \cap U_{n+3}}
\nabla \zeta_z(x) \cdot \left( 
\tilde{\a}^{(k)}_{\nabla \overline{u}(z)}\left(\tfrac  x\ep\right)
\nabla v_z\left( \tfrac x\ep \right) + \tilde{\mathbf{F}}^{(k)}_{n+2,\overline{\Theta}(z)}\left(\tfrac x\ep\right) \right) \,dx
\right|
}  & 
\\ & 
\leq 
\left| 
\int_{U_{n+2}} 
h \!\!
\sum_{z\in \ep3^l\Zd \cap U_{n+3}}
\nabla \zeta_z \cdot \left( 
\tilde{\a}^{(k)}_{\nabla \overline{u}(z)}\left(\tfrac  \cdot \ep\right)
\nabla v_z\left( \tfrac \cdot\ep \right) + \tilde{\mathbf{F}}^{(k)}_{n+2,\overline{\Theta}(z)}\left(\tfrac \cdot\ep\right) 
-
\ahom \nabla \overline{w}_{n+2} + \overline{\mathbf{F}}_{n+2}
\right) \,dx
\right|
\\ & \qquad 
+
\int_{U_{n+2}} 
\left| \nabla h(x) \right| 
\left| \, \sum_{z\in3^l\Zd\setminus U_{n+3}} \zeta_z(x) \left( \ahom(x) \nabla \overline{w}_{n+2}(x) + \overline{\mathbf{F}}_{n+2}(x) \right)
\, \right|
\,dx .
\end{align*}
For the first term on the right, we use~\eqref{e.detanchors},~\eqref{e.appcorrbounds},~\eqref{e.abarsclose} and~\eqref{e.barsclose} to obtain
\begin{multline*}
\left| 
\int_{U_{n+2}} 
h \!\!
\sum_{z\in \ep3^l\Zd \cap U_{n+3}}
\nabla \zeta_z \cdot \left( 
\tilde{\a}^{(k)}_{\nabla \overline{u}(z)}\left(\tfrac  \cdot \ep\right)
\nabla v_z\left( \tfrac \cdot\ep \right) + \tilde{\mathbf{F}}^{(k)}_{n+2,\overline{\Theta}(z)}\left(\tfrac \cdot\ep\right) 
-
\ahom \nabla \overline{w}_{n+2} + \overline{\mathbf{F}}_{n+2}
\right) \,dx \,
\right|
\\
\leq
C \left\| h \right\|_{H^1(U_{n+2})}
\left( 3^{-\alpha(l-k)}3^{Qk} + 3^{-k\alpha} \right)
 \sum_{i=1}^{n+1} \left\| g_i \right\|_{{L}^2(U_i)}^{\frac {n+2}i}.
\end{multline*}
For the second term, we denote 
\begin{equation}
\label{e.defUn4}
U_{n+4}:= \left\{ x\in U_{n+3} \,:\, x+\ep\cu_{l+1} \subseteq U_{n+3} \right\},
\end{equation}
observe that $\left|U_{n+2}\setminus U_{n+4} \right| \leq C \ep 3^{l}$, and compute, using the H\"older inequality, to obtain, using also~\eqref{e.wn2meyers},
\begin{align*}
\lefteqn{
\int_{U_{n+2}} 
\left| \nabla h(x) \right| \left|  \sum_{z\in3^l\Zd\setminus U_{n+3}} \zeta_z(x) \left( \ahom(x) \nabla \overline{w}_{n+2}(x) + \overline{\mathbf{F}}_{n+2}(x) \right) \right|
\,dx
} \qquad & 
\\ & 
\leq 
C \left\| \nabla h \right\|_{L^2(U_{n+2})}
\left\| \ahom  \nabla \overline{w}_{n+2}  + \overline{\mathbf{F}}_{n+2} \right\|_{L^2(U_{n+2} \setminus U_{n+4})}
\\ & 
\leq 
C \left\| \nabla h \right\|_{L^2(U_{n+2})}
\left( 
(\ep3^l)^{\frac{\delta}{4+2\delta}} 
\left\| \nabla \overline{w}_{n+2} \right\|_{L^{2+\delta}(U_{n+2})} + \ep 3^l \left\| \overline{\mathbf{F}}_{n+2} \right\|_{L^\infty(U_{n+2})}  \right)
\\ & 
\leq 
C \left\| \nabla h \right\|_{L^2(U_{n+2})} (\ep3^l)^{\alpha} \sum_{i=1}^{n+1} \left\| g_i \right\|_{L^2(U_i)}^{\frac{n+2}i} .
\end{align*}
Combining the above estimates and taking the supremum over~$h\in H^1_0(U_{n+2})$ with~$\| h \|_{H^1(U_{n+2})}=1$ yields 
\begin{align*}
&
\left\| 
\sum_{z\in \ep3^l\Zd \cap U_{n+3}}
\nabla \zeta_z \cdot \left( 
\tilde{\a}^{(k)}_{\nabla \overline{u}(z)}\left(\tfrac \cdot\ep\right)
\nabla v_z\left( \tfrac \cdot\ep \right) + \tilde{\mathbf{F}}^{(k)}_{n+2,\overline{\Theta}(z)}\left(\tfrac \cdot\ep\right) \right)
\right\|_{H^{-1}(U_{n+2})} 
\\ & \qquad\qquad\qquad\qquad
\notag
\leq 
C \left( 3^{-\alpha(l-k)}3^{Qk} + 3^{-k\alpha} + (\ep3^l)^{\alpha} \right) \sum_{i=1}^{n+1} \left\| g_i \right\|_{L^2(U_i)}^{\frac{n+2}i} .
\end{align*}

\smallskip

\emph{Step 2.} The estimate of the second term on the right side of~\eqref{e.bigestwthreetterms}. We use~\eqref{e.contdep.ap.Linfty},~\eqref{e.detanchors} and~\eqref{e.appcorr.bndd} to find
\begin{align*}
\lefteqn{
\left\|  \sum_{z\in \ep3^l\Zd \cap U_{n+3}}
\zeta_z(x) 
\left( \tilde{\a}^\ep - \tilde{\a}^{(k)}_{\nabla \overline{u}(z)}\left(\tfrac\cdot\ep \right)\right) 
\nabla v_z\left( \tfrac \cdot\ep \right)
\right\|_{L^2(U_{n+2})}
} \qquad & 
\\ &
\leq 
\sum_{z\in \ep3^l\Zd \cap U_{n+3}}
\left\|
\left( \tilde{\a}^\ep - \tilde{\a}^{(k)}_{\nabla \overline{u}(z)}\left(\tfrac\cdot\ep \right)\right) 
\nabla v_z\left( \tfrac \cdot\ep \right)
\right\|_{L^2(z+\ep\cu_{l+1})}
\\ & 
\leq 
C
\sum_{z\in \ep3^l\Zd \cap U_{n+3}}
\left\|
\tilde{\a}^\ep - \tilde{\a}^{(k)}_{\nabla \overline{u}(z)}\left(\tfrac\cdot\ep \right)
\right\|_{L^\infty(z+\ep\cu_{l+1})}
\left\| \nabla v_z\left( \tfrac \cdot\ep \right)
\right\|_{L^2(z+\ep\cu_{l+1})}
\\ & 
\leq 
C 3^{Qk} \left( \ep 3^{l} \right)^\alpha \sum_{i=1}^{n+1} \left\| g_i \right\|_{{L}^2(U_i)}^{\frac {n+2}i}. 
\end{align*}

\smallskip

\emph{Step 3.} The estimate of the third term on the right side of~\eqref{e.bigestwthreetterms}. We have by~\eqref{e.contdep.Fm.Linfty} and~\eqref{e.detanchors} that 
\begin{align*} \label{}
\lefteqn{
\left\|  \tilde{\mathbf{F}}^\ep_{n+2} -  \sum_{z\in \ep3^l\Zd \cap U_{n+3}}
\zeta_z  
\tilde{\mathbf{F}}_{n+2,\overline{\Theta}(z)}\left(\tfrac \cdot\ep\right) \right\|_{L^2(U_{n+2})}
} \qquad & 
\\ & 
\leq 
C \left\| \sum_{z\in \ep3^l\Zd \cap U_{n+3}} \zeta_z \left(  \tilde{\mathbf{F}}^\ep_{n+2} -  
\tilde{\mathbf{F}}_{n+2,\overline{\Theta}(z)}\left(\tfrac \cdot\ep\right) \right) \right\|_{L^2(U_{n+2})}
\\ & 
\leq 
C\sum_{z\in \ep3^l\Zd \cap U_{n+3}}  \left\|  \tilde{\mathbf{F}}^\ep_{n+2} -  
\tilde{\mathbf{F}}_{n+2,\overline{\Theta}(z)}\left(\tfrac \cdot\ep\right) \right\|_{L^2(z+\ep\cu_{l+1})}
\\ & 
\leq 
C 3^{Qk} \left( \ep 3^l \right)^\alpha
\sum_{i=1}^{n+1} \left\| g_i \right\|_{{L}^2(U_i)}^{\frac {n+2}i}.
\end{align*}

\smallskip

\emph{Step 4.} The estimate of the fourth term on the right side of~\eqref{e.bigestwthreetterms}. For convenience, extend $v_z$ so that it is equal to the affine function $\ell_{\nabla \overline{w}_{n+2}(z)}$ outside of the cube~$z+\ep\cu_{l+1}$. 
By the triangle inequality, we have 
\begin{align}
\label{e.termfour}
\lefteqn{
\left\| \sum_{z\in \ep3^l\Zd \cap U_{n+3}}
\nabla \zeta_z \left( \ep v_z(\tfrac \cdot\ep ) - \ell_{\nabla \overline{w}_{n+2}(z)}(x) \right) 
\right\|_{L^2(U_{n+2})}
} \ \qquad & 
\\ & \notag
\leq
\left\|
\sum_{y,z\in \ep3^l\Zd \cap U_{n+3}}
\nabla \zeta_z \left( \ep v_y(\tfrac \cdot\ep ) 
-
\ell_{\nabla \overline{w}_{n+2}(y)} \right)
\right\|_{L^2(U_{n+2})}
\\ & \qquad \notag
+
\left\|\ep 
\sum_{y,z\in \ep3^l\Zd \cap U_{n+3}}\nabla \zeta_z
\left( v_z(\tfrac \cdot\ep ) - v_y(\tfrac \cdot\ep ) \right)
\right\|_{L^2(U_{n+2})}
\\ & \qquad \notag
+
\left\|
\sum_{y,z\in \ep3^l\Zd \cap U_{n+3},\,y\sim z}
\nabla \zeta_z \ep \left| \nabla \overline{w}_{n+2}(z) - \nabla \overline{w}_{n+2}(y) \right|
\right\|_{L^2(U_{n+2})},
\end{align}
where here and below we use the notation $y\sim z$ to mean that $y+\cu_{l+1}$ and $z+\cu_{l+1}$ have nonempty intersection. 
Using~\eqref{e.cutoffzeta} and the fact that $\sum_{z\in \ep3^l\Zd } \nabla \zeta_z = 0$, we have that
\begin{equation} 
\label{e.zetatrim}
\left| \sum_{z\in \ep3^l\Zd \cap U_{n+3}} \nabla \zeta_z \right| 
\leq 
C \left( \ep 3^{l}\right)^{-1} \indc_{V}.
\end{equation}
where 
\begin{equation}
V:= \left\{ x\in \Rd\,:\, (x+\ep\cu_{l+1}) \cap \partial U_{n+3} \neq \emptyset \right\} 
\end{equation}
is a boundary layer of $U_{n+3}$ of thickness $\ep 3^l$. Observe that $|V| \leq C\ep3^l$. 
Thus by~\eqref{e.appcorrbounds} we find that 
\begin{align*}
\lefteqn{
\left\|
\sum_{y,z\in \ep3^l\Zd \cap U_{n+3}}
\nabla \zeta_z \left( \ep v_y(\tfrac \cdot\ep ) 
-
\ell_{\nabla \overline{w}_{n+2}(y)} \right)
\right\|_{L^2(U_{n+2})}
} \qquad & 
\\ & 
\leq 
C \left( \ep 3^l\right)^{-1} 
\ep\left\| 
\indc_{V} \sum_{y\in\ep3^l\Zd \cap U_{n+3}}
 \left( \ep v_y(\tfrac \cdot\ep ) 
-
\ell_{\nabla \overline{w}_{n+2}(y)} \right) \right\|_{L^2(U_{n+2})}
\\ & 
\leq 
C3^{-\alpha(l-k)} 3^{Qk} \sum_{i=1}^{n+1} \left\| g_i \right\|_{{L}^2(U_i)}^{\frac {n+2}i}
.
\end{align*}
Combining~\eqref{e.cutoffzeta} with~\eqref{e.appcorr.overlap},~\eqref{e.detanchors} and the triangle inequality to compare the $v_z$'s on overlapping cubes, 
we find that
\begin{align*}
\lefteqn{
\left\|\ep 
\sum_{y,z\in \ep3^l\Zd \cap U_{n+3}}\nabla \zeta_z
\left( v_z(\tfrac \cdot\ep ) - v_y(\tfrac \cdot\ep ) \right)
\right\|_{L^2(U_{n+2})}
} \quad & 
\\ & 
\leq
\sum_{y,z\in \ep3^l\Zd \cap U_{n+3}, \, y\sim z}
\left\| \nabla \zeta_z \right\|_{L^\infty(\Rd)}
\cdot \ep \left\| v_y - v_z \right\|_{L^2((y+\cu_{l+1})\cap (z+\cu_{l+1}))} 
\\ &
\leq 
C3^{-\alpha(l-k)}3^{Qk} \sum_{i=1}^{n+1} \left\| g_i \right\|_{{L}^2(U_i)}^{\frac {n+2}i}.
\end{align*}
For the last term, we compute, using~\eqref{e.detanchors2},
\begin{align*}
\lefteqn{
\left\|
\sum_{y,z\in \ep3^l\Zd \cap U_{n+3},\,y\sim z}
\nabla \zeta_z \ep \left| \nabla \overline{w}_{n+2}(z) - \nabla \overline{w}_{n+2}(y) \right|
\right\|_{L^2(U_{n+2})}
} \qquad & 
\\ & 
\leq 
C\ep^{1+\beta} 3^{l} \left\| \nabla \overline{w}_{n+2} \right\|_{C^{0,\beta}(U_{n+3})} \left\| \sum_{y,z\in \ep3^l\Zd \cap U_{n+3},\,y\sim z}
\left| \nabla \zeta_z \right|\right\|_{L^2(U_{n+2})}
\\ & 
\leq 
C \ep^\alpha \left( 3^l\ep \right)^{-Q} 
\sum_{i=1}^{n+1} \left\| g_i \right\|_{{L}^2(U_i)}^{\frac {n+2}i}.
\end{align*}
Putting the above inequalities together, we find that 
\begin{align*}
\lefteqn{
\left\| \sum_{z\in \ep3^l\Zd \cap U_{n+3}}
\nabla \zeta_z \left( \ep v_z(\tfrac \cdot\ep ) - \ell_{\nabla \overline{w}_{n+2}(z)}(x) \right) 
\right\|_{L^2(U_{n+2})}
} 
\qquad\qquad\qquad\qquad\qquad
& 
\\ & 
\leq 
C\left(  
3^{-\alpha(l-k)}3^{Qk}
+
\ep^\alpha \left( 3^l\ep \right)^{-Q} 
\right)
\sum_{i=1}^{n+1} \left\| g_i \right\|_{ {L}^2(U_i)}^{\frac {n+2}i}.
\end{align*}

\smallskip

\emph{Step 5.} The estimate of the fifth term on the right side of~\eqref{e.bigestwthreetterms}. Recall that~$U_{n+4}$ is defined in~\eqref{e.defUn4}.
We have that, using~\eqref{e.wn2meyers} and~\eqref{e.detanchors2},
\begin{align*}
\lefteqn{
\left\| \nabla \overline{w}_{n+2} - \sum_{z\in \ep3^l\Zd \cap U_{n+3}} \zeta_z \nabla \overline{w}_{n+2}(z)
\right\|_{L^2(U_{n+2})} 
}  & 
\\ & 
\leq
\left\| \nabla \overline{w}_{n+2} \left( 1 - \sum_{z\in \ep3^l\Zd \cap U_{n+3}} \zeta_z \right)
\right\|_{L^2(U_{n+2})} \!\!\!
+
\left\| \sum_{z\in \ep3^l\Zd \cap U_{n+3}} \zeta_z \left( \nabla \overline{w}_{n+2} -\nabla \overline{w}_{n+2}(z) \right)
\right\|_{L^2(U_{n+2})} 
\\ & 
\leq 
\left\| \nabla \overline{w}_{n+2} 
\right\|_{L^2(U_{n+2}\setminus U_{n+4})} 
+ C \ep 3^{l} \left\| \nabla \overline{w}_{n+2} \right\|_{C^{0,\beta}(U_{n+3})} 
\\ & 
\leq 
C\left| U_{n+2}\setminus U_{n+4} \right|^{\frac{\delta}{4+2\delta}} 
\left\| \nabla \overline{w}_{n+2} 
\right\|_{L^{2+\delta}(U_{n+2})} 
+
C \ep^\alpha \left( \ep 3^{l} \right)^{-Q}\sum_{i=1}^{n+1} \left\| g_i \right\|_{{L}^2(U_i)}^{\frac {n+2} i}
\\ & 
\leq 
C \left( (\ep 3^{l})^{\alpha} + \ep^\alpha \left( \ep 3^{l} \right)^{-Q} \right) \sum_{i=1}^{n+1} \left\| g_i \right\|_{{L}^2(U_i)}^{\frac {n+2}i}.
\end{align*}

\emph{Step 6.} Let us summarize what we have shown so far. Substituting the results of the five previous steps into~\eqref{e.bigestwthreetterms}, we obtain that 
\begin{align}
\label{e.yesstep6}
\lefteqn{
\left\| 
\nabla \cdot \left(  \tilde{\a}^\ep \nabla \tilde{v}_{n+2}^\ep
+ \tilde{\mathbf{F}}^\ep_{n+2} \right) 
\right\|_{H^{-1}(U_{n+2})}
} \qquad & 
\\ & \notag
\leq 
C\left( \ep^\alpha \left( \ep 3^l \right)^{-Q} + 3^{-l\alpha} \left( \ep 3^l \right)^{-1} + 3^{-k\alpha} + \left( \ep 3^l \right)^\alpha 3^{kQ} \right)
\sum_{j=1}^{n+1} \left\|  \nabla g_{j} \right\|_{{L}^{2+\delta}(U_{j})}^{\frac{n+2}{j}}.
\end{align}
This implies by~\eqref{e.captcha} that 
\begin{align}
\label{e.captcha.indeed}
\lefteqn{
\left\| \nabla \tilde{v}^\ep_{n+2} - \nabla \tilde{w}^\ep_{n+2} 
\right\|_{L^2(U_{n+2})}
} \qquad & 
\\ & \notag
\leq 
C\left( \ep^\alpha \left( \ep 3^l \right)^{-Q} + 3^{-l\alpha} \left( \ep 3^l \right)^{-1} + 3^{-k\alpha} + \left( \ep 3^l \right)^\alpha 3^{kQ} \right)
\sum_{j=1}^{n+1} \left\|  \nabla g_{j} \right\|_{{L}^{2+\delta}(U_{j})}^{\frac{n+2}{j}}.
\end{align}
Therefore to obtain the lemma it suffices to prove~\eqref{e.throwyes.wts}. 
To prove this bound, we use the identity 
\begin{align*}
\lefteqn{
\nabla \tilde{v}_{n+2}^\ep(x)
-
\nabla \overline{w}_{n+2}(x)
} \qquad &
\\ &
=
\nabla \left( \sum_{z\in \ep3^l\Zd \cap U_{n+3}}
\zeta_z(x) \left( \ep v\left( \tfrac x\ep , \tfrac{z}\ep + \cu_{l+1},\nabla \overline{w}_{n+2}(z),\overline\Theta(z) \right) - \ell_{\nabla \overline{w}_{n+2}(z)}(x) \right) \right)
\end{align*}
which, combined with~\eqref{e.appcorrbounds}, and~\eqref{e.detanchors}, yields
\begin{align*}
\lefteqn{
\left\| \nabla \tilde{v}_{n+2}^\ep(x)
-
\nabla \overline{w}_{n+2}(x)
\right\|_{H^{-1}(U_{n+2})}
} \ \ & 
\\ & 
\leq 
\left\| 
\sum_{z\in \ep3^l\Zd \cap U_{n+3}}
\zeta_z(x) \left( \ep v\left( \tfrac x\ep , \tfrac{z}\ep + \cu_{l+1},\nabla \overline{w}_{n+2}(z),\overline\Theta(z) \right) - \ell_{\nabla \overline{w}_{n+2}(z)}(x) \right)
\right\|_{L^2(U_{n+2})}
\\ & 
\leq 
C\ep 3^{l}  3^{-\alpha(l-k)} 3^{Qk} \sum_{i=1}^{n+1} \left\| g_i \right\|_{{L}^2(U_i)}^{\frac {n+2}i}
\leq 
C 3^{-\alpha(l-k)} 3^{Qk} \sum_{i=1}^{n+1} \left\| g_i \right\|_{{L}^2(U_i)}^{\frac {n+2}i}.
\end{align*}
This is~\eqref{e.throwyes.wts}. 

\smallskip

In view of the selection of the mesoscopic parameters~$k$ and~$l$ in Section~\ref{ss.meso}, which gives us~\eqref{e.goodchoices}, the proof of~\eqref{e.homogenization.estimates.wts.2} is now complete. 
\end{proof}

\section{Large-scale 
\texorpdfstring{$C^{0,1}$}{C01}-type estimates for linearized equations}
\label{s.reglineqs}

In the next two sections, we suppose that~$n\in \{0,\ldots,\mathsf{N}-1 \}$ is such that   
\begin{equation}
\label{e.assumption.section5}
\left\{
\begin{aligned}
& \  \mbox{Theorems~\ref{t.regularity.Lbar} and~\ref{t.linearizehigher} are valid for $n+1$ in place of $n$,} \\
& \ \mbox{and Theorem~\ref{t.regularity.linerrors} is valid for $n$.}
\end{aligned}\right.
\end{equation}
The goal is to prove that Theorem~\ref{t.regularity.linerrors} is also valid for~ $n+1$. Combined with the results of the previous two sections and an induction argument, this completes the proof of Theorems~\ref{t.regularity.Lbar},~\ref{t.linearizehigher} and~\ref{t.regularity.linerrors}. 

\smallskip

The goal of this section is to prove the half of Theorem~\ref{t.regularity.linerrors}, namely estimate~\eqref{e.C01linsols} for~$w_{n+2}$. The estimate~\eqref{e.C01linerror} will be the focus of Section~\ref{s.reglinerrors}. 

\smallskip

Both of the estimate in the conditional proof of Theorem~\ref{t.regularity.linerrors} are obtained by ``harmonic'' approximation: homogenization says that on large scales the heterogeneous equations behave like the homogenized equations, and therefore we may expect the former to inherit some of the better regularity estimates of the latter. The quantitative version of the homogenization statement provided by Theorem~\ref{t.linearizehigher} allows us to prove a~$C^{0,1}$--type estimate, following a well-known excess decay argument originally introduced in~\cite{AS}. 

\smallskip

\subsection{Approximation of 
\texorpdfstring{$w_{n+2}$}{w n+2}
by smooth functions}

The large-scale regularity estimates are obtained by approximating the solutions of the linearized equations for the heterogeneous Lagrangian~$L$, as well as the linearization errors, by the solutions of the linearized equations for the homogenized Lagrangian~$\overline{L}$. Since the latter possess better smoothness properties, this allows us to implement an excess decay iteration for the former. 

\smallskip

We begin by giving a quantitative statement concerning the smoothness of solutions to the linearized equations for the homogenized operator~$\overline{L}$. This is essentially well-known, but we need a more precise statement than we could find in the literature. 

\smallskip

We next present the statement concerning the approximating of the solutions~$w_m$ of the linearized equations for~$L$, as well as the linearized errors~$\xi_m$, by solutions of the linearized equations for~$\overline{L}$. For the~$w_m$'s, this is essentially a rephrasing of the assumed validity of Theorem~\ref{t.linearizehigher} for $n+1$ in place of~$\mathsf{N}$, see~\eqref{e.assumption.section5}. For the $\xi_m$'s, this can be thought of as a homogenization result.

\begin{lemma}[{Smooth approximation of $w_{n+2}$}]
\label{l.smoothapprox.w}
Assume that~\eqref{e.assumption.section5} holds. Fix $\mathsf{M} \in [1,\infty)$. There exist $\delta(n,d,\Lambda)\in(0,d)$, $\alpha(d,\Lambda) \in \left(0,\tfrac 12\right]$, $C(n,\mathsf{M},d,\Lambda)<\infty$ and a random variable~$\X$ satisfying
\begin{equation*}
\X = \O_{\delta}(C)
\end{equation*}
such that the following statement is valid. 
For every $R\geq \X$ and $u,v,w_1,\ldots,w_{n+2} \in H^1(B_R)$ satisfying, for each $m\in \{1,\ldots,n+2\}$, 
\begin{equation}
\label{e.linearized.approx}
\left\{ 
\begin{aligned}
& -\nabla \cdot  \left( D_pL\left( \nabla u,x \right) \right) = 0 
\quad \mbox{and} \quad -\nabla \cdot \left( D_pL(\nabla v,x) \right) = 0
& \mbox{in} &  \ B_R,\\
& -\nabla \cdot  \left( D^2_pL\left( \nabla u,x \right) \nabla w_m \right) = \nabla \cdot \left( \mathbf{F}_m(x, \nabla u,\nabla w_1,\ldots,\nabla w_{m-1})\right) & \mbox{in} & \ B_{R},\\
&
\left\| \nabla u \right\|_{\underline{L}^2(B_R)} \vee 
\left\| \nabla v \right\|_{\underline{L}^2(B_R)} \leq \mathsf{M},
\end{aligned} 
\right. 
\end{equation}
if we let $\overline{u},\overline{v},\overline{w}_1,\ldots,\overline{w}_{n+2} \in H^1(B_{R/2})$ be the solutions of the Dirichlet problems 
\begin{equation}
\label{e.linearized.approx.hom}
\left\{ 
\begin{aligned}
& -\nabla \cdot  \left( D_p\overline{L}\left( \nabla \overline{u} \right) \right) = 0 
\quad \mbox{and} \quad -\nabla \cdot \left( D_p\overline{L}(\nabla \overline{v}) \right) = 0
& \mbox{in} & \ B_{R},\\
& -\nabla \cdot  \left( D^2_p\overline{L}\left( \nabla \overline{u} \right) \nabla \overline{w}_m \right) = \nabla \cdot \left( \overline{\mathbf{F}}_m(\nabla \overline{u},\nabla \overline{w}_1,\ldots,\nabla \overline{w}_{m-1})\right) & \mbox{in} & \ B_{\frac12(1+2^{-m}) R},\\
& \overline{u} = u, \ \overline{v} = v, \ \overline{w}_m = w_m & \mbox{on} & \ \partial B_{\frac12 (1+2^{-m})R},
\end{aligned} 
\right. 
\end{equation}
then we have, for each $m \in \{1,\ldots,n+2\}$, the estimate
\begin{equation}
\label{e.approx.w}
\left\| w_{m} - \bar{w}_{m} \right\|_{\underline{L}^2(B_{\frac12 (1+2^{-m}) R})} 
\leq 
CR^{-\alpha} 
\sum_{i=1}^{m} \left( \frac1R \left\|  w_{i} - \left( w_{i} \right)_{B_R}   \right\|_{\underline{L}^2(B_R)} \right)^{\frac{m}{i}}.
\end{equation}
\end{lemma}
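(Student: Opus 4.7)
The plan is to rescale to unit scale and invoke Theorem~\ref{t.linearizehigher} at order $n+1$, whose validity is ensured by~\eqref{e.assumption.section5}. Set $\ep:=1/R$ and define $\tilde u(y):=u(Ry)/R$, $\tilde v(y):=v(Ry)/R$, $\tilde w_m(y):=w_m(Ry)/R$ for $y\in B_1$; since $\nabla \tilde u(y)=(\nabla u)(Ry)$, these functions satisfy on $B_1$ the equations of Theorem~\ref{t.linearizehigher} with the Lagrangian $(p,y)\mapsto L(p,y/\ep)$. Take the nested unit-scale domains $U_m:=B_{(1+2^{-m})/2}$ (so $\overline{U}_{m+1}\subseteq U_m$) and the boundary data $f:=\tilde u$ on $\partial U_1$ and $g_m:=\tilde w_m|_{U_m}$. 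With these choices, the homogenized solutions produced by the theorem coincide, after undoing the rescaling, with the $\bar u,\bar w_m$ of the lemma.

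Theorem~\ref{t.linearizehigher} then yields, for every $m\in\{1,\ldots,n+2\}$,
\[
\|\nabla\tilde w_m-\nabla\bar{\tilde w}_m\|_{H^{-1}(U_m)}\le\X\ep^\alpha\sum_{j=1}^m\|\nabla\tilde w_j\|_{L^{2+\delta}(U_j)}^{m/j}.
\]
Because the boundary data match, $\tilde w_m-\bar{\tilde w}_m\in H^1_0(U_m)$; testing the $H^{-1}$ norm against the $H^1_0(U_m)$-solution of $-\Delta\phi=\tilde w_m-\bar{\tilde w}_m$ and using Poincar\'e on the unit-scale $U_m$ upgrades this to an $L^2$-bound on the function difference. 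Reverting the rescaling turns $\ep^\alpha$ into $R^{-\alpha}$ and converts the left-hand side into the volume-averaged norm of $w_m-\bar w_m$ over $B_{(1+2^{-m})R/2}$. The remaining task is to bound each factor $\|\nabla\tilde w_j\|_{L^{2+\delta}(U_j)}$, which up to unit-order constants equals $\|\nabla w_j\|_{\underline{L}^{2+\delta}(B_{(1+2^{-j})R/2})}$, by $\sum_{i=1}^j\bigl(R^{-1}\|w_i-(w_i)_{B_R}\|_{\underline{L}^2(B_R)}\bigr)^{j/i}$.

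For $j\le n+1$, I would cover $B_{(1+2^{-j})R/2}$ by a bounded number of balls of the form $B_{R/8}(z)$ with $B_{R/4}(z)\subseteq B_R$, apply Theorem~\ref{t.regularity.linerrors} (valid for $n$) on each sub-ball with radius $R/4$ and exponent $q=2+\delta$, and then pass from a mean over $B_{R/4}(z)$ to one over $B_R$ via a standard Poincar\'e--Wirtinger estimate. For $j=n+2$, where Theorem~\ref{t.regularity.linerrors} is not yet available, I would close the loop deterministically by combining Caccioppoli's inequality with Meyers' higher-integrability estimate applied to the linear equation for $w_{n+2}$; the source $\mathbf{F}_{n+2}(\nabla u,\nabla w_1,\ldots,\nabla w_{n+1},\cdot)$ is controlled via~\eqref{e.Fmbasic} and the $L^q$-bounds on $\nabla w_1,\ldots,\nabla w_{n+1}$ from the previous case, with $q$ chosen large enough (depending only on $n$) that each $|\nabla w_i|^{(n+2)/i}$ lies in $L^{2+\delta}$. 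A final use of the elementary power-sum inequality $(\sum_i a_i^{j/i})^{m/j}\lesssim\sum_i a_i^{m/i}$ then produces the sum in the stated form. The minimal scale $\X$ is the maximum of those furnished by Theorems~\ref{t.linearizehigher} and~\ref{t.regularity.linerrors}, hence $\X=\O_\delta(C)$ as required. The main obstacle is the $j=n+2$ case: since Theorem~\ref{t.regularity.linerrors} at this order is exactly the conclusion this lemma is intended to feed into, one must replace it by the deterministic Caccioppoli--Meyers pair while keeping the bookkeeping of the exponents $m/j$ clean.
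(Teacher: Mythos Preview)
Your approach is essentially the paper's: invoke Theorem~\ref{t.linearizehigher} at level $n+1$ for the homogenization error, control $\|\nabla w_j\|_{\underline L^{2+\delta}}$ for $j\le n+1$ via Theorem~\ref{t.regularity.linerrors} (valid for $n$), and close the $j=n+2$ case by Meyers applied to the equation for $w_{n+2}$ with the source $\mathbf{F}_{n+2}$ bounded through~\eqref{e.Fmbasic}. The $H^{-1}\to L^2$ step you describe is fine on balls.

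There is one real detail you skip. In Theorem~\ref{t.linearizehigher} the random variable $\X$ enters the estimate~\eqref{e.homogenization.estimates} \emph{multiplicatively}, so what you actually obtain is $\tilde\X R^{-\alpha}\sum(\cdots)$ with a random prefactor $\tilde\X$, whereas the lemma asserts a deterministic constant $C$ in front of $R^{-\alpha}$ once $R$ exceeds a threshold. Taking ``the maximum of the minimal scales'' does not remove this prefactor. The paper's fix is to redefine the threshold as $\X:=(1\vee\tilde\X)^{2/\alpha}$; then $\X=\O_{\alpha\sigma/2}(C)$, and $R\ge\X$ forces $\tilde\X R^{-\alpha}\le R^{-\alpha/2}$, after which one relabels $\alpha/2\mapsto\alpha$. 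Without this device (or an equivalent one), your final bound still carries a random constant and does not match~\eqref{e.approx.w}.
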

\begin{proof}
Denote, in short, $R_m := \frac12 (1+2^{-m}) R$ and  $r_m := \frac14 (R_m - R_{m-1})$. Since we assume~\eqref{e.assumption.section5}, 
we have that Theorem~\ref{t.linearizehigher} applied for $n+1$ instead of $\mathsf{N}$ and Theorem~\ref{t.regularity.linerrors}, assumed now for $n$ in place of $\mathsf{N}$, are both valid. In particular, there is $\sigma(n,\data) \in (0,1)$ and $C(n,\mathsf{M},\data)<\infty$, a random variable    
$\tilde \X \leq \O_\sigma(C)$ such that, for $R \geq \tilde \X$ and $m \in \{1,\ldots,n+2\}$, Theorem~\ref{t.regularity.linerrors} gives
\begin{equation} \label{e.C01linsols.applied000}
\sum_{i=1}^{m}  \left\|  \nabla w_i \right\|_{L^{\frac{m}{i} (2+\delta)}(B_{R_i})}^{\frac mi} \leq C
\sum_{i=1}^{m} \left( \frac1R \left\|  w_{i} - \left( w_{i} \right)_{B_R}   \right\|_{\underline{L}^2(B_R)} \right)^{\frac{m}{i}}.
\end{equation}
and Theorem~\ref{t.linearizehigher} yields
\begin{equation}  \label{e.C01linsols.applied0000}
\left\| w_m - \bar{w}_m \right\|_{\underline{L}^2(B_{R_m} )} 
\leq
 \tilde{\X} R^{-\alpha}
\sum_{i=1}^m \left\|  \nabla w_{j} \right\|_{\underline{L}^{2+\delta}(B_{R_i} )} ^{\frac{m}{i}} .
\end{equation}
We set 
\begin{equation*} 
\X := \left( 1 \vee \tilde{\X} \right)^{\frac{2}{\alpha}}.
\end{equation*}
Clearly $\X = \O_{\frac12 \alpha \sigma}(C)$ and $\X \geq \tilde \X$, and if $R \geq \X$, then $ \tilde{\X} R^{-\alpha} \leq R^{-\frac12 \alpha}$. In conclusion, by taking $\alpha$ smaller if necessary, we obtain by~\eqref{e.C01linsols.applied0000} that, for $m \in \{1,\ldots,n+2\}$ and~$R \geq \X$, 
\begin{equation}  \label{e.C01linsols.applied001}
\left\| w_m - \bar{w}_m \right\|_{\underline{L}^2(B_{R_m} )} 
\leq
C R^{-\alpha} 
\sum_{i=1}^m \left\|  \nabla w_{i} \right\|_{\underline{L}^{2+\delta}(B_{R_i})}^{\frac{m}{i}}. 
\end{equation}
Furthermore,  we notice that~\eqref{e.Fmbasic} yields
\begin{equation*} 
\left| \mathbf{F}_{m}(x, \nabla u,\nabla w_1,\ldots,\nabla w_{n+1}) \right| \leq C \sum_{i=1}^{n+1} \left|\nabla w_i \right|^{\frac{m}{i}},
\end{equation*}
and thus we get by~\eqref{e.C01linsols.applied000} and~\eqref{e.C01linsols.applied001} that 
\begin{equation*} 
\left\| \mathbf{F}_{m}(x, \nabla u,\nabla w_1,\ldots,\nabla w_{m-1}) \right\|_{\underline{L}^{2+\delta}(B_{R_{m-1}})} \leq C
\sum_{i=1}^{m-1} \left( \frac1R \left\|  w_{i} - \left( w_{i} \right)_{B_R}   \right\|_{\underline{L}^2(B_R)}\right)^{\frac {m}{i}}.
\end{equation*}
It then follows by the Meyers estimate and equation of $w_m$ that 
\begin{equation*} 
\left\| \nabla w_{m}  \right\|_{\underline{L}^{2+\delta}(B_{R_m} )}  \leq C \sum_{i=1}^{m} \left( \frac1R \left\|  w_{i} - \left( w_{i} \right)_{B_R}   \right\|_{\underline{L}^2(B_R)}\right)^{\frac {m}{i}},
\end{equation*}
finishing the proof of~\eqref{e.approx.w} by~\eqref{e.C01linsols.applied001}. 
\end{proof}

\subsection{Excess decay iteration for \texorpdfstring{$w_{n+2}$}{w n+2}}

In this subsection we conditionally prove the statement of Theorem~\ref{t.regularity.linerrors} for $n+1$ and for $q=2$. The restriction on $q$ will be removed in the next subsection. 
The proof is by a decay of excess iteration, following along similar lines as the argument from~\cite{AS}, using ``harmonic'' approximation. The statement we prove is summarized in the following proposition.

\begin{proposition}
\label{p.wLip}
Assume that~\eqref{e.assumption.section5} holds. Fix $\mathsf{M} \in [1,\infty)$. Then there exist constants $\sigma(\data),\alpha(d,\Lambda) \in \left(0,\frac12 \right]$, $C(\mathsf{M},\data)<\infty$ and a random variable~$\X$ satisfying
\begin{equation*}
\X \leq \O_{\sigma}(C)
\end{equation*}
such that the following statement is valid. Let~$R \in [\X , \infty)$ and
$u,w_1,\ldots,w_{n}$ satisfy, for each $m\in \{1,\ldots,n+1\}$, 
\begin{equation}
\label{e.linearized.c1alpha}
\left\{ 
\begin{aligned}
& -\nabla \cdot  \left( D_pL\left( \nabla u,x \right) \right) = 0 & \mbox{in} & \ B_R, \\
& -\nabla \cdot  \left( D^2_pL\left( \nabla u,x \right) \nabla w_m \right) = \nabla \cdot \left( \mathbf{F}_m(\nabla u,\nabla w_1,\ldots,\nabla w_{m-1})\right) & \mbox{in} & \ B_{R},
\end{aligned} 
\right. 
\end{equation}
where $u$ satisfy the normalization 
\begin{equation*} 
\frac1R \left\|  u - (u)_{B_R} \right\|_{\underline{L}^2\left( B_{R} \right)} \leq \mathsf{M}  . 
\end{equation*}
Then, for $m \in \{1,\ldots,n+2\}$ and for every $r \in [\X,\tfrac12 R]$, we have
\begin{multline} \label{e.Lipw}
 \left( \frac{r}{R} + \frac1{r} \right)^{-\alpha} \inf_{\ell \in \mathcal{P}_1} \frac{1}{r} \left\|  w_{m} - \ell   \right\|_{\underline{L}^2(B_{r})}  
 + \left\| \nabla w_{m}   \right\|_{\underline{L}^2(B_r)} 
 \\ \leq C \sum_{i=1}^{m} \left( \frac1R \left\|  w_{i} - \left( w_{i} \right)_{B_R}   \right\|_{\underline{L}^2(B_R)}\right)^{\frac {m}{i}} .
\end{multline}
\end{proposition}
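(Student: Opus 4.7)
The plan is to prove Proposition~\ref{p.wLip} by induction on $m \in \{1,\ldots,n+2\}$, using at each step a dyadic excess decay iteration driven by harmonic approximation. Since we assume~\eqref{e.assumption.section5}, Theorems~\ref{t.regularity.Lbar} and~\ref{t.linearizehigher} are at our disposal at the order $n+1$, so that Lemma~\ref{l.smoothapprox.w} provides the key approximation of $w_m$ by a solution $\bar w_m$ of the homogenized linearized system whose coefficients are smooth (since $\overline{L}\in C^{3+n,\beta}$). The base case $m=1$ reduces to the $C^{1,\alpha}$-type excess decay for first-order linearized equations already proved in~\cite[Theorem~5.2]{AFK}; for the induction step, we assume~\eqref{e.Lipw} holds for every $m'\in\{1,\ldots,m-1\}$ and prove it for $m$.

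For the inductive step, I would introduce, for $r\in (0,R]$,
\begin{equation*}
\mathsf{E}_m(r) := \inf_{\ell\in\mathcal{P}_1}\frac{1}{r}\|w_m-\ell\|_{\underline{L}^2(B_r)},
\qquad
\mathsf{D}_m(r) := \sum_{i=1}^{m}\left(\frac{1}{r}\|w_i - (w_i)_{B_r}\|_{\underline{L}^2(B_r)}\right)^{m/i},
\end{equation*}
so that the right-hand side of~\eqref{e.Lipw} equals $C\mathsf{D}_m(R)$. Using the inductive hypothesis for $i<m$ (together with the Poincar\'e inequality to convert the gradient bound into an $\mathsf{E}_i$-type bound at every intermediate scale), Young's inequality gives $\mathsf{D}_m(r)\leq C\mathsf{D}_m(R)$ uniformly in $r\in[\X,R]$. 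Lemma~\ref{l.smoothapprox.w} then yields, on each scale $r\in[\X,R/2]$, the approximation
\begin{equation*}
\frac{1}{r}\|w_m-\bar w_m\|_{\underline{L}^2(B_{r/2})} \leq Cr^{-\alpha}\mathsf{D}_m(r).
\end{equation*}
Meanwhile, the constant-coefficient $C^{1,\alpha}$ estimates of Appendix~\ref{s.appendixconstant} (essentially Lemma~\ref{l.appC.C1alphabarwn}), applied to $\bar w_m$, give, for any $\theta\in\left(0,\tfrac14\right]$,
\begin{equation*}
\inf_{\ell\in\mathcal{P}_1}\frac{1}{\theta r}\|\bar w_m-\ell\|_{\underline{L}^2(B_{\theta r})} \leq C\theta^{\alpha}\left(\mathsf{E}_m(r)+\mathsf{D}_m(R)\right).
\end{equation*}
Combining these two bounds via the triangle inequality produces the one-step excess decay
\begin{equation*}
\mathsf{E}_m(\theta r) \leq C\theta^{\alpha}\mathsf{E}_m(r) + C\left(\theta^{-1-d/2}r^{-\alpha}+\theta^{\alpha}\right)\mathsf{D}_m(R).
\end{equation*}

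Fixing $\theta$ so that $C\theta^{\alpha}\leq\tfrac12$ and iterating this estimate dyadically from scale $R$ down to scale $r\geq\X$ produces a geometric sum whose homogeneous part contributes a factor $(r/R)^{\alpha}$ and whose forcing contributes $r^{-\alpha}$, giving the claimed bound $\mathsf{E}_m(r)\leq C\left(r/R+1/r\right)^{\alpha}\mathsf{D}_m(R)$. The bound on $\|\nabla w_m\|_{\underline{L}^2(B_r)}$ is then obtained from the Caccioppoli inequality applied to $w_m-\ell$ (with $\ell$ the near-optimal affine function in $\mathsf{E}_m(2r)$), combined with the $L^2$ control of $\mathbf{F}_m(\nabla u,\nabla w_1,\ldots,\nabla w_{m-1},x)$ coming from the already-established bounds on the lower-order $w_i$'s via~\eqref{e.Fmbasic}.

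The main obstacle is the bookkeeping required to close the iteration through the polynomial-in-gradient nonlinearity encoded in $\mathsf{D}_m$. The exponents $m/i\geq 1$ make $\mathsf{D}_m$ a genuinely nonlinear functional of the $w_i$'s, and propagating the inductive bound $\mathsf{D}_m(r)\leq C\mathsf{D}_m(R)$ through scales requires repeated application of Young's inequality to absorb cross terms---the same kind of accounting that appears in Lemma~\ref{l.diff.linearizedsystem} and throughout Section~\ref{s.homogenization}. Once this accounting is in place, the iteration closes cleanly and the proposition follows.
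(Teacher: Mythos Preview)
Your overall strategy is the paper's strategy: harmonic approximation via Lemma~\ref{l.smoothapprox.w}, then the constant-coefficient excess decay from Lemma~\ref{l.appC.C1alphabarwn}, then a dyadic iteration in scales. The base case and the ingredients are the right ones.

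There is, however, a genuine circularity in the bookkeeping as you have written it. You claim that the inductive hypothesis for $i<m$ (plus Poincar\'e and Young) gives $\mathsf{D}_m(r)\leq C\,\mathsf{D}_m(R)$ uniformly in $r\in[\X,R]$. But your own definition of $\mathsf{D}_m(r)$ includes the $i=m$ summand $\tfrac{1}{r}\|w_m-(w_m)_{B_r}\|_{\underline{L}^2(B_r)}$, which is precisely the quantity you are trying to control; the induction on $m$ says nothing about it. And you do need this term: the harmonic approximation error from Lemma~\ref{l.smoothapprox.w} applied at scale $r$ is $Cr^{-\alpha}\mathsf{D}_m(r)$, with the full sum up to $i=m$, so you cannot feed only lower-order information into the one-step decay.

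The paper resolves this by \emph{coupling} the excess bound and the oscillation bound in a single induction on scales: the induction hypothesis~\eqref{e.wLipinductionass} assumes simultaneously that $\inf_{\ell}\tfrac{1}{r_j}\|w_m-\ell\|\leq \delta_j\mathsf{E}_m$ and that $\tfrac{1}{r_j}\|w_m-(w_m)_{B_{r_j}}\|\leq \mathsf{C}\,\mathsf{D}_m$ for an absolute constant $\mathsf{C}$ to be chosen. Step~4 propagates the excess bound to the next scale (using the oscillation bound to control the approximation error), and Step~5 then recovers the oscillation bound at the next scale by telescoping the near-optimal affine functions and summing the excesses. The constant $\mathsf{C}$ is fixed at the end so that the loop closes. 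A second point you have suppressed is that the constant-coefficient estimate~\eqref{e.appC.C1alphabarw} carries a factor of $\bigl(\tfrac{1}{r}\inf_{\ell}\|\bar u_j-\ell\|\bigr)^\beta$ multiplying the full $\mathsf{D}_m$; the paper's Step~1 shows this is $\leq \eta\,(r_j/R+1/r_j)^{\alpha_1}$ and then chooses $\eta$ in terms of $\mathsf{C}$, so this needs to be tracked as well. Once you restructure your iteration to carry both bounds together, your argument goes through and is the same as the paper's.
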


\begin{remark} \label{r.w1reg}
Proposition~\ref{p.wLip} holds for $n=0$ by~\cite[Proposition 4.3]{AFK} without assuming~\eqref{e.assumption.section5}.
\end{remark}

\begin{proof}
The proof is based on combination of harmonic approximation and decay estimates for homogenized solutions presented in Appendix~\ref{s.appendixconstant}. The necessary estimate is~\eqref{e.appC.C1alphabarw}. We take the minimal scale $\X$ as the maximum of the minimal scale in Lemma~\ref{l.smoothapprox.w} and in Theorem~\ref{t.regularity.linerrors}, which is valid with $n$ in place of $\mathsf{N}$, corresponding $q  =2 $, and a constant $\mathsf{R}(\mathsf{M},\data) \in [1,\infty)$ to be fixed in the course of the proof.  This choice, in particular, implies that there exist constants $C(\mathsf{M},\data)<\infty$ and $\sigma(\data) \in \left(0 ,\frac12 \right]$ such that $\X \leq \O_{\sigma}(C)$ and $\X \geq \mathsf{R}$.
\smallskip

We will prove the statement using induction in shrinking radii. Indeed, we set, for $j \in \N$ and $\theta \in (0,\frac12]$, $r_j := \theta^{j} r_0 $, where $r_0  \in [\X, \delta R]$ and $\delta \in \left(0 , \tfrac12 \right]$. Parameters~$\theta$,~$\delta$ and~$\mathsf{R}$ will all be fixed in the course of the proof. Having fixed~$\theta$,~$\delta$ and~$\mathsf{R}$, we assume that there is $J \in \N$ such that  $r_{J+1} <  \X \leq r_J$ for some $J \in \N$.  If there is no such $J$ or $\X \geq \delta R$, the result will follow simply by giving up a volume factor.  Furthermore, we device the notation of this proof in such a way that it will also allow us to prove the result of the next lemma, Lemma~\ref{l.C1alphanew}. 

\smallskip

We denote, in short, for $m \in \{1,\ldots,n+2\}$ and $\gamma \in [0,1)$ to be fixed, 
\begin{align} \label{e.wLipDm}
\mathsf{D}_{m} 
&
:=   \frac{1}{r_0} \left\| w_{m} - \left( w_{m} \right)_{B_{r_0}}     \right\|_{\underline{L}^2(B_{r_0})} 
+ \sum_{i=1}^{m-1} \left( \frac1{R} \left\|  w_{i} - \left( w_{i} \right)_{B_{R}}   \right\|_{\underline{L}^2(B_{R})}\right)^{\frac{m}i} 
\end{align}
and
\begin{align} \label{e.wLipEm}
\mathsf{E}_{m} 
&
:=   
 \inf_{\ell \in \mathcal{P}_1} \frac{1}{r_0} \left\|  w_{m} - \ell   \right\|_{\underline{L}^2(B_{r_0})} 
 +\left( \frac{r_0}{R} + \frac1{r_0} \right)^{\gamma}  \mathsf{D}_{m}  .
\end{align}
Theorem~\ref{t.regularity.linerrors} implies that, for $r \in [\X , \tfrac12 R]$ and $m \in \{1,\ldots,n+1\}$, we have that 
\begin{align}
\label{e.C01linsols.applied1}
\left\| \nabla w_{m}   \right\|_{\underline{L}^2(B_r)}
& 
\leq 
C \sum_{i=1}^{m} \left( \frac1R \left\|  w_{i} - \left( w_{i} \right)_{B_R}   \right\|_{\underline{L}^2(B_R)}\right)^{\frac mi} .
\end{align}
Notice then that, by~\eqref{e.C01linsols.applied1} and Poincar\'e's inequality, we have, for $r \in [\X , \tfrac12 R]$ and $m\in \{1,\ldots,n+2\}$,  that 
\begin{equation} \label{e.wLipEmest}
 \sum_{i=1}^{m-1} \left( \frac1r \left\|  w_{i} - \left( w_{i} \right)_{B_r}   \right\|_{\underline{L}^2(B_r)}\right)^{\frac mi} \leq C\mathsf{D}_{m} .
\end{equation}

\smallskip

\emph{Step 1.} 
Letting $\overline{u}_j$ solve
\begin{equation*}
\left\{ 
\begin{aligned}
& -\nabla \cdot  \left( D_p\overline{L}\left( \nabla \overline{u}_j \right) \right) = 0 & \mbox{in} & \ B_{2r_j}, \\
&  \overline{u}_j = u & \mbox{on} & \ \partial B_{2r_j},
\end{aligned} 
\right. 
\end{equation*}
we show that there exist, for $\eta \in (0,1]$, constants $ \alpha_1(d,\Lambda) \in (0,\frac12)$, $\delta(\eta,\mathsf{M},\data)<\infty$ and~$\mathsf{R}(\eta,\mathsf{M},\data)<\infty$ such that, for $j \in \{0,\ldots,J\}$, 
\begin{equation}  \label{e.witerbarureg}
\frac1{2r_j} \inf_{\ell \in \mathcal{P}_1}\left\| u -\ell \right\|_{\underline{L}^2 \left( B_{2r_j} \right)}
+
\frac1{2r_j} \inf_{\ell \in \mathcal{P}_1} \left\| \overline{u}_j -\ell \right\|_{\underline{L}^2 \left( B_{2r_j} \right)} \leq \eta \left( \frac{r_j}{R} +  \frac{1}{r_j}\right)^{\alpha_1} .
\end{equation}
The parameter $\eta$ shall be fixed later in~\eqref{e.fixetainw}.  On the one hand, we have from~\cite[Theorem 2.3]{AFK} that there exist constants~$\beta_1(d,\Lambda) \in (0,1)$ and $C(\mathsf{M},d,\Lambda)<\infty$ such that 
\begin{equation*} 
\frac1{2r_0}  \inf_{\ell \in \mathcal{P}_1}\left\| u -\ell \right\|_{\underline{L}^2 \left( B_{2r_0} \right)} \leq C \left( \frac{r_0}{R} +  \frac{1}{r_0}\right)^{\beta_1} .
\end{equation*}
On the other hand, by harmonic approximation (\cite[Corollary 2.2]{AFK}) and Lipschitz estimate for $u$  (\cite[Theorem 2.3]{AFK})  we get that there exist constants~$\beta_2(d,\Lambda) \in (0,1)$ and $C(\mathsf{M},d,\Lambda)<\infty$ such that
\begin{equation*} 
\left\| u - \overline{u}_j  \right\|_{\underline{L}^2 \left( B_{r_j} \right)} \leq C r_j^{-\beta_2} .
\end{equation*}
Thus~\eqref{e.witerbarureg} follows by the triangle inequality by taking $\alpha_1 := \tfrac12 (\beta_1 \wedge \beta_2)$, and choosing $\delta$ small enough and $\mathsf{R}$ large enough so that 
\begin{equation}  \label{e.wLipetaR1}
C \left( \delta +  \frac{1}{\mathsf{R}}\right)^{\frac12 \beta_1} + C \mathsf{R}^{-\frac12 \beta_2}  \leq \eta .
\end{equation}
We assume, from now on,  that $\delta$ and $\mathsf{R}$ are such that~\eqref{e.wLipetaR1} is valid. 

\smallskip

\emph{Step 2.} 
Letting $j \in \{0,\ldots,J\}$ and $m\in \{1,\ldots,n+2\}$,  and $\bar{w}_{1,j},\ldots,\bar{w}_{m,j}$ to solve, with $\overline{u}_j$ as in Step 2, equations
\begin{equation*}
\left\{ 
\begin{aligned}
& -\nabla \cdot  \left( D^2_p\overline{L}\left( \nabla \overline{u}_j\right) \nabla  \overline{w}_{m,j} \right) = \nabla \cdot \overline{\mathbf{F}}_{m}\left(\nabla \bar{u}_j , \bar{w}_{1,j} , \ldots, \bar{w}_{m-1,j} \right) & \mbox{in} & \ B_{\frac12 (1+2^{-m})r_{j}}
, \\
& \bar{w}_{m,j} = w_m & \mbox{on} & \ \partial B_{\frac12 (1+2^{-m})r_{j}},
\end{aligned} 
\right. 
\end{equation*}
we show that then there is $\alpha_2(\data) \in \left(0,\tfrac 12\right]$ such that 
\begin{align}  \label{e.wLipcomp}
 \frac1{r_j} \left\| w_{m} -  \bar{w}_{m,j} \right\|_{L^2 \left(B_{  r_j/2} \right)}
&
\leq 
 Cr_j^{-\alpha_2}  \frac1{r_j} \left\| w_{m} - (w_{m})_{B_{r_j}}   \right\|_{\underline{L}^{2}(B_{r_j})}  
 \\ \notag & \quad 
 +
 Cr_j^{-\alpha_2}  \sum_{i=1}^{m-1} \left( \frac1{R} \left\|  w_{i} - \left( w_{i} \right)_{B_{R}}   \right\|_{\underline{L}^2(B_{R})}\right)^{\frac{m}i} .
\end{align}
This, however,  is a direct consequence of~\eqref{e.C01linsols.applied1} and Lemma~\ref{l.smoothapprox.w}. 

\smallskip

\emph{Step 3}. Induction assumption. Set $\alpha := \frac \beta2 (\alpha_1 \wedge \alpha_2)$, where~$\alpha_1$ and~$\alpha_2$ are as in Steps 1 and 2, respectively,  and $\beta$ comes from the $C^{n+2,\beta}$ regularity of $\overline{L}$. Let~$\delta_j$ be defined as
\begin{equation} \label{e.wLipdeltaj}
\delta_j :=  \left( \frac {r_j}{r_0}  + \frac 1{r_j^{d-\sigma}} \right)^{\alpha} .
\end{equation}
We assume inductively that, for $j^* \in \{1,\ldots,J\}$, $j \in \{0,\ldots,j^*\}$, and $m \in \{1,\ldots,n+2\}$, we have, for a constant $\mathsf{C} \in [1,\infty)$ to be fixed in Step 5, that
\begin{equation} \label{e.wLipinductionass}
 \inf_{\ell \in \mathcal{P}_1} \frac1{r_j}\left\|  w_{m} - \ell   \right\|_{\underline{L}^2(B_{r_j})} \leq \delta_j  \mathsf{E}_m 
 \quad \mbox{and} \quad  \frac1{r_j} \left\|  w_{m} - (w_{m})_{B_{r_j}}   \right\|_{\underline{L}^2(B_{r_j})} \leq \mathsf{C}  \mathsf{D}_{m}.
\end{equation}
Here~$\mathsf{D}_m $ and~$\mathsf{E}_m $ are defined in~\eqref{e.wLipDm} and~\eqref{e.wLipEm}, respectively. 
Obviously ~\eqref{e.wLipinductionass} is valid for $j=0$ by the definitions of~$\mathsf{D}_m $ and~$\mathsf{E}_m $. Fixing 
\begin{equation}  \label{e.fixetainw}
\eta := (1 \vee \mathsf{C})^{1/\beta},
\end{equation}
we have that~\eqref{e.witerbarureg} implies 
\begin{equation}  \label{e.witerbarureg2}
\mathsf{C}  \left( \frac1{r_j} \inf_{\ell \in \mathcal{P}_1}  \left\|  \overline{u}_j  - \ell \right\|_{\underline{L}^{2} \left( B_{r_j} \right)}  \right)^\beta \leq  \delta_{j} \left( \frac{r_0}{R} + \frac1{r_0} \right)^{\alpha} .
\end{equation}
Using also~\eqref{e.wLipcomp} and~\eqref{e.wLipinductionass}, we obtain, for $m \in \{1,\ldots,n+1\}$, 
\begin{equation}  \label{e.wLipcompwstupid}
 \frac1{r_j} \left\| w_{m} -  \bar{w}_{m,j} \right\|_{L^2 \left(B_{  r_j/2} \right)}   \leq  C \mathsf{C} r_j^{-\alpha_2 } \mathsf{D}_m 
 \leq 
 \frac12 \theta^{\frac d2 +1}\delta_{j+1} \mathsf{E}_{m}
\end{equation}
provided that 
\begin{equation} \label{e.wLipetaR2}
 C \mathsf{C} \theta^{-\frac d2 - 2}  \mathsf{R}^{- \frac12  \alpha_2} \leq \frac12 .
\end{equation}
We assume, from now on, that $\mathsf{R}$ is such that both~\eqref{e.wLipetaR1} and~\eqref{e.wLipetaR2} are valid. 

\smallskip

\emph{Step 4}.  
We show that the first inequality in~\eqref{e.wLipinductionass} continues to hold for $j = j^*+1$.  
First, by the triangle inequality,~\eqref{e.wLipcompwstupid} and~\eqref{e.wLipinductionass}, we have that
\begin{align*}  
\lefteqn{
 \frac1{\delta_j r_j} \inf_{\ell \in \mathcal{P}_1} \left\|  \bar{w}_{m,j} - \ell \right\|_{L^2 \left(B_{  r_j/2} \right)}  
} \quad &
\\ \notag &
\leq 
  \frac1{ \delta_j r_j} \inf_{\ell \in \mathcal{P}_1} \left\| w_{m} - \ell \right\|_{L^2 \left(B_{  r_j/2} \right)}  + 
   \frac1{ \delta_j r_j} \left\| w_{m} -  \bar{w}_{m,j} \right\|_{L^2 \left(B_{  r_j/2} \right)} 
\\ \notag &
\leq 
2 \mathsf{E}_{m} 
\end{align*}
and
\begin{equation*} 
\frac1{ \delta_{j+1} r_{j+1}}\inf_{\ell \in \mathcal{P}_1 } \left\|  w_{m} - \ell  \right\|_{\underline{L}^2 \left( B_{r_{j+1}} \right)}
\leq 
\frac1{ \delta_{j+1}   r_{j+1}}\inf_{\ell \in \mathcal{P}_1 } \left\|  \overline{w}_{m,j} - \ell  \right\|_{\underline{L}^2 \left( B_{r_{j+1}} \right)}
+  \frac12  \mathsf{E}_{m} .
\end{equation*}
By a similar computation, using also the induction assumption~\eqref{e.wLipinductionass}, 
\begin{equation*} 
\sum_{i=1}^{m} \left(  \frac1{r_j} \left\|  \overline{w}_{i} - \left( \overline{w}_{i} \right)_{B_{r_j/2}}   \right\|_{\underline{L}^2(B_{r_j/2})}\right)^{\frac{m}{i}} 
\leq C \mathsf{C} \mathsf{D}_{m} .
\end{equation*}
Now, applying~\eqref{e.appC.C1alphabarw} we obtain by the previous three displays,~\eqref{e.witerbarureg2} and~\eqref{e.wLipEmest}, for each $m \in \{1,\ldots,n+2\}$, that
\begin{align}
\label{e.wLipbarwiter}
\lefteqn{
\left(  \frac{r_j}{r_{j+1}} \right)^{\beta}  \frac1{ r_{j+1}}\inf_{\ell \in \mathcal{P}_1 } \left\|  \overline{w}_{m,j} - \ell  \right\|_{\underline{L}^2 \left( B_{r_{j+1}} \right)}
} \quad &
\\ \notag &
\leq 
C \delta_j \sum_{i=1}^{m} \left( \frac1{\delta_j r_j} \inf_{\ell \in \mathcal{P}_1} \left\|  \overline{w}_{i,j}  - \ell \right\|_{\underline{L}^2(B_{r_j/2})} \right)^{\frac{m}{i}}
\\ \notag & \quad 
+
C \delta_j \sum_{i=1}^{m-1} \left(  \frac1{r_j} \left\|  \overline{w}_{i} - \left( \overline{w}_{i} \right)_{B_{r_j/2}}   \right\|_{\underline{L}^2(B_{r_j/2})} \right)^{\frac{m}{i}}
\\ \notag & \quad
+  C \left( \frac1{ r_j} \inf_{\ell \in \mathcal{P}_1} \left\|  \bar{u}_j - \ell \right\|_{\underline{L}^2(B_{r_j/2})}\right)^\beta 
\sum_{i=1}^{m} \left( \frac1{r_j} \left\|  \overline{w}_{i} - \left( \overline{w}_{i} \right)_{B_{r_j/2}}   \right\|_{\underline{L}^2(B_{r_j/2})} \right)^{\frac{m}{i}} 
 \\ \notag &
\leq    C \delta_j \mathsf{E}_{m}.
\end{align}
and, consequently,
\begin{equation*} 
  \frac1{\delta_{j+1} r_{j+1}}\inf_{\ell \in \mathcal{P}_1 } \left\|  w_{m} - \ell  \right\|_{\underline{L}^2 \left( B_{r_{j+1}} \right)}
  \leq \frac12  \mathsf{E}_{m} + 
  C \left( \frac{\delta_j}{\delta_{j+1}}  \right) \left(  \frac{r_{j+1}}{r_j} \right)^{\beta} \mathsf{E}_{m} \leq \left(\frac12 +  C \theta^{\frac \beta2} \right) \mathsf{E}_{m} .
\end{equation*}
Thus, choosing $\theta$ small enough so that  $C \theta^{\frac \beta2} \leq \frac12$, we obtain that the first inequality in~\eqref{e.wLipinductionass} is valid for $j= j^*+1$.  

\smallskip

\emph{Step 5}. The last step in the proof is to show the second inequality in~\eqref{e.wLipinductionass}  for $j = j^*+1$. Let $\ell_j$ be the minimizing affine function in $ \inf_{\ell \in \mathcal{P}_1} \left\|  w_{m} - \ell   \right\|_{\underline{L}^2(B_{r_j})}$. Then, by the triangle inequality and the first inequality in~\eqref{e.wLipinductionass} valid for $j \in \{0,\ldots,j^*\}$,
\begin{equation*} 
\left| \nabla \ell_{j+1} - \nabla  \ell_j  \right|  
\leq C  (\delta_{j+1} + \delta_j)  \mathsf{E}_m 
\end{equation*}
Thus, summation gives that 
\begin{equation*} 
|\nabla \ell_{j^*+1} - \nabla \ell_0 | \leq  C \mathsf{E}_m   \sum_{j=0}^{j^* +1} \delta_j 
.
\end{equation*}
Therefore,
 \begin{align*} 
\frac{1}{r_{j^*+1}} \left\| w_{m} - (w_{m})_{B_{r_{j^*+1}}} \right\|_{\underline{L}^2 \left( B_{r_{j^*+1}} \right)} 
 & 
\leq 
\frac{1}{r_{j^*+1}} \left\| w_{m} - \ell_{j^*+1} \right\|_{\underline{L}^2 \left( B_{r_{j^*+1}} \right)} + |\nabla \ell_{j^*+1} | 
\\ &
\leq  
 C \mathsf{E}_m   \sum_{j=0}^{j^* +1} \delta_j   + |\nabla \ell_{0} | .
\end{align*}
By the triangle inequality we have that 
\begin{equation*} 
|\nabla \ell_{0} | \leq  \frac{8}{r_{0}} \left\| w_{m} - \ell_{0} \right\|_{\underline{L}^2 \left( B_{r_{0}} \right)} + 
 \frac{8}{r_{0}} \left\| w_{m} - (w_m)_{B_{r_0}} \right\|_{\underline{L}^2 \left( B_{r_{0}} \right)} \leq 8 \mathsf{E}_m + 8 \mathsf{D}_m 
\end{equation*}
and hence 
\begin{equation*} 
\frac{1}{r_{j^*+1}} \left\| w_{m} - (w_{m})_{B_{r_{j^*+1}}} \right\|_{\underline{L}^2 \left( B_{r_{j^*+1}} \right)}  \leq C  \mathsf{D}_m   \sum_{j=0}^{j^* +1} \delta_j .
\end{equation*}
Choosing $\mathsf{C} = C$, where $C$ is as in the above inequality, verifies the second inequality in~\eqref{e.wLipinductionass}  for $j = j^*+1$. This finishes the proof of the induction step, and thus completes the proof.
\end{proof}

To show Lipschitz estimates for the linearization errors in the next section, we need a small variant of the previous proposition. 

\begin{lemma} \label{l.C1alphanew}
Assume that~\eqref{e.assumption.section5} holds. Fix $\mathsf{M} \in [1,\infty)$. Then there exist constants $\alpha(\data),\sigma(n,\mathsf{M},\data), \theta(n,\mathsf{M},\data) \in \left( 0, \frac12  \right]$ and $C(\sigma,\mathsf{M},\data)<\infty$, and a random variable~$\X$ satisfying
\begin{equation*}
\X \leq \O_{\sigma}(C)
\end{equation*}
such that the following statement is valid. Let~$R \in [\X , \infty)$ and
$u,w_1,\ldots,w_{n}$ satisfy, for each $m\in \{1,\ldots,n+1\}$, 
\begin{equation}
\label{e.linearized.c1alphanew}
\left\{ 
\begin{aligned}
& -\nabla \cdot  \left( D_pL\left( \nabla u,x \right) \right) = 0 & \mbox{in} & \ B_R, \\
& -\nabla \cdot  \left( D^2_pL\left( \nabla u,x \right) \nabla w_m \right) = \nabla \cdot \left( \mathbf{F}_m(\nabla u,\nabla w_1,\ldots,\nabla w_{m-1})\right) & \mbox{in} & \ B_{R},
\end{aligned} 
\right. 
\end{equation}
and, for $r \in [\X , \frac12 R]$, 
\begin{equation*} 
 -\nabla \cdot  \left( D^2_pL\left( \nabla u,x \right) \nabla w_{n+2} \right) = \nabla \cdot \left( \mathbf{F}_m(\nabla u,\nabla w_1,\ldots,\nabla w_{n+1})\right) \quad \mbox{in}  \ B_{r},
\end{equation*}
where $u$ satisfy the normalization
\begin{equation*} 
\frac1R \left\|  u - (u)_{B_R} \right\|_{\underline{L}^2\left( B_{R} \right)} \leq \mathsf{M}.
\end{equation*}
Then 
\begin{align} 
\label{e.Lipw.pre}
\lefteqn{
 \inf_{\ell \in \mathcal{P}_1} \frac 1{\theta r}\left\|  w_{n+2} - \ell   \right\|_{\underline{L}^2(B_{\theta r})}
} \qquad &
\\ & \notag
\leq
   \frac12  \inf_{\ell \in \mathcal{P}_1} \frac 1r \left\|  w_{n+2} - \ell   \right\|_{\underline{L}^2(B_r)} 
 + C    \left(  \frac{r}{R} + \frac1{r} \right)^{\alpha} \frac{1}{r} \left\|  w_{n+2} - \left( w_{n+2} \right)_{B_r}   \right\|_{\underline{L}^2(B_r)} 
 \\ \notag & \qquad 
 +  C\left(  \frac{r}{R} + \frac1{r}  \right)^{\alpha }  \ \sum_{i=1}^{n+1} \left( \frac1R \left\|  w_{i} - \left( w_{i} \right)_{B_R}   \right\|_{\underline{L}^2(B_R)}\right)^{\frac {n+2}{i}} .
\end{align}
\end{lemma}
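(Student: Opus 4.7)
The plan is to carry out a single step of the harmonic approximation / excess decay scheme used in Step 4 of the proof of Proposition~\ref{p.wLip}, adapted to the fact that $w_{n+2}$ is only a solution on the smaller ball $B_r$, while $u,w_1,\ldots,w_{n+1}$ are solutions on the larger ball $B_R$. First I would fix the target radius $r$ and introduce the homogenized companions on $B_r$: let $\bar u$ solve $-\nabla\cdot D_p\overline L(\nabla\bar u)=0$ in $B_r$ with $\bar u=u$ on $\partial B_r$, and, setting $r_m:=\tfrac12(1+2^{-m})r$, let $\bar w_m$ solve the $m$th linearized equation around $\nabla\bar u$ on $B_{r_m}$ with $\bar w_m=w_m$ on $\partial B_{r_m}$, for $m\in\{1,\ldots,n+2\}$, exactly as in Lemma~\ref{l.smoothapprox.w}. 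Since $w_1,\ldots,w_{n+2}$ all solve their respective linearized equations on $B_r$, Lemma~\ref{l.smoothapprox.w} applied at scale $r$ yields
\begin{equation*}
\|w_m-\bar w_m\|_{\underline L^2(B_{r_m})}\leq C r^{-\alpha}\sum_{i=1}^{m}\left(\tfrac1r\|w_i-(w_i)_{B_r}\|_{\underline L^2(B_r)}\right)^{m/i},\qquad m\in\{1,\ldots,n+2\}.
\end{equation*}

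For $i\leq n+1$ I would convert each $r$-scale quantity into an $R$-scale quantity by invoking Proposition~\ref{p.wLip} (available at levels $1,\ldots,n+1$ by hypothesis~\eqref{e.assumption.section5} and Remark~\ref{r.w1reg}) together with Poincar\'e's inequality:
\begin{equation*}
\tfrac1r\|w_i-(w_i)_{B_r}\|_{\underline L^2(B_r)}\leq C\|\nabla w_i\|_{\underline L^2(B_r)}\leq C\sum_{j=1}^{i}\left(\tfrac1R\|w_j-(w_j)_{B_R}\|_{\underline L^2(B_R)}\right)^{i/j}.
\end{equation*}
Raising to the power $(n+2)/i$ and rearranging the resulting double sum, the contribution of the indices $i\leq n+1$ to the approximation bound for $w_{n+2}$ becomes a multiple of $\sum_{j=1}^{n+1}(R^{-1}\|w_j-(w_j)_{B_R}\|_{\underline L^2(B_R)})^{(n+2)/j}$, while the $i=n+2$ contribution is $\tfrac1r\|w_{n+2}-(w_{n+2})_{B_r}\|_{\underline L^2(B_r)}$. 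Using the elementary bound $r^{-\alpha}\leq(r/R+1/r)^{\alpha}$, the prefactor $Cr^{-\alpha}$ can be replaced by $C(r/R+1/r)^{\alpha}$. The same reasoning applied to the oscillations of the $\bar w_i$ (via the triangle inequality against $w_i$ and the $r$-scale approximation bound) controls $\tfrac1r\|\bar w_i-(\bar w_i)_{B_{r/2}}\|_{\underline L^2(B_{r/2})}$ in terms of the $R$-scale $w_i$-norms.

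The third step is to apply the constant-coefficient $C^{1,\alpha}$-type estimate~\eqref{e.appC.C1alphabarw} to $\bar w_{n+2}$ on $B_{r/2}$, following the computation leading to~\eqref{e.wLipbarwiter}. Since $\bar u$ solves a nice equation, it is close to an affine function on $B_r$ up to an error of order $(r/R+1/r)^{\alpha_1}$ by~\cite[Corollary 2.2]{AFK} and~\cite[Theorem 2.3]{AFK}, so that~\eqref{e.appC.C1alphabarw} gives, for $\theta\in(0,1/2]$,
\begin{equation*}
\inf_{\ell\in\mathcal P_1}\tfrac{1}{\theta r}\|\bar w_{n+2}-\ell\|_{\underline L^2(B_{\theta r})}\leq C\theta^{\beta}\inf_{\ell\in\mathcal P_1}\tfrac{2}{r}\|\bar w_{n+2}-\ell\|_{\underline L^2(B_{r/2})}+C\bigl(r/R+1/r\bigr)^{\alpha_1\beta}\mathsf S,
\end{equation*}
where $\mathsf S$ is a sum of $\bar w_i$-oscillations, $1\leq i\leq n+1$, each raised to the power $(n+2)/i$. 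Combining with the triangle inequality
\begin{equation*}
\inf_\ell\tfrac{1}{\theta r}\|w_{n+2}-\ell\|_{\underline L^2(B_{\theta r})}\leq \inf_\ell\tfrac{1}{\theta r}\|\bar w_{n+2}-\ell\|_{\underline L^2(B_{\theta r})}+\tfrac{1}{\theta r}\|w_{n+2}-\bar w_{n+2}\|_{\underline L^2(B_{r/2})},
\end{equation*}
and the analogous bound $\inf_\ell\tfrac{2}{r}\|\bar w_{n+2}-\ell\|_{\underline L^2(B_{r/2})}\leq 2\inf_\ell\tfrac{1}{r}\|w_{n+2}-\ell\|_{\underline L^2(B_r)}+\tfrac{C}{r}\|w_{n+2}-\bar w_{n+2}\|_{\underline L^2(B_{r/2})}$, substituting the approximation estimate from the previous paragraph for $\|w_{n+2}-\bar w_{n+2}\|_{\underline L^2(B_{r/2})}$, and finally choosing $\theta(n,\mathsf M,\data)$ small enough so that $2C\theta^{\beta}\leq \tfrac12$, yields the claimed estimate~\eqref{e.Lipw.pre}.

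The main obstacle is the bookkeeping needed in the second step: Lemma~\ref{l.smoothapprox.w} produces only $r$-scale norms of $w_1,\ldots,w_{n+2}$, whereas the conclusion singles out the $(n+2)$nd index at scale $r$ and relegates the remaining indices $i\leq n+1$ to scale $R$. Doing this cleanly, while keeping the prefactor exactly $(r/R+1/r)^{\alpha}$ and producing the $\tfrac12$ (not a scale-dependent small factor) in front of the $E(r)$ term, requires that the contraction coming from the $C^{1,\beta}$ regularity of $\bar w_{n+2}$ is genuinely of fixed rate $\theta^{\beta}$ and is not mixed with the $(r/R+1/r)^{\alpha_1\beta}$ contribution arising from the non-affineness of $\bar u$; the latter must be routed exclusively into the error terms involving lower-order $w_i$'s. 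Once this separation is respected, the iteration scheme of Proposition~\ref{p.wLip} reduces to a single-step contraction, which is exactly the content of the lemma.
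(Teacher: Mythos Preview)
Your proposal is correct and follows essentially the same approach as the paper: the paper's proof simply says to take $r_0=r$ in the setup of Proposition~\ref{p.wLip} and combine the single-step decay inequality~\eqref{e.wLipbarwiter} (which is the constant-coefficient estimate~\eqref{e.appC.C1alphabarw} applied to $\bar w_{n+2,j}$) with the harmonic approximation error~\eqref{e.wLipcomp} and with~\eqref{e.Lipw} for $m\in\{1,\ldots,n+1\}$ to pass the lower-order terms to scale $R$. Your three steps---apply Lemma~\ref{l.smoothapprox.w} at scale $r$, upgrade the $i\le n+1$ terms to scale $R$ via Proposition~\ref{p.wLip}, then invoke~\eqref{e.appC.C1alphabarw} and choose $\theta$ small---are exactly this rearrangement written out in full.
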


\begin{proof}
The proof is a rearrangement of the argument in the proof of Proposition~\ref{p.wLip}. Indeed, we take $r_0=r$ and combine the first inequality of~\eqref{e.wLipbarwiter} with~\eqref{e.wLipcomp} and~\eqref{e.Lipw}, which is valid for $m\in \{1,\ldots,n+1\}$.
\end{proof}
 
\subsection{Improvement of spatial integrability} \label{s.spatintw} 

We next complete the conditional proof of~\eqref{e.C01linsols} by improving the spatial integrability of~\eqref{e.Lipw.pre} from $L^2$ to $L^q$ for every $q\in [2,\infty)$. To do this, we use the estimate~\eqref{e.Lipw.pre} to pass from the large scale~$R$ to the microscopic, random scale~$\X$. We then use deterministic estimates from classical elliptic regularity theory to obtain local $L^q$ estimates in balls of radius one, picking up a volume factor---which is power of~$\X$---as a price to pay. The first formalize the latter step in the following lemma. 

\begin{lemma}
\label{l.toqandbeyond}
Assume~\eqref{e.assumption.section5} and the hypotheses of Theorem~\ref{t.regularity.linerrors}. Let $\beta\in (0,1)$ and $q\in (2,\infty)$. Then there exist~$\sigma(q,\data) \in (0,d)$, $C(q,\mathsf{M},\data)<\infty$ and a random variable~$\X$ satisfying~$\X = \O_\sigma(C)$ such that, for every $r\geq \X$ and $m\in\{1,\ldots,n+2\}$,
\begin{equation}
\label{e.smallscalezz}
\left\| \nabla w_m \right\|_{\underline{L}^q(B_{r/2})}
\leq
C \left( 1 + r^{\frac{d^2(q-2)}{4q\beta} + \frac{d(q-2)}{2q}} \right) 
\sum_{i=1}^{m} 
\left( \frac1R \left\|  w_{i} - \left( w_{i} \right)_{B_R}   \right\|_{\underline{L}^2(B_R)} 
\right)^{\frac{m}i}.
\end{equation}
\end{lemma}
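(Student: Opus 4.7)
The estimate combines the large‐scale $L^2$ bound furnished by Proposition~\ref{p.wLip} (now available for $m = n+2$ under hypothesis~\eqref{e.assumption.section5}) with classical deterministic Schauder theory, which is applicable because $L$ has some spatial regularity. The integer $q$ is not assumed close to $2$, so the $L^\infty$ regularity obtained at the microscopic scale has to be interpolated with the mesoscopic $L^2$ bound to produce the claimed $L^q$ estimate.

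First, I would promote the random minimal scale of Proposition~\ref{p.wLip} to a \emph{uniform} minimal scale $\tilde{\mathcal{X}}$ by a union bound over the lattice translations $T_z\mathcal{X}$, $z\in\Z^d\cap B_R$, following the argument recalled in Subsection~\ref{ss.mscales}. This loses only a constant factor in $\sigma$ (keeping $\sigma\in(0,d)$) and allows us to apply the $L^2$ bound
\[
\|\nabla w_i\|_{\underline{L}^2(B_\rho(x))} \leq C \sum_{j=1}^{i}\left(\tfrac{1}{R}\|w_j-(w_j)_{B_R}\|_{\underline{L}^2(B_R)}\right)^{i/j}
\]
simultaneously at every center $x\in B_{r/2}$ and every radius $\rho\in[\tilde{\mathcal{X}},r/4]$. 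In particular, taking $\rho=\tilde{\mathcal{X}}\vee 1$ and converting to the non‐normalized $L^2$ norm introduces a factor $\tilde{\mathcal{X}}^{d/2}$.

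Second, on each unit (or $\tilde{\mathcal{X}}$-sized) ball $B_1(x)\subseteq B_{r/2}$, I would iterate the deterministic Schauder estimate exactly as in the proof of Lemma~\ref{l.detbounds.het}: first, deterministic $C^{0,\beta}$ regularity for the nonlinear equation bounds $[\nabla u]_{C^{0,\beta}(B_{1/2}(x))}$, hence $[\mathbf{a}]_{C^{0,\beta}(B_{1/2}(x))}$, in terms of $\|\nabla u\|_{L^2(B_1(x))}$; then Schauder is applied successively to the linear equations for $w_1,\ldots,w_{n+2}$, each producing a $C^{0,\beta}$ estimate for $\nabla w_m$ in terms of $C^{0,\beta}$ norms of the previous $\nabla w_i$ and of $\mathbf{F}_m$. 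Plugging in the $L^2$ bound from the first step (with the $\tilde{\mathcal{X}}^{d/2}$ volume cost) and absorbing the polynomial dependence of the Schauder constant on $[\mathbf{a}]_{C^{0,\beta}}$ (which contributes the $1/\beta$-type factor) yields, at each $x\in B_{r/2}$,
\[
\|\nabla w_m\|_{L^\infty(B_c(x))} \leq C\tilde{\mathcal{X}}^{d^2/(4\beta)+d/2}\sum_{i=1}^{m}\left(\tfrac{1}{R}\|w_i-(w_i)_{B_R}\|_{\underline{L}^2(B_R)}\right)^{m/i}.
\]
Since $r\geq\tilde{\mathcal{X}}$, we may replace $\tilde{\mathcal{X}}$ by $r$ in the prefactor, producing the $r^{d^2/(4\beta)+d/2}$ factor (plus a harmless $1$ to handle the regime $\tilde{\mathcal{X}}\leq 1$).

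Third, I would interpolate: for any $x\in B_{r/2}$,
\[
\|\nabla w_m\|_{\underline{L}^q(B_{r/2})} \leq \|\nabla w_m\|_{L^\infty(B_{r/2})}^{(q-2)/q}\,\|\nabla w_m\|_{\underline{L}^2(B_{r/2})}^{2/q},
\]
which, combined with the $L^\infty$ estimate of the previous step (raised to the power $(q-2)/q$) and the $L^2$ estimate from the first step (raised to the power $2/q$), produces the stated $L^q$ bound with prefactor $r^{(q-2)/q\cdot(d^2/(4\beta)+d/2)} = r^{d^2(q-2)/(4q\beta)+d(q-2)/(2q)}$.

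\textbf{Main obstacle.} The delicate part is the second step: keeping track of how the iterated Schauder constants depend polynomially on $[\mathbf{a}]_{C^{0,\beta}}$ and on the previously established Hölder bounds for $\nabla w_1,\ldots,\nabla w_{m-1}$, so that the resulting power of $\tilde{\mathcal{X}}$ is exactly $d^2/(4\beta)+d/2$ (rather than something worse that would degrade the interpolated exponent). This is not a conceptual difficulty—it is a direct Schauder iteration as in Lemma~\ref{l.detbounds.het}—but it must be done carefully in order to land on the precise exponent in the statement.
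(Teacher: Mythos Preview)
Your route differs from the paper's and, as written, has a gap in Step~2 that prevents you from landing on the precise exponent $\tfrac{d^2(q-2)}{4q\beta}+\tfrac{d(q-2)}{2q}$.

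The paper does \emph{not} iterate Schauder through $w_1,\ldots,w_{n+2}$ and then interpolate. Instead it applies the Calder\'on--Zygmund gradient $L^q$ estimate of Proposition~\ref{p.gradientLq} \emph{once}, directly to $w_{n+2}$ on unit balls. That proposition already carries the explicit dependence $[\a]_{C^{0,\beta}}^{d(q-2)/(2q\beta)}$, and---crucially---only requires the right-hand side $\mathbf{f}_{n+2}$ in $L^q$, not in $C^{0,\beta}$. The $L^q$ bound on $\mathbf{f}_{n+2}$ is supplied by the induction hypothesis (Theorem~\ref{t.regularity.linerrors} for $n$, with a large exponent $q'$), which controls $\|\nabla w_i\|_{\underline L^{q'}(B_r)}$ for $i\leq n{+}1$ \emph{without any factor of $r$}. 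A covering argument and conversion between normalized $L^q$ and $L^2$ norms then produces exactly the stated power of $r$.

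Your Schauder iteration, by contrast, needs $[\mathbf{f}_{n+2}]_{C^{0,\beta}}$, hence $[\nabla w_i]_{C^{0,\beta}}$ for $i\leq n{+}1$. Each application of Proposition~\ref{p.schauder} introduces a factor of $(1+[\a]_{C^{0,\beta}}^{1+d/(2\beta)})$ into the H\"older seminorm, and these factors compound through the hierarchy: already at $m=2$ one finds $[\mathbf{f}_2]_{C^{0,\beta}}\lesssim [\a]^{1+d/\beta}\|\nabla w_1\|_{L^2}^2$, not $[\a]^{d/(2\beta)}$. So the $L^\infty$ bound you obtain is $r^{Q(m)}$ with $Q(m)$ genuinely growing in $m$, and interpolation then gives $r^{(q-2)Q(m)/q}$, not the $m$-independent exponent in the statement. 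Your ``main obstacle'' paragraph correctly identifies this as the delicate point, but it is not merely bookkeeping---the iteration as in Lemma~\ref{l.detbounds.het} (which only records ``some $Q(\data)$'') does not collapse to $d^2/(4\beta)+d/2$.

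Two smaller remarks. First, the union bound in your Step~1 is unnecessary here: the paper applies Proposition~\ref{p.wLip} only on $B_r$ centered at the origin, and the covering argument works with the crude embedding $\|\nabla w_{n+2}\|_{L^2(B_2(x))}\leq \|\nabla w_{n+2}\|_{L^2(B_r)}$. (The union bound over translates is the engine of the \emph{next} lemma, Lemma~\ref{l.toqandbeyond2}.) Second, your approach would still yield \emph{some} polynomial-in-$r$ bound sufficient for the downstream application, but not the lemma as stated.
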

\begin{proof}
In view of the assumption of~\eqref{e.assumption.section5} and thus the validity of Theorem~\ref{t.regularity.linerrors} for~$n$, we only need to prove~\eqref{e.smallscalezz} for $m=n+2$. 
Fix~$q\in (2,\infty)$, $r\in [2,\infty)$, $\beta\in (0,1)$ to be selected below. The $C^{1,\beta}$-estimate in Proposition~\ref{p.schauder}, together with a covering argument, yields 
\begin{equation*}
\left\| \nabla u \right\|_{L^\infty(B_r)}
+
\left[ \nabla u \right]_{C^{0,\beta}(B_r)}
\leq 
Cr^{\frac d2} \left\| \nabla u \right\|_{\underline{L}^2(B_{2r})}.
\end{equation*}
Setting $\a:=D^2_pL(\nabla u,x)$, we deduce by the assumption of~(L1) that
\begin{equation*}
\left[ \a \right]_{C^{0,\beta}(B_r)} 
\leq
C \left( 1 + \left[ \nabla u \right]_{C^{0,\beta}(B_r)} \right)
\leq 
C \left(1 + r^{\frac d2} \left\| \nabla u \right\|_{\underline{L}^2(B_{2r})} \right). 
\end{equation*}
Applying Proposition~\ref{p.gradientLq} we find that, for each~$x\in B_{r/2}$ and $q\in (2,\infty]$,
\begin{align*}
\lefteqn{
\left\| \nabla w_{n+2} \right\|_{L^q(B_1(x))}
} \quad & 
\\ &
\leq
C \left[ \a \right]_{C^{0,\beta}(B_r)}^{\frac{d(q-2)}{2q\beta}} \left\| \nabla w_{n+2} \right\|_{L^2(B_2(x))} 
+
C\left\| \mathbf{f}_m\left( \nabla u,\nabla w_1,\ldots,\nabla w_{n+1}\right) \right\|_{L^q(B_2(x))}
\\ & 
\leq 
C \left( 1 + r^{\frac{d^2(q-2)}{4q\beta}}  \left\| \nabla u \right\|_{\underline{L}^2(B_{2r})} \right)
\left\| \nabla w_{n+2} \right\|_{L^2(B_2(x))} 
\\ & \qquad
+
C\left\| \mathbf{f}_{n+2}\left( \nabla u,\nabla w_1,\ldots,\nabla w_{n+1}\right) \right\|_{L^q(B_2(x))}.
\end{align*}
By a covering argument, we therefore obtain
\begin{align}
\label{e.readyforqbeezes}
\left\| \nabla w_{n+2} \right\|_{L^q(B_{r/2})}
& \leq
C \left( 1 + r^{\frac{d^2(q-2)}{4q\beta}}  \left\| \nabla u \right\|_{\underline{L}^2(B_{2r})} \right)
\left\| \nabla w_{n+2} \right\|_{L^2(B_r)} 
\\ & \qquad\notag
+
C\left\| \mathbf{f}_{n+2}\left( \nabla u,\nabla w_1,\ldots,\nabla w_{n+1}\right) \right\|_{L^q(B_r)}.
\end{align}
If we now take~$\X$ to be the maximum of the minimal scales in the statements of:
\begin{enumerate}
\item[(1)]
\cite[Theorem 11.13]{AKMbook};
\item[(2)] Theorem~\ref{t.regularity.linerrors} for~$n$ in place of $\mathsf{N}$ and with a sufficiently large exponent of spatial integrability~$q'$ in place of~$q$ (which can be computed explicitly in terms of our~$q$ using the H\"older inequality, although we omit this computation)---the validity of which is given by assumption~\eqref{e.assumption.section5};
\item[(3)] Proposition~\ref{p.wLip};
\end{enumerate}
then we have that $\X = \O_\sigma(C)$ as stated in the lemma and that~$r \geq \X$ implies the following estimates:
\begin{equation*}
\left\| \nabla u \right\|_{\underline{L}^2(B_{2r})}  \leq C,
\end{equation*}
\begin{equation*}
\left\| \mathbf{f}_{n+2}\left( \nabla u,\nabla w_1,\ldots,\nabla w_{n+1}\right) \right\|_{\underline{L}^q(B_r)}
\leq 
C\sum_{i=1}^{n+1} 
\left( \frac1R \left\|  w_{i} - \left( w_{i} \right)_{B_R}   \right\|_{\underline{L}^2(B_R)} 
\right)^{\frac{n+2}i}
\end{equation*}
and
\begin{equation*} \label{}
\left\| \nabla w_{n+2} \right\|_{\underline{L}^2(B_r)} 
\leq 
C\sum_{i=1}^{n+2} 
\left( \frac1R \left\|  w_{i} - \left( w_{i} \right)_{B_R}   \right\|_{\underline{L}^2(B_R)} 
\right)^{\frac{n+2}i}.
\end{equation*}
Combining these with~\eqref{e.readyforqbeezes}, we obtain
\begin{equation*}
\left\| \nabla w_{n+2} \right\|_{\underline{L}^q(B_{r/2})}
\leq
C \left( 1 + r^{\frac{d^2(q-2)}{4q\beta} + \frac{d(q-2)}{2q}} \right) 
\sum_{i=1}^{n+2} 
\left( \frac1R \left\|  w_{i} - \left( w_{i} \right)_{B_R}   \right\|_{\underline{L}^2(B_R)} 
\right)^{\frac{n+2}i}.
\end{equation*}
This completes the proof of the lemma. 
\end{proof}

In the next lemma we finally achieve the goal of this section, which is to show that~\eqref{e.assumption.section5} implies~\eqref{e.C01linsols} for $m=n+1$. 

\begin{lemma} \label{l.toqandbeyond2}
Assume that~\eqref{e.assumption.section5} holds. Fix $\mathsf{M} \in [1,\infty)$ and $q\in (2,\infty)$. Then there exist constants $\sigma(q,\data) \in (0,d)$, $C(q,\mathsf{M},\data)<\infty$ and a random variable~$\X$ satisfying
\begin{equation*}
\X \leq \O_{\sigma}(C)
\end{equation*}
such that the following is valid. Suppose that~$R \in [\X , \infty)$ and~$u,w_1,\ldots,w_{n} \in H^1(B_R)$ such that, for every $m\in \{1,\ldots,n+2\}$, 
\begin{equation}
\label{e.linearized.2toq}
\left\{ 
\begin{aligned}
& \frac1R \left\|  u - (u)_{B_R} \right\|_{\underline{L}^2\left( B_{R} \right)} \leq \mathsf{M},
\\
& -\nabla \cdot  \left( D_pL\left( \nabla u,x \right) \right) = 0 & \mbox{in} & \ B_R, \\
& -\nabla \cdot  \left( D^2_pL\left( \nabla u,x \right) \nabla w_m \right) = \nabla \cdot \left( \mathbf{F}_m(\nabla u,\nabla w_1,\ldots,\nabla w_{m-1})\right) & \mbox{in} & \ B_{R},
\end{aligned} 
\right. 
\end{equation}
Then, for every $r \in [\X,\tfrac12 R]$, we have
\begin{equation} 
\label{e.Lipw.q}
\left\| \nabla w_{n+2}   \right\|_{\underline{L}^q(B_r)} 
\leq C \sum_{i=1}^{{n+2}} \left( \frac1R \left\|  w_{i} - \left( w_{i} \right)_{B_R}   \right\|_{\underline{L}^2(B_R)}\right)^{\frac {{n+2}}{i}} .
\end{equation}
\end{lemma}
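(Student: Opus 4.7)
By the induction hypothesis~\eqref{e.assumption.section5}, the statement of Theorem~\ref{t.regularity.linerrors} for $m \leq n+1$ is already available, so the content of the lemma lies entirely in the case $m = n+2$. The starting point is Lemma~\ref{l.toqandbeyond}, whose bound carries a polynomial prefactor $(1+r^P)$ with $P = P(q,d,\beta)$. The plan is to eliminate this $r$-dependence by applying the lemma at translated centers at the smallest admissible scale and then summing.

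The main computation proceeds as follows. For each $z \in \Z^d$ with $z + \cu_0 \subset B_{R/4}$, set $s_z := T_z\X_1 \vee \sqrt{d}$, where $\X_1$ is the minimal scale from Lemma~\ref{l.toqandbeyond} and $T_z$ denotes the stationary shift by $z$. By stationarity of $\P$, a translated version of Lemma~\ref{l.toqandbeyond} applied at radius $2 s_z$ centered at $z$ yields
\begin{equation*}
\int_{z+\cu_0} |\nabla w_{n+2}|^q \,dy \leq |B_{s_z}(z)|\,\bigl(C(1 + s_z^P)\bigr)^q \sum_{i=1}^{n+2} \left(\tfrac{1}{R}\|w_i-(w_i)_{B_R}\|_{\underline{L}^2(B_R)}\right)^{q(n+2)/i},
\end{equation*}
where the unit cube $z+\cu_0$ sits inside $B_{s_z}(z)$ because $s_z \geq \sqrt{d}$, and the right-hand side has been reduced to the original $B_R$-quantity by elementary Poincar\'e manipulations (valid since $|z| \leq R/4$). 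Summing over $z \in \Z^d \cap B_r$ (whose associated unit cubes cover $B_r$ up to a thin boundary layer) and dividing by $|B_r|$ produces
\begin{equation*}
\|\nabla w_{n+2}\|_{\underline{L}^q(B_r)}^q \leq C\, \mathsf{A}_r \sum_{i=1}^{n+2}\left(\tfrac{1}{R}\|w_i-(w_i)_{B_R}\|_{\underline{L}^2(B_R)}\right)^{q(n+2)/i}, \qquad \mathsf{A}_r := \frac{1}{|B_r|} \sum_{z \in \Z^d \cap B_r} s_z^{d+Pq}.
\end{equation*}

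To close the argument I need $\mathsf{A}_r \leq C$ for $r$ above some random minimal scale. The expectation $\E[s_z^{d+Pq}]$ is finite because $\X_1 = \O_\sigma(C)$ implies finite moments of every order, and by stationarity $\E[\mathsf{A}_r]$ is independent of $r$. For the concentration of $\mathsf{A}_r$ around its mean, I would invoke the tools for spatial averages of stationary random fields with approximate finite range of dependence from~\cite[Appendix~A]{AKMbook}: these produce stretched-exponential tails for $\mathsf{A}_r - \E[\mathsf{A}_r]$, and a standard union bound over dyadic scales defines a minimal scale $\X_2 = \O_{\sigma'}(C)$ with $\sigma'(q,\data) > 0$ above which $\mathsf{A}_r \leq 2\E[\X_1^{d+Pq}]$. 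Taking $\X := \X_1 + \X_2$ then yields~\eqref{e.Lipw.q}.

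The \emph{main obstacle} is the concentration step for $\mathsf{A}_r$. The difficulty is that $\X_1$ does not have strict unit range of dependence: it is the maximum of several subordinate minimal scales obtained from large-scale regularity estimates. Nevertheless, these scales are each defined by local conditions on the coefficient field, and the stretched-exponential moments $\X_1 = \O_\sigma(C)$ combined with the unit range of dependence assumption (P1) are sufficient to carry out the concentration argument via the standard machinery of~\cite{AKMbook}, at the cost of a degradation of the exponent from $\sigma$ to some smaller $\sigma'(q,\data) > 0$. All remaining ingredients---the translated application of Lemma~\ref{l.toqandbeyond}, the Poincar\'e reduction of the right-hand side, and the cube-by-cube summation---are essentially routine.
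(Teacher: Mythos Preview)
Your approach is correct in outline and close in spirit to the paper's, but the paper organizes the averaging differently in a way that makes the concentration step you flag as the ``main obstacle'' essentially trivial. Rather than summing over unit cubes and applying Lemma~\ref{l.toqandbeyond} at the \emph{random} scale $s_z=T_z\X_1$, the paper fixes a \emph{deterministic} mesoscopic scale $3^{\lceil\theta k\rceil}$ (a small power of the macroscopic scale $3^k$) and defines, for each mesoscopic cube $z+\cu_{\lceil\theta k\rceil}$, a random variable $\mathcal{Z}_{\lceil\theta k\rceil,z}$ as the worst ratio of the left side of~\eqref{e.smallscalezz} to its right side over all admissible local solutions. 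The point is that $\mathcal{Z}_{k,z}$ is $\mathcal{F}(z+\cu_{k+1})$--measurable by construction, so after truncating at a level $\lambda$ the collection $\{\mathcal{Z}_{\lceil\theta k\rceil,z}^q\wedge\lambda\}$ splits into $3^d$ genuinely independent subfamilies and a textbook bounded-variable large deviations bound applies; the untruncated tail is handled by a union bound using $\mathcal{Z}_{k,z}\leq\O_\sigma(C)$. A separate minimal scale $\mathcal{Y}$ (from Proposition~\ref{p.wLip}, via a union bound over centers) ensures that the right side on each mesoscopic cube can be reduced to the global $B_R$ quantity.

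Your route via $\mathsf{A}_r=|B_r|^{-1}\sum_z s_z^{d+Pq}$ can be pushed through, but the hand-wave to ``standard machinery'' hides real work: $T_z\X_1$ is \emph{not} a local function of the coefficient field (it is a supremum over all scales of locally-checked conditions), so the independence needed for concentration is not immediate and would require an explicit localization of the minimal scale. The paper's deterministic mesoscale sidesteps this entirely. Your ``elementary Poincar\'e'' reduction of the right-hand side is fine since you are comparing balls of comparable radii $R'\simeq R$.
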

\begin{proof}
Fix $q\in (2,\infty)$. 
Select a parameter~$\theta\in (0,1)$ which will denote a mesoscopic scale. 
For each $z\in\Rd$, we take $\X_z$ to be the random variable $\X$ in the statement of Proposition~\ref{p.wLip}, centered at the point~$z$. Define another random variable (a minimal scale) by 
\begin{equation*} \label{}
\mathcal{Y} 
:=
\sup \left\{ 3^k \,:\, k\in\N, \, \sup_{z\in \Zd\cap B_{3^k}} \X_z \geq 3^{k \theta} \right\}.
\end{equation*}
It is clear from Proposition~\ref{p.wLip} and an routine union bound argument that 
\begin{equation*} \label{}
\Y \leq \O_\sigma(C). 
\end{equation*}
Next, for every $k\in\N$ and $z\in \Rd$ we let $\mathcal{Z}_{k,z}$ denote the random variable 
\begin{equation*} \label{}
\mathcal{Z}_{k,z}:=
\sup_{(u,w_1,\ldots,w_{n+2})}
\sup_{m\in\{1,\ldots,n+2\}}
\frac{\left\| \nabla w_m \right\|_{\underline{L}^q(z+\cu_k)}}{\sum_{i=1}^{m} 
\left( 3^{-k} \left\|  w_{i} - \left( w_{i} \right)_{z+\cu_{k+1}}   \right\|_{\underline{L}^2(z+\cu_{k+1})} 
\right)^{\frac{m}i}},
\end{equation*}
where the supremum is taken over all $(u,w_1,\ldots,w_{n+2})\in \left( H^1(z+\cu_{k+1})\right)^{n+3}$  satisfying, for every $m\in \{1,\ldots,n+2\}$, 
\begin{equation} 
\label{e.linsreffss}
\left\{
\begin{aligned}
& \left\| \nabla u \right\|_{\underline{L}^2(z+\cu_{k+1})} \leq \mathsf{M}, 
\\ & 
-\nabla \cdot D_pL\left( \nabla u,x \right) = 0 &  \mbox{in} & \ z+\cu_{k+1},
\\ & 
-\nabla \cdot  \left( D^2_pL\left( \nabla u,x \right) \nabla w_m \right) = \nabla \cdot \left( \mathbf{F}_m(\nabla u,\nabla w_1,\ldots,\nabla w_{m-1},x)\right) &  \mbox{in} & \ z+\cu_{k+1}.
\end{aligned}
\right.
\end{equation}
Observe that $\mathcal{Z}_{k,z}$ is $\mathcal{F}(z+\cu_{k+1})$--measurable and, by Lemma~\ref{l.toqandbeyond} and an easy covering argument, it satisfies the estimate
\begin{equation} 
\label{e.Zkzest}
\mathcal{Z}_{k,z} \leq \O_\sigma(C). 
\end{equation}
Fix $\mathsf{A}\in [1,\infty)$ and define another random variable (a minimal scale) $\mathcal{Z}$ by 
\begin{equation*} \label{}
\mathcal{Z} :=
\sup\left\{ 
3^k \,:\,
\left( \left| 3^{\lceil \theta k \rceil}\Zd \cap \cu_{k+1} \right|^{-1} \!\!\!\!
\sum_{z\in 3^{\lceil \theta k \rceil}\Zd \cap \cu_{k+1} }
\mathcal{Z}_{\lceil \theta k \rceil,z}^q \right)^{\frac1q}
\geq \mathsf{A}
\right\}.
\end{equation*}
We will show below that, if~$\mathsf{A}$ is chosen sufficiently large (depending of course on the appropriate parameters) then 
\begin{equation} 
\label{e.claimedZest}
\mathcal{Z} \leq \O_\sigma(C). 
\end{equation}
Assuming that~\eqref{e.claimedZest} holds for the moment, let us finish the proof of the lemma. Suppose now that $k\in\N$ satisfies $\mathcal{Y} \vee \mathcal{Z} \leq 3^k \leq 3^{k+1}\leq R$. Let $(u,w_1,\ldots,w_{n+2})\in \left( H^1(B_R)\right)^{n+3}$ satisfy~\eqref{e.linearized.2toq}. Then we have that 
\begin{align*} \label{}
\lefteqn{
\left\| \nabla w_{n+2} \right\|_{\underline{L}^q(\cu_k)} 
}  & 
\\ &
=
\left(\left| 3^{\lceil \theta k \rceil}\Zd \cap \cu_k \right|^{-1} \!\!\!\!
\sum_{z\in 3^{\lceil \theta k \rceil}\Zd \cap \cu_k}
\left\| \nabla w_{n+2} \right\|_{\underline{L}^q\left(z+\cu_{\lceil \theta k\rceil}\right)}^q 
\right)^{\frac1q} 
\\ & 
\leq 
\left(\left| 3^{\lceil \theta k \rceil}\Zd \cap \cu_k \right|^{-1} \!\!\!\!
\sum_{z\in 3^{\lceil \theta k \rceil}\Zd \cap \cu_k}
\sum_{i=1}^{{n+2}} 
\left( 3^{-\theta k} \left\|  w_{i} - \left( w_{i} \right)_{z+\cu_{\lceil \theta k\rceil +1}}   \right\|_{\underline{L}^2(z+\cu_{\lceil \theta k\rceil +1})} 
\right)^{\frac{(n+2)q}i}
\mathcal{Z}_{\lceil \theta k \rceil,z}^q 
\right)^{\frac1q} 
\\ & 
\leq 
C\sum_{i=1}^{{n+2}} 
\left(3^{-k} \left\|  w_{i} - \left( w_{i} \right)_{z+\cu_{k +1}}   \right\|_{\underline{L}^2(\cu_{k +1})} 
\right)^{\frac{n+2}i}
\left(\left| 3^{\lceil \theta k \rceil}\Zd \cap \cu_k \right|^{-1}\!\!\!\!
\sum_{z\in 3^{\lceil \theta k \rceil}\Zd \cap \cu_k}
\mathcal{Z}_{\lceil \theta k \rceil,z}^q 
\right)^{\frac1q} 
\\ & 
\leq 
C\mathsf{A}\sum_{i=1}^{{n+2}} 
\left( 3^{-k} \left\|  w_{i} - \left( w_{i} \right)_{z+\cu_{k +1}}   \right\|_{\underline{L}^2(\cu_{k +1})} 
\right)^{\frac{{n+2}}i}
\\ & 
\leq 
C\mathsf{A}\sum_{i=1}^{{n+2}} 
\left( \frac1R \left\|  w_{i} - \left( w_{i} \right)_{z+\cu_{k +1}}   \right\|_{\underline{L}^2(B_R)} 
\right)^{\frac{{n+2}}i}.
\end{align*}
Note that in the third and final lines we used that $3^k\geq \Y$, that is, we used the result of Proposition~\ref{p.wLip}. This is the desired estimate for $\X = \Y \vee \mathcal{Z}$, and so the proof of the lemma is complete subject to the verification of~\eqref{e.claimedZest}. 

\smallskip

Turning to the proof of~\eqref{e.claimedZest}, we notice first that it suffices to show, for~$\mathsf{A}$ sufficiently large, the existence of~$\sigma(q,\data)>0$ and $C(q,\mathsf{M},\data)<\infty$ such that, for every~$k\in\N$,  
\begin{equation} 
\label{e.claimedZest.red}
\P\left[ \left| 3^{\lceil \theta k \rceil}\Zd \cap \cu_{k+1} \right|^{-1} \!\!\!\!
\sum_{z\in 3^{\lceil \theta k \rceil}\Zd \cap \cu_{k+1} }
\mathcal{Z}_{\lceil \theta k \rceil,z}^q
\geq \mathsf{A}^q  \right]
\leq 
C \exp\left( -c3^{k\sigma } \right).
\end{equation}
Indeed, we can see that~\eqref{e.claimedZest.red} implies~\eqref{e.claimedZest} using a simple union bound. 
Fix then a parameter~$\lambda\in [1,\infty)$ and compute, using~\eqref{e.Zkzest} and the Chebyshev inequality,
\begin{equation} 
\label{e.Zest1}
\P \left[  \sup_{z\in 3^{\lceil \theta k \rceil}\Zd \cap \cu_{k+1} } \mathcal{Z}_{\lceil \theta k \rceil \Zd\cap \cu_{k+1}}
> \lambda
\right]
\leq 
\exp\left( -c \lambda^{\sigma} \right). 
\end{equation}
Using the simple large deviations bound for sums of bounded, independent random variables, we have
\begin{align}
\label{e.Zest2}
&
\P \left[ 
 \left| 3^{\lceil \theta k \rceil}\Zd \cap \cu_{k+1} \right|^{-1} \!\!\!\!
\sum_{z\in 3^{\lceil \theta k \rceil}\Zd \cap \cu_{k+1} }
\left( \mathcal{Z}_{\lceil \theta k \rceil,z}^q \wedge \lambda \right)
\geq \E \left[ \mathcal{Z}_{\lceil \theta k \rceil,z}^q \wedge \lambda \right]  + 1
\right]
\\ & \qquad\qquad\qquad \notag
\leq 3^d \exp\left( -c\lambda^{-2} \left| 3^{\lceil \theta k \rceil}\Zd \cap \cu_{k+1} \right| \right)
\leq 
3^d \exp\left( -c3^{d(1-\theta)k} \lambda^{-2} \right).
\end{align}
Here we are careful to notice that, while the collection~$\{ \mathcal{Z}_{\lceil \theta k \rceil,z}^q \wedge \lambda : z\in 3^{\lceil \theta k \rceil}\Zd \cap \cu_{k+1} \}$ of random variables is not independent (since adjacent cubes are touching and thus not separated by a unit distance), we can partition this collection into $3^d$ many subcollections which have an equal number of elements and each of which is independent. The large deviations estimate can then be applied to each subcollection, and then a union bound yields~\eqref{e.Zest2}. Combining~\eqref{e.Zest1},~\eqref{e.Zest2} and the observation that $\E \left[ \mathcal{Z}_{\lceil \theta k \rceil,z}^q \wedge \lambda \right] \leq \E \left[ \mathcal{Z}_{\lceil \theta k \rceil,z}^q\right] \leq C$ by~\eqref{e.Zkzest}, we obtain 
\begin{equation*}
\P\left[ \left| 3^{\lceil \theta k \rceil}\Zd \cap \cu_{k+1} \right|^{-1} \!\!\!\!
\sum_{z\in 3^{\lceil \theta k \rceil}\Zd \cap \cu_{k+1} }
\mathcal{Z}_{\lceil \theta k \rceil,z}^q
\geq  
C + 1
\right]
\leq 
C \exp\left( -c \left( \lambda^\sigma \wedge 3^{d(1-\theta)k} \lambda^{-2} \right) \right).
\end{equation*}
Taking $\lambda := 3^{\frac d4(1-\theta)k}$ and $\mathsf{A}^q:=C+1$ yields~\eqref{e.claimedZest.red}. 
\end{proof}

The above proof, in view of Remark~\ref{r.w1reg}, gives the following result without assuming~\eqref{e.assumption.section5}. This, together with~\eqref{p.Lipxi0} below, serves as the base case for the induction.  

\begin{proposition} \label{p.w1higher}
Let $q\in [2,\infty)$, $\mathsf{M}\in [1,\infty)$. Then there exist~$\sigma(q,\data)>0$ and~$C(q,\mathsf{M},\data) <\infty$ and a random variable~$\X$ satisfying~$\X \leq \O_{\sigma}\left( C \right)$ such that the following statement is valid. 
For $R \in \left[ 2\X,\infty\right)$ and $u,w_1 \in H^1(B_R)$ satisfying $\left\| \nabla u \right\|_{\underline{L}^2(B_R)} \leq \mathsf{M}$ and
\begin{equation*}
\left\{
\begin{aligned}
&
-\nabla \cdot \left( D_pL(\nabla u,x) \right) = 0 \quad \mbox{in} \ B_R,\\
& 
-\nabla \cdot  \left( D^2_pL\left( \nabla u,x \right) \nabla w_1 \right) = 0 \quad \mbox{in}  \ B_R,
\end{aligned}
\right.
\end{equation*}
we have, for all $r\in [\X ,\tfrac12 R]$, 
\begin{equation} \label{e.w1higher}
\left\| \nabla w_1 \right\|_{\underline{L}^q \left( B_{r} \right)} \leq \frac{C}{R}  \left\| w_1  -(w_1)_{B_R} \right\|_{\underline{L}^2 \left( B_{r} \right)}.
\end{equation}
\end{proposition}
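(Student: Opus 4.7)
The plan is to mirror the proof of Lemma~\ref{l.toqandbeyond2} specialized to $n=0$, noting that for the first linearized equation the forcing $\mathbf{F}_1$ vanishes identically (so no induction hypothesis on lower-order correctors is needed). The key observation, recorded in Remark~\ref{r.w1reg}, is that the large-scale $C^{0,1}$ estimate in $L^2$ for $w_1$ — i.e., the $m=1$ case of~\eqref{e.Lipw} — is already proved in \cite[Proposition 4.3]{AFK} without invoking~\eqref{e.assumption.section5}. Thus the base-case Lipschitz estimate at $q=2$ is available for free, and the entire task reduces to upgrading spatial integrability from $L^2$ to arbitrary $L^q$.

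For the integrability upgrade, first I would establish a deterministic small-scale bound analogous to Lemma~\ref{l.toqandbeyond}: on any ball $B_r$ with $r$ above a random minimal scale, the $C^{1,\beta}$ estimate of Proposition~\ref{p.schauder} applied to $u$ yields a bound of the form $[\nabla u]_{C^{0,\beta}(B_r)} \leq C r^{d/2}\| \nabla u\|_{\underline{L}^2(B_{2r})}$, which via assumption~(L1) gives a corresponding Hölder bound on the coefficient field $\a(x):= D^2_pL(\nabla u,x)$. Proposition~\ref{p.gradientLq} applied to the homogeneous equation for $w_1$ then produces, on unit balls $B_1(x)\subseteq B_{r/2}$, the estimate
\begin{equation*}
\left\| \nabla w_1 \right\|_{L^q(B_1(x))}
\leq
C\bigl(1+[\a]_{C^{0,\beta}(B_r)}\bigr)^{\frac{d(q-2)}{2q\beta}}\left\| \nabla w_1 \right\|_{L^2(B_2(x))}.
\end{equation*}
A covering argument, together with the $L^2$ large-scale Lipschitz estimate from \cite[Proposition 4.3]{AFK} and the interior estimate $\|\nabla u\|_{\underline{L}^2(B_{2r})}\leq C$ available above a random minimal scale (e.g.\ via \cite[Theorem~11.13]{AKMbook}), yields a local bound of the form $\|\nabla w_1\|_{\underline{L}^q(B_{r/2})}\leq C(1+r^Q)R^{-1}\|w_1-(w_1)_{B_R}\|_{\underline{L}^2(B_R)}$ for an explicit exponent $Q=Q(d,\Lambda,\beta,q)$.

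To eliminate the $r^Q$ prefactor (which is tolerable only on unit-order scales), I would run the union-bound argument from the proof of Lemma~\ref{l.toqandbeyond2}: for each $k\in\N$ and each $z$ in the grid $3^{\lceil\theta k\rceil}\Zd$, define the $\F(z+\cu_{\lceil\theta k\rceil +1})$--measurable random variable
\begin{equation*}
\mathcal{Z}_{\lceil\theta k\rceil,z}
:= \sup_{(u,w_1)} \frac{\| \nabla w_1\|_{\underline{L}^q(z+\cu_{\lceil\theta k\rceil})}}{3^{-\lceil\theta k\rceil}\| w_1 - (w_1)_{z+\cu_{\lceil\theta k\rceil+1}}\|_{\underline{L}^2(z+\cu_{\lceil\theta k\rceil +1})}},
\end{equation*}
with the supremum taken over admissible solution pairs, and verify that $\mathcal{Z}_{\lceil\theta k\rceil,z}=\O_\sigma(C)$ from the deterministic step just described. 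Partitioning the grid inside $\cu_{k+1}$ into $3^d$ families of pairwise unit-separated cubes (so that, within each family, the $\mathcal{Z}_{\lceil\theta k\rceil,z}$ are independent by~(P1)), a Chebyshev/Hoeffding-type concentration inequality applied to truncations at a level $\lambda \sim 3^{d(1-\theta)k/4}$ controls the $\ell^q$--average of these variables, and a union bound over $k$ yields a minimal scale $\mathcal{Z}=\O_\sigma(C)$ above which assembling the local $L^q$ bounds gives~\eqref{e.w1higher} with the $\mathcal{Y}\vee \mathcal{Z}$ playing the role of $\X$.

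The main — and really the only — obstacle is the last stochastic step: one has to choose the mesoscopic parameter $\theta\in(0,1)$ so that the deterministic prefactor $r^Q$ is absorbed by the stretched-exponential concentration of the averaged $\mathcal{Z}_{\lceil\theta k\rceil,z}^q$. Because $w_1$ is linear in its data, no analogue of the polynomial growth $\mathsf{h}_i^{m/i}$ appears on the right-hand side, which is precisely why~\eqref{e.assumption.section5} is not needed and the argument goes through verbatim as the base case of the induction.
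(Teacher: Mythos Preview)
Your proposal is correct and follows essentially the same approach as the paper: the paper's proof is the single sentence that ``the above proof'' (i.e., the proof of Lemma~\ref{l.toqandbeyond2}, which in turn rests on Lemma~\ref{l.toqandbeyond}) goes through for $m=1$ without assuming~\eqref{e.assumption.section5}, thanks to Remark~\ref{r.w1reg}. Your sketch spells out exactly this specialization --- the deterministic Calder\'on--Zygmund step, the random variables $\mathcal{Z}_{\lceil\theta k\rceil,z}$, and the concentration argument with truncation level $\lambda\sim 3^{d(1-\theta)k/4}$ --- all matching the paper's route.
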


\section{Large-scale 
\texorpdfstring{$C^{0,1}$}{{C01}}-type estimate for linearization errors}
\label{s.reglinerrors}

In this section we continue to suppose that~$n\in \{0,\ldots,\mathsf{N}-1 \}$ is such that~\eqref{e.assumption.section5} holds. 
The goal is to complete the proof that Theorem~\ref{t.regularity.linerrors} is also valid for~ $n+1$. Combined with the results of the previous three sections and an induction argument, this completes the proof of Theorems~\ref{t.regularity.Lbar},~\ref{t.linearizehigher} and~\ref{t.regularity.linerrors}.

\subsection{Excess decay iteration for $\xi_{n+1}$}

We start by proving higher integrability for a difference of two solutions. This, together with Proposition~\ref{p.w1higher}, yields the base case for the induction. 

\begin{proposition}  
\label{p.Lipxi0}
Fix $\mathsf{M} \in [1,\infty)$ and $q \in [2,\infty)$. 
There exist $\alpha(\data),\sigma(q,\data)  \in \left(0,\tfrac 12\right]$, $C(q,\mathsf{M},\data)<\infty$ and a random variable~$\X$ satisfying
\begin{equation*}
\X = \O_{\sigma}(C)
\end{equation*}
such that the following statement is valid. 
For every $R\geq \X$ and $u,v \in H^1(B_R)$ satisfying, for each $m\in \{1,\ldots,n+1\}$, 
\begin{equation}
\label{e.linearized.approx.L.0}
\left\{ 
\begin{aligned}
& -\nabla \cdot  \left( D_pL\left( \nabla u,x \right) \right) = 0 
\quad \mbox{and} \quad -\nabla \cdot \left( D_pL(\nabla v,x) \right) = 0
& \mbox{in} &  \ B_R,\\
&
\left\| \nabla u \right\|_{\underline{L}^2(B_R)} \vee 
\left\| \nabla v \right\|_{\underline{L}^2(B_R)} \leq \mathsf{M},
\end{aligned} 
\right. 
\end{equation}
Then, for $m\in \{1,\ldots,n+1\}$, $\xi_0 = u-v$ and $r \in [\X , \tfrac12 R]$, we have
\begin{equation} \label{e.Lipxi0}
  \left\|  \nabla \xi_{0}     \right\|_{\underline{L}^{q}(B_r)}  
  + \left( \frac{r}{R} + \frac1{r} \right)^{-\alpha} \inf_{\ell \in \mathcal{P}_1} \frac{1}{r} \left\|  \xi_{0} - \ell   \right\|_{\underline{L}^2(B_{r})}   
 \leq 
C \frac{1}{R}\left\| \xi_{0} -  \left( \xi_{0} \right)_{B_{R}} \right\|_{\underline{L}^2(B_R)} .
\end{equation}
\end{proposition}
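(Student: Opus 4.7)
The plan is to split the proof into two stages: first establish the $q=2$ version of~\eqref{e.Lipxi0}, then upgrade the $L^2$ gradient bound to $L^q$ for arbitrary $q \in [2,\infty)$ using a mesoscopic concentration argument.

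For the first stage, observe that $\xi_0 = v-u$ solves the linear equation $-\nabla \cdot \left(\a_0 \nabla \xi_0\right) = 0$ in $B_R$, where
\begin{equation*}
\a_0(x) := \int_0^1 D_p^2 L\left(t\nabla v(x) + (1-t)\nabla u(x),x\right)\,dt
\end{equation*}
is uniformly elliptic with the same ellipticity constants as $L$. The $q=2$ version of~\eqref{e.Lipxi0}---that is, the $L^2$ gradient bound together with the excess decay---is precisely the large-scale $C^{0,1}$-type estimate for differences of solutions proved in~\cite[Theorem~1.3 and Section~5]{AFK}. Alternatively, it may be re-derived by running the excess decay iteration used in the proof of Proposition~\ref{p.wLip} (for the case $n=0$) directly on $\xi_0$: at each dyadic scale, compare $\xi_0$ to the solution of a constant-coefficient equation with coefficients $D_p^2 \overline{L}(p)$ and the same boundary values, the comparison being controlled by the unconditional $n=0$ case of Theorem~\ref{t.linearizehigher} and the $C^{1,\alpha}$ regularity of solutions to the homogenized equation.

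For the second stage, the strategy mirrors Lemma~\ref{l.toqandbeyond2} almost verbatim. The deterministic small-scale ingredient is supplied by Proposition~\ref{p.schauder} applied to $u$ and $v$, which yields $[\nabla u]_{C^{0,\beta}(B_r)} + [\nabla v]_{C^{0,\beta}(B_r)} \leq C r^{d/2}$ under the hypotheses, and hence by (L1) the bound $[\a_0]_{C^{0,\beta}(B_r)} \leq C(1 + r^{d/2})$. Applying the deterministic Calder\'on--Zygmund-type estimate in Proposition~\ref{p.gradientLq} to the equation $-\nabla \cdot (\a_0 \nabla \xi_0) = 0$ (with vanishing right-hand side) gives
\begin{equation*}
\left\|\nabla \xi_0\right\|_{\underline{L}^q(B_{r/2})}
\leq
C\left(1 + r^{Q}\right)\left\|\nabla \xi_0\right\|_{\underline{L}^2(B_r)}
\end{equation*}
for some explicit exponent $Q = Q(q,d,\Lambda,\beta)$.

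To promote this from microscopic balls to the scale $r \geq \X$, I will reproduce the union bound / large-deviation structure from Lemma~\ref{l.toqandbeyond2}. Let $\X_z$ denote the translation to $z$ of the random scale delivered by stage one, set $\mathcal{Y} := \sup\{3^k : \sup_{z \in \Zd \cap B_{3^k}} \X_z \geq 3^{k\theta}\}$ for a small $\theta > 0$, and define $\mathcal{Z}_{k,z}$ as the $\F(z+\cu_{k+1})$-measurable supremum, over all admissible pairs $(u,v)$, of the ratio of $\|\nabla \xi_0\|_{\underline{L}^q(z+\cu_k)}$ to the natural $L^2$ excess on $z+\cu_{k+1}$. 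The small-scale estimate above gives $\mathcal{Z}_{k,z} = \O_\sigma(C)$, the independence of $\mathcal{Z}_{k,z}$ over well-separated lattice points (via (P1), handled by partitioning into $3^d$ independent subcollections) then yields a minimal scale $\mathcal{Z}$ above which the mesoscopic $L^q$ average of $\mathcal{Z}_{\lceil \theta k\rceil,z}^q$ is bounded by a constant. Setting $\X := \mathcal{Y} \vee \mathcal{Z}$ and covering $B_r$ by mesoscopic cubes of side $3^{\lceil \theta k\rceil}$ produces~\eqref{e.Lipxi0}. The only technical hurdle, already addressed in Lemma~\ref{l.toqandbeyond2}, is the independence partitioning, so no novel obstacle should arise.
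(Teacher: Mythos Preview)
Your proposal is correct and essentially identical to the paper's own proof: the paper also cites \cite{AFK} (specifically Proposition~4.2 there) for the $q=2$ estimate, then observes that $\xi_0$ solves $-\nabla\cdot(\tilde\a\nabla\xi_0)=0$ for the same integrated coefficient field you call $\a_0$, and upgrades to $L^q$ by reusing the argument of Lemmas~\ref{l.toqandbeyond} and~\ref{l.toqandbeyond2} with $w_1$ replaced by $\xi_0$. The only cosmetic difference is the precise AFK citation for stage one.
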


\begin{proof}
On the one hand, the estimate
\begin{equation} \label{e.Lipxi0pre}
 \left\|  \nabla \xi_{0}     \right\|_{\underline{L}^{2}(B_r)}  +  \left( \frac{r}{R} + \frac1{r} \right)^{-\alpha} \inf_{\ell \in \mathcal{P}_1} \frac{1}{r} \left\|  \xi_{0} - \ell   \right\|_{\underline{L}^2(B_{r})}  \leq 
C \frac{1}{R}\left\| \xi_{0} -  \left( \xi_{0} \right)_{B_{R}} \right\|_{\underline{L}^2(B_R)} 
\end{equation}
follows by~\cite[Proposition 4.2]{AFK}. On the other hand, the proof of 
\begin{equation}  \label{e.Lipxi0pre2}
 \left\|  \nabla \xi_{0}     \right\|_{\underline{L}^{q}(B_r)} 
 \leq 
C \frac{1}{R}\left\| \xi_{0} -  \left( \xi_{0} \right)_{B_{R}} \right\|_{\underline{L}^2(B_R)} 
\end{equation}
is similar to the proof of $L^q$-integrability of $w_1$ presented in Section~\ref{s.spatintw}.  Indeed, noticing that $\xi_0$ satisfies the equation 
\begin{equation*} 
-\nabla \cdot \tilde \a  \nabla \xi_0 = 0, \qquad \tilde \a(x) := \int_0^1 D_p^2F(t \nabla u(x) + (1-t) \nabla v(x),x) \, dt,
\end{equation*}
we have by the normalization in~\eqref{e.linearized.approx.L.0} and $C^{1,\alpha}$ regularity of $u$ and $v$ that we may replace $w_1$ with $\xi_0$ in Lemma~\ref{l.toqandbeyond} applied with $m=1$. Using this together with the Lipschitz estimate~\eqref{e.Lipxi0pre} for  $\xi_0$ as in the proof of Lemma~\ref{l.toqandbeyond2}, concludes the proof of~\eqref{e.Lipxi0pre2}. We omit further details.
\end{proof}

\begin{proposition} \label{p.Lipxi}
Assume that~\eqref{e.assumption.section5} holds. Fix $\mathsf{M} \in [1,\infty)$. There exist constants $\alpha(n,\data),\sigma(n,\data)  \in \left(0,\tfrac 12\right]$, $C(n,\mathsf{M},\data)<\infty$ and a random variable~$\X$ satisfying
\begin{equation*}
\X = \O_{\sigma}(C)
\end{equation*}
such that the following statement is valid. 
For every $R\geq \X$ and $u,v,w_1,\ldots,w_{n+1} \in H^1(B_R)$ solving, for each $m\in \{1,\ldots,n+1\}$, 
\begin{equation}
\label{e.linearized.approx.L}
\left\{ 
\begin{aligned}
& -\nabla \cdot  \left( D_pL\left( \nabla u,x \right) \right) = 0 
\quad \mbox{and} \quad -\nabla \cdot \left( D_pL(\nabla v,x) \right) = 0
& \mbox{in} &  \ B_R,\\
& -\nabla \cdot  \left( D^2_pL\left( \nabla u,x \right) \nabla w_m \right) = \nabla \cdot \left( \mathbf{F}_m(x, \nabla u,\nabla w_1,\ldots,\nabla w_{m-1})\right) & \mbox{in} & \ B_{R},\\
&
\left\| \nabla u \right\|_{\underline{L}^2(B_R)} \vee 
\left\| \nabla v \right\|_{\underline{L}^2(B_R)} \leq \mathsf{M},
\end{aligned} 
\right. 
\end{equation}
we have, for $m\in \{1,\ldots,n+1\}$ and $r \in [\X , \tfrac12 R]$, the estimate
\begin{align} 
\label{e.Lipxi}
\lefteqn{ \left( \frac{r}{R} + \frac1{r} \right)^{-\alpha} \inf_{\ell \in \mathcal{P}_1} \frac{1}{r} \left\|  \nabla \xi_{m} - \ell   \right\|_{\underline{L}^2(B_{r})}  
 +  \left\|  \nabla \xi_{m}     \right\|_{\underline{L}^{2}(B_r)} 
 } \qquad & 
\\ & \leq \notag
C \sum_{i=0}^{m} 
\left(\frac{1}{R}\left\| \xi_{i} -  \left( \xi_{i} \right)_{B_{R}} \right\|_{\underline{L}^2(B_R)}^{\frac{1}{i+1}} +  \frac1R \left\|  w_{i} - \left( w_{i} \right)_{B_R}   \right\|_{\underline{L}^2(B_R)}^{\frac 1i}  \right)^{m+1} .
\end{align}
\end{proposition}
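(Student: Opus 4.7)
The plan is to establish~\eqref{e.Lipxi} by induction on $m\in\{0,\ldots,n+1\}$, with the base case $m=0$ supplied by Proposition~\ref{p.Lipxi0}. The structure parallels Proposition~\ref{p.wLip}: a geometric excess-decay iteration built on harmonic approximation, now applied to the linearization error $\xi_m$ instead of to the linearized solution $w_m$.

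I first derive the PDE satisfied by $\xi_m$. Writing $v-u = \sum_{k=1}^m \frac{1}{k!} w_k + \xi_m$ and Taylor-expanding $D_pL(\nabla v,x)$ about $\nabla u$ to order $m+1$, then subtracting the equations for $v$ and $u$ together with the defining equations for $w_1,\ldots,w_m$, I obtain an equation of the form
\begin{equation*}
-\nabla \cdot \bigl( D_p^2 L(\nabla u,x)\,\nabla \xi_m \bigr) = \nabla \cdot \mathbf{G}_m,
\end{equation*}
where $\mathbf{G}_m$ is a polynomial expression in $\nabla \xi_0,\ldots,\nabla \xi_{m-1}$ and $\nabla w_1,\ldots,\nabla w_m$ (with coefficients depending on $\nabla u$ through the derivatives $D_p^{k+1}L$) in which every monomial has \emph{weighted degree} exactly $m+1$, counting a factor $\nabla \xi_i$ as weight $i+1$ and a factor $\nabla w_i$ as weight $i$. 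This matches precisely the scaling of the right-hand side of~\eqref{e.Lipxi}. Using the induction hypothesis together with the $L^q$ integrability provided by Proposition~\ref{p.Lipxi0}, Proposition~\ref{p.w1higher}, and Lemma~\ref{l.toqandbeyond2} (for some $q$ large, chosen in terms of $m$ and $d$), I control $\|\mathbf{G}_m\|_{\underline{L}^{2+\delta}(B_R)}$ by the right-hand side of~\eqref{e.Lipxi}.

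Next comes harmonic approximation. On each ball $B_r \subseteq B_{R/2}$ with $r\geq \X$, I compare $\xi_m$ to the solution $\bar\xi_m \in H^1(B_r)$ of the homogenized linear Dirichlet problem
\begin{equation*}
-\nabla\cdot\bigl(D_p^2\overline{L}(\nabla\bar u)\,\nabla\bar\xi_m\bigr) = \nabla\cdot\overline{\mathbf{G}}_m \quad\text{in}\ B_r,\qquad \bar\xi_m = \xi_m \quad\text{on}\ \partial B_r,
\end{equation*}
where $\overline{\mathbf{G}}_m$ is the natural homogenized counterpart of $\mathbf{G}_m$. The quantitative homogenization statement---Theorem~\ref{t.linearizehigher}, which~\eqref{e.assumption.section5} provides for $n+1$---together with a telescoping comparison between $\xi_m$ and the system built from $u,v,w_1,\ldots,w_m$, yields the smallness bound $\|\nabla\xi_m - \nabla\bar\xi_m\|_{\underline{L}^2(B_{r/2})} \leq Cr^{-\alpha}(\cdots)$. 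Because Theorem~\ref{t.regularity.Lbar} holds for $n+1$, the homogenized quantity $\bar\xi_m$ satisfies the constant-coefficient Campanato decay estimate of Appendix~\ref{s.appendixconstant}, yielding a gain of $\theta^\beta$ when passing from $B_r$ to $B_{\theta r}$, modulo lower-order terms of the same weighted form.

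Combining these two inputs, the excess-decay iteration from Steps~3--5 of the proof of Proposition~\ref{p.wLip} applies essentially verbatim: set $r_j := \theta^j r_0$ and $\delta_j := (r_j/r_0 + r_j^{\sigma-d})^\alpha$, choose $\theta$ small enough that $C\theta^\beta \leq \tfrac12$, and propagate the inductive invariant~\eqref{e.wLipinductionass} written with $\xi_m$ in place of $w_m$. This delivers the Campanato half of~\eqref{e.Lipxi}, and the $L^2$ gradient bound then follows from the Caccioppoli inequality applied to $\xi_m - \ell$ for the minimizing affine function $\ell$. The main obstacle---and the only real content beyond the iteration already carried out in Section~\ref{s.reglineqs}---is the combinatorial bookkeeping of the Taylor remainder: one must verify that every monomial appearing in $\mathbf{G}_m$ carries exactly weighted degree $m+1$, so that after harmonic approximation the excess estimate closes with the right-hand side in the form stated in~\eqref{e.Lipxi}.
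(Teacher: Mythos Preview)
Your harmonic approximation step has a scaling mismatch that prevents the iteration from closing. If you approximate $\xi_m$ by $\bar\xi_m$ via the telescoping identity
\[
\xi_m - \bar\xi_m \;=\; (v-\bar v) - (u-\bar u) - \sum_{k=1}^m \tfrac{1}{k!}(w_k - \bar w_k),
\]
then Theorem~\ref{t.linearizehigher} bounds each summand on the right by $Cr^{-\alpha}$ times a quantity of weighted degree at most~$m$ (e.g.\ $\|w_k-\bar w_k\|\lesssim r^{-\alpha}\sum_i\|\nabla w_i\|^{k/i}$). But $\xi_m$ itself is of weighted degree $m+1$, so the approximation error is one full order too large relative to $\xi_m$, and the excess-decay iteration cannot close with the right-hand side of~\eqref{e.Lipxi}. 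The fact that your $\mathbf{G}_m$ has weight exactly $m+1$ is correct (this is Lemma~\ref{l.lineq}), but that does not control the homogenization error of the pieces. There is also no ``natural homogenized counterpart'' $\overline{\mathbf{G}}_m$ supplied by Theorem~\ref{t.linearizehigher}, which only homogenizes the specific hierarchy for the~$w_m$.

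The paper handles this with a dichotomy you do not mention, and with an indirect comparison rather than direct homogenization of~$\xi_m$. At each scale $r_j$ one splits into two cases. In Case~3.1, the weight-$(k{+}2)$ quantity $\mathsf{E}_k$ dominates $\ep_j$ times the weight-$(k{+}1)$ quantity built from $\xi_0$ and the $w_i$; then no approximation is needed and the decay comes from the bare decomposition $\xi_{k+1}=\xi_0-\sum_i\tfrac{1}{i!}w_i$ together with the Campanato estimates already proved for $\xi_0$ (Proposition~\ref{p.Lipxi0}) and the $w_i$ (Proposition~\ref{p.wLip}). In Case~3.2, $\mathsf{E}_k$ is small, and one compares $\xi_{k+1}$ not to a homogenized $\bar\xi$ but to a \emph{heterogeneous} solution $w_{k+2,j}$ of the $(k{+}2)$th linearized equation with boundary data $\xi_{k+1}$ on $\partial B_{r_j}$. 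Lemma~\ref{l.lineq} gives $\tfrac{1}{r_j}\|\xi_{k+1}-w_{k+2,j}\|\lesssim \mathsf{E}_k^{(k+3)/(k+2)}$, and the smallness $\mathsf{E}_k\lesssim\ep_j^{(k+2)/(k+1)}$ makes this $\lesssim \ep_j^{1/(k+1)}\mathsf{E}_k$; the excess decay for $w_{k+2,j}$ itself then comes from Lemma~\ref{l.C1alphanew}. This indirection through the next linearized equation, together with the two-case split, is the key idea missing from your proposal.
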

\begin{proof}
We start by fixing some notation. Let $q(n,d,\Lambda)$ be as in Lemma~\ref{l.lineq}, applied for $n+1$ instead of $n$. Corresponding this $q$, choose $\X $ to be the maximum of minimum scales appearing in Proposition~\ref{p.Lipxi0}, Theorem~\ref{t.regularity.linerrors} and  Lemma~\ref{l.C1alphanew}, of which last two are valid for $n+1$ in place of $\mathsf{N}$ by~\eqref{e.assumption.section5}. We also assume that $\X \geq \mathsf{R}$, by taking $\X \vee \mathsf{R}$ instead of $\X$, where $\mathsf{R}$ will be fixed in the course of the proof to depend on parameters $(n,\mathsf{M},\data)$. Chosen this way,~$\X$ satisfies $\X \leq \O_\sigma(C)$ for some constants $\sigma(n,\data)>0$ and $C(n,\mathsf{M},\data)<\infty$.

\smallskip
Let $r_j = \theta^{j} \eta R$, where $\theta$ is as in Lemma~\ref{l.C1alphanew} and $\eta \in \left(0,\frac12 \right]$. The constant $\eta$, as well as $\mathsf{R}$, will be fixed in the course of the proof, so that $\eta$ is small and $\mathsf{R}$ is large. We track down the dependencies on $\eta$ and  $\mathsf{R}$ carefully and, in particular, constants denoted by $C$ below do not depend on them. We may assume, without loss of generality, that $\eta R \geq \X$.

Set, for $k \in \{0,1,\ldots,n\}$, 
\begin{align} \label{e.xiregEk}
\mathsf{E}_{k} & 
:= \frac { 1}{2 r_0}\left\| \xi_{k+1} -  \left( \xi_{k+1} \right)_{B_{2 r_0}}  \right\|_{\underline{L}^2(B_{2 r_0})}
\\ & \notag 
\qquad 
+
 \sum_{i=0}^{k} \left(  \frac{1}{R}\left\| \xi_{i} -  \left( \xi_{i} \right)_{B_{R}} \right\|_{\underline{L}^2(B_{R})} 
+
\frac1R \left\|  w_{i+1} - \left( w_{i+1} \right)_{B_R}   \right\|_{\underline{L}^2(B_R)}  \right)^{\frac{k+2}{i+1}} .
\end{align}
We denote
\begin{equation*} 
R_k := \frac12 \left(1 + 2^{-k} \right)R .
\end{equation*}

\smallskip

\emph{Step 1.} Induction on degree. We assume that, for fixed $k \in \{0,\ldots,n\}$, we have, for every $m \in \{0,\ldots,k\}$ and every 
$r \in [\X ,  R_m]$, 
\begin{multline} \label{e.Lipxiind}
 \left( \frac{r}{R} + \frac1{r} \right)^{-\alpha} \inf_{\ell \in \mathcal{P}_1} \frac{1}{r} \left\|  \nabla \xi_{m} - \ell   \right\|_{\underline{L}^2(B_{r})}  
 +  \left\|  \nabla \xi_{m}     \right\|_{\underline{L}^{2}(B_r)} 
 \\ \leq 
C \sum_{i=0}^{m} 
\left(\frac{1}{R}\left\| \xi_{i} -  \left( \xi_{i} \right)_{B_{R}} \right\|_{\underline{L}^2(B_R)}  \right)^{\frac{m+1}{i+1}}  + \sum_{i=1}^{m}  \left( \frac1R \left\|  w_{i} - \left( w_{i} \right)_{B_R}   \right\|_{\underline{L}^2(B_R)}  \right)^{\frac{m+1}{i}} .
\end{multline}
Notice that if $k=0$, then~\eqref{e.Lipxiind} follows by Proposition~\ref{p.Lipxi0}.

\smallskip

\emph{Step 2.} Cacciopppoli estimate for $\xi_{k+1}$. 
We show that, for all $r \in [\X, R_{k}]$, 
\begin{equation}  
\label{e.ximplus1bnd}
\left\| \nabla \xi_{k+1} \right\|_{\underline{L}^{2} \left( B_{(1-2^{-k-2})r} \right)} 
\leq 
\frac{C}{r} \left\|  \xi_{k+1}  - ( \xi_{k+1})_{B_{r}} \right\|_{\underline{L}^{2} \left( B_{r} \right)}
+
  C \mathsf{E}_{k}.
\end{equation}
We first have by~\eqref{e.EnL2} that
\begin{align*} 
\lefteqn{
\left\| \nabla \xi_{k+1} \right\|_{\underline{L}^{2} \left( B_{(1-2^{-k-3})r} \right)} 
} \qquad & 
\\ &
\leq 
 C  \sum_{i=1}^{k} \left( \frac1{r}\left\|  \xi_{i} - (\xi_{i})_{B_{r}}\right\|_{\underline{L}^{2} \left( B_{r} \right)} \right)^{\frac{k+2}{i+1}} 
+
C \left\| \nabla \xi_{0} \right\|_{\underline{L}^{q}(B_{r})}^{k+2}
 +
C \sum_{i=1}^{k+1}  \left\| \nabla \frac{w_i}{i!} \right\|_{\underline{L}^{q}(B_{r})}^{\frac{k+2}{i}}  .
\end{align*}
By Theorem~\ref{t.regularity.linerrors} and the choice of $q$  in the beginning of the proof,  we obtain, for every $r \in \left[ \X , \tfrac 12 R \right]$ and $m\in\{1,\ldots,k+1\}$, the estimates
\begin{align} \label{e.ximplus1bndw}
\left\| \nabla w_{m}   \right\|_{\underline{L}^q(B_r)}^{\frac{1}{m}}
& 
\leq 
C\sum_{i=1}^{m} 
\left( \frac1R \left\|  w_{i} - \left( w_{i} \right)_{B_R}   \right\|_{\underline{L}^2(B_R)} 
\right)^{\frac{1}i} 
\end{align}
and, by Proposition~\ref{p.Lipxi0},
\begin{equation*} 
\left\| \nabla \xi_0   \right\|_{\underline{L}^q(B_r)} \leq  \frac CR \left\|  \xi_{0} - \left( \xi_{0} \right)_{B_R}   \right\|_{\underline{L}^2(B_R)} .
\end{equation*}
Combining above displays yields~\eqref{e.ximplus1bnd} in view of the induction assumption, i.e., that~\eqref{e.Lipxiind} holds for $m \in \{0,\ldots,k\}$.

\smallskip

\emph{Step 3.} 
We prove that there is a constant $C<\infty$ independent of $\eta$ and $\mathsf{R}$ such that, for $j \in \N_0$ such that $r_j \geq 2 (\X \vee \mathsf{R})$, we have
\begin{align} \label{e.Lnplusonedecay.case2res}
\lefteqn{
 \inf_{\ell \in \mathcal{P}_1} \frac 1{r_{j+1}}\left\|  \xi_{k+1} - \ell   \right\|_{\underline{L}^2(B_{r_{j+1}})}
} \quad & \\ \notag 
&
\leq
   \frac12  \inf_{\ell \in \mathcal{P}_1} \frac 1{r_j} \left\|  \xi_{k+1} - \ell   \right\|_{\underline{L}^2(B_{r_j})} 
 + C\frac{\ep_j  }{r_j} \left\|  \xi_{k+1} - \left( \xi_{k+1} \right)_{B_{r_j}}   \right\|_{\underline{L}^2(B_{r_j})} 
 +  C  \ep_j^{\frac{1}{k+1}}    \mathsf{E}_{k} ,
\end{align}
where 
\begin{equation}  \label{e.Lnplusonedecay.epr}
\ep_j := \frac12 \left( \frac{r_j}{R} + \frac1{r_j} \right)^{\frac{\alpha}{2}},
\end{equation}
and $\alpha(\data)$ is the minimum of parameter $\alpha$ appearing in statements of Proposition~\ref{p.Lipxi0} and Lemma~\ref{l.C1alphanew}. 

\smallskip

Let us fix $j \in \N_0$ such that $r_j \geq 2 (\X \vee \mathsf{R})$. We argue in two cases, namely, either~\eqref{e.Lnplusonedecay.case1}
or~\eqref{e.Lnplusonedecay.case2} below is valid. We prove that in both cases~\eqref{e.Lnplusonedecay.case2res} follows.

\smallskip

\emph{Step 3.1.} We first assume that 
\begin{equation}
\label{e.Lnplusonedecay.case1}
\mathsf{E}_{k} 
\\ > \ep_j
\left( 
\frac1R \left\| \xi_0  - ( \xi_0  )_{B_R} \right\|_{\underline{L}^2(B_R)} 
+ \sum_{i=1}^{k+1} \left( \frac1R  \left\| w_i - (w_i)_{B_R} \right\|_{\underline{L}^2(B_R)} \right)^{\frac {k+1}i} 
\right),
\end{equation}
where $\ep_j $ has been fixed in~\eqref{e.Lnplusonedecay.epr}. We show that this implies
\begin{equation}  \label{e.Lnplusonedecay.case1res}
\inf_{\ell \in \mathcal{P}_1 }\frac1r \left\| \xi_{k+1}  - \ell \right\|_{\underline{L}^2 \left( B_{r} \right)} \leq C \ep_j\mathsf{E}_{k} .
\end{equation}
Notice that this gives~\eqref{e.Lnplusonedecay.case2res}. To show~\eqref{e.Lnplusonedecay.case1res}, 
we have by the triangle inequality that
\begin{align} \notag 
\inf_{\ell \in \mathcal{P}_1 }\frac1{r_j} \left\| \xi_{k+1}  - \ell \right\|_{\underline{L}^2 \left( B_{r_j} \right)}
&
\leq
\inf_{\ell \in \mathcal{P}_1 }\frac1{r_j} \left\| \xi_{0}  - \ell \right\|_{\underline{L}^2 \left( B_{r_j} \right)}
+ 
\sum_{i=1}^{k+1} \inf_{\ell \in \mathcal{P}_1 }  \frac1{r_j}  \left\| w_i - \ell  \right\|_{\underline{L}^2(B_{r_j})} .
\end{align}
By the choice of $\alpha$, we get  by Proposition~\ref{p.Lipxi0} that 
\begin{equation*} 
\inf_{\ell \in \mathcal{P}_1 }\frac1{r_j} \left\| \xi_{0}  - \ell \right\|_{\underline{L}^2 \left( B_{r_j} \right)} 
\leq 
C \left( \frac{r_j}{R} + \frac1{r_j} \right)^{\alpha}  \frac1R \left\| \xi_0  - ( \xi_0  )_{B_R} \right\|_{\underline{L}^2(B_R)} 
\end{equation*}
and, by Proposition~\ref{p.wLip}, 
\begin{equation*} 
\inf_{\ell \in \mathcal{P}_1 }  \frac1{r_j}  \left\| w_i - \ell  \right\|_{\underline{L}^2(B_r)} 
\leq 
C \left( \frac{r_j}{R} + \frac1{r_j} \right)^{\alpha}  \sum_{h=1}^i \left( \frac1R  \left\| w_h - (w_h)_{B_R} \right\|_{\underline{L}^2(B_R)} \right)^{\frac ih} .
\end{equation*}
Combining the estimates and using~\eqref{e.Lnplusonedecay.case1}, we have that 
\begin{equation*} 
 \inf_{\ell \in \mathcal{P}_1 }\frac1r \left\| \xi_{k+1}  - \ell \right\|_{\underline{L}^2 \left( B_{r} \right)} \leq C \left( \frac{r_j}{R} + \frac1{r_j} \right)^{\alpha}  \ep_j^{-1}  \mathsf{E}_{k}  . 
\end{equation*}
We then obtain~\eqref{e.Lnplusonedecay.case1res} by the choice of $\ep_j$ in~\eqref{e.Lnplusonedecay.epr}, provided that~\eqref{e.Lnplusonedecay.case1} is valid.

\smallskip

\emph{Step 3.2.} We then assume that~\eqref{e.Lnplusonedecay.case1} is not the case, that is, 
\begin{equation}
\label{e.Lnplusonedecay.case2}
\mathsf{E}_{k} 
\leq \ep_j 
\left( 
\frac1R \left\| \xi_0  - ( \xi_0  )_{B_R} \right\|_{\underline{L}^2(B_R)} 
+ \sum_{i=1}^{k+1} \left( \frac1R  \left\| w_i - (w_i)_{B_R} \right\|_{\underline{L}^2(B_R)} \right)^{\frac {k+1}{i}} 
\right)
\end{equation}
holds with~$\ep_j$ defined in~\eqref{e.Lnplusonedecay.epr}. We validate~\eqref{e.Lnplusonedecay.case2res} also in this case. To this end, let us first observe an immediate consequence of~\eqref{e.Lnplusonedecay.case2}. By Young's inequality, we have
\begin{equation*} 
 \ep_j \sum_{i=1}^{k+1} \left( \frac1R  \left\| w_i - (w_i)_{B_R} \right\|_{\underline{L}^2(B_R)} \right)^{\frac{k+1}{i}}  
\leq 
C  \ep_j^{\frac{k+2}{k+1}}
+ \frac14 \sum_{i=0}^{k} \left( \frac1R  \left\| w_{i+1} - (w_{i+1})_{B_R} \right\|_{\underline{L}^2(B_R)} \right)^{\frac {k+2}{i+1}} 
\end{equation*}
and
\begin{equation*} 
\ep_j   \frac1R \left\| \xi_0  - ( \xi_0  )_{B_R} \right\|_{\underline{L}^2(B_R)} \leq  4 \ep_j^{\frac{k+2}{k+1}} +  
\frac14 \left(\frac1R \left\| \xi_0  - ( \xi_0  )_{B_R} \right\|_{\underline{L}^2(B_R)}  \right)^{k+2} .
\end{equation*}
Hence we get, by the definition of $\mathsf{E}_{k}$ in~\eqref{e.xiregEk} and reabsorption,  that 
\begin{equation}  \label{e.Eksmallincasetwo}
\mathsf{E}_{k}  \leq C  \ep_j^{\frac{k+2}{k+1}}. 
\end{equation}
Let then ${w}_{m+2,j}$ solve
\begin{equation*}
\left\{ 
\begin{aligned}
& -\nabla \cdot  \left( D^2_pL\left( \nabla u ,x\right) \nabla {w}_{m+2,j} \right) = \nabla \cdot \left({\mathbf{F}}_{m+2}(\nabla {u},\nabla {w}_1,\ldots,\nabla {w}_{m+1}, \cdot )\right) & \mbox{in} & \ B_{r_j},\\
& {w}_{m+2,j} = \xi_{m+1}  & \mbox{on} & \ \partial B_{r_j}.
\end{aligned} 
\right. 
\end{equation*}
It follows from Lemma~\ref{l.lineq}, together with~\eqref{e.Lipxi0},~\eqref{e.ximplus1bndw} and~\eqref{e.Lipxiind}, assumed inductively for $m \in \{1,\ldots,k\}$, that 
\begin{align} \notag 
\lefteqn{\left\| \nabla \cdot \left( D^2 L(\nabla u,\cdot) \nabla\left( \xi_{k+1} - w_{k+2,j} \right) \right) \right\|_{\underline{H}^{-1}(B_{r_j})} } \qquad &
\\ \notag &
\leq 
C\sum_{i=1}^{k+1} 
\left( \left\|  \nabla \xi_{i-1}  \right\|_{\underline{L}^{2+\delta}(B_{r_j})}^{\frac{1}{i}}   
+ 
\left\|  \nabla \xi_{0}   \right\|_{\underline{L}^{\frac{8}{\delta}(k+3)}(B_{r_j})} 
+
\left\|  \nabla w_{i}   \right\|_{\underline{L}^{\frac{8}{\delta}(k+3)}(B_{r_j})}^{\frac{1}{i}} 
\right)^{k+3}
\\ \notag &
\leq 
C \mathsf{E}_{k}^{\frac{k+3}{k+2}}.
\end{align}
Consequently, since $ \xi_{k+1} - w_{k+2,j }\in H_0^1(B_{r_j})$,~\eqref{e.Eksmallincasetwo} yields
\begin{equation}  \label{e.Lnplusonecomp}
\frac1{r_j} \left\|  \xi_{k+1} - w_{k+2,j}\right\|_{\underline{L}^2 ( B_{r_j} )} \leq C \ep_j^{\frac{1}{k+1}} \mathsf{E}_{k} .
\end{equation}
By Lemma~\ref{l.C1alphanew} we have 
\begin{align} \label{e.Lipw.pre.applied}
\lefteqn{
 \inf_{\ell \in \mathcal{P}_1} \frac 1{r_{j+1}}\left\|  w_{k+2,r} - \ell   \right\|_{\underline{L}^2(B_{r_{j+1}})}
} \qquad & \\ \notag 
&
\leq
   \frac12  \inf_{\ell \in \mathcal{P}_1} \frac 1{r_j} \left\|  w_{k+2,j} - \ell   \right\|_{\underline{L}^2(B_{r_j})} 
 + C \ep_j  \frac{1}{r_j} \left\|  w_{k+2,j} - \left( w_{k+2,j} \right)_{B_{r_j}}   \right\|_{\underline{L}^2(B_{r_j})} 
 \\ \notag & \qquad
 +  C \ep_j   \sum_{i=1}^{k+1} \left( \frac1R \left\|  w_{i} - \left( w_{i} \right)_{B_R}   \right\|_{\underline{L}^2(B_R)}\right)^{\frac {k+2}{i}} .
\end{align}
Combining this with~\eqref{e.Lnplusonecomp} and the triangle inequality yields~\eqref{e.Lnplusonedecay.case2res}.

\smallskip

\emph{Step 4.} We show that, for $r \in [ 2 (\X_\sigma \vee \mathsf{R}),r_0]$ we have 
\begin{equation} \label{e.Lipxipre}
\frac{1}{r} \left\|  \xi_{k+1} - \left( \xi_{k+1} \right)_{B_{r}}   \right\|_{\underline{L}^2(B_{r})} \leq C \mathsf{E}_{k} . 
\end{equation}
We proceed inductively, and assume $J \in \N_0$ is such that  $r_J \geq 2(\X \vee \mathsf{R})$ and for all $j \in \{0,\ldots,J\}$ we assume that there exists a constant $\mathsf{C}(d,\theta) \in [1,\infty)$, independent of $\eta$ and $\mathsf{R}$, such that 
\begin{equation*} 
\frac{1}{r_j} \left\|  \xi_{k+1} - \left( \xi_{k+1} \right)_{B_{r_j}}   \right\|_{\underline{L}^2(B_{r_j})}  \leq \mathsf{C} \mathsf{E}_{k} .
\end{equation*}
This is true for $J=0$ by the definition of $\mathsf{E}_{k}$.  We claim that it continues to hold for $j=J+1$ as well. Combining~\eqref{e.Lnplusonedecay.case1res} and~\eqref{e.Lnplusonedecay.case2res} with the induction assumption  we have, for $j \in \{1,\ldots,J\}$,  that 
\begin{align*} 
 \inf_{\ell \in \mathcal{P}_1} \frac 1{r_{j+1}}\left\|  \xi_{k+1} - \ell   \right\|_{\underline{L}^2(B_{r_{j+1}})}
\leq
   \frac12  \inf_{\ell \in \mathcal{P}_1} \frac 1{r_j} \left\|  \xi_{k+1} - \ell   \right\|_{\underline{L}^2(B_{r_j})}  +  C \left( \mathsf{C} \ep_j +    \ep_j^{\frac{1}{k+1}}  \right)  \mathsf{E}_{k} .
\end{align*}
Since $r_J \geq \mathsf{R}$, we may take $\mathsf{R}$ large and $\eta$ small enough so that 
\begin{equation*} 
C \sum_{j=0}^J \left( \mathsf{C} \ep_j +    \ep_j^{\frac{1}{k+1}}  \right)
\leq 
C \mathsf{C} \left( 1 - \theta^{\frac{\alpha}{n+1}})\right)^{-1} \left(\eta + \frac1{\mathsf{R}} \right)^{\frac{1}{k+1}} 
 \leq 
 \frac12. 
\end{equation*}
Thus, by summing and reabsorbing, 
\begin{equation} \label{e.iterxikplusone}
\sum_{j=0}^{J+1}  \inf_{\ell \in \mathcal{P}_1} \frac 1{r_{j}}\left\|  \xi_{k+1} - \ell   \right\|_{\underline{L}^2(B_{r_{j}})} \leq  \inf_{\ell \in \mathcal{P}_1} \frac 1{r_0} \left\|  \xi_{k+1} - \ell   \right\|_{\underline{L}^2(B_{r_0})} +  \mathsf{E}_{k} \leq 2 \mathsf{E}_{k}.
\end{equation}
Letting $\ell_j$ being the minimizing affine function in $ \inf_{\ell \in \mathcal{P}_1} \left\|  \xi_{k+1} - \ell   \right\|_{\underline{L}^2(B_{r_{j}})}$, we obtain by the above display and the triangle inequality that 
\begin{equation*} 
\left| \nabla \ell_{j+1}          - \nabla \ell_0 \right| \leq C(d) \theta^{-\frac d2-1} \mathsf{E}_{k}. 
\end{equation*}
By the triangle inequality again, we obtain that 
\begin{equation*} 
\left| \nabla \ell_0 \right| \leq  C(d) \left( \frac{1}{r_0}\left\|  \xi_{k+1} - \ell_0   \right\|_{\underline{L}^2(B_{r_{0}})} + \frac{1}{r_0}\left\|  \xi_{k+1} - (\xi_{k+1})_{B_{r_0}}   \right\|_{\underline{L}^2(B_{r_{0}})} \right) \leq 2 \mathsf{E}_{k}
\end{equation*}
and, consequently, for $C(d,\theta) <\infty$,
\begin{equation*} 
\left| \nabla \ell_{J+1} \right| \leq C \mathsf{E}_{k} .
\end{equation*}
We thus obtain by the triangle inequality, together with~\eqref{e.iterxikplusone} and the above display, that, there exists $C(d,\theta) <\infty$ such that
\begin{equation*} 
\frac{1}{r_{J+1}}\left\|  \xi_{k+1} - (\xi_{k+1})_{B_{r_{J+1}}}   \right\|_{\underline{L}^2(B_{r_{J+1}})}   \leq C  \mathsf{E}_{k} .
\end{equation*}
Hence we can take $\mathsf{C} = C$, proving the induction step. Letting then $J$ be such that $r \in (r_{J+1},r_{J}]$, we obtain~\eqref{e.Lipxipre} by giving up a volume factor. 

\smallskip

\emph{Step 5.} Conclusion.
To conclude, we obtain from~\eqref{e.Lnplusonedecay.case2res} and~\eqref{e.Lipxipre} by iterating that
\begin{equation*} 
 \inf_{\ell \in \mathcal{P}_1} \frac 1{r_{j}}\left\|  \xi_{k+1} - \ell   \right\|_{\underline{L}^2(B_{r_{j}})} \leq C \left(2^{-j} + \sum_{i=0}^j 2^{i-j} \ep_j \right)  \mathsf{E}_{k} 
\end{equation*}
We hence find $\alpha$ such that, for all $r \in [2 (\X \wedge \mathsf{R} , r_0]$, 
\begin{equation*} 
 \left( \frac{r}{R} + \frac1{r} \right)^{-\alpha} \inf_{\ell \in \mathcal{P}_1} \frac{1}{r} \left\|  \nabla \xi_{k+1} - \ell   \right\|_{\underline{L}^2(B_{r})}  
 \leq 
 C \mathsf{E}_{k}.
\end{equation*}
Moreover, by~\eqref{e.Lipxipre} and~\eqref{e.ximplus1bnd} we deduce that, for all $r \in [2 (\X \wedge \mathsf{R}) , R_{k+1}]$, 
\begin{equation*} 
\left\| \nabla \xi_{k+1} \right\|_{\underline{L}^{2} \left( B_{r} \right)} \leq  C \sum_{i=0}^{k+1} 
\left(\frac{1}{R}\left\| \xi_{i} -  \left( \xi_{i} \right)_{B_{R}} \right\|_{\underline{L}^2(B_R)} +  \frac1R \left\|  w_{i+1} - \left( w_{i+1} \right)_{B_R}   \right\|_{\underline{L}^2(B_R)}  \right)^{\frac{m+1}{i+1}} .
\end{equation*}
Therefore,~\eqref{e.Lipxiind} is valid for $m= k+1$, by giving up a volume factor. This proves the induction step and completes the proof. 
\end{proof}

\subsection{Improvement of spatial integrability} \label{s.spatintxi}

Following the strategy in Section~\ref{s.spatintw}, we improve the spatial integrability of~\eqref{e.Lipxi} from $L^2$ to $L^q$ for every $q\in [2,\infty)$. Fix $q \in [2,\infty)$. 
Now, $\xi_{n+1}$ solves
\begin{equation*} 
-\nabla \cdot \left( D_p^2 L(\nabla u,\cdot) \nabla \xi_{n+1}  \right) = \nabla \cdot \mathbf{E}_{n+1} ,
\end{equation*}
where $ \mathbf{E}_{n+1} $ satisfies the estimate~\eqref{e.EnLq} for $\delta = q$ and $n+1$ instead of $n$. Recall that both~\eqref{e.C01linsols} and~\eqref{e.C01linerror} are valid for $m \in \{1,\ldots,n\}$ with $2nq$ instead of $q$. These, together with Proposition~\ref{p.Lipxi0}, yields by~\eqref{e.EnLq} that, for $R \geq \X$, 
\begin{align*}
\left\| \mathbf{E}_{n+1}   \right\|_{\underline{L}^{q} \left( B_{\X/2} \right)} 
& 
\leq 
C \sum_{i=0}^{n}  \left( \frac{1}{R}\left\| \xi_{i} -  \left( \xi_{i} \right)_{B_{R}} \right\|_{\underline{L}^2(B_R)} \right)^{\frac{n+2}{i+1}} 
 \\ & \qquad 
 +  C \sum_{i=1}^{n+1}   \left( \frac1R \left\|  w_{i} - \left( w_{i} \right)_{B_R}   \right\|_{\underline{L}^2(B_R)}\right)^{\frac {n+2}{i}}  .
\end{align*}
Having this at our disposal, we may repeat the proof of Section~\ref{s.spatintw} to conclude~\eqref{e.Lipxi} for $q \in [2,\infty)$.

\section{Sharp estimates of linearization errors}
\label{s.linerrorsproof}

Here we show that Corollary~\ref{c.linerrors} is a consequence of Theorems~\ref{t.regularity.Lbar},~\ref{t.linearizehigher} and~\ref{t.regularity.linerrors}. 

\begin{proof}[{Proof of Corollary~\ref{c.linerrors}}]
For each $k\in \{0,\ldots,n-1\}$, let $\{ B^{(k)}_j \}$ be a finite family of balls such that 
\begin{equation}
U_k \subseteq \bigcup_j B^{(k)}_j 
\quad \mbox{and} \quad 
\bigcup_j 4 B^{(k)}_j  \subseteq U_{k+1}.
\end{equation}
By the Vitali covering lemma, we may further assume that $\frac13 B^{(k)}_j \cap \frac13 B^{(k)}_i=\emptyset$ whenever $i\neq j$. Let $\mathcal{Z}$ be the finite set consisting of the centers of these balls. 
The size of $\mathcal{Z}$ depends only on the geometry of the sequence of domains~$U_0,U_1,\ldots,U_{n}$. Let~$\X$ be the maximum of the random variables~$\X$ given in Theorem~\ref{t.regularity.linerrors}, centered at  elements of~$\mathcal{Z}$, divided by the radius of the smallest ball. We assume that $r\geq \X$. This ensures the validity of Theorem~\ref{t.regularity.linerrors} in each of the balls $B^{(k)}_j$: that is, for every $q\in [2,\infty)$ and $m\in\{1,\ldots,n\}$, we have the estimate
\begin{align}
\label{e.C01linsols2}
\left\| \nabla w_{m}   \right\|_{\underline{L}^q(B^{(m)}_j)}
& 
\leq 
C\sum_{i=1}^{m} 
\left\| \nabla w_{i}  \right\|_{\underline{L}^2(2B^{(m)}_j)}^{\frac{m}i}
\end{align}
and hence, by the covering, 
\begin{align}
\label{e.C01linsols3}
\left\| \nabla w_{m}   \right\|_{\underline{L}^q(U_m)}
& 
\leq 
C\sum_{i=1}^{m} 
\left\| \nabla w_{i}  \right\|_{\underline{L}^2(U_{i})}^{\frac{m}i}.
\end{align}
Proceeding with the proof of the corollary, we define, as usual,
\begin{equation*}
\xi_m := v - u - \sum_{k=1}^m \frac1{k!} w_k.  
\end{equation*}
Arguing by induction, let us suppose that~$m\in\N$ with $m\geq 1$ and $\theta>0$ are such that, for every $j \in \left\{ 0,\ldots,m-1\right\}$ and $q\in [2,\infty)$, there exists $C(q,\data)<\infty$ such that 
\begin{equation}
\label{e.inductgoatass}
\left\| \nabla \xi_{j}  \right\|_{\underline{L}^{2+\theta}(U_j)} 
+
\left\| \nabla w_{j+1}  \right\|_{\underline{L}^q(U_{j+1})} 
\leq 
C \left( \left\| \nabla u - \nabla v \right\|_{\underline{L}^{2}(rU_0)} \right)^{j+1}.
\end{equation}
This obviously holds for~$m=1$ and some~$\theta (d,\Lambda)>0$ by the Meyers estimate and Theorem~\ref{t.regularity.linerrors}. We will show that it must also hold for~$m+1$ and some other (possibly smaller) exponent~$\theta>0$. 

\smallskip

\emph{Step 1.} We show that 
\begin{equation}
\label{e.corowts1}
\left\| \nabla w_{m+1} \right\|_{\underline{L}^2(U_{m+1})}
\leq 
C \left( \left\| \nabla u - \nabla v \right\|_{\underline{L}^{2}(rU_0)} \right)^{m+1}.
\end{equation}
By the basic energy estimate,
\begin{align*}
\left\| \nabla w_{m+1} \right\|_{\underline{L}^2(U_{m+1})}
\leq
C\cdot
\left\{ 
\begin{aligned}
& \left\| \nabla u - \nabla v \right\|_{\underline{L}^{2}(rU_0)}  & \mbox{if} & \ m=0,\\
& \left\| \mathbf{F}_{m+1} (\cdot,\nabla u,\nabla w_1,\ldots,\nabla w_{m})  \right\|_{L^2(U_{m+1})}
& \mbox{if} & \ m\geq 1,
\end{aligned}
\right.
\end{align*}
where 
\begin{equation*}
\left| \mathbf{F}_{m+1}  (\cdot,\nabla u,\nabla w_1,\ldots,\nabla w_{m}) \right|
\leq 
C \sum_{k=1}^{m}  \left| \nabla w_k \right|^{\frac{m+1}{k}}.
\end{equation*}
By Theorem~\ref{t.regularity.linerrors} (using our definition of~$\X$ and the fact that $r\geq \X$) and the induction hypothesis, we have, for every $k\in\{ 1,\ldots,m\}$, 
\begin{align*}
\left\| \nabla w_k \right\|_{\underline{L}^{\frac{2(m+1)}{k}}(U_{m})}^{\frac{m+1}{k}}
\leq 
C\sum_{i=1}^{k-1}
\left( \frac1R
\left\|  w_{i}  \right\|_{\underline{L}^2(U_{m-1})} 
\right)^{\frac{m+1}i}
\leq 
C\left( \left\| \nabla u - \nabla v \right\|_{\underline{L}^{2}(rU_0)} \right)^{m+1}.
\end{align*}
This completes the proof of~\eqref{e.corowts1}.

\smallskip

\emph{Step 2.} We show that 
\begin{equation}
\label{e.corowts2}
\left\| \nabla \xi_m \right\|_{\underline{L}^{2+\theta/2}(U_m)}
\leq 
C \left( \left\| \nabla u - \nabla v \right\|_{\underline{L}^{2}(rU_0)} \right)^{m+1}.
\end{equation}
Observe that, since $m\geq 1$, $\xi_m \in H^1_0(U_m)$, that is, $\xi_m$ vanishes on $\partial U_m$. Therefore, by Meyers estimate and Lemma~\ref{l.lineq}, in particular~\eqref{e.EnLq} with $q=2+\frac12\theta$ and $\delta=\frac12\theta$, we get
\begin{align*}
\left\| \nabla \xi_m \right\|_{\underline{L}^{2+\theta/2}(U_m)}
&
\leq 
C\left( \sum_{i=1}^{m-1} \left\| \nabla \xi_{i} \right\|_{\underline{L}^{2+\theta} \left( B_{R} \right)}^{\frac{m+1}{i+1}}  
+ 
 \left\| \nabla \xi_{0} \right\|_{\underline{L}^{\frac{2m(4+\theta)}{\theta}}(B_R)}^{m+1}
+
\sum_{i=1}^{m-1}  \left\| \nabla w_i \right\|_{\underline{L}^{\frac{2m(4+\theta)}{\theta}}(B_R)}^{\frac{m+1}{i}} \right)
\\ & 
\leq 
C \left( \left\| \nabla u - \nabla v \right\|_{\underline{L}^{2}(rU_0)} \right)^{m+1}.
\end{align*}
This completes the proof of~\eqref{e.corowts2}. 

\smallskip

The corollary now follows by induction. 
\end{proof}


\section{Liouville theorems and higher regularity}
\label{s.regularityhigher}

In this section we prove Theorem~\ref{t.regularityhigher} by an induction in the degree~$n$. The initial step $n=1$ has been already established in~\cite{AFK}. Indeed, $\mathrm{(i)}_1$ and $\mathrm{(ii)}_1$ are consequences of~\cite[Theorem 5.2]{AFK}, and $\mathrm{(iii)}_1$ follows from Theorem~\ref{t.C11estimate} which is~\cite[Theorem 1.3]{AFK}. Moreover, these estimates hold with optimal stochastic integrability, namely we may take any $\sigma \in (0,d)$ for $n=1$ (with the constant~$C$ then depending additionally on~$\sigma$). 

\smallskip

Throughout the section we will use the following notation. Given $p \in \R^d$ and $k \in \N$, we denote
\begin{equation*} 
\A_k^p := \left\{ u \in H_{\textrm{loc}}^1(\R^d) \; : \; -\nabla \cdot \left( \a_p \nabla u \right) = 0 , \; \lim_{r \to \infty} 
r^{-k-1} \left\| u \right\|_{\underline{L}^2 \left( B_{r} \right)} = 0 \right\}
\end{equation*}
and
\begin{equation*} 
\Ahom_k^p := \left\{  \overline{u} \in H_{\textrm{loc}}^1(\R^d) \; : \; -\nabla \cdot \left( \ahom_p \nabla \overline{u} \right) = 0 , \; \lim_{r \to \infty} 
r^{-k-1} \left\|  \overline{u} \right\|_{\underline{L}^2 \left( B_{r} \right)} = 0 \right\}.
\end{equation*}

\begin{remark} \label{r.regularityhigher1}  
In proving Theorem~\ref{t.regularityhigher}$\mathrm{(ii)}_n$ by induction in~$n$, it will be necessary to prove a stronger statement, namely that, for every $p \in B_{\mathsf{M}}$, $(\overline{w}_1,\ldots,\overline{w}_n) \in \overline{\mathsf{W}}_n^{p}$ and $(w_1,\ldots,w_{n-1}) \in \mathsf{W}_{n-1}^p$ satisfying ~\eqref{e.liouvillec} for $m\in \{1,\ldots,n-1\}$,  there exists~$w_n$ satisfying~\eqref{e.liouvillec} for~$m=n$ such that $(w_1,\ldots,w_{n}) \in \mathsf{W}_{n}^p$.
\end{remark}

\begin{proof}[Proof of Theorem~\ref{t.regularityhigher} $\mathrm{(ii)}_n$] For fixed $n$, we will take $\X$ as the maximum of random variables $\X$ appearing in Theorem~\ref{t.linearizehigher}, Theorem~\ref{t.regularity.linerrors} corresponding $q=2+\delta$, where $\delta$ is as in Theorem~\ref{t.linearizehigher},  and a deterministic constant $\mathsf{R}(n,\mathsf{M}, d,\Lambda) < \infty$. Clearly~\eqref{e.higherX} holds then.

\smallskip

Given $p \in B_{\mathsf{M}}$ and $(\overline{w}_1,\ldots,\overline{w}_n) \in \overline{\mathsf{W}}_n^p $, our goal is to prove that there exists a tuplet $(w_1,\ldots,w_n) \in \mathsf{W}_n^p$ such that~\eqref{e.liouvillec} holds for every $R\geq \X$ and $k \in \{1,\ldots,n\}$.

\smallskip

\emph{Step 1.} Induction assumption. We proceed inductively and assume that there is a tuplet $(w_1,\ldots,w_{n-1}) \in \mathsf{W}_{n-1}^p$ such that~\eqref{e.liouvillec} is true for every $R\geq \X$ and $m \in \{1,\ldots,n-1\}$. The base case for the induction is valid by the results of~\cite{AFK}, as mentioned at the beginning of this section. Our goal is therefore to construct $w_n$ such that $(w_1,\ldots,w_n) \in \mathsf{W}_n^p$ and~\eqref{e.liouvillec} holds for every $R\geq \X$ and $k \in \{1,\ldots,n\}$.  Recall that since $(\overline{w}_1,\ldots,\overline{w}_n) \in \overline{\mathsf{W}}_n^p$, we have, for every $R\geq \X$, that  
\begin{equation}  
\label{e.overlinewgrowth}
\sum_{i=1}^{n} 
\left(
 \frac1{R} \left\| \overline{w}_i  \right\|_{\underline{L}^2(B_R)} 
\right)^{\frac{n}{i}}  
\leq 
C(d)
\left(\frac{R}{\X}\right)^n   \sum_{i=1}^{n} 
\left(
 \frac1{\X} \left\| \overline{w}_i  \right\|_{\underline{L}^2(B_\X)}
\right)^{\frac{n}{i}}  
.
\end{equation}
Moreover, by the induction assumption, if the lower bound $\mathsf{R}$ for $\X$ is large enough, we deduce by~\eqref{e.liouvillec} that, for $R \geq \X$ and $m \in \{1,\ldots,n-1\}$,  
\begin{equation}  
\label{e.overlinewpluswgrowth}
\sum_{i=1}^{n} 
\left(
 \frac1{R} \left\| w_i  \right\|_{\underline{L}^2(B_R)} 
\right)^{\frac{m}{i}}  
\leq 2
\left(\frac{R}{\X}\right)^m   \sum_{i=1}^{m} 
\left(
 \frac1{\X} \left\| \overline{w}_i  \right\|_{\underline{L}^2(B_\X)}
\right)^{\frac{n}{i}}  
.
\end{equation}

\smallskip

\emph{Step 1.} 
Let $r_j:= 2^j \X$ and let $w_{n,j}$,  $j \in \N$,  solve
\begin{equation}
\left\{ 
\begin{aligned}
& -\nabla \cdot  \left( \a_p \nabla w_{n,j} \right) =  \nabla \cdot \mathbf{F}_n(p + \nabla \phi_p,\nabla w_{1},\ldots, \nabla w_{n-1},\cdot) 
& \mbox{in} &  \ B_{r_{j}},\\
& w_{n,j} =  \overline{w}_n & \mbox{on} & \ \partial B_{r_{j}}.
\end{aligned} 
\right. 
\end{equation}     
We show that
\begin{equation}  \label{e.wmjhomog2}
 \left\| w_{n,j} - \overline{w}_{n} \right\|_{\underline{L}^2 \left( B_{r_{j-1}} \right)} 
\leq 
C r_j^{1-\alpha}  
 \left(\frac{r_j}{\X}\right)^n   \sum_{i=1}^{n} 
\left(
 \frac1{\X} \left\| \overline{w}_i  \right\|_{\underline{L}^2(B_\X)}
\right)^{\frac{n}{i}}  
.
\end{equation}
Theorem~\ref{t.linearizehigher} 
yields that, for $m \in \{1,\ldots,n\}$,  solutions of 
\begin{equation*}
\left\{ 
\begin{aligned}
& -\nabla \cdot  \left( D_p\overline{L}(\nabla \overline{u}_j)  \right) = 0 & & \mbox{in }   \ B_{r_{j+n}},\\
& -\nabla \cdot  \left( D_p^2\overline{L}(\nabla \overline{u}_j)  \nabla \overline{w}_{m,j} \right) =  \nabla \cdot \overline{\mathbf{F}}_n(\nabla \overline{u}_j,\nabla  \overline{w}_{1,j},\ldots, \nabla \overline{w}_{m-1,j}) 
& & \mbox{in }   \ B_{r_{j+n-m}},\\
& \overline{u}_{j}(x) = p\cdot x + \phi_p(x) &  &\mbox{for }  x \in \partial B_{r_{j+n}},\\
& \overline{w}_{m,j} =  w_{m}, \quad m \in \{1,\ldots,n-1\},  & & \mbox{on }  \partial B_{r_{j+n-m}},
\\
& \overline{w}_{n,j} =  \overline{w}_{n} & & \mbox{on }  \partial B_{r_{j}}
\end{aligned} 
\right. 
\end{equation*}  
satisfy the estimate 
\begin{equation*} 
\left\| w_{n,j} - \overline{w}_{n,j} \right\|_{\underline{L}^2 \left( B_{r_{j}} \right)} 
\leq 
C r_j^{1-\alpha}  \left( \left\| \nabla  \overline{w}_{n} \right\|_{\underline{L}^{2+\delta}(B_{r_{j}})} +   
\sum_{i=1}^{n-1} 
\left( 
\left\| \nabla w_i \right\|_{\underline{L}^{2+\delta}(B_{r_{j + n-m}})} \right)^{\frac{n}{i}} 
\right)  .
\end{equation*}
By Theorem~\ref{t.regularity.linerrors}, together with~\eqref{e.overlinewgrowth}  and~\eqref{e.overlinewpluswgrowth}, assumed for $m \in \{1,\ldots,n-1\}$, we obtain
\begin{equation} \label{e.wmjhomog00}
\left\| w_{n,j} - \overline{w}_{n,j} \right\|_{\underline{L}^2 \left( B_{r_{j}} \right)}  
\leq
C r_j^{1-\alpha}  \left(\frac{r_j}{\X}\right)^n   \sum_{i=1}^{n} 
\left(
 \frac1{\X} \left\| \overline{w}_i  \right\|_{\underline{L}^2(B_\X)}
\right)^{\frac{n}{i}}  
.
\end{equation}
Similarly, by Theorems~\ref{t.linearizehigher} and~\ref{t.regularity.linerrors}, together with~\eqref{e.overlinewpluswgrowth} valid for $m \in \{1,\ldots,n-1\}$, we get, for $m \in \{1,\ldots,n-1\}$,  that
\begin{equation*} %
\left\| w_{m} - \overline{w}_{m,j} \right\|_{\underline{L}^2 \left( B_{r_{j+n-m}} \right)} \leq 
C r_j^{1-\alpha} 
 \left(\frac{r_j}{\X}\right)^m  \sum_{i=1}^{m} 
\left(
 \frac1{\X} \left\| \overline{w}_i  \right\|_{\underline{L}^2(B_\X)}
\right)^{\frac{m}{i}}  .
\end{equation*}
Consequently, again by~\eqref{e.liouvillec} assumed for $m \in \{1,\ldots,n-1\}$, that 
\begin{equation}  \label{e.wmjhomog0}
\left\| \overline{w}_{m} - \overline{w}_{m,j} \right\|_{\underline{L}^2 \left( B_{r_{j}} \right)} \leq 
C r_j^{1-\delta}  
 \left(\frac{r_j}{\X}\right)^m   \sum_{i=1}^{m} 
\left(
 \frac1{\X} \left\| \overline{w}_i  \right\|_{\underline{L}^2(B_\X)}
\right)^{\frac{m}{i}}  
.
\end{equation}
We denote, in short,
\begin{equation*} 
 \overline{\f}_{m,j} := \overline{\mathbf{F}}_m(\nabla \overline{u}_j,\nabla  \overline{w}_{1,j},\ldots, \nabla \overline{w}_{m-1,j}) ,
\end{equation*}
together with
\begin{equation*} 
\ahom_p := D_p^2 \overline{L}(p) \quad \mbox{and} \quad \overline{\f}_{m} := \overline{\mathbf{F}}_m(p, \nabla  \overline{w}_{1},\ldots, \nabla \overline{w}_{m-1}) .
\end{equation*}
Now, it is easy to see (cf. the proof of~\cite[Theorem 5.2]{AFK}) that 
\begin{equation*} 
\left\| \nabla \overline{u}_{j} - p \right\|_{L^\infty(B_{r_{j+n-1}})} \leq C r_j^{-\alpha},
\end{equation*}
so that, by Theorem~\ref{t.regularity.Lbar},
\begin{equation*} 
\left\| D_p^2\overline{L}(\nabla \overline{u}_{j}) - \ahom_p  \right\|_{L^\infty(B_{r_{j+n-1}})} \leq 
\left\| D_p^3 \overline{L} \right\|_{L^\infty(B_{2\mathsf{M}})} \left\| \nabla \overline{u}_{j} - p \right\|_{L^\infty(B_{r_{j+n-1}})} 
\leq C r_j^{-\alpha}. 
\end{equation*}
Therefore, by an analogous computation  to the proof of Lemma~\ref{l.diff.linearizedsystem}, using~\eqref{e.wmjhomog0},  we get
\begin{equation*} 
\left\| \overline{\f}_{n,j} -  \overline{\f}_{n} \right\|_{\underline{L}^{2}(B_{r_{j}})}
 \leq
C r_j^{-\alpha/2}  
 \left(\frac{r_j}{\X}\right)^n   \sum_{i=1}^{n} 
\left(
 \frac1{\X} \left\| \overline{w}_i  \right\|_{\underline{L}^2(B_\X)}
\right)^{\frac{n}{i}}  
\end{equation*}
as well as
\begin{equation*} 
\left\| D_p^2 \overline{L}(\nabla \overline{u}_j) - \ahom_p )  \nabla \overline{w}_{n,j} \right\|_{\underline{L}^{2}(B_{r_{j}})} 
\leq 
C r_j^{- \alpha/2-1}  
\left\| \overline{w}_n - (\overline{w}_n)_{B_{r_n}}  \right\|_{\underline{L}^{2}(B_{r_{j}})}.
\end{equation*}
By testing the equation 
\begin{equation*}
\left\{ 
\begin{aligned}
& -\nabla \cdot  \left( \ahom_p   \nabla ( \overline{w}_{n,j}  -  \overline{w}_{n} ) \right) =  \nabla \cdot \left((D_p^2 \overline{L}(\nabla \overline{u}_j) - \ahom_p )  \nabla \overline{w}_{n,j}  +   \overline{\f}_{n,j} -  \overline{\f}_{n} \right) 
& & \mbox{in }   \ B_{r_{j}},
\\
& \overline{w}_{n,j}  -  \overline{w}_{n}  =  0 & & \mbox{on }  \partial B_{r_{j}}
\end{aligned} 
\right. 
\end{equation*} 
with $\overline{w}_{n,j}  -  \overline{w}_{n}$, we then obtain
\begin{equation*} 
\left\| \overline{w}_{n,j}  -  \overline{w}_{n} \right\|_{\underline{L}^{2}(B_{r_{j}})} \leq 
C (r_j^{1-\delta} + r_j^{1-\alpha/2})  
 \left(\frac{r_j}{\X}\right)^n   \sum_{i=1}^{n} 
\left(
 \frac1{\X} \left\| \overline{w}_i  \right\|_{\underline{L}^2(B_\X)}
\right)^{\frac{n}{i}}  
.
\end{equation*}
Therefore,~\eqref{e.wmjhomog2} follows by~\eqref{e.wmjhomog00} and the above display by taking $\delta = \alpha/2$.

\smallskip

\emph{Step 2.} We show that there is $w_n$ such that $(w_1,\ldots,w_n) \in \mathsf{W}_n^p$ and $w_n$ satisfies~\eqref{e.liouvillec}. 
Setting $z_{n,j} := w_{n,j} - w_{n,j+1}$ we have by the triangle inequality that 
\begin{equation}  \label{e.znjest}
  \left\| z_{n,j} \right\|_{\underline{L}^2 \left( B_{r_j} \right)} \leq 
C r_j^{1 - \delta}  
 \left(\frac{r_j}{\X}\right)^n   \sum_{i=1}^{n} 
\left(
 \frac1{\X} \left\| \overline{w}_i  \right\|_{\underline{L}^2(B_\X)}
\right)^{\frac{n}{i}}   .
\end{equation}
Notice that $z_{n,j}$ is $\a_p$-harmonic in $B_{r_j}$. Thus, by~\cite[Theorem 5.2]{AFK}, we find $\phi_{n,j} \in \A_{n}^p$ such that, for every $r \in [\X,r_j]$, 
\begin{equation} \label{e.znjest2}
 \left\| z_{n,j} - \phi_{n,j}  \right\|_{\underline{L}^2 \left( B_{r} \right)} 
\leq 
C \left(\frac{r}{r_j}\right)^{n+1}  \left\|  z_{n,j}   \right\|_{\underline{L}^2 \left( B_{r_j} \right)} .
\end{equation}
Consequently, for every $r \in [\X,r_j]$, 
\begin{equation*} 
\left\| z_{n,j} - \phi_{n,j}  \right\|_{\underline{L}^2 \left( B_{r} \right)} 
\leq 
C \left( \frac{r}{r_j} \right)^{\delta} 
 r^{1 - \delta}  
  \left(\frac{r}{\X}\right)^n   \sum_{i=1}^{n} 
\left(
  \frac1{\X} \left\| \overline{w}_i  \right\|_{\underline{L}^2(B_\X)}
\right)^{\frac{n}{i}}  . 
\end{equation*}
Setting then $\tilde w_{n,j} := w_{n,j} - \sum_{h=1}^{j-1} \phi_{n,h}$, we have that 
$
\tilde w_{n,k} - \tilde w_{n,j} = \sum_{i= j}^{k-1}  (z_{n,i} -  \phi_{n,i})
$ and it follows that, for all $j,k \in \N$, $j < k$, 
\begin{equation}  \label{e.wmjhomog3prepre}
\left\|  \tilde w_{n,k} - \tilde w_{n,j}  \right\|_{\underline{L}^2 \left( B_{r_j} \right)} 
\leq 
C  r_j^{1-\delta}
 \left(\frac{r_j}{\X}\right)^n   \sum_{i=1}^{n} 
\left(
 \frac1{\X} \left\| \overline{w}_i  \right\|_{\underline{L}^2(B_\X)}
\right)^{\frac{n}{i}}  
. 
\end{equation}
Therefore, $\{\tilde w_{n,k} \}_{k=j}^\infty$ is a Cauchy sequence and, by the Caccioppoli estimate and the diagonal argument, we find~$w_n$ such that 
\begin{equation} \label{e.wmjhomog3pre}
\begin{aligned}
& -\nabla \cdot  \left( \a_p \nabla w_{n} \right) 
=  
\nabla \cdot \mathbf{F}_n(p + \nabla \phi_p,\nabla w_1,\ldots, \nabla w_{n-1},\cdot) 
& \mbox{in} &  \ \R^d
\end{aligned} 
\end{equation}     
and, for all $j \in \N$, 
\begin{equation}  \label{e.wmjhomog3}
 \left\| w_{n} - \tilde w_{n,j} \right\|_{\underline{L}^2 \left( B_{r_{j}} \right)} 
\leq 
C r_j^{1-\delta}  
 \left(\frac{r_j}{\X}\right)^n   \sum_{i=1}^{n} 
\left(
  \frac1{\X} \left\| \overline{w}_i  \right\|_{\underline{L}^2(B_\X)}
\right)^{\frac{n}{i}}  . 
\end{equation}
We now use the facts that $(\overline{w}_1,\ldots,\overline{w}_n) \in \overline{\mathsf{W}}_n^{p} $ and $\phi_{n,j} \in \A_{n}^p$, together with~\eqref{e.znjest} and~\eqref{e.znjest2}, to deduce that
\begin{align} \notag 
\sum_{h=1}^{j-1} \left\| \phi_{n,h} \right\|_{\underline{L}^2 \left( B_{r_j} \right)} 
& 
\leq 
\sum_{h=1}^{j-1} \left( \frac{r_j}{r_h} \right)^n  \left\| \phi_{n,h} \right\|_{\underline{L}^2 \left( B_{r_h} \right)}
\\ \notag & 
\leq
\sum_{h=1}^{j-1} 
\left( \frac{r_j}{r_h} \right)^n 
\left( \left\|  \phi_{n,j}  \right\|_{\underline{L}^2 \left( B_{r_h} \right)} +
\left\| z_{n,j} - \phi_{n,j}  \right\|_{\underline{L}^2 \left( B_{r_h} \right)} 
\right)
\\ \notag & 
\leq 
C \sum_{h=1}^{j-1} \left( \frac{r_j}{r_h} \right)^n r_h^{1-\delta} 
 \left(\frac{r_h}{\X}\right)^n   \sum_{i=1}^{n} 
\left(
 \left\| \overline{w}_i  \right\|_{\underline{L}^2(B_\X)}
\right)^{\frac{n}{i}}  
\\ \notag & 
\leq 
C r_j^{1-\delta}  
\left(\frac{r_j}{\X}\right)^n   \sum_{i=1}^{n} 
\left(
 \left\| \overline{w}_i  \right\|_{\underline{L}^2(B_\X)}
\right)^{\frac{n}{i}}  
 . 
\end{align}
Combining this with~\eqref{e.wmjhomog3} yields that $w_n$ satisfies~\eqref{e.liouvillec}. Moreover, obviously $(w_1,\ldots,w_n) \in \mathsf{W}_n^p$. The proof is complete.
\end{proof}

\begin{proof}[Proof of Theorem~\ref{t.regularityhigher} $\mathrm{(i)}_n$] Let $\X$ be as in the beginning of the proof of $\mathrm{(ii)}_n$.  Fix $R \geq \X$. We proceed via induction. Assume that $(w_1,\ldots,w_{n-1})$ satisfy~\eqref{e.liouvillec}, i.e.,  we find $(\overline{w}_1,\ldots,\overline{w}_{n-1}) \in \overline{\mathsf{W}}_{n-1}^p$ such that, for $k \in \{1,\ldots,n-1\}$ and~$t \geq \X$, 
\begin{equation} \label{e.liouvillec2}
\left\| w_k - \overline{w}_k  \right\|_{\underline{L}^2(B_t)} 
\leq 
C t^{1-\delta} 
\left(\frac{t}{\X}\right)^n   \sum_{i=1}^{n} 
\left(
\frac{1}{\X} \left\| \overline{w}_i  \right\|_{\underline{L}^2(B_\X)}
\right)^{\frac{n}{i}}  
.
\end{equation}
Since $(\overline{w}_1,\ldots,\overline{w}_{n-1}) \in \mathcal{P}_2 \times \ldots \times \mathcal{P}_{n}$, we have by the homogeneity that 
\begin{equation*} 
\overline{\mathbf{F}}(p,\nabla\overline{w}_1,\ldots,\nabla\overline{w}_{n-1}) \in \mathcal{P}_{n}. 
\end{equation*}
We find, by Remark~\ref{r.barwregtrivial} below, a solution $\overline{w} \in \mathcal{P}_{n+1}$ such that  $(\overline{w}_1,\ldots,\overline{w}_{n-1},\overline{w}) \in \overline{\mathsf{W}}_{n}^p$ and that there exists a constant $\mathsf{C}(n,d,\Lambda)< \infty$ such that, for every $t \geq \X$, 
\begin{equation} \label{e.barwregtrivial}
 \left\| \overline{w}  \right\|_{\underline{L}^2 \left( B_{t} \right)}  
 \leq
  \mathsf{C} t 
\left(\frac{t}{\X}\right)^{n}   \sum_{i=1}^{n-1} 
\left(
\frac{1}{\X} \left\| \overline{w}_i  \right\|_{\underline{L}^2(B_\X)}
\right)^{\frac{n}{i}}  
.
\end{equation}
Consequently, Remark~\ref{r.regularityhigher1} provides us $\tilde w$ such that $(w_1,\ldots,w_{n-1},\tilde w) \in \mathsf{W}_{n}^p$ and, for~$t\geq \X$, 
\begin{equation} \label{e.liouvillec3}
 \left\| \tilde w - \overline{w}  \right\|_{\underline{L}^2(B_t)}
\leq 
C \mathsf{C} 
t^{1-\alpha} 
\left(\frac{t}{\X}\right)^n   \sum_{i=1}^{n-1} 
\left(
\frac{1}{\X} \left\| \overline{w}_i  \right\|_{\underline{L}^2(B_\X)}
\right)^{\frac{n}{i}}  
.
\end{equation}
Moreover, by the equations and the growth condition at infinity, $w_n - \tilde w \in \A_{n+1}^p$. Therefore, by~\cite[Theorem 5.2]{AFK}, there is $q \in \Ahom_{n+1}^p$ such that, for all $t \geq \X$, 
\begin{equation*} 
 \left\| w_n - \tilde w  - q  \right\|_{\underline{L}^2(B_t)} \leq C t^{-\delta}\left\| q \right\|_{\underline{L}^2 \left( B_{t} \right)} .
\end{equation*}
 We set $\overline{w}_n :=  \overline{w} + q$. Obviously, $(\overline{w}_1,\ldots,\overline{w}_{n-1},\overline{w}_n) \in \overline{\mathsf{W}}_{n}^p$ since $q$ is $\ahom_p$-harmonic. By~\eqref{e.barwregtrivial} and the triangle inequality we have, for $t\geq \X$, that 
 \begin{multline*} 
 t 
\left(\frac{t}{\X}\right)^n   \sum_{i=1}^{n-1} 
\left(
\frac{1}{\X} \left\| \overline{w}_i  \right\|_{\underline{L}^2(B_\X)}
\right)^{\frac{n}{i}}  
 \leq  
 \frac1{2\mathsf{C}} \left\| q \right\|_{\underline{L}^2 \left( B_{t} \right)} 
 \\    \; \implies \;  
 \left\| q \right\|_{\underline{L}^2 \left( B_{t} \right)} +  \left\| \overline{w}  \right\|_{\underline{L}^2 \left( B_{t} \right)}  
 \leq 
3 \left\| \overline{w}_n  \right\|_{\underline{L}^2 \left( B_{t} \right)} \leq C \left( \frac{t}{\X} \right)^{n+1}  \left\| \overline{w}_n  \right\|_{\underline{L}^2 \left( B_{\X} \right)}
\end{multline*}
and 
\begin{multline*} 
\left\| q \right\|_{\underline{L}^2 \left( B_{t} \right)}  
\leq 
2\mathsf{C}   t 
\left(\frac{t}{\X}\right)^n   \sum_{i=1}^{n-1} 
\left(
\frac{1}{\X} \left\| \overline{w}_i  \right\|_{\underline{L}^2(B_\X)}
\right)^{\frac{n}{i}}  
\\
 \; \implies \;  
 \left\| q \right\|_{\underline{L}^2 \left( B_{t} \right)} +  \left\| \overline{w}  \right\|_{\underline{L}^2 \left( B_{t} \right)} 
 \leq 
 3\mathsf{C}
 t 
\left(\frac{t}{\X}\right)^n   \sum_{i=1}^{n-1} 
\left(
\frac{1}{\X} \left\| \overline{w}_i  \right\|_{\underline{L}^2(B_\X)}
\right)^{\frac{n}{i}}  
.
\end{multline*}
We thus have by the triangle inequality that 
\begin{align} \notag 
\left\| w_n - \overline{w}_n  \right\|_{\underline{L}^2(B_t)} 
& 
\leq 
\frac1t \left\| w_n - \tilde w - q  \right\|_{\underline{L}^2(B_t)} +   \left\| \tilde w  - \overline{w} \right\|_{\underline{L}^2(B_t)} 
\\ \notag &
\leq 
C t^{-\delta \wedge \alpha} 
\left(
 \left\| q \right\|_{\underline{L}^2 \left( B_{t} \right)} + \left\|  \overline{w}  \right\|_{\underline{L}^2(B_t)} 
+ t\left(\frac{t}{\X}\right)^n   \sum_{i=1}^{n-1} \left( \frac{1}{\X} \left\| \overline{w}_i  \right\|_{\underline{L}^2(B_\X)} \right)^{\frac{n}{i}}  
\right) 
\\ \notag &
\leq 
Ct^{1-\delta \wedge \alpha} 
\left(\frac{t}{\X}\right)^n   \sum_{i=1}^{n} 
\left(
\frac{1}{\X} \left\| \overline{w}_i  \right\|_{\underline{L}^2(B_\X)}
\right)^{\frac{n}{i}}  
,
\end{align}
proving the induction step and finishing the proof of~$\mathrm{(i)}_n$.
\end{proof}

\begin{remark} \label{r.barwregtrivial}
We show that there is $\overline{w}$ such that~\eqref{e.barwregtrivial} is valid. Indeed, by letting~$\overline{v}$ solve
\begin{equation} \label{e.barwregtrivial2}
\left\{ 
\begin{aligned}
& -\nabla \cdot \ahom_p \nabla \overline{v} = \nabla \cdot   \overline{\mathbf{F}}(p,\nabla\overline{w}_1,\ldots,\nabla\overline{w}_{n-1}) 
& \mbox{in} &  \ B_{R},\\
& \overline{v} = 0 & \mbox{on} & \ \partial B_{R},
\end{aligned} 
\right. 
\end{equation}
using the fact that $\overline{\mathbf{F}}(p,\nabla\overline{w}_1,\ldots,\nabla\overline{w}_{n-1}) $ is a polynomial of degree $n$, 
it is straightforward to show by homogeneity that 
\begin{equation*} 
\overline{w}(x) = \sum_{m=2}^{n+1} \frac{1}{m!} \nabla^m \overline{v}(0) x^{\otimes m}
\end{equation*}
solves
\begin{equation*} 
-\nabla \cdot \left( \ahom \nabla \overline{w}\right) = \nabla \cdot   \overline{\mathbf{F}}(p,\nabla\overline{w}_1,\ldots,\nabla\overline{w}_{n-1})  \quad  \mbox{in }   \; \R^d .
\end{equation*}
We have the estimate
\begin{equation*} 
\left| \nabla^{m+1} \overline{v}(0) \right| \leq C \sum_{k=0}^{m+1} R^{k - m} \left\|  \nabla^k \overline{\mathbf{F}}(p,\nabla\overline{w}_1,\ldots,\nabla\overline{w}_{n-1})  \right\|_{\underline{L}^2 \left( B_{R} \right)}
\end{equation*}
for all $m \in \{1,\ldots,n\}$, and by the equations of $\overline{w}_1,\ldots,\overline{w}_{n-1}$ we see that
\begin{equation*} 
 \left\|  \nabla^k \overline{\mathbf{F}}(p,\nabla\overline{w}_1,\ldots,\nabla\overline{w}_{n-1})  \right\|_{\underline{L}^2 \left( B_{R} \right)}
 \leq C R^{-k} \left( \sum_{i=1}^{n-1}  \frac1R \left\| \overline{w}_i  \right\|_{\underline{L}^2 \left( B_{R} \right)} \right)^{\frac ni} .
\end{equation*}
Therefore, for all $R>0$ and $m \in \{1,\ldots,n\}$, 
\begin{equation*} 
\left| \nabla^{m+1} \overline{v}(0) \right| 
\leq 
C R^{-m}  \left( \sum_{i=1}^{n-1}  \frac1R \left\| \overline{w}_i  \right\|_{\underline{L}^2 \left( B_{R} \right)} \right)^{\frac ni}  .
\end{equation*}
Thus we have that, for $t \geq R$, 
\begin{equation*} 
\left\|  \nabla \overline{w} \right\|_{\underline{L}^2 \left( B_{t} \right)} \leq C  \left( \frac{t}{R} \right)^{n}  \left( \sum_{i=1}^{n-1}  \frac1R \left\| \overline{w}_i  \right\|_{\underline{L}^2 \left( B_{R} \right)} \right)^{\frac ni} ,
\end{equation*}
which yields~\eqref{e.barwregtrivial}.  
\end{remark}

We now turn to the proof of the large-scale $C^{n,1}$ estimate. 

\begin{proof}[Proof of Theorem~\ref{t.regularityhigher} $\mathrm{(iii)}_n$]
Fix $\mathsf{M} \in [1,\infty)$. By Theorem~\ref{t.regularity.linerrors} there exist constants $\sigma(n,\mathsf{M},\data) \in (0,1)$ and $C(n,\mathsf{M},\data)<\infty$ and a random variable $\X$ satisfying $\X \leq \O_\sigma(C)$ so that the statement of Theorem~\ref{t.regularity.linerrors}  is valid with $q = 2(n+2)$.   We now divide the proof in two steps. 

\smallskip

\emph{Step 1.} 
Induction assumption. Assume $\mathrm{(iii)}_{n-1}$. Consequently there is $p \in B_{C}$ and a tuplet $(w_1,\ldots,w_{n-1})$ such that, for $k \in \{0,\ldots,n-1\}$ and
\begin{equation*} 
\xi_k(x) = v(x) - p \cdot x  -  \phi_p(x) - \sum_{i=1}^{k} \frac{w_i(x)}{i!} , \qquad \xi_0(x) := v(x) - p \cdot x  -  \phi_p(x),
\end{equation*}
we have that  there exists $C(k,\mathsf{M},\data)$ such that, for every $m \in \{0,\ldots,n-1\}$ and for every $r \in [\X, \frac12 (1+ 2^{-k-2})R]$, 
\begin{equation}
\label{e.intrinsicregnminusone1}
\left\| \nabla \xi_m \right\|_{\underline{L}^{2}(B_r)}
\leq 
C \left( \frac r R \right)^{m+1} 
\left( \mathsf{H}_k^{m+1}  \wedge \inf_{\phi \in \mathcal{L}_1} \frac1R \left\|  v - \phi \right\|_{\underline{L}^2 \left( B_{R} \right)} \right) ,
\end{equation}
where we denote, in short, 
\begin{equation*} 
\mathsf{H}_m  
:= 
\sum_{i=0}^{m}   
\left(   \frac1R
 \left\|  \xi_i - (\xi_i)_{B_R}  \right\|_{\underline{L}^2(B_{R})} 
\right)^{\frac{1}{i+1}} .
\end{equation*}
Our goal is to show that~\eqref{e.intrinsicregnminusone1} continues to hold with $m=n$ and for every $r \in [\X, \frac12 (1+  2^{-n-2})R]$. The base case $n=1$ is valid since, by~\cite[Theorem 1.3]{AFK}, we have that there is $ p \in B_C$ such that, for all $r \in  [\X, \frac  34 R]$, 
\begin{equation}  \label{e.intrinsicregnminusonebbase}
\left\|  \nabla \xi_0 \right\|_{\underline{L}^2 \left( B_{r} \right)} \leq C\left( \frac rR \right) \inf_{\phi \in \mathcal{L}_1} \frac1R \left\|  v - \phi \right\|_{\underline{L}^2 \left( B_{R} \right)}. 
\end{equation}

\smallskip

\smallskip

\emph{Step 2.} Construction of a special solution. We construct a solution $\tilde w_n$ of
\begin{equation}  \label{e.tildewneq}
- \nabla \cdot \left( \a_p  \nabla \tilde w_n \right) 
=
 \nabla \cdot  \mathbf{F}_n \left( p+ \nabla \phi_p , \nabla w_1,\ldots, \nabla w_{n-1} ,\cdot \right) \quad \mbox{in } \, \R^d 
\end{equation}
satisfying, for $r \geq \X$, 
\begin{equation}  \label{e.tildewnest}
  \left\| \nabla \tilde w_n \right\|_{\underline{L}^{2(n+1)}(B_r)} 
\leq
C \left( \frac r R \right)^{n}    \left( \mathsf{H}_{n-1}^{n} \wedge \inf_{\phi \in \mathcal{L}_1} \frac1R \left\|  v - \phi \right\|_{\underline{L}^2 \left( B_{R} \right)}\right) Ã.
\end{equation}
To show this, it first follows by~\eqref{e.intrinsicregnminusone1} and the triangle inequality that, for $m \in \{1,\ldots,n-1\}$, 
\begin{equation*} 
\left\| \nabla w_m \right\|_{\underline{L}^{2}(B_r)}
 \leq
C \left( \inf_{\phi \in \mathcal{L}_1} \frac1R \left\|  v - \phi \right\|_{\underline{L}^2 \left( B_{R} \right)}\right)^{\frac{1}{m+1}}  \left\| \nabla \xi_m \right\|_{\underline{L}^{2}(B_r)}^{\frac{m}{m+1}} + \left\| \nabla \xi_{m-1} \right\|_{\underline{L}^{2}(B_r)}
.
\end{equation*}
Since we have Theorem~\ref{t.regularity.linerrors} at our disposal with $q = 2(n+1)$, we can increase the integrability and obtain by~\eqref{e.C01linsols} and~\eqref{e.intrinsicregnminusone1} that, for $r \in [\X, \frac12 (1+ \frac38 2^{-n})R]$ and $m \in \{1,\ldots,n-1\}$,
\begin{equation}
\label{e.intrinsicregnminusone3n}
  \left\| \nabla w_m \right\|_{\underline{L}^{2(n+1)}(B_r)} 
\leq
C \left( \frac r R \right)^{m}   
\left( \mathsf{H}_{m}^{m} \wedge \inf_{\phi \in \mathcal{L}_1} \frac1R \left\|  v - \phi \right\|_{\underline{L}^2 \left( B_{R} \right)}\right) 
.
\end{equation}
Consequently, by~\eqref{e.C01linerror} and~\eqref{e.intrinsicregnminusone3n}, we also get, for $m \in \{0,\ldots,n-1\}$ and $r \in [\X, \frac12 (1+\frac3{16} 2^{-m})R]$, that
\begin{equation}
\label{e.intrinsicregnminusone1n}
\left\| \nabla \xi_m \right\|_{\underline{L}^{2(n+1)}(B_r)}
\leq 
C \left( \frac r R \right)^{m+1}   \left( \mathsf{H}_{m}^{m+1} \wedge \inf_{\phi \in \mathcal{L}_1} \frac1R \left\|  v - \phi \right\|_{\underline{L}^2 \left( B_{R} \right)}\right) 
.
\end{equation}
Next, Theorem~\ref{t.regularityhigher}$\mathrm{(i)}_{n-1}$ yields that we find $(\overline{w}_1,\ldots,\overline{w}_{n-1}) \in \overline{\mathsf{W}}_{n-1}^{p}$ such that, for $m \in \{1,\ldots,n-1\}$, 
\begin{equation} \label{e.liouvillec.applied1}
\frac1\X \left\| w_m - \overline{w}_m \right\|_{\underline{L}^2(B_\X)} \leq C \X^{-\delta}  \sum_{i=1}^m \left(  \frac1{\X} \left\| \overline{w}_i \right\|_{\underline{L}^2(B_{\X})}\right)^{\frac{m}{i}} .
\end{equation}
 In particular, applying this inductively, assuming that the lower bound $\mathsf{R}$ for $\X$ is such that $C \mathsf{R}^{-\delta}  \leq \frac12$, we deduce by~\eqref{e.intrinsicregnminusone3n} that, for $k \in \{1,\ldots,n-1\}$, 
 \begin{equation} \label{e.liouvillec.applied2}
\left\| \nabla \overline{w}_k  \right\|_{\underline{L}^2(B_\X)} 
\leq
C \left( \frac {\X}{R} \right)^{k} 
\left( \mathsf{H}_{k}^{k} \wedge \inf_{\phi \in \mathcal{L}_1} \frac1R \left\|  v - \phi \right\|_{\underline{L}^2 \left( B_{R} \right)}\right) 
.
\end{equation}
By Remark~\ref{r.barwregtrivial} we find a solution $\overline{w}_n$ of 
\begin{equation*} 
-\nabla \cdot \left( \ahom_p \nabla \overline{w}_n \right)  
= 
\nabla \cdot   \overline{\mathbf{F}}(p,\nabla\overline{w}_1,\ldots,\nabla\overline{w}_{n-1}) 
\end{equation*}
satisfying, for $r \geq \X$, 
\begin{equation*} 
\left\|  \nabla \overline{w}_n \right\|_{\underline{L}^2 \left( B_{r} \right)}
 \leq 
 C \left( \frac{r}{\X} \right)^{n}  \left( \sum_{i=1}^{n-1}  \left\| \nabla \overline{w}_i  \right\|_{\underline{L}^2 \left( B_{\X} \right)} \right)^{\frac ni} 
.
\end{equation*}
In view of~\eqref{e.liouvillec.applied2} this yields, for $r \geq \X$, that
\begin{equation} \label{e.liouvillec.applied3}
\left\|  \nabla \overline{w}_n \right\|_{\underline{L}^2 \left( B_{r} \right)} 
\leq 
C \left( \frac{r}{R} \right)^{n} 
\left( \mathsf{H}_{n-1}^{n} \wedge \inf_{\phi \in \mathcal{L}_1} \frac1R \left\|  v - \phi \right\|_{\underline{L}^2 \left( B_{R} \right)}\right) 
.
\end{equation}
By Remark~\ref{r.regularityhigher1} we then find $\tilde w_n$ solving~\eqref{e.tildewneq} such that, for $r\geq \X$, 
\begin{equation} \label{e.liouvillec.applied4}
\left\| \tilde w_n - \overline{w}_n  \right\|_{\underline{L}^2(B_r)} \leq C r^{1-\delta}  \left( \frac{r}{\X} \right)^{n} \sum_{i=1}^n \left(  \frac1{\X} \left\| \overline{w}_i \right\|_{\underline{L}^2(B_{\X})}\right)^{\frac{n}{i}} .
\end{equation}
Now~\eqref{e.tildewnest} follows by~\eqref{e.liouvillec.applied2},~\eqref{e.liouvillec.applied3} and~\eqref{e.liouvillec.applied4} together with Theorem~\ref{t.regularity.linerrors}.

\smallskip

\emph{Step 3.} We show the induction step, that is, we validate~\eqref{e.intrinsicregnminusone1} for $k=n$ and $r \in [\X, \frac12 (1+ 2^{-n-2})R]$. 
Denote~$\tilde \xi_n := \xi_{n-1} - \frac1{n!} \tilde w_n$. We begin by deducing an estimate for~$\tilde \xi_n$. Appendix~\ref{app.linerrors} tells us that~$\tilde \xi_n$ solves the equation 
\begin{equation*} 
- \nabla \cdot \left( \a_p \nabla \tilde \xi_{n}  \right) 
= \nabla \cdot \mathbf{E}_{n}
\quad \mbox{in} \ B_{R}
\end{equation*}
and there exist constants $C(n,\mathsf{M},\data)<\infty$ such that 
\begin{equation} \label{e.Enapplied}
\left\| \mathbf{E}_{n} \right\|_{\underline{L}^{2} \left( B_{r} \right)} 
\leq 
 C \sum_{i=0}^{n-1} \left\| \nabla \xi_{i} \right\|_{\underline{L}^{2(n+1)} \left( B_{r} \right)}^{\frac{n+1}{i+1}}  
+
C \sum_{i=1}^{n-1}  \left\| \nabla w_i \right\|_{\underline{L}^{2(n+1)} (B_r)}^{\frac{n+1}{i}}  
.
\end{equation}
By~\eqref{e.intrinsicregnminusone1n} and~\eqref{e.intrinsicregnminusone3n} we then obtain, for $r \in [\X, \frac12 (1+\frac38 2^{-n})R]$, that
\begin{equation} \label{e.intrinsicregnminusone4}
\left\| \mathbf{E}_{n} \right\|_{\underline{L}^{2} \left( B_{r} \right)}  
\leq 
C \left( \frac r R \right)^{n+1} 
\left( \mathsf{H}_{n-1}^{n+1} \wedge \inf_{\phi \in \mathcal{L}_1} \frac1R \left\|  v - \phi \right\|_{\underline{L}^2 \left( B_{R} \right)}\right) 
 .
\end{equation}

\smallskip

Next, set, for $j\in \N_0$, $r_j := \frac12  \theta^j (1+2^{-n-2})R$, where $\theta(n,\mathsf{M},\data) \in \left(0,\frac12 \right]$ will be fixed shortly.  
Let $\phi_0  \in \A_{n+2}^p$ and, for given $\phi_{j}  \in \A_{n+2}^p$, let $h_{j}$ solve 
\begin{equation}
\left\{ 
\begin{aligned}
& -\nabla \cdot  \left( \a_p \nabla h_j \right) =  0
& \mbox{in} &  \ B_{r_{j}},\\
& h_j = \tilde \xi_n - \phi_{j} & \mbox{on} & \ \partial B_{r_{j}}.
\end{aligned} 
\right. 
\end{equation}
By testing and~\eqref{e.intrinsicregnminusone4} we get
\begin{equation}  \label{e.intrinsicregnminusone5}
\left\|   \nabla \tilde \xi_n -   \nabla \phi_{j}  -  \nabla h_j  \right\|_{\underline{L}^{2} \left( B_{r_{j}} \right)} 
\leq   
C \left( \frac r R \right)^{n+1}  
\left( \mathsf{H}_{n-1}^{n+1} \wedge \inf_{\phi \in \mathcal{L}_1} \frac1R \left\|  v - \phi \right\|_{\underline{L}^2 \left( B_{R} \right)}\right) 
.
\end{equation}
Furthermore, by~\cite[Theorem 5.2]{AFK}, there is $\tilde \phi_{j+1} \in \A_{n+2}^p$ such that  
\begin{equation*} 
 \left\| \nabla h_j -  \nabla \tilde \phi_{j+1} \right\|_{\underline{L}^{2}(B_{r_{j+1}})} 
 \leq 
 C \theta^{n+2}  \left\| \nabla h_j -  \nabla \phi_{j}   \right\|_{\underline{L}^{2}(B_{r_{j}})}
.
\end{equation*}
Combining, we have by the triangle inequality, for $\phi_{j+1} := \tilde \phi_{j+1} + \phi_{j} \in \A_{n+2}^p$,  that 
\begin{align} \notag 
\left\|  \nabla \tilde \xi_n- \nabla \phi_{j+1} \right\|_{\underline{L}^{2} (B_{r_{j+1}})}
& 
\leq 
C \theta^{n+2}  \left\| \nabla \tilde \xi_n -  \nabla \phi_{j}   \right\|_{\underline{L}^{2}(B_{r_{j}})}  
\\ \notag & \quad
+ 
C \theta^{-\frac d2} \left( \frac r R \right)^{n+1}  \left( \mathsf{H}_{n-1}^{n+1} \wedge \inf_{\phi \in \mathcal{L}_1} \frac1R \left\|  v - \phi \right\|_{\underline{L}^2 \left( B_{R} \right)}\right) 
.
\end{align}
Choosing $C\theta^{1/2} = 1$, we thus arrive at
\begin{align} \notag 
\lefteqn{ \frac{1}{r_{j+1}^{n+1}}
\left\|  \nabla \tilde \xi_n - \nabla \phi_{j+1} \right\|_{\underline{L}^{2} (B_{r_{j+1}})}} \quad &
\\ \notag &
\leq 
   \frac{ \theta^{1/2} }{r_{j}^{n+1}} \left\| \nabla \tilde \xi_n -  \nabla \phi_{j}   \right\|_{\underline{L}^{2+\theta}(B_{r_{j}})}
+
\frac{C}{R^{n+1}}  \left( \mathsf{H}_{n-1}^{n+1} \wedge \inf_{\phi \in \mathcal{L}_1} \frac1R \left\|  v - \phi \right\|_{\underline{L}^2 \left( B_{R} \right)}\right) 
.
\end{align}
It follows by iteration that, for $r \in [\X, \frac12 (1+ 2^{-n-2})R]$, 
\begin{align} \notag 
\inf_{\phi \in \A_{n+2}^p } \left\|  \nabla \tilde \xi_n - \nabla \phi \right\|_{\underline{L}^{2} (B_{r}) } 
&
\leq 
C\left( \frac rR \right)^{n+3/2}  \inf_{\phi \in \A_{n+2}^p } \left\|  \nabla \tilde \xi_n - \nabla \phi \right\|_{\underline{L}^{2} (B_{r_0}) }  
\\ \notag & \quad 
+  C\left( \frac rR \right)^{n+3/2}   \left( \mathsf{H}_{n-1}^{n+1} \wedge \inf_{\phi \in \mathcal{L}_1} \frac1R \left\|  v - \phi \right\|_{\underline{L}^2 \left( B_{R} \right)}\right) 
.
\end{align}
We can now revisit~\cite[Proof of (3.49)]{AKMbook} and obtain that there exists $\phi \in \A_{n+1}^p$ such that, taking $w_n = \tilde w_n + \phi$ and $\xi_n := \xi_{n-1} - \frac1{n!} w_n$, 
we get, for all $r \in [\X, \frac12 (1+2^{-n-2})R]$,
\begin{equation} 
\left\|  \nabla \xi_n \right\|_{\underline{L}^{2} (B_{r}) } 
\leq 
C \left( \frac rR \right)^{n+1} 
 \left( \mathsf{H}_{n}^{n+1} \wedge \inf_{\phi \in \mathcal{L}_1} \frac1R \left\|  v - \phi \right\|_{\underline{L}^2 \left( B_{R} \right)}\right) 
\end{equation}
Since $\phi \in \A_{n+1}^p$, we see that $(w_1,\ldots,w_n) \in \mathsf{W}_n^p$. Now~\eqref{e.intrinsicregnminusone1} follows for $k=n$ by the previous inequality together with the Caccioppoli estimate and~\eqref{e.intrinsicregnminusone4}. The proof is complete. 
\end{proof}


\appendix

\section{Deterministic regularity estimates}
\label{app.CZ}
In this first appendix, we record some determinstic regularity estimates of Schauder and Calder\'on-Zygmund type for linear equations with H\"older continuous coefficients. These estimates, while well-known, are not typically written with explicit dependence on the regularity of the coefficients, which is needed for our purposes in this paper. 


\begin{proposition}[{Calder\'on-Zygmund gradient $L^q$ estimates}]
\label{p.gradientLq}
Let~$\beta \in (0,1]$, $q\in [2,\infty)$ and~$\a\in \R^{d\times d}$ be a symmetric matrix with entries in~$C^{0,\beta}(B_2)$ satisfying 
\begin{equation*}
I_d \leq \a(x) \leq \Lambda I_d, \quad \forall x\in B_2. 
\end{equation*}
Suppose~$\mathbf{f} \in L^q(B_2;\Rd)$ and~$u\in H^1(B_2)$ is a solution of 
\begin{equation*}
-\nabla \cdot \left( \a\nabla u \right) = \nabla\cdot \mathbf{f} \quad \mbox{in} \ B_2. 
\end{equation*}
Then $u \in W^{1,q}_{\mathrm{loc}}(B_2)$ and there exists $C(q,d,\Lambda)<\infty$ such that
\begin{equation}
\label{e.CZ}
\left\| \nabla u \right\|_{L^q(B_1)} 
\leq 
C \exp\left( \tfrac C\beta\left(1 - \tfrac2q\right) \right)
\left( 1+ \left[ \a \right]_{C^{0,\beta}(B_2)}^{\frac{d}{2\beta}\left( 1 - \frac{2}{q}\right)}  \right)
\left\| \nabla u \right\|_{L^2(B_2)} 
+
C \left\| \mathbf{f} \right\|_{L^q(B_2)}
.
\end{equation}
\end{proposition}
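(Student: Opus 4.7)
The proof I would give proceeds by interpolation between an $L^2$ bound (which is trivial) and an $L^\infty$ bound on $\nabla u$ obtained from a quantitative Schauder estimate. To isolate the dependence on $\mathbf{f}$ cleanly, I would first use linearity to split $u = u_1 + u_2$, where $u_1 \in H^1_0(B_{3/2})$ solves the Dirichlet problem $-\nabla\cdot(\a \nabla u_1) = \nabla \cdot (\chi \mathbf{f})$ with a cutoff $\chi \in C^\infty_c(B_{3/2})$ satisfying $\chi \equiv 1$ on $B_1$, and $u_2 := u - u_1$ is therefore a solution of the \emph{homogeneous} equation $-\nabla \cdot (\a\nabla u_2) = 0$ in $B_1$. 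The contribution from $u_1$ is handled by the classical Calder\'on--Zygmund $L^q$ bound for variable coefficient equations with merely measurable coefficients (which is valid for all $q$ close to $2$; for larger $q$ one uses a perturbation/freezing argument as in Meyers/Campanato and the fact that $\a \in C^{0,\beta}$ gives coefficients with small BMO norm on small balls): this yields $\|\nabla u_1\|_{L^q(B_1)} \le C(q,d,\Lambda) \|\mathbf{f}\|_{L^q(B_2)}$ with a constant independent of $[\a]_{C^{0,\beta}}$, which absorbs into the last term of~\eqref{e.CZ}.

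The heart of the argument is then the following quantitative Schauder bound for the homogeneous problem: if $-\nabla\cdot(\a\nabla v) = 0$ in $B_{3/2}$, then
\begin{equation*}
\|\nabla v\|_{L^\infty(B_1)} \le C\exp(C/\beta)\bigl(1+[\a]_{C^{0,\beta}(B_{3/2})}^{d/(2\beta)}\bigr)\|\nabla v\|_{\underline{L}^2(B_{3/2})}.
\end{equation*}
The appearance of the power $d/(2\beta)$ is natural on dimensional grounds: let $r_* := c\,[\a]_{C^{0,\beta}}^{-1/\beta} \wedge 1$ be the scale below which the oscillation of $\a$ in a ball of radius $r_*$ is smaller than a fixed threshold $\delta_0(d,\Lambda)$. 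On balls of radius $r \le r_*$, the Campanato excess decay iteration based on freezing $\a$ at a point and comparing with the constant-coefficient solution — whose $C^{1,\beta}$ estimate is classical — becomes effective and yields $\nabla v \in C^{0,\beta}_{\mathrm{loc}}$ with dimension-only constants. Passing from $B_{r_*}$ up to $B_1$ costs at most a volume factor $r_*^{-d/2} \sim [\a]_{C^{0,\beta}}^{d/(2\beta)}$ when bounding $\|\nabla v\|_{L^\infty(B_{r_*}(x_0))}$ by $\|\nabla v\|_{\underline{L}^2(B_{r_*}(x_0))} \le Cr_*^{-d/2}\|\nabla v\|_{L^2(B_1)}$. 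The exponential factor $e^{C/\beta}$ tracks the geometric series $\sum_k \theta^{k\beta} \sim (1-\theta^\beta)^{-1} \le C/\beta$ that arises in summing the Campanato decay.

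Given the $L^\infty$ bound on $\nabla u_2$, the final estimate follows by the elementary interpolation
\begin{equation*}
\|\nabla u_2\|_{L^q(B_1)} \le \|\nabla u_2\|_{L^2(B_1)}^{2/q}\|\nabla u_2\|_{L^\infty(B_1)}^{1-2/q},
\end{equation*}
which raises the factor $[\a]_{C^{0,\beta}}^{d/(2\beta)}$ to the power $1-2/q$ and produces exactly the exponent appearing in~\eqref{e.CZ}, while the exponential constant becomes $\exp(C(1-2/q)/\beta)$. Since $\|\nabla u_2\|_{L^2(B_{3/2})} \le \|\nabla u\|_{L^2(B_2)} + \|\nabla u_1\|_{L^2(B_{3/2})} \le \|\nabla u\|_{L^2(B_2)} + C\|\mathbf{f}\|_{L^2(B_2)} \le \|\nabla u\|_{L^2(B_2)} + C\|\mathbf{f}\|_{L^q(B_2)}$ (the last step by H\"older on the bounded domain), combining the estimates for $u_1$ and $u_2$ yields~\eqref{e.CZ}.

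The main obstacle is the quantitative bookkeeping in the Schauder step — specifically, tracking that the dependence on $[\a]_{C^{0,\beta}}$ enters only as the power $d/(2\beta)$ of an $L^2$-to-$L^\infty$ rescaling and that the $\beta$-dependence in the iteration is captured by a single exponential factor. This is a standard exercise in the modern presentation of Schauder theory, but it does require the iteration to be implemented with the correct scale $r_*$ and with the constant-coefficient $C^{1,\beta}$ estimate invoked in its scale-invariant form so that no hidden $\beta^{-1}$ factors appear in the base case.
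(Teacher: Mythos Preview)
Your approach differs from the paper's and contains a genuine gap in the treatment of $u_1$. The claim that the Calder\'on--Zygmund bound $\|\nabla u_1\|_{L^q(B_1)} \le C(q,d,\Lambda)\|\mathbf{f}\|_{L^q(B_2)}$ holds with a constant \emph{independent} of $[\a]_{C^{0,\beta}}$ is false for general $q\in[2,\infty)$. The small-BMO (or freezing) perturbation argument you invoke requires the oscillation of $\a$ to be below a threshold $\delta_0(q,d,\Lambda)$, and for $\a\in C^{0,\beta}$ this only holds on balls of radius $r \lesssim r_* := [\a]_{C^{0,\beta}}^{-1/\beta}$. Patching the local estimates up to scale $1$ forces a factor $r_*^{-d(1/2-1/q)} \sim [\a]_{C^{0,\beta}}^{\frac{d}{2\beta}(1-2/q)}$ onto the $\|\nabla u_1\|_{L^2}$ term, and since $\|\nabla u_1\|_{L^2} \le C\|\mathbf{f}\|_{L^2} \le C\|\mathbf{f}\|_{L^q}$, this factor contaminates the $\|\mathbf{f}\|_{L^q}$ contribution. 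Your split therefore yields an estimate of the form
\[
\|\nabla u\|_{L^q(B_1)} \le C\bigl(1+[\a]_{C^{0,\beta}}^{\frac{d}{2\beta}(1-2/q)}\bigr)\bigl(\|\nabla u\|_{L^2(B_2)} + \|\mathbf{f}\|_{L^q(B_2)}\bigr),
\]
which is strictly weaker than~\eqref{e.CZ}, where the $[\a]$ prefactor multiplies only $\|\nabla u\|_{L^2}$.

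The paper's argument avoids this by working directly with the full solution $u$ (no splitting) and invoking the local small-oscillation $L^q$ estimate
\[
\|\nabla u\|_{\underline{L}^q(B_r(x))} \le C\bigl(\|\nabla u\|_{\underline{L}^2(B_{2r}(x))} + \|\mathbf{f}\|_{\underline{L}^q(B_{2r}(x))}\bigr), \qquad r \le r_*,
\]
for each $x\in B_1$. Raising to the $q$th power, integrating over $x$, and applying Fubini to the $\mathbf{f}$ term gives $\int_{B_1}\fint_{B_{2r}(x)}|\mathbf{f}|^q\,dy\,dx \le C\|\mathbf{f}\|_{L^q(B_2)}^q$ with no $r_*$ dependence, while Young's inequality for convolutions applied to the $|\nabla u|^2$ term produces exactly the factor $r_*^{-d(q/2-1)}$ on $\|\nabla u\|_{L^2}^q$ alone. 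The exponential $\exp(C(1-2/q)/\beta)$ just records $\delta_0^{-\frac{d}{2\beta}(1-2/q)}$. Your Schauder-plus-interpolation idea for the homogeneous part $u_2$ is sound and would recover the correct prefactor on $\|\nabla u\|_{L^2}$; the failure is entirely in the handling of $u_1$, where the variable-coefficient global CZ constant cannot be decoupled from $[\a]_{C^{0,\beta}}$.
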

\begin{proof}
We will explain how to extract the statement of the proposition from that of~\cite[Proposition 7.3]{AKMbook}. The latter asserts the existence of $\delta_0(q,d,\Lambda)>0$ such that, for every ball $x\in B_1$ and $r\in \left( 0,\tfrac 12\right]$ satisfying 
\begin{equation*}
\osc_{B_{2r}(x)} \a  \leq \delta_0,
\end{equation*}
we have, for a constant $C(q,d,\Lambda)<\infty$,   the estimate
\begin{equation*}
\left\| \nabla u \right\|_{\underline{L}^q(B_r(x))} 
\leq 
C \left( \left\| \nabla u \right\|_{\underline{L}^2(B_{2r}(x))} + \left\| \f \right\|_{\underline{L}^q(B_{2r}(x))}  \right) .
\end{equation*}
Since $\osc_{B_{2r}} \a \leq (2r)^\beta \left[ \a \right]_{C^{0,\beta}(B_2)}$, we have the above estimate for every $x\in B_1$ and
\begin{equation*}
r:= \frac12\wedge \frac12 \left( \delta_0 \left[ \a \right]_{C^{0,\beta}(B_2)}^{-1} \right)^{\frac1\beta}.
\end{equation*}
From this, Fubini's theorem and Young's inequality for convolutions, we obtain
\begin{align*}
\left\| \nabla u \right\|_{L^q(B_1)}^q 
& 
\leq 
C \int_{B_{1}} \left( \left|\nabla u \right|^q \ast \left( \frac1{|B_r|} \indc_{B_r} \right) \right) (x)\,dx
\\ & 
= 
C \int_{B_{1}} \fint_{B_r(x)} \left|\nabla u(y) \right|^q \,dy\,dx
\\ & 
\leq 
C \int_{B_{1}} 
\left(
\fint_{B_{2r}(x)} \left| \nabla u(y) \right|^2 \, dy
\right)^{\frac q2}\,dx
+ 
C \int_{B_{1}} \fint_{B_{2r}(x)} \left| \f(y) \right|^q\,dy\,dx
\\ & 
\leq 
C \int_{B_{1}} \left| \left(  \left| \nabla u \right|^2 \ast 
\left( \frac1{|B_{2r}|} \indc_{B_{2r}} \right) \right) (x) \right|^{\frac q2} \,dx
+
C \left\| \f \right\|_{L^q(B_2)}^q
\\ & 
\leq 
C \left\| \nabla u \right\|_{L^2(B_2)}^q
\left\| \frac1{|B_{2r}|} \indc_{B_{2r}} \right\|_{L^{q/2}(B_2)}^{\frac q2}
+
C \left\| \f \right\|_{L^q(B_2)}^q
\\ & 
= C r^{-d\left(\frac q2-1\right)} \left\| \nabla u \right\|_{L^2(B_2)}^q
+
C \left\| \f \right\|_{L^q(B_2)}^q.
\end{align*}
This completes the proof. 
\end{proof}

\begin{proposition}
\label{p.schauder}
Let~$\beta \in (0,1)$ and~$\a\in \R^{d\times d}$ be a symmetric matrix with entries in~$C^{0,\beta}(B_2)$ satisfying 
\begin{equation*}
I_d \leq \a(x) \leq \Lambda I_d, \quad \forall x\in B_2. 
\end{equation*}
Suppose~$\mathbf{f} \in C^{0,\beta} (B_2;\Rd)$ and~$u\in H^1(B_2)$ is a solution of 
\begin{equation*}
-\nabla \cdot \left( \a\nabla u \right) = \nabla\cdot \mathbf{f} \quad \mbox{in} \ B_2. 
\end{equation*}
Then $u \in C^{1,\beta}_{\mathrm{loc}}(B_2)$ and there exists $C(\beta,d,\Lambda)<\infty$ such that
\begin{equation}
\label{e.schauder1}
\left\| \nabla u \right\|_{L^\infty(B_1)} 
\leq 
C \left( 1 + \left[ \a \right]_{C^{0,\beta}(B_2)}^{\frac d{2\beta}} \right) 
\left\| \nabla u \right\|_{L^2(B_2)} 
+ 
C \left[ \f \right]_{C^{0,\beta}(B_{2})} 
\end{equation}
and
\begin{equation}
\label{e.schauder2}
\left[ \nabla u \right]_{C^{0,\beta}(B_1)}
\leq 
C\left( 1 + \left[ \a \right]_{C^{0,\beta}(B_2)}^{1+\frac d{2\beta}} \right)  \left\| \nabla u \right\|_{L^2(B_2)} 
+ 
C\left[ \f \right]_{C^{0,\beta}(B_{2})}.
\end{equation}
\end{proposition}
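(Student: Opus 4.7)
\medskip

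\noindent\textbf{Proof proposal for Proposition~\ref{p.schauder}.} The plan is to run the classical Campanato-type excess decay iteration for the equation $-\nabla\cdot(\a\nabla u) = \nabla\cdot \f$, with careful bookkeeping of the dependence on $K:=1+[\a]_{C^{0,\beta}(B_2)}+[\f]_{C^{0,\beta}(B_2)}$, and then interpolate with an $L^q$ bound coming from Proposition~\ref{p.gradientLq} to handle the small scales where the frozen-coefficient approximation has not yet entered its good regime.

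First I would fix a point $x_0\in B_1$ and, for each $r\in(0,1]$, compare $u$ to the solution $\tilde u_r\in H^1(B_r(x_0))$ of the constant-coefficient problem $-\nabla\cdot(\a(x_0)\nabla \tilde u_r)=\nabla\cdot(\f(x_0))=0$ with $\tilde u_r = u$ on $\partial B_r(x_0)$. The difference $u-\tilde u_r\in H^1_0(B_r(x_0))$ satisfies
\begin{equation*}
-\nabla\cdot(\a(x_0)\nabla(u-\tilde u_r))=\nabla\cdot\bigl((\a-\a(x_0))\nabla u + (\f-\f(x_0))\bigr),
\end{equation*}
so the basic energy estimate plus the H\"older hypotheses gives, with $[\cdot]_\beta := [\cdot]_{C^{0,\beta}}$,
\begin{equation*}
\|\nabla u - \nabla \tilde u_r\|_{\underline{L}^2(B_r(x_0))} \leq C r^\beta\bigl([\a]_\beta \|\nabla u\|_{\underline{L}^\infty(B_r(x_0))} + [\f]_\beta\bigr).
\end{equation*}
Since $\tilde u_r$ is $\a(x_0)$-harmonic with constant coefficients, classical De Giorgi--Nash--Moser plus the Liouville argument for constant-coefficient equations yield, for every $\theta\in(0,\tfrac12]$,
\begin{equation*}
\inf_{q\in\Rd}\|\nabla \tilde u_r - q\|_{\underline{L}^2(B_{\theta r}(x_0))} \leq C\theta\, \|\nabla \tilde u_r\|_{\underline{L}^2(B_r(x_0))}.
\end{equation*}
Combining and setting $\Phi(r):=\inf_{q}\|\nabla u - q\|_{\underline{L}^2(B_r(x_0))}$, I get the one-step excess decay
\begin{equation*}
\Phi(\theta r) \leq C\theta\,\Phi(r) + C\theta^{-d/2} r^\beta([\a]_\beta\|\nabla u\|_{L^\infty(B_r(x_0))} + [\f]_\beta).
\end{equation*}

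The second step is to turn this into a $C^{1,\beta}$ bound. Choosing $\theta$ so that $C\theta\leq \theta^{(1+\beta)/2}$ and iterating on the geometric sequence of radii $r_k:=\theta^k r_*$ yields, by a standard Campanato--Morrey geometric-sum argument, $\Phi(r)\leq C(r/r_*)^\beta(\Phi(r_*) + r_*^\beta([\a]_\beta M + [\f]_\beta))$ for all $r\leq r_*$, provided that on the starting scale $r_*$ we already control $M:=\|\nabla u\|_{L^\infty(B_{r_*}(x_0))}$. This would immediately give~\eqref{e.schauder2} and, via Campanato's characterization, pointwise bounds on $\nabla u$ in terms of the initial excess. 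The circular dependence on $\|\nabla u\|_{L^\infty}$ is broken by a standard interpolation/absorbing step: one introduces the weighted seminorm $[u]_*:=\sup_{x_0,r}(r/d_{x_0})^{1-\beta}(\cdots)$ on an interior subdomain, uses the interpolation inequality $\|\nabla u\|_{L^\infty}\leq \eta [\nabla u]_{C^{0,\beta}} + C_\eta \|\nabla u\|_{L^2}$, and absorbs the $\eta$-term.

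The main obstacle, and the point where the quantitative dependence on $[\a]_\beta$ is generated, is fixing a legitimate starting scale $r_*$ on which $\nabla u$ is already known to be bounded and on which the excess-decay perturbative step is licit. The natural choice is to take $r_*\sim [\a]_\beta^{-1/\beta}\wedge 1$ (so that $r_*^\beta[\a]_\beta\lesssim 1$), and on this scale use Proposition~\ref{p.gradientLq} with a large exponent $q=q(d,\beta)$ satisfying $q>d$ to obtain $\nabla u\in L^\infty(B_{r_*/2}(x_0))$ via Morrey embedding and a covering. Specifically, applying~\eqref{e.CZ} on balls of radius $r_*$ and a Morrey-type embedding yields
\begin{equation*}
\|\nabla u\|_{L^\infty(B_{r_*/2}(x_0))}\leq C r_*^{-d/2}\|\nabla u\|_{L^2(B_{2r_*}(x_0))} + C[\f]_\beta,
\end{equation*}
and the factor $r_*^{-d/2}\sim [\a]_\beta^{d/(2\beta)}$ is precisely what appears on the right-hand sides of~\eqref{e.schauder1} and~\eqref{e.schauder2}. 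Feeding this $L^\infty$ bound back into the iterated Campanato estimate from the previous paragraph produces~\eqref{e.schauder1} and, after one additional iteration that keeps the scale-invariant $r^\beta$-gain, the H\"older seminorm estimate~\eqref{e.schauder2} with the stated power $1+d/(2\beta)$. A covering argument finally upgrades the pointwise-in-$x_0$ estimate to the interior estimate on $B_1$.
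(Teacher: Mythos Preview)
Your overall architecture is sound and even shares its key quantitative idea with the paper: pick the scale $r_*\simeq \tfrac12\wedge[\a]_{C^{0,\beta}(B_2)}^{-1/\beta}$ at which the oscillation of $\a$ is at most one, obtain a Schauder estimate on each ball $B_{r_*}(x_0)$, and then cover. The paper, however, executes this far more cheaply: it simply quotes a classical Schauder estimate (e.g.\ \cite[Theorem 3.13]{HL}), valid when $[\a]_{C^{0,\beta}}\le 1$, rescales it to $B_{r_*}$, and reads off the powers $r_*^{-d/2}\sim(1+[\a]_{C^{0,\beta}}^{d/(2\beta)})$ in~\eqref{e.schauder1} and $r_*^{-\beta-d/2}\sim(1+[\a]_{C^{0,\beta}}^{1+d/(2\beta)})$ in~\eqref{e.schauder2}. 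Your route---rederiving the Campanato excess-decay iteration from scratch, with an absorbing argument to break the circular $L^\infty$ dependence---is more self-contained but considerably longer, and not needed once one is willing to invoke the standard result.

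There is also a genuine (though repairable) gap in your initialization step. You write that Proposition~\ref{p.gradientLq} with $q>d$ together with a ``Morrey-type embedding'' yields $\nabla u\in L^\infty(B_{r_*/2}(x_0))$. It does not: Morrey applied to $\nabla u\in L^q$ gives $u\in C^{0,1-d/q}$, not $\nabla u\in L^\infty$. One workable fix is to feed the $L^q$ bound on $\nabla u$ into the perturbation term of your excess decay via H\"older, i.e.\ estimate $\|(\a-\a(x_0))\nabla u\|_{\underline L^2(B_r)}\le Cr^{\beta(1-2/q)}[\a]_{C^{0,\beta}}\|\nabla u\|_{\underline L^q(B_r)}$, and bootstrap. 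But the cleanest fix is exactly the paper's: at scale $r_*$ the rescaled coefficients satisfy $[\tilde\a]_{C^{0,\beta}}\le 1$, so the full classical Schauder estimate applies on $B_{r_*}(x_0)$ and gives $\|\nabla u\|_{L^\infty}+r_*^\beta[\nabla u]_{C^{0,\beta}}\le C(\|\nabla u\|_{\underline L^2(B_{2r_*})}+r_*^\beta[\f]_{C^{0,\beta}})$ directly, after which the covering argument yields~\eqref{e.schauder1} and~\eqref{e.schauder2} in one stroke.
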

\begin{proof}
We will explain how to extract the statement of the proposition from the gradient H\"older estimate found in~\cite[Theorem 3.13]{HL}. The latter states that, under the assumption that 
\begin{equation*}
\left[ \a \right]_{C^{0,\beta}(B_2)} \leq 1, 
\end{equation*}
there exists $C(\beta,d,\Lambda)<\infty$ such that
\begin{equation*}
\left\| \nabla u \right\|_{C^{0,\beta}(B_1)} 
\leq 
C\left( \left\| \nabla u \right\|_{L^2(B_2)} + \left[ \f \right]_{C^{0,\beta}(B_2)} \right). 
\end{equation*}
After changing the scale, we obtain the corresponding statement in $B_r$, which asserts that, under the assumption that 
\begin{equation*}
r^{\beta} \left[ \a \right]_{C^{0,\beta} (B_{2r})} \leq 1, 
\end{equation*}
there exists $C(\beta,d,\Lambda)<\infty$ such that
\begin{equation*}
\left\| \nabla u \right\|_{L^\infty(B_r)} 
+ r^{\beta} \left[ \nabla u \right]_{C^{0,\beta}(B_r)} 
\leq 
C\left( \left\| \nabla u \right\|_{\underline{L}^2(B_{2r})} + r^{\beta} \left[ \f \right]_{C^{0,\beta}(B_{2r})} \right). 
\end{equation*}
Therefore we take~$r:= \frac12\wedge \left[ \a \right]_{C^{0,\beta} (B_{2})}^{-\frac1\beta}$ and apply the previous statement in every ball~$B_r(x)$ with $x\in B_1$ to obtain
\begin{align*}
\left\| \nabla u \right\|_{L^\infty(B_1)} 
+ \sup_{x\in B_1} r^{\beta} \left[ \nabla u \right]_{C^{0,\beta}(B_r(x))} 
&
\leq 
C\sup_{x\in B_1}\left(  \left\| \nabla u \right\|_{\underline{L}^2(B_{2r}(x))} + r^{\beta} \left[ \f \right]_{C^{0,\beta}(B_{2r}(x))} \right)
\\ & 
\leq 
C \left(r^{-\frac d2} \left\| \nabla u \right\|_{L^2(B_2)} 
+ 
r^{\beta} \left[ \f \right]_{C^{0,\beta}(B_{2})} \right).
\end{align*}
After a covering argument, we obtain 
\begin{equation*}
\left\| \nabla u \right\|_{L^\infty(B_1)} + r^{\beta} \left[ \nabla u \right]_{C^{0,\beta}(B_1)}
\leq 
C \left(r^{-\frac d2} \left\| \nabla u \right\|_{L^2(B_2)} 
+ 
r^{\beta} \left[ \f \right]_{C^{0,\beta}(B_{2})} \right),\end{equation*}
which yields the proposition. 
\end{proof}

\section{Differentiation of \texorpdfstring{$\mathbf{F}_m$}{Fm}}
\label{s.AppendixFm}
In this appendix, we show that \eqref{e.Fmrelation} holds.

\begin{lemma} \label{l.Fmrelation} Fix $m \in \N$ and $h,p \in \R^d$. Suppose that $z\mapsto L(z,x)$ is $C^{m+2}$ and $t \mapsto \mathbf{g}( p + th)$ is~$m$ times differentiable at~$0$.  Then 
\begin{align}  \label{e.FmrelationApp}
\lefteqn{
\mathbf{F}_{m+1} (\mathbf{g}(p) , D_p \mathbf{g}(p) h^{\otimes 1} ,\ldots,D_p^{m} \mathbf{g}(p) h^{\otimes m},x) 
} \quad &
\\ \notag &
=  D_p \left( \mathbf{F}_{m} (\mathbf{g}(p),D_p \mathbf{g}(p) h^{\otimes 1},\ldots,D_p^{m-1} \mathbf{g}(p) h^{\otimes (m-1)},x) \right) \cdot h   
\\ \notag & \quad 
+ D_p \left( D_p^{2} L(\mathbf{g}(p),x) \right) h^{\otimes 1} \left( D_p^{m} \mathbf{g}(p) h^{\otimes m}  \right)^{\otimes 1}.
\end{align}
\end{lemma}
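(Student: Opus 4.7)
The plan is to reduce~\eqref{e.FmrelationApp} to an abstract multilinear identity that does not involve $\mathbf{g}$, and to verify the latter by a single generating-function computation.

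First, I would note that~\eqref{e.FmrelationApp} follows, by the chain rule (using $D_p \mathbf{g}(p)\,h = h_1$ and $D_p(D_p^{i} \mathbf{g}(p) h^{\otimes i}) \cdot h = D_p^{i+1} \mathbf{g}(p) h^{\otimes (i+1)}$ for $i \leq m-1$), from the following identity in free variables $z, h_1,\ldots,h_m \in \R^d$:
\begin{equation*}
\mathbf{F}_{m+1}(z, h_1, \ldots, h_m, x) = (\partial_z \mathbf{F}_m)\, h_1 + \sum_{i=1}^{m-1} (\partial_{h_i} \mathbf{F}_m)\, h_{i+1} + D_p^3 L(z, x)\, h_1\, h_m,
\end{equation*}
where $\partial_z \mathbf{F}_m$ and $\partial_{h_i} \mathbf{F}_m$ are evaluated at $(z, h_1, \ldots, h_{m-1}, x)$, with tensor contractions as in~\eqref{e.Fmalt}. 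Since this abstract identity does not invoke the differentiability of $\mathbf{g}$, the hypothesis on $\mathbf{g}$ serves only to justify the chain-rule step.

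To prove the abstract identity, I would introduce the polynomial $S(t) := \sum_{j=1}^m \frac{t^j}{j!} h_j$ and the function $\Xi(t) := D_p L(z + S(t), x)$, which is smooth in $t$. Taylor expanding $D_p L$ around $z$ and organizing by the order $k$ of the expansion---noting that the $k=0$ term is constant in $t$ and the $k=1$ term $D_p^2 L(z,x) S(t)$ is a polynomial of degree $m$ in $t$, so neither contributes at Taylor order $m+1$---one reads off $\partial_t^{m+1}|_{t=0} \Xi(t) = \mathbf{F}_{m+1}(z, h_1, \ldots, h_m, x)$. On the other hand, the chain rule in $t$ gives $\partial_t \Xi(t) = D_p^2 L(z+S(t), x)\, S'(t)$, where $S'$ is a polynomial of degree $m-1$ satisfying $\partial_t^{m-l}|_{t=0} S'(t) = h_{m-l+1}$ for $l \in \{1, \ldots, m\}$ and $=0$ for $l=0$. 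Leibniz's rule therefore yields
\begin{equation*}
\mathbf{F}_{m+1}(z, h_1, \ldots, h_m, x) = \sum_{l=1}^m \binom{m}{l} \mathbf{M}_l\, h_{m-l+1}, \qquad \mathbf{M}_l := \partial_t^l|_{t=0} D_p^2 L(z+S(t), x).
\end{equation*}

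The final step is to identify this Leibniz sum with the three summands on the right-hand side of the abstract identity. A second Taylor expansion of $D_p^2 L$ around $z$ decomposes $\mathbf{M}_m = D_p^3 L(z, x)\, h_m + \partial_z \mathbf{F}_m(z, h_1, \ldots, h_{m-1}, x)$---the first summand coming from the order-$1$ term of the expansion and the second, via~\eqref{e.Fmalt}, from orders $k\geq 2$---so the $l=m$ contribution $\mathbf{M}_m h_1$ furnishes both the extra term $D_p^3 L(z,x) h_1 h_m$ (using symmetry of $D_p^3 L$) and the $(\partial_z \mathbf{F}_m) h_1$ term. For $l \in \{1, \ldots, m-1\}$ and $i := m-l$, a direct term-by-term comparison using~\eqref{e.Fmalt} gives $\binom{m}{l} \mathbf{M}_l\, h_{m-l+1} = (\partial_{h_i} \mathbf{F}_m)\, h_{i+1}$, completing the identification. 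The main obstacle is the combinatorial bookkeeping in this last step: one must match the binomial coefficient $\binom{m}{l}$ and the factorials $(m-l)!$, $l!$ arising from $\mathbf{M}_l$ against the combinatorial factor $\frac{m!}{k!\,i!}$ arising from differentiating~\eqref{e.Fmalt} in $h_i$, the match being possible only after exploiting the full symmetry of $D_p^{k+2} L$.
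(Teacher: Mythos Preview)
Your proposal is correct and takes a genuinely different route from the paper's proof. The paper keeps $\mathbf{g}$ in the picture throughout: after a reduction to the polynomial case, it sets $\mathbf{Z}(p,t)=\sum_{j=1}^{m}\frac{t^j}{j!}D_p^{j}\mathbf{g}(p)h^{\otimes j}$, observes the single relation $D_p\mathbf{Z}(p,t)\cdot h=\partial_t\mathbf{Z}(p,t)-D_p\mathbf{g}(p)h$, and uses it to derive the generating-function identity
\[
\partial_t\mathbf{G}_{h,x}(t,p)=D_p\mathbf{G}_{h,x}(t,p)\cdot h+D_p^{3}L(\mathbf{g}(p),x)\,\mathbf{Z}(p,t)^{\otimes 1}\bigl(D_p\mathbf{g}(p)h\bigr)^{\otimes 1},
\]
from which~\eqref{e.FmrelationApp} follows by applying $\partial_t^{m}\big|_{t=0}$. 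You instead first strip out $\mathbf{g}$ by chain rule, reducing to a purely algebraic identity in free variables $z,h_1,\ldots,h_m$, and then verify that identity by Leibniz-expanding $\partial_t^{m}\bigl(D_p^{2}L(z+S(t),x)\,S'(t)\bigr)$ and matching terms. The paper's approach is slicker---one identity at the level of generating functions, no term-by-term combinatorics---while yours isolates an explicit recurrence $\mathbf{F}_{m+1}=(\partial_z\mathbf{F}_m)h_1+\sum_{i=1}^{m-1}(\partial_{h_i}\mathbf{F}_m)h_{i+1}+D_p^{3}L\,h_1h_m$ that is independent of any particular $\mathbf{g}$ and may be reusable. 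The combinatorial matching you flag as the ``main obstacle'' can in fact be bypassed entirely by the generating-function computation $\partial_{h_i}\mathbf{G}(t)=\frac{t^i}{i!}\bigl(D_p^{2}L(z+S(t),x)-D_p^{2}L(z,x)\bigr)$, which after $\partial_t^{m}\big|_{t=0}$ and Leibniz gives $(\partial_{h_i}\mathbf{F}_m)h_{i+1}=\binom{m}{i}\mathbf{M}_{m-i}\,h_{i+1}$ directly, with no appeal to~\eqref{e.Fmalt}.
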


\begin{proof}
We first observe that the terms $D_p \mathbf{g}(p) h^{\otimes 1},\ldots,D_p^{m} \mathbf{g}(p) h^{\otimes m}$ in~\eqref{e.FmrelationApp} are precisely the directional derivatives of $\mathbf{g}$ in the $h$ direction, up to $m$th degree. The terms in \eqref{e.FmrelationApp} involve derivatives of $z\mapsto L(z,x)$ up to the $(m+2)\text{th}$ degree. Hence, we can assume by approximation, without loss of generality, that $z \mapsto L(z,x)$ and $p \mapsto \mathbf{g}(p)$ are polynomials, of degrees at most $m+2$ and $m$, respectively. Fix $h \in \R^d$ and let $t \in \R$. We write
\begin{equation*} 
\mathbf{g}(p+t h) = \mathbf{g}(p) + \sum_{j=1}^m \frac{t^j }{j!} D_p^j \mathbf{g}(p) h^{\otimes j} .
\end{equation*}
Denote
\begin{equation*} 
\mathbf{z}_j(p) :=  D_p^j \mathbf{g}(p) h^{\otimes j} \quad \mbox{and} \quad \mathbf{Z}(p,t) := \sum_{j=1}^{m} \frac{t^j }{j!} \mathbf{z}_{j}(p).
\end{equation*}
Examining the relation between the $p$ and $t$ derivatives of $\mathbf{Z}(p,t)$, we find that
\begin{equation} \label{e.diffZ}
D_p \mathbf{Z}(p,t) \cdot h = \sum_{j=1}^{m} \frac{t^j }{j!} \mathbf{z}_{j+1}(p) 
= 
\partial_t \mathbf{Z}(p,t)  - \mathbf{z}_{1}(p) .
\end{equation}
Set now, for fixed $h,x\in\R^d$,
\begin{equation*}
\mathbf{G}_{h,x}(t,p) := \sum_{k=2}^{m+1} \frac1{k!} D_p^{k+1} L(\mathbf{g}(p),x) \left( \mathbf{Z}(p,t) \right)^{\otimes k} 
\end{equation*}
and, by the definition of $\mathbf{F}_m$ in~\eqref{e.defFm}, 
\begin{equation*} 
\partial_t^m \mathbf{G}_{h,x} (0,p) = \mathbf{F}_m (\mathbf{g}(p),\mathbf{z}_1(p) ,\ldots,\mathbf{z}_{m-1}(p),x) ,
\end{equation*}
and similarly for $\mathbf{F}_{m+1}$.  Computing the directional derivative, recalling that we assume that $z \mapsto L(z,x)$ is a $(m+2)$th degree polynomial,   
\begin{align} \notag 
D_p \mathbf{G}_{h,x}(t,p) \cdot h 
& 
=  
\sum_{k=2}^{m+1} \frac1{(k-1)!} D_p^{k+1} L(\mathbf{g}(p),x) \left( \mathbf{Z}(p,t) \right)^{\otimes (k-1)} \left( D_p \mathbf{Z}(p,t) \cdot h \right)^{\otimes 1}
\\ \notag & \quad 
+ 
\sum_{k=3}^{m+1} \frac1{(k-1)!} D_p^{k+1} L(\mathbf{g}(p),x)  \left( \mathbf{Z}(p,t) \right)^{\otimes (k-1)} \left( \mathbf{z}_{1}(p) \right)^{\otimes 1},
\end{align}
we get by~\eqref{e.diffZ} that
\begin{align} \notag 
\partial_t \mathbf{G}_{h,x}(t,p) 
& = D_p \mathbf{G}_{h,x}(t,p) \cdot h   +  D_p^{3} L(\mathbf{g}(p),x)  \left( \mathbf{Z}(p,t) \right)^{\otimes 1} \left( \mathbf{z}_{1}(p) \right)^{\otimes 1}.
\end{align}
Consequently, we have
\begin{equation*} 
\partial_t^{m+1} \mathbf{G}_{h,x}(0,p) =
D_p \partial_t^{m} \mathbf{G}_{h,x}(0,p) \cdot h   +  D_p^{3} L(\mathbf{g}(p),x)   \left( \mathbf{z}_{1}(p) \right)^{\otimes 1}  \left( \mathbf{z}_{m}(p) \right)^{\otimes 1},
\end{equation*}
which is~\eqref{e.FmrelationApp}, concluding the proof.
\end{proof}

\section{Linearization errors}
\label{app.linerrors}

In this appendix we compute the equation satisfied by a higher-order linearization error and thereby obtain gradient estimates.

\smallskip

\begin{lemma}[{Equation for the linearization error}] 
\label{l.lineq}
Fix $\mathsf{M},R \in (0,\infty)$ and $n\in \N$ with $n \geq 2$. Assume that $p \mapsto L(p,x)$ is $C^{n+1,1}$ for every $x\in \R^d$ and
\begin{equation*} 
\sum_{k=1}^{n} \frac{1}{k!} \|          D_p^{k+1} L \|_{L^\infty(\R^d \times \R^d)} \leq \mathsf{M}. 
\end{equation*}
Suppose that $u,v,w_1,\ldots,w_n \in H^1(B_R)$ satisfy
\begin{equation*} 
\nabla \cdot \left( D_pL(\nabla u,x) - D_pL(\nabla v,x) \right) = 0
\quad \mbox{in} \ B_R
\end{equation*}
and, for each~$m \in \{1,\ldots,n\}$, 
\begin{equation*}
-\nabla \cdot  \left( D^2_pL\left( \nabla u, x \right) \nabla w_m \right) = \nabla \cdot \mathbf{F}_m(\nabla u,\nabla w_1,\ldots,\nabla w_{m-1},x)
\quad\mbox{in} \ B_R,
\end{equation*}
where $ \mathbf{F}_m$ is defined in~\eqref{e.defFm}. Denote $\xi_0 = v -u$ and, for each~$m \in \{1,\ldots,n\}$, 
\begin{equation*} 
\xi_m := v - u - \sum_{k=1}^m \frac{w_k}{k!}.
\end{equation*}
Then there is vector field $\mathbf{E}_{n}$ such that  $\xi_n$ solves 
\begin{equation*} 
- \nabla \cdot \left( D^2 L(\nabla u,\cdot) \nabla \xi_{n}  \right) = \nabla \cdot \mathbf{E}_{n}
\quad \mbox{in} \ B_R
\end{equation*}
and there exists a constant $C(n,\mathsf{M},d)<\infty$ such that 
\begin{align}  \label{e.Enprel}
\left| \mathbf{E}_{n} \right|
&
\leq
 C \sum_{h=0}^{n-1} \left| \nabla \xi_{h} \right| \left( \left| \nabla \xi_{0} \right| + \sum_{i=1}^{n-1} \left| \nabla \frac{w_i}{i!} \right|^{\frac{1}{i}}  \right)^{n-h}.
\end{align}
Furthermore, there exist constants $C(n,\mathsf{M},d)<\infty$, $q(n,d) \in (2,\infty)$ and $\delta(d,\Lambda) \in \left( 0 ,\tfrac 12 \right]$  such that
\begin{align} 
\label{e.EnL2}
\left\| \nabla \xi_n \right\|_{\underline{L}^{2+\delta} \left( B_{\frac12 (1 + 2^{-n})R} \right)} 
& \leq 
 C  \sum_{i=1}^{n} \left( \frac1R\left\|  \xi_{i} - (\xi_{i})_{B_R}\right\|_{\underline{L}^{2} \left( B_{R} \right)} \right)^{\frac{n+1}{i+1}} 
\\  \notag & \qquad  + 
C \left\| \nabla \xi_{0} \right\|_{\underline{L}^{q}(B_R)}^{n+1}
+
C \sum_{i=1}^{n-1}  \left\| \nabla \frac{w_i}{i!} \right\|_{\underline{L}^{q}(B_R)}^{\frac{n+1}{i}}  .
\end{align}
\end{lemma}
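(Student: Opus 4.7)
The plan is to derive both the equation for $\xi_n$ and the structural bound on $\mathbf{E}_n$ from a single Taylor expansion of $D_pL(\nabla v,x) - D_pL(\nabla u,x)$ around $\nabla u$ in the increment $\nabla \xi_0 = \nabla v - \nabla u$, combined with the substitution $\nabla \xi_0 = \nabla \xi_n + \sum_{j=1}^n \nabla w_j/j!$ and the cascade of $w_m$-equations. Concretely, Taylor's theorem with remainder gives
\[
D_pL(\nabla v,x) - D_pL(\nabla u,x) = \sum_{k=1}^{n} \frac{1}{k!} D_p^{k+1}L(\nabla u,x)(\nabla \xi_0)^{\otimes k} + \tilde R_n, \qquad |\tilde R_n| \leq C|\nabla \xi_0|^{n+1},
\]
and then a multinomial expansion of each $(\nabla \xi_0)^{\otimes k}$ produces three families of terms: the strictly linear-in-$\nabla \xi_n$ contribution, which is exactly $D_p^2L(\nabla u,x)\nabla \xi_n$; the ``pure-$w$'' terms with no factor of $\nabla \xi_n$; and the remaining cross terms (at least one factor of $\nabla \xi_n$ and degree $k\geq 2$). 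By the very definition of $\mathbf{G}$ and $\mathbf{F}_m$ in~\eqref{e.defFm}, the sum of pure-$w$ terms of total degree at most $n$ (in the auxiliary parameter $t$ of $\mathbf{G}$) coincides with $\sum_{m=1}^n (1/m!)\mathbf{F}_m(\nabla u,\nabla w_1,\ldots,\nabla w_{m-1},x)$, which combined with the $\sum_j (1/j!) D_p^2L(\nabla u)\nabla w_j$ piece extracted above gives $\sum_{m=1}^n (1/m!)[D_p^2L(\nabla u)\nabla w_m+\mathbf{F}_m]$; this is solenoidal by the $w_m$-equations (with $\mathbf{F}_1\equiv 0$), so it drops out on taking divergence. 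Collecting everything else---namely $\tilde R_n$, the pure-$w$ terms of total degree strictly greater than $n$, and all cross terms---into $\mathbf{E}_n$ yields the claimed equation $-\nabla\cdot(D_p^2L(\nabla u,\cdot)\nabla \xi_n) = \nabla \cdot \mathbf{E}_n$.

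Given the explicit form of the surviving monomials, the pointwise bound~\eqref{e.Enprel} follows from two observations. First, setting $M := |\nabla \xi_0| + \sum_{i=1}^{n-1}|\nabla w_i/i!|^{1/i}$, the elementary inequality
\[
\prod_{i}|\nabla w_i/i!|^{\alpha_i} = \prod_{i}\bigl(|\nabla w_i/i!|^{1/i}\bigr)^{i\alpha_i} \leq M^{\sum_i i\alpha_i}
\]
bounds every pure-$w$ factor by the appropriate power of $M$. Second, any remaining factor of $\nabla \xi_n$ can be rewritten via the telescoping identity $\nabla \xi_n = \nabla \xi_h - \sum_{k=h+1}^n \nabla w_k/k!$, choosing $h\in\{0,\ldots,n-1\}$ so that, after absorbing the extra $\nabla w_k$-contributions into $M$ by the same inequality, the total remaining power of $M$ is exactly $n-h$; this produces a term of the form $|\nabla \xi_h|\cdot M^{n-h}$. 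The Taylor remainder $\tilde R_n$ is directly bounded by $C|\nabla \xi_0|\cdot M^n$, matching the $h=0$ case. Summing over all surviving monomials gives~\eqref{e.Enprel}.

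For~\eqref{e.EnL2} I would apply a Meyers $L^{2+\delta}$ estimate to the equation for $\xi_n$ on the ball $B_{(1+2^{-n})R/2}$, which produces
\[
\|\nabla \xi_n\|_{L^{2+\delta}(B_{(1+2^{-n})R/2})}\leq C\|\mathbf{E}_n\|_{L^{2+\delta}(B_R)} + CR^{-1}\|\xi_n-(\xi_n)_{B_R}\|_{L^2(B_R)},
\]
the second term providing the $i=n$ contribution in~\eqref{e.EnL2}. Feeding~\eqref{e.Enprel} into H\"older's inequality---distributing the $L^{2+\delta}$ norm across the product $|\nabla \xi_h|\cdot M^{n-h}$ and then across the sum defining $M$---bounds $\|\mathbf{E}_n\|_{L^{2+\delta}}$ by a sum of products of $\|\nabla \xi_h\|_{L^{p_h}}$ with $\|\nabla \xi_0\|_{L^q}^{a_h}\prod_i\|\nabla w_i/i!\|_{L^q}^{b_{h,i}/i}$ for suitable exponents summing appropriately; applying a Caccioppoli estimate to each $\xi_h$-equation (the $h<n$ analogue of the identity established in the first step) converts $\|\nabla \xi_h\|_{L^2}$ into $R^{-1}\|\xi_h - (\xi_h)_{B_R}\|_{L^2}$ plus lower-order $\mathbf{E}_h$ terms that can be controlled by induction in $h$, and Young's inequality collapses the resulting products into the sum $\sum_{i=0}^{n}(R^{-1}\|\xi_i - (\xi_i)_{B_R}\|_{L^2})^{(n+1)/(i+1)}$. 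The main obstacle is the bookkeeping in the first step: cleanly identifying how the pure-$w$ terms of $t$-degree $\leq n$ match $\sum_m (1/m!)\mathbf{F}_m$, and then verifying that the telescoping of $\nabla \xi_n$ reproduces the exact structure $\sum_{h=0}^{n-1}|\nabla \xi_h|M^{n-h}$ asserted in~\eqref{e.Enprel}, rather than some merely equivalent but less useful bound.
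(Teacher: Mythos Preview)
Your overall architecture---Taylor expansion plus identification of the pure-$w$ terms of $t$-degree $\leq n$ with $\sum_{m}\tfrac1{m!}\mathbf{F}_m$---is exactly the paper's, and your identification step is the content of the paper's Step~5. The derivation of the equation for $\xi_n$ is fine.

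The gap is in your proposed proof of the pointwise bound~\eqref{e.Enprel}. Your plan is to expand $(\nabla\xi_0)^{\otimes k}$ multinomially via the single substitution $\nabla\xi_0=\nabla\xi_n+\nabla s_n$ and then estimate each surviving monomial. This does not work term-by-term, because the bound~\eqref{e.Enprel} involves neither $\nabla\xi_n$ nor $\nabla w_n$, whereas your decomposition produces both. Concretely, take $n=2$, $\nabla\xi_0=0$, $\nabla w_1=0$, $\nabla w_2\neq 0$: then $M=0$ and the right side of~\eqref{e.Enprel} vanishes, yet your ``cross'' term $(\nabla\xi_2)^{\otimes 2}=(\nabla w_2/2)^{\otimes 2}$ and your ``pure-$w$ of degree $>2$'' term $(\nabla w_2/2)^{\otimes 2}$ are each nonzero---they cancel in the sum, but neither is individually controlled. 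Your telescoping $\nabla\xi_n=\nabla\xi_h-\sum_{k>h}\nabla w_k/k!$ does not help, since the $k=n$ summand $\nabla w_n/n!$ cannot be absorbed into $M$ (which only contains $w_1,\ldots,w_{n-1}$). The paper avoids this by a genuinely different decomposition: it peels tensor factors one at a time, using at each step a \emph{different} split $\nabla\xi_0=\nabla s_{m'}+\nabla\xi_{m'}$ with a running ``budget'' $m'$ that decreases along the recursion. This produces quantities $\mathbf{S}^{(k)}_n$, $\mathbf{E}^{(k)}_n$ (paper's Step~2) such that $\mathbf{E}^{(k)}_n$ is a sum of terms each carrying exactly one factor $\nabla\xi_h$ with $h\leq n-1$ tensored with factors $\nabla w_i$, $i\leq n-1$, and powers of $\nabla\xi_0$; the bound~\eqref{e.Enprel} then follows term-by-term (paper's Step~3). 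The varying-depth recursion is the missing idea.

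Your sketch for~\eqref{e.EnL2} is essentially correct once~\eqref{e.Enprel} is in hand: the paper carries out exactly the Meyers/Caccioppoli induction you describe, with explicit exponents $q_h=q+\tfrac12\delta_0+\tfrac12\tfrac{n+1-h}{n+1}\delta_0$ on nested balls $B_{R_h}$ to close the loop.
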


\begin{proof}
Throughout the proof we use the notation $s_k = \sum_{j=1}^k \frac{w_j}{j!}$ and $\xi_0 = v-u$, so that $\xi_k = \xi_0 - s_k$.  

\smallskip

\emph{Step 1.}  Recalling that $\mathbf{F}_1 = 0$, we may rewrite
\begin{align} \notag 
\lefteqn{D_p L(\nabla v,x) - D_p L(\nabla u,x)  -
D^2 L(\nabla u,x) \nabla \xi_n    } \quad &
\\ \notag & 
= 
\sum_{k=1}^{n} \frac{1}{k!}   \left(D^2 L(\nabla u,x) \nabla w_k + \mathbf{F}_{k}(\nabla u,\nabla w_1,\ldots,\nabla w_{k-1},x) \right) + \mathbf{E}_{n}
\end{align} 
where we define
\begin{align} \notag 
\mathbf{E}_{n} & :=  \sum_{k=2}^{n} \frac{1}{k!}   \left( D_p^{k+1} L(\nabla u,x) (\nabla \xi_0)^{\otimes k} -  \mathbf{F}_{k}(\nabla u,\nabla w_1,\ldots,\nabla w_{k-1},x)  \right)
\\ \notag & 
\quad 
+ D_p L(\nabla v,x) - \sum_{k=0}^{n} \frac1{k!} D_p^{k+1} L(\nabla u,x) (\nabla \xi_0)^{\otimes k}   .
\end{align}
By the equations of $u$, $v$ and $w_k$, we have that 
\begin{equation*} 
- \nabla \cdot \left( D^2 L(\nabla u,x) \nabla \xi_{n}    \right) = \nabla \cdot \mathbf{E}_{n}.
\end{equation*}
It thus remains to estimate~$\mathbf{E}_{n}$.

\smallskip

\emph{Step 2.} We show that, for $k \in \{2,\ldots n\}$ and $m \in \{k,\ldots,n\}$,
\begin{equation} \label{e.recursiveformulaforL}
(\nabla \xi_0)^{\otimes k}  = \mathbf{S}^{(k)}_{m}  + \mathbf{E}^{(k)}_{m},
\end{equation}
where $\mathbf{S}^{(j)}_{m}$ and  $\mathbf{E}^{(j)}_{m}$ are defined, for $j \in \{2,\ldots,k\}$, recursively as 
\begin{equation} \label{e.Sjm}
\mathbf{S}^{(j)}_{m} :=  \sum_{i = 1}^{m+1-j} \mathbf{S}^{(j-1)}_{m - i}  \otimes \nabla \frac{w_{i}}{i!} 
\end{equation}
and
\begin{equation} \label{e.Ejm}
\mathbf{E}^{(j)}_{m} := \sum_{i = 1}^{m+1-j} \mathbf{E}^{(j-1)}_{m-i} \otimes \nabla \frac{w_{i}}{i!} 
+  (\nabla \xi_0)^{\otimes (j-1)}  \otimes  \nabla \xi_{m-(j-1)} ,
\end{equation}
with
\begin{equation}  \label{e.S1E1mdef}
\mathbf{S}^{(1)}_{i} :=  \nabla s_{i}  
\qquad \mbox{and} \qquad  
\mathbf{E}^{(1)}_{i} :=  \nabla \xi_{i} .
\end{equation}
Indeed, suppose that we have, for $j \in \{1,\ldots,k-1\}$ and $ m \in \{j,\ldots,n\}$, that 
\begin{equation*} 
(\nabla \xi_0)^{\otimes j}  = \mathbf{S}^{(j)}_{m}  + \mathbf{E}^{(j)}_{m}. 
\end{equation*}
This is obviously true for $j=1$. We compute, for $m \in \{k,\ldots,n\}$, 
\begin{align} 
\notag  
(\nabla \xi_0)^{\otimes k}  
&  
=  \sum_{i=1}^{m+1-k} (\nabla \xi_0)^{\otimes (k-1)} \otimes   \nabla \frac{w_i}{i!}  +  (\nabla \xi_0)^{\otimes (k-1)}   \otimes  \nabla \xi_{m-(k-1)} 
\\  \notag & 
= \sum_{i = 1}^{m+1- k} \mathbf{S}^{(k-1)}_{m - i}  \otimes   \nabla \frac{w_{i}}{i!}
+ \sum_{i = 1}^{m+1-k}  \mathbf{E}^{(k-1)}_{m-i} \otimes  \nabla \frac{w_{i}}{i!}  + (\nabla \xi_0)^{\otimes (k-1)}   \otimes  \nabla \xi_{m-(k-1)}  
\\  \notag & 
=  \mathbf{S}^{(k)}_{m}  + \mathbf{E}^{(k)}_{m} ,
\end{align}
which proves the recursive formula~\eqref{e.recursiveformulaforL}.

\smallskip

\emph{Step 3.}
We show that, for $k \in \{2,\ldots, n\}$ there exists a constant $C(n,k,d)<\infty$ such that
\begin{align}  \label{e.Eknest}
\left| \mathbf{E}^{(k)}_{n} \right|  
\leq   
C \sum_{h=1}^{n+1-k} \left| \nabla \xi_{h} \right| \left(\left| \nabla \xi_0 \right|  +  \sum_{i=1}^{n+1-k} \left|\nabla \frac{w_{i}}{i!}\right|^{\frac1i} \right)^{n-h}.
\end{align}
The statement is easy to verify by induction. Indeed, for $m=j=2$ we have by~\eqref{e.Ejm} that  
\begin{equation*} 
\left|\mathbf{E}^{(2)}_{2}   \right| \leq  \left| \nabla \xi_{1} \otimes  \nabla w_{1} +   \nabla \xi_{0} \otimes  \nabla \xi_{1} \right| \leq  
C \sum_{h=1}^{1} \left| \nabla \xi_{h} \right| \left(\left| \nabla \xi_0 \right|  + \left|\nabla w_{1}\right| \right)^{2-h}.
\end{equation*}
Assume then that, for $ m\in \{2,\ldots,n-1\}$ and $j \in \{2,\ldots,m\}$, we have 
\begin{align}  \label{e.Ekmjest}
\left| \mathbf{E}^{(j)}_{m} \right|  
\leq   
C \sum_{h=0}^{m+1-j} \left| \nabla \xi_{h} \right| \left(\left| \nabla \xi_0 \right|  +  \sum_{i=1}^{m+1-j} \left|\nabla \frac{w_{i}}{i!}\right|^{\frac1i} \right)^{m-h}.
\end{align}
By the definition of $\mathbf{E}^{(j)}_{n}$, we have, for $j \in \{2,\ldots,n\}$, that 
\begin{align} \label{e.Ejm.again}
\left|\mathbf{E}^{(j)}_{n}  \right| & \leq C \sum_{i = 1}^{n+1-j} \left| \mathbf{E}^{(j-1)}_{n-i} \right| \left|\nabla \frac{w_{i}}{i!}\right|
+  C \left| \nabla \xi_0 \right|^{j-1}   \left| \nabla \xi_{n-(j-1)} \right| 
\end{align}
By~\eqref{e.Ekmjest}, using Fubini for sums, we obtain, 
\begin{align*} 
\sum_{i = 1}^{n+1-j} \left| \mathbf{E}^{(j-1)}_{n-i} \right| \left|\nabla \frac{w_{i}}{i!}\right| 
&
\leq 
C \sum_{i = 1}^{n+1-j} \sum_{h=0}^{n+2-i-j} \left| \nabla \xi_{h} \right| \left(\left| \nabla \xi_0 \right|  +  \sum_{\ell=1}^{n+2-i-j} \left|\nabla \frac{w_{\ell}}{\ell !}\right|^{\frac1\ell} \right)^{n-i-h} |\nabla \frac{w_i}{i!}|
\\ \notag &
\leq 
C \sum_{h = 0}^{n+1-j} 
\left| \nabla \xi_{h} \right|  
\sum_{i=1}^{n+2-j-h} \left(\left| \nabla \xi_0 \right|  +  \sum_{\ell=1}^{n+1-j} \left|\nabla w_{\ell}\right|^{\frac1\ell}\right)^{n-i-h} |\nabla \frac{w_i}{i!}|
\\ \notag &
\leq 
C \sum_{h = 0}^{n+1-j} \left| \nabla \xi_{h} \right| \left(\left| \nabla \xi_0 \right|  +  \sum_{\ell=1}^{n+1-j} \left|\nabla \frac{w_{\ell}}{\ell!}\right|^{\frac1\ell}\right)^{n-h}.
\end{align*}
This, together with~\eqref{e.Ejm.again}, proves the induction step, and gives also~\eqref{e.Eknest}. 

\smallskip

\emph{Step 4.}
 We show that 
\begin{equation}
\label{e.L0vsSkns}
\left| \sum_{k=2}^{n} \left( \frac1{k!} D_p^{k+1} L(\nabla u,x) \left((\nabla \xi_0)^{\otimes k} -  \mathbf{S}^{(k)}_{n} \right)  \right) \right| 
\leq C \sum_{h=0}^{n-1} \left| \nabla \xi_{h} \right| \left(\left| \nabla \xi_0 \right|  +  \sum_{i=1}^{n-1} \left|\nabla \frac{w_{i}}{i!}\right|^{\frac1i} \right)^{n-h}.
\end{equation}
By the recursive formula we have, for $k \in \{2,\ldots,n\}$, that
\begin{align} 
\notag  
(\nabla \xi_0)^{\otimes k}  
&  
=  \mathbf{S}^{(k)}_{n}  + \mathbf{E}^{(k)}_{n},
\end{align}
and thus~\eqref{e.L0vsSkns} follows by~\eqref{e.Eknest}.
\smallskip

\emph{Step 5.}
 We show that 
\begin{align}
\label{e.obvioushomo}
\sum_{k=2}^{n} \frac1{k!} \left(  D_p^{k+1} L(\nabla u,x) \mathbf{S}^{(k)}_{n} - \mathbf{F}_k(\nabla u,\nabla w_1,\ldots,\nabla w_{k-1},x)  \right) = 0 .
\end{align}

For this, we first abbreviate
\[
\mathbf{F}_k=\mathbf{F}_{k}(\nabla u,\nabla w_1,\ldots,\nabla w_{k-1},x)
\]
and observe that, by definition,
\[
\frac{1}{k!}\mathbf{F}_{k}=\sum_{j \geq 2}\frac{1}{j!}D_{p}^{j+1}L(\nabla u, x)\left(\sum_{i_{1}+\cdots i_{j}= k \, : \, i_{1},\dots, i_{j}\geq 1}\nabla \frac{w_{i_1}}{i_{1}!}\otimes \cdots \otimes \nabla \frac{w_{i_j}}{i_{j}!}\right).
\]
Second, we observe that, by induction on $j\geq 2$, we have
\[
\mathbf{S}^{(j)}_{n}=\sum_{m\leq n}\left(\sum_{i_{1}+\cdots i_{j}= m \, : \, i_{1}, \dots, i_{j}\geq 1}\nabla \frac{w_{i_1}}{i_{1}!}\otimes\cdots \nabla \frac{w_{i_j}}{i_{j}!}\right)
\]
for all $n\geq j$.  Third, by commutativity of addition, we observe that
\[
\sum_{m\leq n}\left(\sum_{j\geq 2}\frac{1}{j!}D_{p}^{j+1}L(\nabla u, x)\left(\sum_{i_{1}+\cdots i_{j}=m\ :\ i_{1}, \dots, i_{j}\geq 1}\nabla \frac{w_{i_1}}{i_{1}!}\otimes\cdots\otimes\nabla \frac{w_{i_j}}{i_{j}!}\right)\right)=
\]
\[
=\sum_{j\geq 2}\frac{1}{j!}D_{p}^{j+1}L(\nabla u, x)\left(\sum_{m\leq n}\left(\sum_{i_{1}+\cdots i_{j}=m\ :\ i_{1}, \dots, i_{j}\geq 1}\nabla \frac{w_{i_1}}{i_{1}!}\otimes\cdots\otimes\nabla \frac{w_{i_j}}{i_{j}!}\right)\right).
\]
Finally, letting $\mathbf{F}_{m}=0$ for $m<2$ and $\mathbf{S}^{(j)}_{n}=0$ for $j>n$ for notational convenience, we note that the above equation may be rewritten as
\[
\sum_{m\leq n}\frac{1}{m!}\mathbf{F}_{m}=\sum_{j\geq 2}\frac{1}{j!}D_{p}^{j+1}L(\nabla u, x)\mathbf{S}^{(j)}_{n},
\]
which is \eqref{e.obvioushomo}.

\smallskip

\emph{Step 6.} Conclusion. 
We have that 
\begin{equation} 
\label{e.ws.restaylor}
\left| D_p L(\nabla v,x) - \sum_{k=0}^{n} \frac1{k!} D_p^{k+1} L(\nabla u,x) (\nabla v - \nabla u)^{\otimes k}  \right|
\leq 
C \left|  \nabla v - \nabla u  \right|^{n+1}   .
\end{equation}
Indeed, by a Taylor expansion, we see that
\begin{equation*} 
\left| D_p L(z_0 + z,x) - \sum_{k=0}^{n} \frac1{k!} D_p^{k+1} L(z_0,x) z^{\otimes k}  \right| \leq \left[ D_p^{n+1} L(\cdot,x)\right]_{C^{0,1}\left( B_{|z|}(z_0) \right)}  \frac{\left| z \right|^{n+1} }{(k+1)!}  .
\end{equation*}
Applying this with $z_0 = \nabla u$ and $z = \nabla v - \nabla u$ gives~\eqref{e.ws.restaylor}. Combining this with the previous steps yields the desired estimate~\eqref{e.Enprel} for $\mathbf{E}_{n}$. Finally, by the H\"older and Young inequalities, we get, for all $q \in [2,\infty)$ and $r \in (0,R]$,
\begin{multline}  \label{e.EnLq}
\left\| \mathbf{E}_{n} \right\|_{\underline{L}^{q} \left( B_{r} \right)}  
 \leq
\sum_{h=1}^{n-1} \left\| \nabla \xi_{h} \right\|_{\underline{L}^{q+\delta} \left( B_{r} \right)} \left\| \left| \nabla \xi_{0} \right| + \sum_{i=1}^{n-1} \left| \nabla \frac{w_i}{i!} \right|^{\frac{1}{i}}  \right\|_{\underline{L}^{\frac{nq(q +\delta)}{\delta}} \left( B_{r} \right)}^{n-h}
\\ 
\leq C
\left(\sum_{h=1}^{n-1} \left\| \nabla \xi_{h} \right\|_{\underline{L}^{q+\delta} \left( B_{R} \right)}^{\frac{n+1}{h+1}} + 
\left\| \nabla \xi_{0}  \right\|_{\underline{L}^{\frac{nq(q +\delta)}{\delta}} \left( B_{R} \right)}^{n+1} +
 \sum_{i=1}^{n-1}  \left\| \nabla \frac{w_i}{i!}  \right\|_{\underline{L}^{\frac{nq(q +\delta)}{\delta}} \left( B_{R} \right)}^{\frac{n+1}{i}}\right),
\end{multline}
Let $\delta_0$ be the Meyers exponent  corresponding $\Lambda$. Let 
\begin{equation*} 
q_h := q +  \frac12 \delta_0 + \frac12 \frac{n+1-h}{n+1} \delta_0 
\quad \mbox{and} \quad 
q :=  \frac{16}{\delta_0} n(n+1) . 
\end{equation*}
Set also $R_h := \frac12 (1+2^{-h}) R$.  With this notation the previous display yields, by H\"older's inequality, that 
\begin{equation*} 
\left\| \mathbf{E}_{m} \right\|_{\underline{L}^{q_m} \left( B_{R_m} \right)}   
\leq C
\left(\sum_{h=1}^{m-1} \left\| \nabla \xi_{h} \right\|_{\underline{L}^{q_h} \left( B_{R_h} \right)}^{\frac{n+1}{h+1}} + 
\left\| \nabla \xi_{0}  \right\|_{\underline{L}^{q} \left( B_{R} \right)}^{n+1} +
 \sum_{i=1}^{n-1}  \left\| \nabla \frac{w_i}{i!}  \right\|_{\underline{L}^{q} \left( B_{R} \right)}^{\frac{n+1}{i}}\right),
\end{equation*}
Now~\eqref{e.EnL2} follows by the Caccioppoli estimate, concluding the proof.
\end{proof}

\section{Regularity for constant coefficient linearized equations} 
\label{s.appendixconstant}

In this appendix we prove a lemma tracking down the regularity of a solution $(\overline{w}_1,\ldots,\overline{w}_n)$ of the linearized system in the case that~$\overline{L}$ is a smooth, constant-coefficient Lagrangian.

\smallskip

Throughout we fix $n \in \N_0$, $\Lambda \in [1,\infty)$, $\beta \in (0,1)$, and assume that $\overline{L}$ satisfies 
\begin{equation} \label{e.appC.barL1}
I_d \leq D_p^2 \overline{L} \leq \Lambda I_d 
\end{equation}
and for all $\mathsf{M}_0 \in [1,\infty)$ there is $C(\mathsf{M}_0,\beta,d)<\infty$ such that 
\begin{equation} \label{e.appC.barL2}
\left\| D^2 \overline{L} \right\|_{C^{n,\beta}(B_{ \mathsf{M}_0 })}  \leq  C. 
\end{equation}

\smallskip

\begin{lemma}[{Regularity of $\overline{w}_{m}$}]
\label{l.appC.C1alphabarwn}
Let $\eta \in [\tfrac12,1)$, $\mathsf{M} \in [1,\infty)$ and $R \in (0,\infty)$. Assume that $\overline{L}$ satisfies~\eqref{e.appC.barL1} and~\eqref{e.appC.barL2}. Let $\overline{u},\overline{w}_1,\ldots,\overline{w}_n$ solve the equations, for $m \in \{1,\ldots,n+1\}$,
\begin{equation}
\label{e.appC.eqs}
\left\{ 
\begin{aligned}
& -\nabla \cdot  \left( D_p\overline{L}\left( \nabla \overline{u} \right) \right) =  0 & \mbox{in} & \ B_{R}, \\
& -\nabla \cdot  \left( D^2_p\overline{L}\left( \nabla \overline{u}\right) \nabla  \overline{w}_m \right) = \nabla \cdot \left( \overline{\mathbf{F}}_m(\nabla \overline{u},\nabla \overline{w}_1,\ldots,\nabla \overline{w}_{m-1})\right) & \mbox{in} & \ B_{R},
\end{aligned} 
\right. 
\end{equation}
where $\overline{\mathbf{F}}_m$ has been defined in~\eqref{e.defbarFm} and $\overline{u}$ satisfies 
\begin{equation}  \label{e.appC.norm1pre}
\frac1R  \left\|  \overline{u} - ( \overline{u})_{B_{R}} \right\|_{\underline{L}^2\left( B_{R} \right)} 
 \leq  \mathsf{M}.
\end{equation}
Then, for $m \in \{1,\ldots,n+1\}$, there exists a constant $C(m,\eta,\mathsf{M},\beta,d,\Lambda)<\infty$ such that
 \begin{equation} \label{e.appC.wres1} 
\left\| \nabla \overline{w}_m  \right\|_{L^\infty(B_{\eta R})} 
\leq 
C  \sum_{i=1}^m \left( \frac1R \left\|  \overline{w}_{i} - \left( \overline{w}_{i} \right)_{B_R}   \right\|_{\underline{L}^2(B_R)} \right)^{\frac mi} 
\end{equation}
Moreover, letting  
\begin{equation}  \label{e.deltacondonbaru}
\delta\in (0,\infty) \cap  \left[  \left( \frac{1}{R} \inf_{\ell \in \mathcal{P}_1} \left\|  \bar{u} - \ell  \right\|_{L^2(B_{R})} \right)^\beta, \infty \right), 
\end{equation}
we have, for $m \in \{1,\ldots,n+1\}$, that 
\begin{align} \label{e.appC.C1alphabarw}
\lefteqn{  R^\beta \left[ \nabla \overline{w}_{m} \right]_{C^{0,\beta}(B_{\eta R})}} \quad & 
\\ \notag 
& \leq 
C \delta \sum_{i=1}^m \left( \frac1{\delta R} \inf_{\ell \in \mathcal{P}_1} \left\|  \overline{w}_{i}  - \ell \right\|_{\underline{L}^2(B_R)} \right)^{\frac mi}  
+  C \delta \sum_{i=1}^{m-1} \left( \frac1R \left\|  \overline{w}_{i} - \left( \overline{w}_{i} \right)_{B_R}   \right\|_{\underline{L}^2(B_R)} \right)^{\frac mi}   
\\ \notag & \quad  
+  C \left( \frac{1}{R} \inf_{\ell \in \mathcal{P}_1} \left\|  \bar{u} - \ell \right\|_{\underline{L}^2(B_R)} \right)^\beta \sum_{i=1}^{m} \left( \frac1R \left\|  \overline{w}_{i} - \left( \overline{w}_{i} \right)_{B_R}   \right\|_{\underline{L}^2(B_R)} \right)^{\frac mi}  .
\end{align}
and, for $m \in \{1,\ldots,n\}$ and $k \in \{1,\ldots,n+1-m\}$, 
\begin{align} \label{e.appC.C1alphabarw2}
\lefteqn{ R^{k}  \left\| \nabla^{k+1} \overline{w}_{m} \right\|_{L^\infty(B_{\eta R})} +   R^{k +\beta} \left[ \nabla^{k+1} \overline{w}_{m} \right]_{C^{0,\beta}(B_{\eta R})}} \quad & 
\\ \notag 
& \leq 
C \delta \sum_{i=1}^m \left( \frac1{\delta R} \inf_{\ell \in \mathcal{P}_1} \left\|  \overline{w}_{i}  - \ell \right\|_{\underline{L}^2(B_R)} \right)^{\frac mi}  
+  C \delta \sum_{i=1}^{m-1} \left( \frac1R \left\|  \overline{w}_{i} - \left( \overline{w}_{i} \right)_{B_R}   \right\|_{\underline{L}^2(B_R)} \right)^{\frac mi}   
\\ \notag & \quad  
+  C \left( \frac{1}{R} \inf_{\ell \in \mathcal{P}_1} \left\|  \bar{u} - \ell \right\|_{\underline{L}^2(B_R)} \right)^\beta \sum_{i=1}^{m} \left( \frac1R \left\|  \overline{w}_{i} - \left( \overline{w}_{i} \right)_{B_R}   \right\|_{\underline{L}^2(B_R)} \right)^{\frac mi}  .
\end{align}
\end{lemma}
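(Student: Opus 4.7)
The plan is to prove \eqref{e.appC.wres1} and \eqref{e.appC.C1alphabarw}--\eqref{e.appC.C1alphabarw2} simultaneously by induction on $m \in \{1,\ldots,n+1\}$, at each step applying the Calder\'on--Zygmund estimate (Proposition~\ref{p.gradientLq}) and the interior Schauder estimate (Proposition~\ref{p.schauder}) to the linear equation for $\bar{w}_m$, whose coefficient is $\mathbf{a}(x) := D_p^2 \overline{L}(\nabla \bar{u}(x))$ and whose right-hand side is the divergence of $\overline{\mathbf{F}}_m(\nabla \bar{u}, \nabla \bar{w}_1,\ldots, \nabla \bar{w}_{m-1})$. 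The starting point is a quantitative $C^{n+2,\beta}$ estimate for $\bar{u}$ itself.

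First I would establish such regularity of $\bar{u}$ by the classical Hilbert-19th-problem bootstrap. By \eqref{e.appC.barL1}, $\bar{u}$ solves a uniformly convex variational equation; De Giorgi--Nash yields $\nabla \bar{u} \in C^{0,\alpha}_{\mathrm{loc}}$, and then, viewing each $\partial_i \bar{u}$ as a solution of a linear equation with coefficient $D_p^2 \overline{L}(\nabla \bar{u}) \in C^{0,\alpha}_{\mathrm{loc}}$ and zero right-hand side, Schauder bootstrapping using the $C^{n,\beta}$ regularity \eqref{e.appC.barL2} of $D_p^2 \overline{L}$ yields $\bar{u} \in C^{n+2,\beta}_{\mathrm{loc}}$. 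Quantitatively, combining with \eqref{e.appC.norm1pre} and a Caccioppoli/rescaling argument, one obtains, for $\eta \in [\tfrac12,1)$ and $k \in \{2,\ldots,n+2\}$, constants $C(\eta,\mathsf{M},\beta,d,\Lambda,k)$ such that $\|\nabla \bar{u}\|_{L^\infty(B_{\eta R})} \leq C$, $R^{k-1}\|\nabla^k \bar{u}\|_{L^\infty(B_{\eta R})} + R^{k-1+\beta}[\nabla^k \bar{u}]_{C^{0,\beta}(B_{\eta R})} \leq C$, and, crucially for the last line of \eqref{e.appC.C1alphabarw},
\begin{equation*}
R^\beta [\nabla \bar{u}]_{C^{0,\beta}(B_{\eta R})} \,\leq\, C\,\frac{1}{R}\inf_{\ell \in \mathcal{P}_1} \|\bar{u} - \ell\|_{\underline{L}^2(B_R)}.
\end{equation*}

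With this in hand, the induction on $m$ proceeds as follows. Assume \eqref{e.appC.wres1}--\eqref{e.appC.C1alphabarw2} have been proven for every $\bar{w}_i$ with $i<m$. The coefficient $\mathbf{a}$ is uniformly elliptic and $C^{0,\beta}$, with $R^\beta [\mathbf{a}]_{C^{0,\beta}} \lesssim R^\beta [\nabla \bar{u}]_{C^{0,\beta}} \lesssim R^{-1}\inf_{\ell} \|\bar{u}-\ell\|_{\underline{L}^2(B_R)}$; this is exactly the source of the final line in \eqref{e.appC.C1alphabarw}. The right-hand side $\overline{\mathbf{F}}_m$ has the explicit multilinear structure of \eqref{e.Fmalt}: a sum of terms $D_p^{j+1}\overline{L}(\nabla \bar{u})\, \bigotimes_{\ell=1}^j \nabla \bar{w}_{i_\ell}/i_\ell!$ with $i_1+\cdots+i_j = m$. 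The elementary bound \eqref{e.Fmbasic}, combined with the $L^\infty$ induction hypothesis on $\nabla \bar{w}_i$ for $i<m$, gives a pointwise control $|\overline{\mathbf{F}}_m| \leq C\sum_{i=1}^{m-1}|\nabla \bar{w}_i|^{m/i}$, hence an $L^\infty$ bound with the right-hand side of \eqref{e.appC.wres1}. Feeding this together with the Caccioppoli $L^2$-bound for $\nabla \bar{w}_m$ into Proposition~\ref{p.gradientLq} (for $L^q$ integrability with arbitrary $q$) and then into Proposition~\ref{p.schauder} (for the $L^\infty$ and $C^{0,\beta}$ bounds) delivers \eqref{e.appC.wres1} and \eqref{e.appC.C1alphabarw}. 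The free parameter $\delta$ enters when one controls $[\overline{\mathbf{F}}_m]_{C^{0,\beta}}$: the chain rule produces one factor $[\nabla \bar{u}]_{C^{0,\beta}}$ against a product of $\nabla \bar{w}_i$ terms, and another factor $[\nabla \bar{w}_i]_{C^{0,\beta}}$ against the remaining gradients. Young's inequality applied to these products, with $\delta$ balancing the two sides, produces precisely the first two sums in \eqref{e.appC.C1alphabarw}; the lower bound \eqref{e.deltacondonbaru} ensures that the contribution from $[\nabla \bar{u}]_{C^{0,\beta}}$ itself can be absorbed into $\delta$.

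The higher-derivative estimate \eqref{e.appC.C1alphabarw2} is then obtained by differentiating the equation for $\bar{w}_m$ in $x$ up to $k$ times ($k \leq n+1-m$) and applying the already-proven $C^{1,\beta}$-type estimate to the differentiated equation. Each spatial derivative hitting $\mathbf{a}$ or $\overline{\mathbf{F}}_m$ produces new source terms polynomial in $\nabla^j \bar{u}$ ($2 \leq j \leq k+2$, controlled by the first step) and in $\nabla^{j'} \bar{w}_i$ ($i<m$, $j' \leq k+1$), the latter controlled by the induction hypothesis invoked at derivative order $k + (m-i) \leq n+1-i$, which is permissible thanks to the constraint $k \leq n+1-m$. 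The principal obstacle will be the bookkeeping: one must verify that after repeated differentiation, chain-rule expansions, and Young-inequality rearrangements, the bounds retain the precise homogeneity $(R^{-1}\|\bar{w}_i - (\bar{w}_i)_{B_R}\|_{\underline{L}^2(B_R)})^{m/i}$ implicit in \eqref{e.Fmalt}, with the free parameter $\delta$ consistently tuned throughout the induction. This bookkeeping is made tractable by the multilinear polarization formula of Lemma~\ref{l.polarization} together with the elementary structural bounds \eqref{e.Fmbasic}--\eqref{e.Fmbasicgradh}, whose precise form is exactly what dictates the mixed-exponent shape of the right-hand sides in \eqref{e.appC.C1alphabarw} and \eqref{e.appC.C1alphabarw2}.
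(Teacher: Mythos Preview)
Your overall induction scheme on $m$, the regularity bootstrap for $\bar u$, and the use of the structural bounds on $\overline{\mathbf F}_m$ are all on the right track, and your approach to \eqref{e.appC.wres1} via Proposition~\ref{p.schauder} works. There is, however, a real gap in your derivation of the sharper estimate \eqref{e.appC.C1alphabarw}.

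Proposition~\ref{p.schauder} as stated yields
\[
R^\beta[\nabla \bar w_m]_{C^{0,\beta}}
\;\le\; C\bigl(1+[\mathbf a]_{C^{0,\beta}}^{\,1+d/(2\beta)}\bigr)\,\|\nabla \bar w_m\|_{\underline L^2}
\;+\; C\,R^\beta[\overline{\mathbf F}_m]_{C^{0,\beta}} .
\]
The factor in front of $\|\nabla\bar w_m\|_{\underline L^2}$ is $C(1+\cdots)$, not $C[\mathbf a]_{C^{0,\beta}}$; so after Caccioppoli you end up with an undecorated term $\tfrac{C}{R}\|\bar w_m-(\bar w_m)_{B_R}\|_{\underline L^2(B_R)}$ on the right. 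In \eqref{e.appC.C1alphabarw} that quantity appears only multiplied by the small factor $(\tfrac1R\inf_\ell\|\bar u-\ell\|)^\beta$; the term that appears without a small prefactor is the affine--subtracted one $\tfrac1R\inf_\ell\|\bar w_m-\ell\|$. Already for $m=1$ (where $\overline{\mathbf F}_1=0$) this difference is visible: if $\bar u$ is affine (so $\mathbf a$ is constant) and $\bar w_1$ is itself affine, the right side of \eqref{e.appC.C1alphabarw} vanishes, while your bound gives $\tfrac{C}{R}\|\bar w_1-(\bar w_1)_{B_R}\|\neq 0$. The smallness of $[\mathbf a]_{C^{0,\beta}}$ does not help here because Proposition~\ref{p.schauder} has the additive ``$1$'' in its prefactor.

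The paper handles this by not invoking Proposition~\ref{p.schauder} as a black box but by running the Campanato iteration explicitly: at each scale $r$ and center $y$ it freezes $\mathbf b(y)$, subtracts the constant $\overline{\mathbf f}_{m,y,r}$, and compares $\bar w_m$ with the $\mathbf b(y)$--harmonic function sharing its boundary data (Steps~4--7). The freezing error is $Cr\|\nabla^2\bar u\|_{L^\infty}\|\nabla\bar w_m\|$, which carries the smallness of $[\mathbf a]$ intrinsically; combined with the refined Caccioppoli estimate for $\bar w_m-\ell$ (equation \eqref{e.appC.cacc}) this produces exactly the split between $\inf_\ell\|\bar w_m-\ell\|$ and $(\inf_\ell\|\bar u-\ell\|)^\beta\|\bar w_m-(\bar w_m)\|$. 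If you want to keep your black--box strategy, the missing step is to apply Proposition~\ref{p.schauder} not to $\bar w_m$ but to $\bar w_m-\ell$, where $\ell$ realizes $\inf_{\ell\in\mathcal P_1}\|\bar w_m-\ell\|_{\underline L^2(B_R)}$, absorbing $(\mathbf a-\mathbf a(0))\nabla\ell$ and $\overline{\mathbf F}_m-\overline{\mathbf F}_m(0)$ into the source term; then $\|\nabla(\bar w_m-\ell)\|_{\underline L^2}$, $[\mathbf a]_{C^{0,\beta}}|\nabla\ell|$, and $[\overline{\mathbf F}_m]_{C^{0,\beta}}$ appear on the right with exactly the weights needed for \eqref{e.appC.C1alphabarw}. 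Without this subtraction the argument does not close.
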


Notice that by~\eqref{e.appC.norm1pre} we may always take $\delta =  \mathsf{M}^{\beta}$ in~\eqref{e.deltacondonbaru}. When applying the result in practice, we typically take $\delta$ to be very small.

\begin{proof}
Fix $m \in \{1,\ldots,n+1\}$, $\eta \in [\tfrac12,1)$, $\mathsf{M} \in [1,\infty)$. Let $\overline{u},\overline{w}_1,\ldots,\overline{w}_n$ solve~\eqref{e.appC.eqs} and assume~\eqref{e.appC.norm1pre}. 
Fix also $\delta$ as in~\eqref{e.deltacondonbaru}. 

\smallskip

Throughout the proof we denote, for~$\theta \in (0,\infty)$,
\begin{align*} 
\mathsf{E}_m ^{(\theta)} & :=  
 \theta \sum_{i=1}^m \left( \frac1{\theta R} \inf_{\ell \in \mathcal{P}_1} \left\|  \overline{w}_{i}  - \ell \right\|_{\underline{L}^2(B_R)} \right)^{\frac mi}  
+  \theta \sum_{i=1}^{m-1} \left( \frac1R \left\|  \overline{w}_{i} - \left( \overline{w}_{i} \right)_{B_R}   \right\|_{\underline{L}^2(B_R)} \right)^{\frac mi}   
\\ \notag & \quad  
+  \left( \frac{1}{R} \inf_{\ell \in \mathcal{P}_1} \left\|  \bar{u} - \ell \right\|_{\underline{L}^2(B_R)} \right)^\beta \sum_{i=1}^{m} \left( \frac1R \left\|  \overline{w}_{i} - \left( \overline{w}_{i} \right)_{B_R}   \right\|_{\underline{L}^2(B_R)} \right)^{\frac mi} .
\end{align*}
We also denote
\begin{equation*} 
\overline{\f}_{m} :=  \overline{\mathbf{F}}_{m}( \nabla \bar{u},\nabla \overline{w}_1,\ldots,\nabla \overline{w}_{m-1})
\end{equation*}
and, for $j \in \N$, 
\begin{equation*} 
R_j := (\eta + 2^{-j}(1-\eta))R  \quad \mbox{and} \quad  r_j :=  2^{-j-8}(1-\eta) R   . 
\end{equation*}
Below we denote by $C$ a constant depending only on parameters $(m,\eta,\mathsf{M},\beta,d,\Lambda)$. It may change from line to line.

\smallskip

\emph{Step 1}.
Basic properties of $\overline{u}$.  In view of~\cite[Proposition A.1]{AFK}, assumption~\eqref{e.appC.barL1} and normalization~\eqref{e.appC.norm1pre} imply that there exists a constant $\mathsf{M_0}(\eta,\mathsf{M},d,\Lambda)<\infty$ such that
\begin{equation}  \label{e.appC.norm1}
 \left\|  \nabla \overline{u}  \right\|_{L^\infty\left( B_{\frac13 (2+\eta)R} \right)}  
  \leq \mathsf{M_0} .
\end{equation}
Therefore,~\eqref{e.appC.barL2} is applicable in $B_{\frac13 (2+\eta)R}$, and we obtain by~\cite[Proposition A.1]{AFK} that
\begin{equation}  \label{e.appC.norm3}
 R^2 \left\|  \nabla^2 \overline{u}  \right\|_{L^\infty\left( B_{\frac12(1+\eta)R} \right)} 
 \leq 
 C \inf_{\ell \in \mathcal{P}_1} \left\|  \bar{u} - \ell  \right\|_{L^2(B_{R})}.
\end{equation}
We also define
\begin{equation} \label{e.appC.b}
\b(x) := D_p^2 \overline{L} \left( \nabla \bar u \right).
\end{equation}
We have by~\eqref{e.appC.norm3} and~\eqref{e.appC.norm1pre} that
\begin{equation} \label{e.appC.bbound}
I_d \leq \b(x)  \leq \Lambda I_d   
\quad \mbox{and} \quad 
R \left\| \nabla \b \right\|_{L^{\infty}(B_{\frac12 (1+\eta)R})} \leq  \frac{C}{R} \inf_{\ell \in \mathcal{P}_1} \left\|  \bar{u} - \ell  \right\|_{L^2(B_{R})} \leq C \delta^{1/\beta}.
\end{equation}
Notice also that, by~\eqref{e.appC.bbound}, we have
\begin{equation}  \label{e.appCHm1bnd}
\mathsf{E}_m ^{(1)} \leq C \sum_{i=1}^m \left( \frac1R \left\|  \overline{w}_{i} - \left( \overline{w}_{i} \right)_{B_R}   \right\|_{\underline{L}^2(B_R)} \right)^{\frac mi} .
\end{equation}

\smallskip

\emph{Step 2}. Induction assumption on degree $m$.  We assume inductively that, for $j \in \{1,\ldots,m-1\}$ there exists a constant $\mathsf{K}_{j}(\eta,\mathsf{M},\mathsf{K}_{0},d,\Lambda)<\infty$ such that 
 \begin{equation}  \label{e.appC.inductionnabla2w}
R_j^\beta \left[ \nabla \overline{w}_{j} \right]_{C^{0,\beta}(B_{R_j})}  \leq  \mathsf{K}_{j} \mathsf{E}_{j}^{(\delta)} 
\end{equation}
 and 
 \begin{equation} \label{e.appC.inductionnablaw} 
\left\| \nabla \overline{w}_{j} \right\|_{\underline{L}^\infty(B_{R_{j}})} 
\leq 
\mathsf{K}_{j} \sum_{i=1}^j \left( \frac1R \left\|  \overline{w}_{i} - \left( \overline{w}_{i} \right)_{B_R}   \right\|_{\underline{L}^2(B_R)} \right)^{\frac ji} .
\end{equation}
Notice that the case $m=2$ has been established in~\cite[Proposition A.1]{AFK}.  

\smallskip

Throughout the next steps of the proof, we let constants $C$ depend on parameters $(\{\mathsf{K}_{i}\}_{i=1}^{m-1}, m,\eta,\mathsf{M},\beta,\mathsf{K}_{0},d,\Lambda)$, and they may change from line to line. 

\smallskip

\emph{Step 3}. Bounds on $\f$. We show that under induction assumptions~\eqref{e.appC.inductionnabla2w} and~\eqref{e.appC.inductionnablaw}, we have that 
\begin{align}  \label{e.appC.fm}
 \left\| \overline{\f}_{m} \right\|_{L^\infty(B_{R_{m-1}})} 
 & 
\leq C \sum_{i=1}^{m-1}  \left( \frac1R \left\|  \overline{w}_{i} - \left( \overline{w}_{i} \right)_{B_R}   \right\|_{\underline{L}^2(B_R)} \right)^{\frac mi}   
\end{align}
and, for $r \in (0,r_m]$ and $y \in B_{R_m}$, defining 
\begin{equation}  \label{e.appC.fmyr}
\overline{\f}_{m,y,r} := \overline{\mathbf{F}}_m \left(  (\nabla \overline{u} )_{B_r(y)} ,  (\nabla \overline{w}_1 )_{B_r(y)},\ldots, (\nabla \overline{w}_{m-1} )_{B_r(y)}\right)    ,
\end{equation}
we have that
\begin{equation}  \label{e.appC.fm2}
\left\|  \overline{\f}_{m} - \overline{\f}_{m,y,r} \right\|_{L^\infty(B_r(y))} \leq C \left(\frac{r}{R}\right)^\beta \mathsf{E}_m ^{(\delta)} .
\end{equation}

\smallskip

To show~\eqref{e.appC.fm}, we have by~\eqref{e.appC.inductionnablaw} that 
\begin{align}  \label{e.appC.fmpre}
  \sum_{i=1}^{m-1} \left\| \nabla \overline{w}_{i} \right\|_{L^\infty(B_{R_{m-1}})}^{\frac mi}   
 & 
\leq  
 C \sum_{i=1}^{m-1} \mathsf{K}_{i}^m  \sum_{j=1}^{i} \left( \frac1R \left\|  \overline{w}_{j} - \left( \overline{w}_{j} \right)_{B_R}   \right\|_{\underline{L}^2(B_R)} \right)^{\frac{m}{j}}  
\\ \notag &
\leq  
C 
\sum_{i=1}^{m-1}  \left( \frac1R \left\|  \overline{w}_{i} - \left( \overline{w}_{i} \right)_{B_R}   \right\|_{\underline{L}^2(B_R)} \right)^{\frac mi}  ,
\end{align}
which yields~\eqref{e.appC.fm} by~\eqref{e.Fmbasic}. 

\smallskip

To show~\eqref{e.appC.fm2}, using H\"older regularity of $\overline{\f}_{m}$ with respect to $\nabla \overline{u}$ variable, similarly to~\eqref{e.Fmbasic3}, gives us
\begin{align*} 
\left| \overline{\f}_{m} -  \overline{\f}_{m,r,y} \right| 
& 
\leq 
C  \left( r^\beta   \left\| \nabla^2 \overline{u} \right\|_{L^\infty (B_{r}(y)) }^\beta  +\delta \left(\frac rR\right)^\beta  \right)\sum_{i=1}^{m-1} \left( \left| \nabla \overline{w}_{j} \right| + \left| (\nabla \overline{w}_{j})_{B_r(y)} \right| \right) ^{\frac{m}{i}}  
\\ \notag & \quad
+  C \delta \left(\frac rR\right)^\beta \sum_{i=1}^{m-1} \left( \delta^{-1} \left(\frac rR\right)^{-\beta}  \left| \nabla \overline{w}_{i} - (\nabla \overline{w}_{i})_{B_r(y)}\right| \right)^{\frac{m}{i}}.     
\end{align*}
Applying~\eqref{e.appC.inductionnabla2w} and~\eqref{e.deltacondonbaru} yields that 
\begin{equation*} 
\delta \left(  \delta^{-1} \left(\frac rR\right)^{-\beta}  \left| \nabla \overline{w}_{i} - (\nabla \overline{w}_{i})_{B_r(y)}\right| \right)^{\frac mi}   \leq 
 C  \delta \left( \delta^{-1} \mathsf{E}_i ^{(\delta)} \right)^{\frac mi}   \leq 
  C  \mathsf{E}_m ^{(\delta)} .
\end{equation*}
Thus~\eqref{e.appC.fm2} follows by~\eqref{e.appC.bbound} and~\eqref{e.appC.fmpre}.

\smallskip

\emph{Step 4}. Caccioppoli estimate. We show that under induction assumptions~\eqref{e.appC.inductionnabla2w} and~\eqref{e.appC.inductionnablaw}, we have that, for all $y \in B_{R_m + r_m}$ and $r \in (0,r_m]$,
\begin{multline}  \label{e.appC.cacc}
\left\|  \nabla \overline{w}_{m} - \nabla \ell  \right\|_{\underline{L}^2 \left( B_{r/2}(y) \right)} \leq \frac{C}{r} \left\|  \overline{w}_{m} - \ell  \right\|_{\underline{L}^2 \left( B_{r}(y) \right)}   
\\
+  C r \left\| \nabla^2 \overline{u} \right\|_{L^\infty (B_{r}(y))  } ( \left| \nabla \ell \right|  \wedge  \left\|  \nabla \overline{w}_{m}   \right\|_{\underline{L}^2 \left( B_{r}(y) \right)}  )
+ 
C \left(\frac{r}{R}\right)^\beta \mathsf{E}_m ^{(\delta)}  .
\end{multline}
Since $\overline{w}_{m}$ solves the equations, for $\f_{m,y,r}$ defined in~\eqref{e.appC.fmyr} and any affine function $\ell$,
\begin{equation*} 
-\nabla \cdot  \left( \b  \nabla ( \overline{w}_{m} -\ell) \right) = \nabla \cdot \left(( \b - \b(y))  \nabla  \ell + \mathbf{f}_{m}  -  \mathbf{f}_{m,y,r} \right),
\end{equation*}
and
\begin{equation*} 
-\nabla \cdot  \left( \b(y)  \nabla  ( \overline{w}_{m} - \ell) \right) = \nabla \cdot \left( (\b - \b(y)) \nabla  \overline{w}_{m} + \mathbf{f}_{m} - \mathbf{f}_{m,y,r} \right),
\end{equation*}
we obtain~\eqref{e.appC.cacc} simply by testing and~\eqref{e.appC.fm2}.

\smallskip

\emph{Step 5.}
Induction assumption on the scale. We now assume that we find $\ep \in (0,1]$, a constant $\mathsf{C}_\ep$, and $r^* \in (0, \ep r_m]$ such that
\begin{equation}  \label{e.appC.indscale}
\sup_{y \in B_{R_m}}\sup_{t \in [r^*, \ep r_m]}\left\|  \nabla \overline{w}_{m}   \right\|_{\underline{L}^2 \left( B_{t} (y) \right)} \leq \mathsf{C}_\ep \sum_{i=1}^m \left( \frac1R \left\|  \overline{w}_{i} - \left( \overline{w}_{i} \right)_{B_R}   \right\|_{\underline{L}^2(B_R)} \right)^{\frac mi} .
\end{equation}
Notice that, by the Caccioppoli estimate~\eqref{e.appC.cacc} and~\eqref{e.appCHm1bnd}, we have, for any $\ep \in (0,1]$, that
\begin{equation}  \label{e.appC.indscaleinit}
\left\|  \nabla \overline{w}_{m} \right\|_{\underline{L}^2 \left( B_{\ep r_m}(y) \right)} \leq C_\ep  \sum_{i=1}^j \left( \frac1R \left\|  \overline{w}_{i} - \left( \overline{w}_{i} \right)_{B_R}   \right\|_{\underline{L}^2(B_R)} \right)^{\frac ji} ,
\end{equation}
implying that~\eqref{e.appC.indscale} is valid for $r^* = \ep r_m$ provided that $\mathsf{C}_\ep \geq C_\ep$.  
 
\smallskip

\emph{Step 6.} We verify that~\eqref{e.appC.inductionnablaw} is true for $j=m$. This gives us also~\eqref{e.appC.wres1}. Actually, we prove that if~\eqref{e.appC.indscale} is valid for some $r^* \in (0, \ep r_m]$, then it remains valid for $\tfrac12 r^*$ instead. This proves, by induction, that we may take any $r^* \in (0, \ep r_m]$ in~\eqref{e.appC.indscale}. In particular, we obtain~\eqref{e.appC.inductionnablaw} for $j=m$.

\smallskip

Fix $y \in B_{R_m}$.  Rewriting the equation of $ \overline{w}_{m}$ as before, for $r \in \left(0 ,r_m \right]$, 
\begin{equation*} 
-\nabla \cdot  \left( \b(y)  \nabla  \overline{w}_{m}  \right) = \nabla \cdot \left( (\b - \b(y)) \nabla  \overline{w}_{m} + \left(\overline{\f}_m- \overline{\f}_{m,y,r}   \right)  \right),
\end{equation*}
where $\f_{m,y,r}$ defined in~\eqref{e.appC.fmyr}, and consequently solving 
\begin{equation*}
\left\{ 
\begin{aligned}
& -\nabla \cdot \left( \b(y) \nabla \overline{w}_{m,y,r}\right) = 0  & \mbox{in} & \ B_{r}(y), \\
& \overline{w}_{m,y,r} = \overline{w}_{m}  & \mbox{on} & \ \partial B_{r}(y ),
\end{aligned} 
\right. 
\end{equation*}
we obtain by testing and~\eqref{e.appC.fm2} that 
\begin{equation*} 
\left\| \nabla \overline{w}_{m,y,r} - \nabla  \overline{w}_{m}  \right\|_{\underline{L}^2 \left( B_r(y)\right)} 
\\ 
\leq C  r   \left\| \nabla^2 \overline{u} \right\|_{L^\infty (B_{r}(y)) } \left\| \nabla   \overline{w}_{m}  \right\|_{\underline{L}^2 \left( B_r(y)\right)} 
+C \left(\frac{r}{R}\right)^\beta \mathsf{E}_m ^{(\delta)} .
\end{equation*}
In particular, we get by~\eqref{e.appC.indscale} for $r \in [r^*, \ep r_m]$ that        
\begin{equation*} 
\left\| \nabla \overline{w}_{m,y,r} - \nabla  \overline{w}_{m}  \right\|_{\underline{L}^2 \left( B_r(y)\right)} \leq C \left(\frac rR\right)^{\beta} \left( \mathsf{C}_\ep   \ep^{1-\beta}  + 1 \right) \mathsf{E}_m ^{(\delta)}. 
\end{equation*}
By decay estimate for harmonic functions we have for small enough $\theta(\beta,d,\Lambda) \in \left(0,\frac12\right]$ that 
\begin{equation*} 
\left\| \nabla \overline{w}_{m,y,r} - (\nabla \overline{w}_{m,y,r})_{B_{\theta r} (y) } \right\|_{\underline{L}^2 \left( B_{\theta r} (y) \right)} 
\leq 
\frac12 \theta^\beta \left\| \nabla \overline{w}_{m,y,r} - (\nabla \overline{w}_{m,y,r})_{B_{r} (y) }  \right\|_{\underline{L}^2 \left( B_{r} (y) \right)}.
\end{equation*}
 Therefore, by the triangle inequality, we get 
 \begin{multline} \notag 
\left\| \nabla \overline{w}_{m} - (\nabla \overline{w}_{m})_{B_{\theta r}(y)} \right\|_{\underline{L}^2 \left( B_{\theta r}(y) \right)} 
\\
 \leq 
\frac12 \theta^\beta \left\| \nabla \overline{w}_{m} - (\nabla \overline{w}_{m})_{B_{r}(y)}  \right\|_{\underline{L}^2 \left( B_{r}(y) \right)}
+ C \left(\frac rR\right)^{\beta} \left( \mathsf{C}_\ep   \ep^{1-\beta}  + 1 \right) \mathsf{E}_m ^{(\delta)}. 
\end{multline}
By an iteration argument we thus obtain that, for $r \in [\theta r^*, \ep r_m]$ 
\begin{multline}  \label{e.appC.almostthere}
\left(\frac {r}{\ep r_m}\right)^{-\beta}  \left\| \nabla \overline{w}_{m} - (\nabla \overline{w}_{m})_{B_{r}(y)} \right\|_{\underline{L}^2 \left( B_{r}(y)\right)}  
\\ \leq 
C \left\| \nabla \overline{w}_{m} - (\nabla \overline{w}_{m})_{B_{\ep r_m} (y)}  \right\|_{\underline{L}^2 \left( B_{\ep r_m}(y) \right)} 
+ C \left(  \ep \mathsf{C}_\ep    + 1 \right) \mathsf{E}_m ^{(\delta)}. 
\end{multline}
Letting $r \in [\theta r^*, \ep r_m]$ and $n\in \N_0$ be such that $r \in (\theta^{n+1} \ep r_m, \theta^n \ep r_m]$, we obtain by the triangle inequality that 
\begin{align} \notag 
\left\| \nabla \overline{w}_{m} \right\|_{\underline{L}^2 \left( B_{r}(y) \right)} & \leq C 
\left\| \nabla \overline{w}_{m} \right\|_{\underline{L}^2 \left( B_{\theta^n \ep r_m}(y) \right)} 
\\       \notag &
\leq 
C \left\| \nabla \overline{w}_{m} - (\nabla \overline{w}_{m})_{B_{\theta^{n} \ep r_m}(y)} \right\|_{\underline{L}^2 \left( B_{\theta^n \ep r_m}(y) \right)} 
\\      \notag & \quad
+C \left|  (\nabla \overline{w}_{m})_{B_{\ep r_m}(y)}  \right| 
+ C \sum_{i=1}^n \left| (\nabla \overline{w}_{m})_{B_{\theta^{i} \ep r_m}(y) }  - (\nabla \overline{w}_{m})_{B_{\theta^{i-1} \ep r_m}(y) }\right|.
\end{align}
Thus, by the previous two displays and~\eqref{e.appC.indscaleinit}, we obtain, for $r \in [\theta r^*, \ep r_m]$, that 
\begin{equation*} 
\left\| \nabla \overline{w}_{m} \right\|_{\underline{L}^2 \left( B_{r}(y) \right)} \leq \left(C_\ep + C \ep \mathsf{C}_\ep  \right)
\sum_{i=1}^m \left( \frac1R \left\|  \overline{w}_{i} - \left( \overline{w}_{i} \right)_{B_R}   \right\|_{\underline{L}^2(B_R)} \right)^{\frac mi} .
\end{equation*}
We first take $\ep$ so small that $C\ep = \frac12$ and then choose  $\mathsf{C}_\ep \geq 2C_\ep$. All in all, we have proved that 
\begin{equation*} 
\sup_{t \in [\theta r^*, \ep r_m]}\left\|  \nabla \overline{w}_{m}   \right\|_{\underline{L}^2 \left( B_{t} (y) \right)} \leq \mathsf{C}_\ep \sum_{i=1}^m \left( \frac1R \left\|  \overline{w}_{i} - \left( \overline{w}_{i} \right)_{B_R}   \right\|_{\underline{L}^2(B_R)} \right)^{\frac mi} ,
\end{equation*}
which implies that~\eqref{e.appC.indscale} is valid for $\frac12 r^*$ instead of $r^*$, which was to be shown. 

\smallskip

\emph{Step 7.} 
We now prove that~\eqref{e.appC.inductionnabla2w} is valid for $j=m$, giving also~\eqref{e.appC.C1alphabarw}. An application of the Caccioppoli estimate~\eqref{e.appC.cacc}, together with~\eqref{e.appC.wres1}, which was proved in Step 6 above, we have that, by giving up volume factors,
\begin{equation*} 
\left\| \nabla \overline{w}_{m} - (\nabla \overline{w}_{m})_{B_{\ep r_m} (y)}  \right\|_{\underline{L}^2 \left( B_{\ep r_m}(y) \right)}  
\leq C \mathsf{E}_m ^{(\delta)} .
\end{equation*}
Therefore,~\eqref{e.appC.almostthere} yields, for all $y \in B_{\eta R}$, that
\begin{equation*} 
\sup_{r \in (0,r_m)} \left(\frac rR\right)^{-\beta}  \left\| \nabla \overline{w}_{m} - (\nabla \overline{w}_{m})_{B_r(y)} \right\|_{\underline{L}^2 \left( B_{r}(y) \right) } \leq C \mathsf{E}_m ^{(\delta)}.
\end{equation*}
This yields, via telescoping summation as in Step 6, that
\begin{equation*} 
\left| \nabla \overline{w}_{m}(y)  - (\nabla \overline{w}_{m})_{B_r(y)} \right| \leq C \left(\frac rR\right)^{\beta}  \mathsf{E}_m ^{(\delta)}.
\end{equation*}
Thus, if, on the one hand, $r = |x-y| \in (0,r_m]$, we get by the above two displays that 
\begin{align} \notag 
\left| \nabla \overline{w}_{m}(y) -  \nabla \overline{w}_{m}(x)   \right|
& \leq 
\left| \nabla \overline{w}_{m}(y)  - (\nabla \overline{w}_{m})_{B_r(y)} \right| 
+\left| \nabla \overline{w}_{m}(x)  - (\nabla \overline{w}_{m})_{B_r(x)} \right|
\\ \notag &
\quad + C \left\| \nabla \overline{w}_{m} - (\nabla \overline{w}_{m})_{B_{2r}(y)} \right\|_{\underline{L}^2 \left( B_{4r}(y) \right) }
\\ \notag & 
\leq C \left(\frac rR\right)^{\beta}  \mathsf{E}_m ^{(\delta)}.
\end{align}
If, on the other hand $ |x-y|> r_m$, we get
\begin{equation*} 
\left| \nabla \overline{w}_{m}(y) -  \nabla \overline{w}_{m}(x)   \right| \leq C  \mathsf{E}_m ^{(\delta)}
\end{equation*}
by noticing that
\begin{equation*} 
\left|  (\nabla \overline{w}_{m})_{B_{r_m}(y)}   - (\nabla \overline{w}_{m})_{B_{r_m}(x)} \right| \leq C
\left\| \nabla \overline{w}_{m} - (\nabla \overline{w}_{m})_{B_{R_m + r_m }(y)} \right\|_{\underline{L}^2 \left( B_{R_m + r_m}(y) \right) } ,
\end{equation*}
and applying once more~\eqref{e.appC.cacc}. Thus we have proved~\eqref{e.appC.C1alphabarw}. 

\smallskip

\emph{Step 8.} 
We finally sketch the proof of~\eqref{e.appC.C1alphabarw2}. Since it is very similar to the above reasoning, we will omit most of the details. We prove the statement by using induction in $m$ and in $k$. First, we observe that by differentiation we see that $\partial_{x_j}^k \overline{u}$ satisfies the equation 
\begin{equation*} 
-\nabla \cdot (\b \nabla \partial_{x_j}^k u) = \nabla \cdot \mathbf{F}_k(\nabla \overline{u}, \nabla \partial_{x_j} \overline{u},\ldots, \nabla \partial_{x_j}^{k-1} \overline{u})  .
\end{equation*}
Thus we can apply~\eqref{e.appC.wres1} and~\eqref{e.appC.C1alphabarw2} for $\overline{w}_k = \partial_{x_j}^k u$ recursively and obtain, by polarization as in Lemma~\ref{l.polarization}, that, for every $k \in \{1,\ldots,n+1\}$ and $\eta \in \left[\tfrac12,1\right)$, there is a constant $C(k,\eta,\mathsf{M},\beta,d,\Lambda)$ such that 
\begin{equation}  \label{e.baruhigherreg}
R^{k}\left\| \nabla^{k+1} \overline{u} \right\|_{L^\infty(B_{(2+\eta)R/3})}
+
R^{k+\beta} \left[ \nabla^{k+1} \overline{u} \right]_{C^{0,\beta}(B_{(2+\eta)R/3})} \leq C \frac{1}{R} \inf_{\ell \in \mathcal{P}_1} \left\|  \bar{u} - \ell  \right\|_{L^2(B_{R})} .
\end{equation}

\smallskip

Next, $\overline{w}_1$ solves $-\nabla \cdot  \left( \b \nabla  \overline{w}_1 \right) = 0$ and $z \mapsto \b(z) = D^2_p\overline{L}(\nabla u(z))$ is in $C^{n,\beta}$ by~\eqref{e.baruhigherreg}, we may differentiate the equation at most $n$ times and obtain that $\overline{w}_1 \in C^{n+1,\beta}$ and it is also straightforward to show that $\overline{w}_1$ satisfies~\eqref{e.appC.C1alphabarw2} for $m=1$; this is just classical Schauder theory. 

\smallskip

We than assume that~\eqref{e.appC.C1alphabarw2} is valid for every $m \in \{1,\ldots,M\}$ and $k \in \{1,\ldots,n+1-m\}$  with some $M \in \{1,\ldots,n-1\}$. We then show that it continues to hold for $m = M+1$ and $k \in \{1,\ldots,n+M\}$  as well. Now 
\begin{equation*} 
-\nabla \cdot (\b \nabla \overline{w}_{M+1} ) = -\nabla \cdot \overline{F}_m(\nabla \overline{u}, \overline{w}_{1},\ldots, \overline{w}_{M}).
\end{equation*}
Recalling that $(h_1,\ldots,h_{m-1}) \mapsto \overline{F}_m(\nabla \overline{u},h_1,\ldots,h_{m-1})$ is a polynomial, using~\eqref{e.baruhigherreg} and~\eqref{e.appC.C1alphabarw2}  for $m \in \{1,\ldots,M\}$ and $k \in \{1,\ldots,n+1-m\}$, we can actually deduce that 
\begin{equation*} 
R^{k+\beta} \left[ \nabla^k \overline{F}_m(\nabla \overline{u}, \overline{w}_{1},\ldots, \overline{w}_{M}) \right]_{C^{0,\beta}(B_{(1+\eta)R/2})} \leq C \mathsf{E}_m ^{(\delta)} 
\end{equation*}
Therefore, using~\eqref{e.baruhigherreg} once more, we can differentiate the equation of $ \overline{w}_{M+1}$ $k$ times and then show that $ \overline{w}_{M+1}$ satisfies~\eqref{e.appC.C1alphabarw2}. The proof is complete. 
\end{proof}

\section{\texorpdfstring{$C^\infty$}{{C-infty}} regularity for smooth constant-coefficient Lagrangians} 
\label{a.constantcoeff}

In this section we give an alternative proof of the statement that $C^{1,1}$ regularity implies $C^\infty$ regularity for smooth, constant-coefficient Lagrangians. Our argument is similar to the classical argument by Schauder theory, but we keep track of the linearized equations to obtain a Taylor series with an explicit representation of the Taylor polynomials in terms of the linearized equations. We note that it is relatively simple to obtain real analyticity for solutions using this argument.

\begin{proposition} \label{p.cc}
Fix $\ep \in (0,\frac12 ]$, $\mathsf{M} \in [0,\infty)$, $\mathsf{N} \in \N$, $\sigma \in [2,\infty)$, and $R \in (0,\infty]$. Suppose that $\overline{L} \in C^{\mathsf{N}+2,1}(\R^d)$ is uniformly convex, that is, for all $\zeta,\xi \in \R^d$, 
\begin{equation*} 
|\zeta|^2  \leq  D^2 \overline{L}(\xi)\zeta \cdot \zeta \leq \Lambda |\zeta|^2 
\end{equation*}
Let $u \in H^1(B_R)$ solve
$\nabla \cdot D\overline{L}(\nabla u)  = 0$ such that $\left\| \nabla u \right\|_{\underline{L}^2(B_R)} \leq \mathsf{M}$.  Then there exist constants $C(\overline{L},\mathsf{M},\mathsf{N},\ep,\data)$ and polynomials $q_1, \ldots, q_{\mathsf{N+2}}$ such that $q_{m+1}$ is homogeneous polynomial of degree $m+1$ solving 
\begin{equation*} 
 -\nabla \cdot  \left( D^2_p\overline{L}\left( \nabla q_1 \right)   \nabla \frac{q_{m+1}}{m+1}  \right) = \nabla \cdot  \overline{\mathbf{F}}_{m} \left(\nabla q_1 , \nabla 
 \frac{q_{2}}{2} ,\ldots,  \nabla  \frac{q_{m}}{m} \right) \quad  \mbox{in } \R^d ,
\end{equation*}
and, for all $r \in \left(0,\frac R2\right]$,    
\begin{equation*} 
\left\| \nabla u - \nabla \sum_{j=1}^{\mathsf{N}+2} \frac{q_j}{j!} \  \right\|_{\underline{L}^p(B_{r})} \leq C \left( \frac{r}{R} \right)^{\mathsf{N} + 2-\ep}.
\end{equation*}
\end{proposition}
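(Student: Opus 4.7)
The strategy is classical De Giorgi--Nash--Schauder regularity bootstrap followed by identification of the Taylor coefficients of $u$ at the origin with solutions of the higher-order linearized system via a $t$-scaling argument. Modulo a constant we may assume $u(0)=0$. Starting from $u \in C^{1,\alpha}_{\mathrm{loc}}(B_R)$ for some $\alpha(d,\Lambda) \in (0,1)$---which follows from De Giorgi--Nash--Schauder (see \cite[Proposition~A.1]{AFK} or Proposition~\ref{p.schauder} applied with $\a:=D^2\overline{L}(\nabla u)$)---the differentiated PDE
\[
-\nabla\cdot \bigl( D^2\overline{L}(\nabla u)\, \nabla \partial_k u \bigr)=0
\]
is linear, uniformly elliptic, with $C^{0,\alpha}$ coefficient. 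Iterating Schauder's estimate $\mathsf{N}+1$ times, each step consuming one additional order of smoothness of $D^2\overline{L}$ (supplied by $\overline{L} \in C^{\mathsf{N}+2,1}$), produces $u \in C^{\mathsf{N}+2,\beta}_{\mathrm{loc}}(B_R)$ for every $\beta \in (0,1)$, with the scale-invariant bound
\[
\sum_{k=1}^{\mathsf{N}+2} R^{k}\, \|\nabla^k u\|_{L^\infty(B_{R/2})} + R^{\mathsf{N}+2+\beta}\, [\nabla^{\mathsf{N}+2} u]_{C^{0,\beta}(B_{R/2})} \leq C(\overline{L}, \mathsf{M}, \mathsf{N}, \beta, d, \Lambda)
\]
(obtained by applying the local estimate to the rescaled function $\tilde u(y) := 2u(Ry/2)/R$ on $B_1$).

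Next I would define the candidate Taylor polynomials $q_j(x) := \nabla^j u(0)\, x^{\otimes j}$ for $j \in \{1, \ldots, \mathsf{N}+2\}$ (each homogeneous of degree $j$) and $P(x) := \sum_{j=1}^{\mathsf{N}+2} q_j(x)/j!$. Taking $\beta := 1 - \ep$ above, Taylor's theorem applied to $\nabla u \in C^{\mathsf{N}+1,\,1-\ep}(B_{R/2})$ yields the pointwise estimate
\[
|\nabla u(x) - \nabla P(x)| \leq C R^{-(\mathsf{N}+2-\ep)}\, |x|^{\mathsf{N}+2-\ep}, \qquad x \in B_{R/2},
\]
from which the claimed $\underline{L}^p$ bound on $B_r$ follows by integration and absorbing a volume factor.

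The conceptual heart of the argument---and the only nontrivial step---is verifying that the $q_{m+1}$ satisfy the linearized equations in the statement. For this I would introduce $w(t,x) := u(tx)/t$ for small $t > 0$; by scale invariance of the PDE, $w(t,\cdot)$ still satisfies $\nabla_x \cdot D\overline{L}(\nabla_x w(t,x)) = 0$ in its domain of definition. By the first step and Taylor expansion in $t$, $w$ is $C^{\mathsf{N}+1}$ in $t$ at $t = 0$ with
\[
\nabla_x w(t,x) - \nabla q_1(x) = \sum_{j=1}^{\mathsf{N}+1} \frac{t^j}{(j+1)!}\, \nabla q_{j+1}(x) + o(t^{\mathsf{N}+1}).
\]
Expanding $D\overline{L}(\nabla_x w(t,x))$ in Taylor series around $\nabla q_1$ and recognizing the sum of $k \geq 2$ terms as precisely the generating function $\overline{\mathbf{G}}$ of Subsection~\ref{ss.assump} under the substitution $h_j := \nabla(q_{j+1}/(j+1))$, the definition~\eqref{e.defbarFm} of $\overline{\mathbf{F}}_m = \partial_t^m \overline{\mathbf{G}}|_{t=0}$ yields, for each $m \in \{1, \ldots, \mathsf{N}+1\}$,
\[
\partial_t^m\big|_{t=0}\! \bigl[D\overline{L}(\nabla_x w(t,x))\bigr] = D^2\overline{L}(\nabla q_1)\, \nabla\frac{q_{m+1}}{m+1} + \overline{\mathbf{F}}_m\!\bigl(\nabla q_1, \nabla\tfrac{q_2}{2}, \ldots, \nabla\tfrac{q_m}{m}\bigr).
\]
Applying $\partial_t^m|_{t=0}$ to the distributional identity $\nabla \cdot D\overline{L}(\nabla_x w(t,x)) = 0$ then gives exactly the desired linearized equation for $q_{m+1}/(m+1)$.

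The main obstacle is the combinatorial matching in the last step: recognizing $\overline{\mathbf{G}}$ as engineered precisely to encode the Taylor-in-$t$ expansion of $D\overline{L}(\nabla_x w(t,x)) - D^2\overline{L}(\nabla q_1)(\nabla_x w - \nabla q_1)$ under $h_j = \nabla(q_{j+1}/(j+1))$. Once this is observed, separating the $k=1$ term from the $k \geq 2$ terms in the expansion of $D\overline{L}$ delivers the linearized system automatically, and the rest reduces to standard Schauder bootstrap and Taylor's theorem.
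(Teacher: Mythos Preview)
Your argument is correct and takes a route genuinely different from the paper's. The paper does \emph{not} first establish $C^{\mathsf N+2,\beta}$ regularity and then read off Taylor polynomials. Instead it runs an excess-decay iteration in the degree: starting only from the $C^{1,1}$ estimate~\eqref{e.C11app}, it assumes inductively that $q_1,\dots,q_n$ have been built with $\bigl\|\nabla u-\nabla\sum_{j\leq n}q_j/j!\bigr\|_{\underline L^\sigma(B_r)}\leq C r^{\,n-\ep}$, writes the equation $-\nabla\cdot\ahom\nabla\xi_n=\nabla\cdot\overline{\mathbf E}_n$ for $\xi_n:=u-\sum_{j\leq n+1}q_j/j!$ with $\overline{\mathbf E}_n$ controlled via the linearization-error identity of Appendix~\ref{app.linerrors} and the induction hypothesis, and then invokes a harmonic-approximation lemma (Lemma~\ref{l.Aharmdecay}) to \emph{select} the free $\ahom$-harmonic part of $q_{n+1}$ so as to close the next order of decay. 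Regularity of $u$ is a byproduct, never an input.

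Your approach---classical Schauder bootstrap to $u\in C^{\mathsf N+2,1-\ep}$, then $q_j(x):=\nabla^ju(0)\,x^{\otimes j}$, then the scaling $w(t,x)=u(tx)/t$ together with the observation that $\overline{\mathbf G}$ is precisely the Taylor-in-$t$ nonlinearity---is more elementary in this constant-coefficient setting and gives a clean conceptual reason \emph{why} the Taylor coefficients of $u$ solve the linearized hierarchy. The paper's route is the one that survives in the heterogeneous case (Sections~\ref{s.reglineqs}--\ref{s.regularityhigher}), where no classical bootstrap is available and the analogues of the $q_j$ must be \emph{constructed} via harmonic approximation rather than read off from pointwise derivatives; Appendix~\ref{a.constantcoeff} is written to mirror that argument. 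One cosmetic slip: your scale-invariant bound should carry $R^{k-1}$ in front of $\|\nabla^k u\|_{L^\infty}$ and $R^{\mathsf N+1+\beta}$ in front of the H\"older seminorm, since $\nabla\tilde u(y)=\nabla u(Ry/2)$; with that correction the Taylor remainder gives exactly $|\nabla u(x)-\nabla P(x)|\leq C(|x|/R)^{\mathsf N+2-\ep}$ as needed.
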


\begin{proof}
Without loss of generality, we may take $R=1$. 
By $C^{1,1}$-estimates, see for example~\cite[Proposition A.1] {AFK}, we have that  
\begin{equation} \label{e.C11app}
 \left| \nabla u(0) \right| \leq C \mathsf{M} \quad \mbox{and} \quad \sup_{r \in (0,3/4)}r^{-1} \left\|  \nabla u - \nabla u(0) \right\|_{\underline{L}^{\infty} \left( B_{r} \right)} \leq C . 
\end{equation}
We set
\begin{equation*} 
q_0 = u(0) \quad \mbox{and} \quad q_1(x) = \nabla u(0) \cdot x. 
\end{equation*}
Assume then inductively that, for $m \in \{1,\ldots,n\}$, there exists homogeneous polynomials $q_m$ of degree $m$ such that, for every $\sigma \in [2,\infty)$ and $\ep \in (0,\tfrac12]$, there exists a constant $\mathsf{N}_{m,\sigma}(\ep,d,\Lambda)$ such that 
\begin{equation*} 
\left| \nabla^{m} q_{m}   \right| \leq \mathsf{N}_{m,2}, \qquad \sup_{r \in \left( 0, \frac {2m+1}{4 m} \right)} r^{-m+\ep} \left\| \nabla u - \nabla \sum_{j=1}^m \frac{q_j}{j!} \right\|_{\underline{L}^{\sigma}(B_r)} \leq \mathsf{N}_{m,\sigma}. 
\end{equation*}
and that, for $m \in \{1,\ldots,n-1\}$, $q_{m+1}$ satisfies the equation 
\begin{equation*} 
 -\nabla \cdot  \left( D^2_p\overline{L}\left( \nabla q_1 \right)   \nabla \frac{q_{m+1}}{m+1}  \right) = \nabla \cdot  \overline{\mathbf{F}}_{m} \left(\nabla q_1 , \nabla 
 \frac{q_{2}}{2} ,\ldots,  \nabla  \frac{q_{m}}{m} \right) \quad  \mbox{in } \R^d ,
\end{equation*}
Let us denote $\ahom:=  D^2_p\overline{L}\left( \nabla q_1 \right)$ and, for $m \in \{1,\ldots,n-1\}$, 
\begin{equation*} 
\overline{w}_{m} := \frac{q_{m+1}}{m+1} .
\end{equation*}

\smallskip

By homogeneity, we find a homogeneous polynomial $q_{n+1}$ of degree $n+1$ solving the equation  
\begin{equation*} 
 -\nabla \cdot  \left( D^2_p\overline{L}\left( \nabla q_1 \right) \nabla  \frac{q_{n+1}}{n+1} \right) 
 = 
 \nabla \cdot  \overline{\mathbf{F}}_{n} \left(\nabla q_1 , \nabla \frac{q_{2}}{2} ,\ldots,  \nabla  \frac{q_{n}}{n} \right) \quad  \mbox{in } \R^d  ,
\end{equation*}
Notice that there is a degree of freedom in the choice of $q_{n+1}$. Namely, the solution is unique up to an $\ahom$-harmonic polynomial of degree $n+1$. We will fix this shortly. To draw parallels between this appendix and Appendix~\ref{app.linerrors}, we set
\begin{equation*} 
\overline{w}_{n} := \frac{q_{n+1}}{n+1} \quad \mbox{and} \quad \xi_m := u - q_1 -  \sum_{k=1}^{m} \frac{\overline{w}_k}{k!} = u - \sum_{k=1}^{m+1} \frac{q_k}{k!}.
\end{equation*}
 Rewrite
\begin{align} \notag 
\lefteqn{
D_p \overline{L}\left( \nabla u \right) - D_p \overline{L}\left( \nabla q_1 \right)  
-
D^2 \overline{L}\left( \nabla q_1 \right) \nabla \xi_n
} \quad &
\\ \notag & 
=  
\sum_{m=1}^{n} \frac{1}{m!}   \left(D^2 \overline{L}\left( \nabla q_1 \right) \nabla \overline{w}_{m} 
+ \overline{\mathbf{F}}_m \left(\nabla q_1,\nabla \overline{w}_1,\ldots,\nabla \overline{w}_{m-1} \right) \right) 
+ \overline{\mathbf{E}}_{n}
\end{align} 
where
\begin{align} \notag 
\overline{\mathbf{E}}_{n} & :=  \sum_{m=2}^{n} \frac{1}{m!}   \left( D_p^{m+1} \overline{L}(\nabla q_1) (\nabla u - \nabla q_1)^{\otimes m} -   \overline{\mathbf{F}}_{m} \left(\nabla q_1 , \nabla \overline{w}_{1},\ldots,  \nabla  \overline{w}_{m-1} \right)  \right)
\\ \notag & 
\quad 
+ D_p \overline{L}(\nabla u) - \sum_{k=0}^{n+1} \frac1{k!} D_p^{k+1} \overline{L}(\nabla q_1) (\nabla u - \nabla q_1)^{\otimes k}   .
\end{align}
By the estimate in Appendix~\ref{app.linerrors}, we have that
\begin{align*} 
\left| \overline{\mathbf{E}}_{n} \right|
&
\leq
 C \sum_{h=0}^{n-1} \left| \nabla \xi_h  \right| \left( \left| \nabla \xi_0 \right| + \sum_{i=1}^{n-1} \left| \nabla \frac{\overline{w}_{i} }{i!} \right|^{\frac{1}{i}}  \right)^{n-h} 
 .
\end{align*}
Taking divergence gives us, by the equations of $u$, $ \overline{w}_{1},\ldots,  \overline{w}_{n} $, that 
\begin{equation*} 
-\nabla \cdot \ahom \nabla \xi_n  = \nabla \cdot \overline{\mathbf{E}}_{n}. 
\end{equation*}
Using the induction assumption we get that 
\begin{equation*} 
 \left\| \overline{\mathbf{E}}_{n}  \right\|_{\underline{L}^{\sigma}(B_r)} \leq C r^{n+1 - \ep}.
\end{equation*}
Now Lemma~\ref{l.Aharmdecay} below allows us to identify the homogeneous $\ahom$-harmonic polynomial part of $q_{n+1}$ of degree $n+1$ such that 
\begin{equation*} 
\sup_{r \in \left( 0, \frac {2(n+1)+1}{4(n+1)} \right)} r^{-(n+1)+\ep} \left\| \nabla u - \nabla \sum_{j=1}^{n+1} \frac{q_j}{j!} \right\|_{\underline{L}^{\sigma}(B_r)} \leq C 
\end{equation*}
and
\begin{equation*} 
\left| \nabla^{n+1} q_{n+1}   \right| \leq C.
\end{equation*}
This proves the induction step, and finishes the proof. 
\end{proof}

\begin{lemma} \label{l.Aharmdecay}
Let $n \in \N$ and let $\alpha \in (n,n+1)$. Let $\mathsf{M} \in [0,\infty)$ and $\ep  \in (0,1)$.   
Suppose that $\mathbf{A} $ is a constant symmetric matrix having eigenvalues on the interval $[1,\Lambda]$. There is a constant $C(n,\alpha,\ep,d,\Lambda)$ such that if  $\mathbf{F} \in L^p(B_1)$ and $u \in H^1(B_1)$ solve 
\begin{equation*} 
\nabla \cdot \mathbf{A} \nabla u=  \nabla \cdot \mathbf{F},
\end{equation*}
and that $\mathbf{F} \in L^p(B_1)$ satisfies, for $r \in (0,1)$,
\begin{equation}  \label{e.Aharmdecay.cond1}
\left\| \mathbf{F} \right\|_{\underline{L}^p(B_r)} \leq  \mathsf{M}  r^{\alpha},
\end{equation}
then there is $\mathbf{A}$-harmonic $q \in \mathcal{P}_{n+1}$ such that 
\begin{equation*} 
\sup_{r \in (0,1-\ep)} r^{-\alpha} \left\| \nabla u - \nabla q \right\|_{L^p(B_r)}  \leq C \left( \left\| \nabla u  \right\|_{L^2(B_1)}  +  \mathsf{M}\right).
\end{equation*}

\end{lemma}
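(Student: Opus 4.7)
The plan is a Campanato--Schauder-type excess decay iteration at dyadic scales $r_k := \theta^k (1-\ep)$, where $\theta \in (0, 1/2]$ is to be chosen small. I will construct a sequence $\{q_k\}_{k \geq 0} \subset \mathcal{P}_{n+1}$ of $\mathbf{A}$-harmonic polynomials with $q_0 = 0$ for which the excess
\[
E_k := r_k^{-\alpha} \left\| \nabla u - \nabla q_k \right\|_{\underline{L}^p(B_{r_k})}
\]
is uniformly bounded in $k$, then pass to the limit $q = \lim_k q_k$ inside the finite-dimensional space $\mathcal{P}_{n+1}$. The two endpoint assumptions on $\alpha$ play complementary roles: the upper bound $\alpha < n+1$ drives the one-step contraction, while the lower bound $\alpha > n$ is exactly what makes the top-order coefficient corrections summable.

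\textbf{One iteration step.} The initial excess $E_0$ is controlled by a single application of the Meyers/Calderón--Zygmund estimate for the constant-coefficient equation, giving $E_0 \leq C(\|\nabla u\|_{L^2(B_1)} + \mathsf{M})$. Given $q_k \in \mathcal{P}_{n+1}$ already $\mathbf{A}$-harmonic, I decompose $u - q_k = v + w$ in $B_{r_k}$, where $v$ solves $\nabla \cdot \mathbf{A} \nabla v = 0$ with $v = u - q_k$ on $\partial B_{r_k}$ and $w \in H^1_0(B_{r_k})$ solves $\nabla \cdot \mathbf{A} \nabla w = \nabla \cdot \mathbf{F}$ (using that $q_k$ is $\mathbf{A}$-harmonic). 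Testing and Calderón--Zygmund yield $\|\nabla w\|_{\underline{L}^p(B_{r_k})} \leq C \|\mathbf{F}\|_{\underline{L}^p(B_{r_k})} \leq C \mathsf{M} r_k^\alpha$. Because $v$ is $\mathbf{A}$-harmonic, its Taylor polynomial $\tilde p$ of degree $n+1$ at the origin is itself $\mathbf{A}$-harmonic, and interior smoothness of $\mathbf{A}$-harmonic functions gives
\[
\left\| \nabla v - \nabla \tilde p \right\|_{\underline{L}^p(B_{\theta r_k})} \leq C \theta^{n+1} \left\| \nabla v \right\|_{\underline{L}^2(B_{r_k/2})} \leq C \theta^{n+1} \left( \left\| \nabla u - \nabla q_k \right\|_{\underline{L}^p(B_{r_k})} + \mathsf{M} r_k^\alpha \right).
\]
Setting $q_{k+1} := q_k + \tilde p$ and normalizing by $r_{k+1}^\alpha = \theta^\alpha r_k^\alpha$, I obtain
\[
E_{k+1} \leq C \theta^{n+1-\alpha} E_k + C_\theta \mathsf{M}.
\]
Choosing $\theta$ small enough that $C \theta^{n+1-\alpha} \leq 1/2$ (possible since $\alpha < n+1$) produces the uniform bound $E_k \leq 2 E_0 + 2 C_\theta \mathsf{M}$.

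\textbf{Passing to the limit and main obstacle.} The triangle inequality combined with the uniform bound on $E_k$ implies $\|\nabla(q_{k+1} - q_k)\|_{\underline{L}^p(B_{r_{k+1}})} \leq C r_k^\alpha (\|\nabla u\|_{L^2(B_1)} + \mathsf{M})$. Since $q_{k+1} - q_k \in \mathcal{P}_{n+1}$, Markov-type inversion of norms on this finite-dimensional polynomial space gives, for each $j \in \{1, \ldots, n+1\}$,
\[
\left\| \nabla^j(q_{k+1} - q_k) \right\|_{L^\infty(B_{r_{k+1}})} \leq C \theta^{-(j-1)} r_k^{\alpha - (j-1)} \left( \left\| \nabla u \right\|_{L^2(B_1)} + \mathsf{M} \right).
\]
The critical case is $j = n+1$ (the top-order coefficient), which yields decay $r_k^{\alpha - n}$, summable precisely because $\alpha > n$. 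Telescoping produces a limit polynomial $q \in \mathcal{P}_{n+1}$, automatically $\mathbf{A}$-harmonic, with $\|\nabla(q - q_k)\|_{\underline{L}^p(B_{r_k})} \leq C r_k^\alpha(\|\nabla u\|_{L^2(B_1)} + \mathsf{M})$. The triangle inequality then yields $\|\nabla u - \nabla q\|_{L^p(B_{r_k})} \leq C r_k^{\alpha + d/p}(\|\nabla u\|_{L^2(B_1)} + \mathsf{M}) \leq C r_k^\alpha (\ldots)$, and interpolating between dyadic scales (using monotonicity of $\|\cdot\|_{L^p(B_r)}$ in $r$) gives the claimed bound for all $r \in (0, 1 - \ep)$. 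The main technical obstacle is precisely the bookkeeping at the top-order coefficient: if $\alpha$ were an integer, resonance with $\mathbf{A}$-harmonic polynomials of that degree would force logarithmic losses, but the open interval $\alpha \in (n, n+1)$ sidesteps this difficulty.
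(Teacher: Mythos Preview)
Your proof is correct and follows essentially the same Campanato-type harmonic approximation iteration as the paper. The only cosmetic difference is that the paper runs the iteration on the excess $D(r) := r^{-\alpha}\inf_{\tilde q \in \Ahom_{n+1}}\|\nabla u - \nabla\tilde q\|_{\underline{L}^p(B_r)}$ and extracts the limit polynomial from the minimizers $\tilde q_t$, whereas you build the polynomials $q_k$ explicitly via the Taylor expansion of the $\mathbf{A}$-harmonic approximant; both routes use $\alpha<n+1$ for the contraction and $\alpha>n$ for the summability of the top-order coefficients in exactly the same way.
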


\begin{proof}
We proceed via harmonic approximation. Let $v_r \in u + H_0^1(B_r)$  be $A$-harmonic. Denote
\begin{equation*} 
 \Ahom_{n+1} := \{ p \in \mathcal{P}_{n+1} \, : \, p \mbox{ is }  A-\mbox{harmonic}\}
\end{equation*}
By Calder\'on-Zygmund estimates and~\eqref{e.Aharmdecay.cond1}, 
\begin{equation*} 
\left\| \nabla v_r -  \nabla u  \right\|_{\underline{L}^2(B_r)} \leq C \mathsf{M} r^{\alpha}. 
\end{equation*}
Using the oscillation decay estimate
\begin{equation*} 
\inf_{ \tilde q \in \Ahom_{n+1}} \left\| \nabla v_r-  \nabla \tilde q \right\|_{L^\infty(B_{r})} \leq C \theta^{n+1} \inf_{ \tilde q \in \Ahom_{n+1}} \left\| \nabla v_r-  \nabla \tilde q \right\|_{\underline{L}^2(B_r)}
\end{equation*}
and defining
\begin{equation*} 
D(r) := r^{-\alpha} \inf_{ \tilde q \in \Ahom_{n+1}} \left\| \nabla u -  \nabla \tilde q \right\|_{\underline{L}^p(B_{r})},
\end{equation*}
we obtain by the triangle inequality, for $\theta>0$ such that $ C \theta^{n+1-\alpha} = \frac12$, that
\begin{equation*} 
D(\theta r)  \leq \frac12 D(r) + C \mathsf{M} .
\end{equation*}
It follows by reabsorption that, for $r \in \left( 0,1\right)$, 
\begin{equation*} 
\sup_{t  \in (0,r)} D(t)  \leq C \left(D(r)  + \mathsf{M} \right) .
\end{equation*}
In particular, letting $\tilde q_t $ be the minimizing element of $\Ahom_{n+1}$ in the definition of $D(t)$, we get by the triangle inequality that
\begin{equation*} 
t^{-n}\left\| \nabla \tilde q_{t/2} -   \nabla \tilde q_t \right\|_{\underline{L}^p(B_{t})} \leq C t^{\alpha-n} \left(D(t/2) + D(t)   \right) \leq C t^{\alpha-n}  \left(D(r)  + \mathsf{M} \right) .
\end{equation*}
This allows us to identify $q \in \Ahom_{n+1}$ such that, for $t  \in (0,r)$, 
\begin{equation*} 
\sum_{j=1}^{n+1} t^{n+1-j} \left| \nabla^{j} \tilde q_{t}(0) -  \nabla^{j} q(0) \right|  \leq  C t^{\alpha-n} \left(D(1)  +  \mathsf{M}  \right) ,
\end{equation*}
and it follows that
\begin{equation*} 
\left\| \nabla u -  \nabla  q \right\|_{\underline{L}^p(B_{r})} \leq C r^{\alpha}    \left(D(1-\ep)  +  \mathsf{M} \right) .
\end{equation*}
The proof is complete by an easy estimate $ D(1-\ep) \leq C_\ep(\left\| \nabla u  \right\|_{\underline{L}^2 \left( B_{1} \right)} +  \mathsf{M} )$. 
\end{proof}

\subsection*{Acknowledgments}
SA was partially supported by the NSF grant DMS-1700329. SF was partially supported by NSF grants DMS-1700329 and DMS-1311833. T.K. was supported by the Academy of Finland and the European Research Council (ERC) under the European Union's Horizon 2020 research and innovation programme (grant agreement No 818437).

\small
\bibliographystyle{abbrv}
\bibliography{linearization}

\end{document}